\documentclass[a4paper,11pt]{article}
\usepackage[latin1]{inputenc}
\usepackage{amsmath}
\usepackage{amsfonts}
\usepackage{amssymb}
\usepackage{epsfig}
\usepackage{amsopn}
\usepackage{amsthm}
\usepackage{graphicx}
\parindent=4pt
\parskip=2pt
\setlength{\oddsidemargin}{0.25in} \addtolength{\hoffset}{-1cm}
\addtolength{\textwidth}{4.5cm} \addtolength{\voffset}{-1cm}
\addtolength{\textheight}{3cm}

\newtheorem{theorem}{Theorem}
\newtheorem{lemma}{Lemma}

\newtheorem{definition}{Definition}
\newtheorem{proposition}{Proposition}
\newtheorem{corollary}{Corollary}
\newtheorem*{theorem*}{Theorem}
\newtheorem*{lemma*}{Lemma}
\newtheorem*{remark*}{Remark}
\newtheorem*{definition*}{Definition}
\newtheorem*{proposition*}{Proposition}
\newtheorem*{corollary*}{Corollary}
\numberwithin{equation}{section} \numberwithin{theorem}{section}
\numberwithin{proposition}{section} \numberwithin{lemma}{section}
\numberwithin{definition}{section} \numberwithin{corollary}{section}

\newcommand{\real}{\mathbb{R}}





\def\a{\alpha}
\def\b{\beta}

\def\e{\varepsilon}        


\def\u{\upsilon}


\def\cb{{\cal B}}

\def\cd{{\cal D}}

\def\ck{{\cal K}}

\def\cm{{\cal M}}

\def\cp{{\cal P}}

\def\cs{{\cal S}}

\newcommand{\dx}{\,{\rm d}x}
\newcommand{\dy}{\,{\rm d}y}

\newcommand{\ds}{\,{\rm d}s}
\newcommand{\dt}{\,{\rm d}t}
\newcommand{\rd}{{\rm d}}

\def\LL{\mathrm{L}} 
\newcommand{\RR}{\mathbb{R}}

\newcommand{\ve}{\varepsilon}

\def\pa{\partial}
\def\dist{\mathrm{dist}} 
\def\qed{\,\unskip\kern 6pt \penalty 500
\raise -2pt\hbox{\vrule \vbox to8pt{\hrule width 6pt
\vfill\hrule}\vrule}\par}

\begin{document}

\title{\huge \bf  Local smoothing effects, positivity,\\
                  and Harnack inequalities for the \\
                  fast $p$\,-Laplacian equation\vspace{1cm}}

\author{\Large  Matteo Bonforte\,\footnote{\textit{e-mail:}
matteo.bonforte@uam.es}, \
Razvan Gabriel Iagar\,\footnote{\textit{e-mail:}razvan.iagar@uam.es},
\ Juan Luis V{\'a}zquez\,\footnote{\textit{e-mail:} juanluis.vazquez@uam.es} \\[5mm]
\normalsize{Departamento de Matem{\'a}ticas,} \\ \normalsize{Universidad
Aut{\'o}noma de Madrid,}\\ \normalsize{28049, Madrid, Spain}\vspace{5mm}}

\date{}

\maketitle

\begin{abstract}
We study qualitative and quantitative properties of local weak
solutions of the fast $p$-Laplacian equation, $\partial_t
u=\Delta_{p}u$, with $1<p<2$. Our main results are quantitative
positivity and boundedness estimates for locally defined solutions
in domains of $\RR^n\times [0,T]$. We combine these lower and upper
bounds in different forms of intrinsic Harnack inequalities, which
are new in the very fast diffusion range, that is when $1<p \le
2n/(n+1)$. The boundedness results may be also extended to the limit
case $p=1$, while the positivity estimates cannot.

We prove the existence as well as sharp
asymptotic estimates for the so-called large solutions  for any
$1<p<2$, and point out their main properties.

We also prove a new
local energy inequality for suitable norms of the gradients of the
solutions.  As a consequence, we prove
that bounded local weak solutions are indeed local strong solutions,
more precisely $\partial_t u\in L^2_{\rm loc}$.

\end{abstract}

\vspace{4.5cm}

\noindent {\bf AMS Subject Classification:} 35B35, 35B65, 35K55,
35K65.

\smallskip

\noindent {\bf Keywords and phrases:} $p$-Laplacian Equation, Fast Diffusion,
    Parabolic Harnack Inequalities, Positivity, Smoothing Effects, Large solutions.
%
%
\newpage

\section{Introduction}

In this paper we study the behaviour of  local weak
solutions of the parabolic $p$-Laplacian equation
\begin{equation}\label{PLE}
\pa_t u=\nabla\cdot(|\nabla u|^{p-2}\nabla u)
\end{equation}
in the range of exponents $1<p<2$, which is known as the fast
diffusion range. We consider weak solutions $u=u(x,t)$ defined in a space-time subdomain of $
{\RR}^{n+1}$ which we usually take to be, without loss of generality
in the results, a cylinder $Q_T=\Omega\times(0,T]$, where $\Omega$
is a domain in $\RR^n$, $n\ge 1$, and $0<T\le \infty$. The main goal
of the present paper is to establish \textsl{local upper and lower
bounds} for the nonnegative weak solutions of this equation. By local estimates we
mean estimates that hold in any compact subdomain of $Q_T$ with
bounds that do not depend on the possible behaviour of the  solution
$u$ near $\pa\Omega$ for $0\le t\le T$. Our estimates cover the
whole range $1<p<2$. The upper estimates extend to signed weak solutions
as estimates in $L^\infty_{loc}(Q_T)$.

It is well-known that fast diffusion equations, like the previous
one and other similar equations, admit local estimates, and there
are a number of partial results in the literature. For the closely
related fast-diffusion equation, $\pa_t u=\Delta (u^m)$ with
$0<m<1$, interesting local bounds were found recently by two of the
authors in \cite{BV}, including the subcritical case
$m<m_c:=(n-2)/n$, where these estimates were completely new. On the
other hand,  the theories of the porous medium/fast diffusion
equation and the $p$-Laplacian equation have strong similarities
both from the quantitative and the qualitative point of view. This
similarity is made explicit by the transformation described  in
\cite{ISV} that establishes complete equivalence of the classes of
radially symmetric solutions of both families of equations (note
that the transformation maps $m$ into $p=m+1$ and may change the
space dimension). However, the particular details of both theories
for general non-radial solutions can be quite different, and the
purpose of this paper is to make a complete analysis of the issue
for the $p$-Laplacian equation.

Let us mention that our parabolic $p$-Laplacian equation has been
widely researched for values of $p>2$, cf. \cite{DiB} and its
references, but the fast-diffusion range has been less studied, see
also \cite{DBH90, DUV, DGV}. However, just as it happens to the fast
diffusion equation for values of $m\sim 0$, the theory becomes
difficult for $p$ near 1, more precisely for $1<p<p_c=2n/(n+1)$, and
such a low range is almost absent from the literature. For the
natural occurrence of the exponent $p_c$ in the theory see for
instance \cite{EstebanVazquez} or the book \cite{VazquezSmoothing},
Chapter 11.

Some local estimates were established by Di Benedetto and Herrero in
\cite{DBH90}. We will establish here new upper and lower bounds of
local type, completing in this way these previous results, and
setting a new basis for the qualitative study of the equation in
that range.

A consequence of our local bounds from above and below is a number
of \textsl{Harnack inequalities}. The question of proving Harnack
inequalities for the fast $p$-Laplacian equation has been raised
first by DiBenedetto and Kwong in \cite{DiBK92}. This problem has
been studied again recently by DiBenedetto, Gianazza and Vespri in
\cite{DGV}, where they prove that the standard intrinsic Harnack
inequality holds for $p>p_c$ and is in general false for $p<p_c$,
and they leave as an open question the existence of Harnack
inequalities of some new form in that low range of $p$. We give a
positive answer to this intriguing open problem.

We also prove existence and sharp space-time asymptotic estimates
for the so-called large solutions $u_\infty$, namely, $u_\infty\sim
\,t^{1/(2-p)}\dist(x,\partial\Omega)^{p/(p-2)}$, for any $1<p<2$.
Moreover, we  prove a new local energy inequality for suitable norms
of the gradients of the solutions, which can be extended to more
general operators of $p$-Laplacian type. As a consequence, we obtain
that bounded local weak solutions are indeed local strong solutions,
more precisely $\partial_t u\in L^2_{\rm loc}$, cf. Corollary
\ref{CorEnergy}. This qualitative information adds an important item
to the general theory of the $p$-Laplacian type diffusions.

Some of the results and techniques may be also extended to more general
degenerate diffusion equations, as mentioned in the concluding remarks.

\

\

\noindent \textbf{Organization of the paper.} We begin with a
section where we state the definitions and the main results of the
present paper in a concentrated form. It contains: local upper
bounds for solutions, positivity estimates, Harnack inequalities and
local inequalities for the energy, i.\,e., for the gradients of the
solutions. The rest of the paper will be divided into several parts,
as follows:

\textsc{Local Smoothing Effect for $L^r$ norms}. In Section
\ref{sec.LSE}, we give the proof of Theorem \ref{mainupper0}, which
is the main Local Smoothing Effect. It is proved in a first step for
the class of bounded local strong solutions. The proof (Subsection
\ref{Proof.SE}) is obtained by joining a space-time local smoothing
effect (Subsection \ref{sec.space.time}) with an $L^r_{loc}$
stability estimate, i.\,e., we control the evolution in time of the
local $L^r$ norms, $r\ge 1$ (Subsection \ref{subsect.Lr.stab}). The
local smoothing result for general local strong solutions will be
postponed to Section \ref{EOP}.

Let us point out here that as a consequence of this result and known
regularity theory (cf. \cite{DiB} or Appendix A2), it follows that
the local strong solutions are H\"older continuous, whenever their
initial trace lies in $L^r_{loc}$ for suitable $r$.

\textsc{Continuous Large Solutions.} In Section \ref{sec.largesol},
we apply the boundedness result of Theorem \ref{mainupper0}, to
prove the existence of the so-called {\sl large solutions} for the
parabolic $p$-Laplacian equation for any $1<p<2$. We derive some of
their properties, in particular we prove a sharp asymptotic
behaviour for large times. We also construct the so-called extended
large solutions, in the spirit of \cite{ChV}.  These results are a
key tool in the proof of our sharp local smoothing effect, when
passing from bounded to general local strong solutions. Roughly
speaking, extended large solutions play the role of (quasi)
``absolute upper bounds'' for local solutions.

\textsc{Local lower bounds}. We devote Sections \ref{sec.MDP.pos}
and \ref{sec.mainposit} to establish lower estimates for local weak
solutions, in the form of quantitative positivity estimates for
small times, see Theorem \ref{mainposit}, and estimates which are
global in time, of the \textsl{Aronson-Caffarelli type}, see Theorem
\ref{main.AC}. In Section \ref{sec.MDP.pos} we prove all these facts
for a minimal Dirichlet problem, while in Section
\ref{sec.mainposit} we extend them to general continuous local weak
solutions via a technique of local comparison.

\textsc{Harnack inequalities}. In Section \ref{sec.Harnack}, we prove
forward, backward and elliptic Harnack inequalities in its intrinsic
form, cf. Theorem \ref{FBH}, together with some other alternative forms,
that avoid the delicate intrinsic geometry. This inequalities are sharp
and extend to the very fast diffusion range $1<p\le p_c$, the results of
\cite{DiBK92, DGV} valid only in the supercritical range $p_c< p<2$, for
which we give a different proof.

\textsc{A special energy inequality}. In Section \ref{Sec.Energy},
we prove a new estimate for gradients, Theorem \ref{inequality},
which, besides its application in the proof of the Local Smoothing
Effect, has several applications outlined in that section, such as
the fact that bounded local weak solutions are indeed local strong
solutions, cf. Corollary \ref{CorEnergy}. This inequality can be
extended to more general operators of $p$-Laplacian type. Let us
also mention that such a technical tool is not needed in developing
the corresponding theory for the fast diffusion equation.

\textsc{Panorama, open problems and existing literature.} In the
last section we draw a panorama of the obtained results, we pose
some open problems and we briefly compare our results with other
related works.

\section{Statements of the main results}

\subsection{The notion of solution}

We use the following definition of local weak solution,
found in the literature, cf. \cite{DiB, DUV}.

\begin{definition}\label{local.weak}
A ``local weak solution'' of {\rm \eqref{PLE}} in $Q_T$ is a measurable
function
\begin{equation*}
u\in C_{loc}\big(0,T;L^{2}_{loc}(\Omega)\big)\cap
L^{p}_{loc}\big(0,T;W^{1,p}_{loc}(\Omega)\big)
\end{equation*}
such that, for every  open bounded
 subset $K$ of \ $\Omega$ and for every time interval
$[t_1,t_2]\subset(0,T]$, the following equality holds true:
\begin{equation}\label{weak}
\int_{K}u(t_2)\varphi(t_2)\,\dx-\int_{K}u(t_1)\varphi(t_1)\,\dx
+\int_{t_1}^{t_2}\int_{K}\left(-u\varphi_{t}+|\nabla u|^{p-2}\nabla
u\cdot\nabla\varphi\right)\,\dx\,\dt=0,
\end{equation}
for any test function $\varphi\in
W^{1,2}_{loc}\big(0,T;L^{2}(K)\big)\cap
L^{p}_{loc}\big(0,T;W_{0}^{1,p}(K)\big)$. Under similar assumptions,
we say that $u$ is a local weak subsolution (supersolution) if we
replace in \eqref{weak} the equality by $\leq$ (resp. $\geq$) and we
restrict the class of test functions to $\varphi\geq0$.

A local weak solution $u$ is called ``local strong
solution'' if $u_t\in L^1_{loc}(Q_T)$, $\Delta_{p}u\in L^1_{loc}(Q_T)$
and equation \eqref{PLE} is satisfied for a.\,e. $(x,t)\in Q_T$. In the definition of
local strong sub- or super-solution we only add  the condition $u_t\in L^1_{loc}(Q_T)$,
while the requirement  $\Delta_{p}u\in L^1_{loc}(Q_T)$ is not imposed (and is in general not true).

\end{definition}
We will recall in the sequel known properties of the local weak or
strong solutions at the point where we need them. We just want to
stress the local (in space-time) character of the definition, since
there is no reference to any initial and/or boundary data taken by
the local weak solution $u$. However, \textit{in some statements
initial data are taken as initial traces in some space $L^r_{\rm
loc}(\Omega)$}, and then $u\in
C\big([0,T];L^{r}_{loc}(\Omega)\big)$.  This can be done in view of
the results of DiBenedetto and Herrero \cite{DBH90}. Let us point
our that the  $p$-Laplacian equation is invariant under constant
$u$-displacements (i.\,e., if $u$ is a local weak solution so is
$u+c$ for any $c\in \RR$). This is a quite convenient property not
shared by the porous medium/fast diffusion equation. The equation is
also invariant under the symmetry $u\mapsto -u$.

Throughout the paper we will use the fixed values of the constants
\begin{equation}
p_c=\frac{2n}{n+1}, \qquad r_c=\frac{n(2-p)}{p}, \qquad \vartheta_r=\frac{1}{rp+(p-2)n}.
\end{equation}
Note that $1<p_c<2$ for $n>1$, and $r_c>1$ for $1<p<p_c$. See figure
in Section~\ref{sec.panorama}.

Next, we state our main results. By local weak solution we will
always refer to the solutions of the fast $p$-Laplacian equation
introduced in  Definition \ref{local.weak}, defined in $Q_T$, and
with $1<p<2$.  At some places we denote by $|\Omega |$ the Lebesgue
volume of a measurable set $\Omega$, typically a ball.

\subsection{Local Smoothing Effects}

Our main result in terms of local upper estimates reads

\begin{theorem}\label{mainupper0}
Let $u$ be a local strong solution of the fast $p$-Laplacian
equation with $1<p<2$ corresponding to an initial datum $u_0\in
L^r_{\rm loc}(\Omega)$, where $\Omega\subseteq\real^n$ is an open
domain containing the ball $B_{R}(x_0)$. If either $1<p\le p_c$ and
$r>r_c$, or $p_c<p<2$ and $r\ge 1$, then there exists two positive
constants $C_1$ and $C_2$ such that:
\begin{equation}\label{mainsmooth0}
u(x_0,t)\leq\frac{C_1}{t^{n\vartheta_r}}
\left[\int_{B_{R}(x_0)}|u_{0}(x)|^{r}\,\dx\right]^{p\vartheta_r}
+C_2\left(\frac{t}{R^{p}}\right)^{\frac{1}{2-p}}.
\end{equation}
Here $C_1$  and $C_2$ depend only on $r$, $p$, $n$;  we recall that
$\vartheta_r>0$ under our assumptions.
\end{theorem}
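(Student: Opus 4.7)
The plan is to prove \eqref{mainsmooth0} first for \emph{bounded} local strong solutions, the extension to general local strong solutions being postponed to Section \ref{EOP} via comparison with the extended large solutions constructed in Section \ref{sec.largesol}. The two-term structure of the right-hand side suggests decoupling the proof into two independent pieces: (i) a space-time smoothing effect that controls $u(x_0,t)$ by a space-time $L^r$-integral of $u$ on a parabolic cylinder sitting inside $B_R(x_0)\times(0,t]$, and (ii) an $L^r_{loc}$-stability estimate that in turn bounds that integral by the $L^r$-norm of $u_0$ plus a ``tail'' term of size $C\,t^{r/(2-p)} R^{n-pr/(2-p)}$ dictated by parabolic scaling. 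Raising the output of (ii) to the power $p\vartheta_r$ and feeding it into (i) produces exactly the two-term sum of \eqref{mainsmooth0}.

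For step (i) I would run a DiBenedetto-style Moser iteration. Starting from Caccioppoli-type energy inequalities obtained by testing the weak formulation \eqref{weak} against $u_+^{q}\zeta^p$ on a nested family of cylinders $B_{R_k}(x_0)\times[t_k,t]$ with $R_k\downarrow R/2$ and $t_k\uparrow t$, the parabolic Sobolev embedding yields the self-improving relation $q_{k+1}+1=(1+p/n)(q_k+1)$. Summing the resulting geometric series with $q_0+1=r$ produces
\[
\sup_{B_{R/2}(x_0)} u_+(\cdot,t) \;\le\; \frac{C}{t^{a}R^{b}}\Bigl(\int_{t/2}^{t}\!\!\int_{B_R(x_0)} u_+^{r}\Bigr)^{p\vartheta_r},
\]
the exponents $a,b$ being forced by the requirement that both sides scale identically under the natural scaling of \eqref{PLE}. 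The invariance $u\mapsto -u$ takes care of possibly sign-changing solutions.

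For step (ii) I would use $|u|^{r-2}u\,\zeta^p$ as test function in \eqref{weak}, so that the time part yields $\tfrac{1}{r}\tfrac{d}{d\tau}\int|u|^r\zeta^p$ and the diffusion part yields a nonnegative dissipation $(r-1)\int|\nabla u|^p|u|^{r-2}\zeta^p$ minus an error $p\int|\nabla u|^{p-1}|u|^{r-1}\zeta^{p-1}|\nabla\zeta|$. Young's inequality absorbs the error into the dissipation, leaving
\[
\frac{d}{d\tau}\int|u|^r\zeta^p \;\le\; \frac{C}{R^p}\int_{B_R(x_0)}|u|^{r+p-2}.
\]
A second Young with conjugate exponents $r/(r+p-2)$ and $r/(2-p)$ converts the right-hand side into $\delta\int|u|^r + C_\delta R^{n}$; Gronwall in $\tau$, with $\delta\sim R^p/t$, then gives the stability estimate
\[
\sup_{0\le\tau\le t}\int_{B_{R/2}(x_0)}|u(\tau)|^r \;\le\; 2\int_{B_R(x_0)}|u_0|^r + C\,t^{r/(2-p)}\,R^{n-pr/(2-p)}.
\]

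The main obstacle is making the Moser iteration close in the very fast diffusion range $p\le p_c$. The scaling exponent $\vartheta_r=[rp+(p-2)n]^{-1}$ is positive \emph{precisely} when $r>r_c=n(2-p)/p$, which is exactly the threshold below which the geometric series feeding the iteration ceases to be summable and, equivalently, the tail term produced in step (ii) fails to have the correct sign of the $R$-exponent. Secondary technicalities include starting the Moser recursion at $q_0$ large enough for the parabolic Sobolev embedding to be effective (a finite number of initial bootstraps may have to be done by hand), and the handling of sign-changing solutions, where the two pieces $u_+$ and $u_-$ have to be treated separately since the symmetry $u\mapsto -u$ of the equation means the same estimate applies to each.
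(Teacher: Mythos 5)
Your proposal is correct and follows essentially the same route as the paper: it proves the bound for bounded strong solutions by combining a Moser-iteration space-time smoothing effect (the paper's Theorem \ref{localsm}) with an $L^r_{loc}$-stability estimate obtained by testing with $|u|^{r-2}u\,\zeta^p$ (the paper's Theorem \ref{main-normr}), joins the two by the natural parabolic rescaling, and defers the removal of the boundedness assumption to a comparison with extended large solutions, exactly as in Sections \ref{sec.LSE}--\ref{EOP}. The identification of $r>r_c$ (equivalently $\vartheta_r>0$) as the condition making the iteration close, and the scaling check that the stability tail produces the term $(t/R^p)^{1/(2-p)}$, also match the paper.
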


\noindent \textbf{Remarks.} (i) We point out  that a natural choice
for  $R$  is  $R={\rm \dist}(x_0,\partial \Omega)$. In this way
reference to the inner ball can be avoided. We ask the reader to
write the equivalent statement.

\noindent (ii) As we have mentioned, using the results of Appendix
A2, we deduce that the local strong solutions are in fact locally
H\"older continuous.

\noindent (iii) This theorem will be a corollary of a slightly more
general theorem, namely Theorem \ref{mainupper}, where the constants
$C_i$ depend also on $R/R_0$. The two terms in the estimates are sharp
in a sense that will be explained after the statement of Theorem \ref{mainupper}.

\noindent (iv) Note that changing $u$ into $-u$ and applying the
same result we get a bound from below for $u$. Therefore, we can
replace $u(x_0,t)$ by $|u(x_0,t)|$ in the left-hand side of formula
\eqref{mainsmooth0}.

\noindent (v)  The above theorem extends to the limit case $p=1$
with the assumption $r>n$.

\noindent (vi) The proof of this Theorem can be extended ``as it is''
to local strong subsolutions.

\medskip

\noindent \textbf{Continuous large solutions and extended large
solutions.} The upper estimate \eqref{mainsmooth0} will be used to
prove the existence of continuous large solutions for the parabolic
$p$-Laplacian equation, cf. Theorem \ref{large-parabolic}. Moreover,
we prove sharp asymptotic estimates for such large solutions in
Theorem \ref{asympt.large}, of the form: $u(x,t) \sim O({\rm
dist}(x,\partial\Omega)^{\frac{p}{2-p}}t^{\frac{1}{2-p}})$. See
precise expression in  \eqref{asympt.large.eq}.

\subsection{Lower bounds for nonnegative solutions}

The next results deal with properties of nonnegative solutions. Note
that since the equation is invariant under constant
$u$-displacements, the results apply to any local weak solution that
is bounded below (and by symmetry $u\mapsto -u$ to any solution that
is bounded above). We divide our presentation of the results into
several different parts.

\noindent \textsc{A. General positivity estimates}. Let $u$ be a
nonnegative, continuous local weak solution of the fast
$p$-Laplacian equation in a cylinder $Q=\Omega\times (0,T)$, with
$1<p<2$, taking an initial datum $u_0\in L^1_{\rm loc}(\Omega)$. Let
$x_0\in\Omega$ be a fixed point, such that
$\dist(x_0,\partial\Omega)>5R$. Consider the \textit{minimal
Dirichlet problem}, which is the problem posed in $B_{3R}(x_0)$,
with initial data $u_0\chi_{B_{R}(x_0)}$ and zero boundary
conditions. The extinction time $T_m=T_m(u_0,R)$ of the solution of
this problem (which is always finite, as results in Subsection
\ref{FET} show) is called the \textit{minimal life time}, and indeed
it satisfies $T_m(u_0,R)<T(u)$, where $T(u)$ is the (finite or
infinite) extinction time of $u$. In order to pass from the estimate
in the center $x_0$ to the infimum in $B_{R}(x_0)$, we need that
$\dist(x_0,\partial\Omega)>5R$. With all these notations we have:

\begin{theorem}\label{mainposit}
Under the previous assumptions, there exists a positive constant
$C=C(n,p)$ such that
\begin{equation}\label{ineq.mainpos}
\inf_{x\in B_{R}(x_0)}u^{p-1}(x,t)\geq
CR^{p-n}t^{\frac{p-1}{2-p}}T_{m}^{-\frac{1}{2-p}}\int_{B_R(x_0)}u_{0}(x)\,\dx,
\end{equation}
for any $0<t<t^{*}$, where $t^{*}>0$ is a critical time depending on
$R$ and on $\|u_0\|_{L^{1}(B_R)}$, but not on $T_m$.
\end{theorem}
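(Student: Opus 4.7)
By the comparison principle we may replace $u$ by the solution $v$ of the minimal Dirichlet problem on $B_{3R}(x_0)$, since $v\leq u$ and the extinction time of $v$ is exactly $T_m$. It is therefore enough to establish \eqref{ineq.mainpos} for $v$. My plan has three steps: a reduction-of-mass inequality, a passage from a measure lower bound to a pointwise one via a self-similar barrier, and a calibration against the extinction time $T_m$.

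\textbf{Step 1 (reduction of mass).} The first task is to show that the local $L^1$ mass in $B_R(x_0)$ does not decay too fast for small times. Testing \eqref{PLE} against a standard Lipschitz cutoff $\varphi$ equal to $1$ on $B_R(x_0)$ and supported in $B_{2R}(x_0)$, the evolution of the mass is driven by the flux $\int|\nabla v|^{p-2}\nabla v\cdot\nabla\varphi\,\dx$. Controlling this flux by H\"older's inequality, a Caccioppoli-type energy estimate of the kind used in Section \ref{Sec.Energy}, and the local $L^\infty$ bound of Theorem \ref{mainupper0} applied inside $B_{2R}(x_0)$, I would derive an inequality of the form
\begin{equation*}
\int_{B_R(x_0)}v(x,t)\,\dx\;\geq\;M-C\,t^{\beta}\,R^{-\alpha}\,M^{\gamma},\qquad M:=\int_{B_R(x_0)}u_0(x)\,\dx,
\end{equation*}
with positive exponents $\alpha,\beta,\gamma$ dictated by the natural scaling of the $p$-Laplacian. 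The critical time $t^*=t^*(R,M)$ in the statement is then selected so that the right-hand side is at least $M/2$ on $(0,t^*)$; by construction $t^*$ depends only on $R$ and $M$, \emph{not} on $T_m$.

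\textbf{Step 2 (pointwise lower bound).} Next, combining $\|v(\cdot,t)\|_{L^1(B_R(x_0))}\geq M/2$ with the $L^\infty$ upper bound of Theorem \ref{mainupper0} through a standard distribution-function argument yields a measure-theoretic lower bound of the shape $|\{x\in B_R(x_0):v(x,t)\geq\kappa\}|\geq c\,R^n$ for $t\in(0,t^*)$. To upgrade this into a pointwise lower estimate for $\inf_{B_R(x_0)}v(\cdot,t)$, I would compare $v$ from below with an appropriately rescaled Barenblatt-type self-similar subsolution supported in a small ball, and slide the centre of this barrier across $B_R(x_0)$ while the mass bound of Step 1 is still in force.

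\textbf{Step 3 (calibration via $T_m$) and main obstacle.} The specific combination $t^{(p-1)/(2-p)}T_m^{-1/(2-p)}$ is produced by calibrating the pointwise bound of Step 2 against the intrinsic time scale $T_m$, using the Benilan-Crandall-type monotonicity $\partial_t(t^{1/(2-p)}v)\geq 0$, which ties $T_m$ to $M$ and $R$. The main obstacle lies in Step 2: in the very fast diffusion range $1<p\leq p_c$ the classical intrinsic Harnack inequality is known to fail (cf. \cite{DGV}), so the expansion-of-positivity mechanism cannot rely on it and must instead be implemented by a carefully chosen self-similar barrier whose rescaling is compatible with the minimal Dirichlet problem and whose support can sweep $B_R(x_0)$ before the mass bound of Step 1 degrades. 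Ensuring that this sweeping time is itself bounded by $t^*(R,M)$ is precisely what keeps $t^*$ independent of $T_m$.
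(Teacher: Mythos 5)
Your outer framing (reduce to the minimal Dirichlet problem, find a critical time $t^*$ depending only on $R$ and $M=\|u_0\|_{L^1(B_R)}$, and finish with the Benilan--Crandall monotonicity of $t^{-1/(2-p)}u$) matches the paper. But the core of your argument, Step~2, is exactly the step that cannot work in the range where the theorem is new. For $1<p\le p_c$ there are no Barenblatt source solutions for the Cauchy problem (the paper uses them only in the proof of Theorem~\ref{posit.goodFDE}, explicitly restricted to $p_c<p<2$), and any expansion-of-positivity argument that converts a measure bound $|\{v\ge\kappa\}|\ge cR^n$ into $\inf_{B_R}v\gtrsim \kappa$ with constants depending only on $(n,p)$ would yield a Harnack inequality with universal constants, which the counterexample of \cite{DGV} rules out for $p<p_c$. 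You name this as ``the main obstacle'' but do not supply the mechanism that overcomes it. A second, quieter gap: both your Step~1 (flux control) and Step~2 (distribution-function argument) invoke the quantitative $L^\infty$ bound of Theorem~\ref{mainupper0}, which for $p\le p_c$ requires $u_0\in L^r_{loc}$ with $r>r_c>1$; the theorem you are proving assumes only $u_0\in L^1_{loc}$, so that input is not available. Finally, your Step~3 does not actually explain where the factor $T_m^{-1/(2-p)}$ comes from; Benilan--Crandall alone relates two times, not the extinction time to the data.

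The paper's route avoids all of this. The Flux Lemma (Lemma~\ref{fluxlemma}) uses extinction itself: since $u$ vanishes at $T_m$, the entire mass $\int_{B_R}u(\cdot,s)$ must exit through the annulus, giving the \emph{identity} $\int_{B_R}u(s)=\int_s^{T_m}\int_{A_1}|\nabla u|^{p-2}\nabla u\cdot\nabla\varphi$ --- this is where $T_m$ enters. The gradient term is then controlled by the DiBenedetto--Herrero estimate of \cite{DBH90} in terms of $\iint (u+\e)^{2(p-1)/p}$ over the outer annulus, and the passage from that integral to a pointwise bound is done not by a barrier but by the local Aleksandrov reflection principle (Proposition~\ref{Aleks}): the value at the centre $x_0$ dominates the average over $B_{R_0}\setminus B_{2R}$ because the initial mass is concentrated in $B_R$. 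A mean-value argument in time produces one good time $t_1\in[t^*,T_m]$, optimization in the free parameter $\e$ fixes the constants and $t^*$ independently of $T_m$, and Benilan--Crandall transports the bound backward from $t_1$ to all $t\in(0,t^*)$, producing precisely the $t^{(p-1)/(2-p)}T_m^{-1/(2-p)}$ combination. If you want to salvage your outline, you would need to replace the Barenblatt barrier by a reflection/monotonicity argument of this kind and replace the $L^\infty$-based mass control by the flux identity.
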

The explicit expression the critical time is $ t^{*}=k^{*}(n,p)
R^{p-n(2-p)}\|u_0\|_{L^{1}(B_R(x_0))}^{2-p}$, cf. \eqref{crittime2}.

The next result is a lower bound for continuous local weak
solutions, in the form of \textsl{Aronson-Caffarelli estimates}. The
main difference with respect to Theorem \ref{mainposit} is that this
estimate is global in time, and implies the first one.

\begin{theorem}\label{main.AC} Under the assumptions of the
last theorem, for any $t\in(0,T_m)$ we have
\begin{equation}\label{AC.estimates.main}
R^{-n}\int_{B_R(x_0)}u_0(x)\,\dx\leq
C_{1}t^{\frac{1}{2-p}}R^{-\frac{p}{2-p}}+
C_{2}\,t^{-\frac{p-1}{2-p}}T_{m}^{\frac{1}{2-p}}R^{-p}\inf\limits_{x\in
B_{R}(x_0)}u(x,t)^{p-1},
\end{equation}
where $C_1$ and $C_2$ are positive constants depending only on $n$ and $p$.
\end{theorem}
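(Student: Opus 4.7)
The plan is to deduce Theorem~\ref{main.AC} directly from the small--time positivity estimate of Theorem~\ref{mainposit}, which is already available to us. The key observation is that the critical time of Theorem~\ref{mainposit} has the explicit form $t^{*} = k^{*}(n,p)\, R^{p-n(2-p)} \|u_{0}\|_{L^{1}(B_{R}(x_{0}))}^{2-p}$, and this expression turns out to encode exactly the first, purely algebraic summand of \eqref{AC.estimates.main}. Accordingly I would perform a dichotomy in the time variable around $t^{*}$: for $t < t^{*}$ the second ``infimum'' term on the right of \eqref{AC.estimates.main} absorbs the initial mass thanks to Theorem~\ref{mainposit}, while for $t \ge t^{*}$ the very definition of $t^{*}$ delivers the first term.

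In the small-time range $t \in (0,t^{*})$, Theorem~\ref{mainposit} rearranges as
\begin{equation*}
R^{-n}\int_{B_{R}(x_{0})} u_{0}(x)\,\dx \;\le\; \frac{1}{C}\, R^{-p}\, t^{-\frac{p-1}{2-p}}\, T_{m}^{\frac{1}{2-p}}\, \inf_{x\in B_{R}(x_{0})} u^{p-1}(x,t),
\end{equation*}
which is exactly the second summand of \eqref{AC.estimates.main} with $C_{2}=1/C$; the first summand, being nonnegative, may be added for free. In the complementary range $t \in [t^{*},T_{m})$ the inequality $t \ge t^{*}$ reads $\|u_{0}\|_{L^{1}(B_{R})}^{2-p} \le (k^{*})^{-1}\, R^{n(2-p)-p}\, t$, and taking the $(2-p)$-th root and dividing by $R^{n}$ yields
\begin{equation*}
R^{-n}\int_{B_{R}(x_{0})} u_{0}(x)\,\dx \;\le\; (k^{*})^{-\frac{1}{2-p}}\, t^{\frac{1}{2-p}}\, R^{-\frac{p}{2-p}},
\end{equation*}
i.e. the first summand of \eqref{AC.estimates.main} with $C_{1}=(k^{*})^{-1/(2-p)}$; this time the infimum term is appended for free as a nonnegative quantity. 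If $t^{*} \ge T_{m}$ the whole interval $(0,T_{m})$ is handled by the first step alone, while if $t^{*} < T_{m}$ the two ranges partition $(0,T_{m})$; in either case every admissible $t$ is covered.

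\textbf{Main obstacle.} With this reduction the real content of Theorem~\ref{main.AC} is entirely hidden inside Theorem~\ref{mainposit}, whose derivation (to be carried out in Sections~\ref{sec.MDP.pos}--\ref{sec.mainposit}) contains the genuinely delicate ingredients: two-sided control of the minimal--Dirichlet extinction time $T_{m}$ in terms of $\|u_{0}\|_{L^{1}(B_{R})}$, a quantitative mass--transfer argument linking the initial $L^{1}$ mass to the infimum at a later time, and the local comparison principle required to pass from the minimal Dirichlet solution $v$ to a general continuous local weak solution $u$, via $u \ge v$ on $B_{3R}(x_{0})\times(0,T_{m})$, so that $\inf_{B_{R}} u^{p-1}(\cdot,t) \ge \inf_{B_{R}} v^{p-1}(\cdot,t)$ and the estimate established for $v$ survives for $u$.
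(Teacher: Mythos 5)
Your proposal is correct and is essentially the paper's own argument: the authors also obtain \eqref{AC.estimates.main} by the dichotomy ``either $t<t^{*}$, in which case the positivity estimate supplies the infimum term, or $t\ge t^{*}$, in which case the explicit formula $t^{*}=k^{*}R^{p-n(2-p)}\|u_0\|_{L^1(B_R)}^{2-p}$ rearranges into the first term,'' adding the missing summand for free in each case (the paper performs this first for the minimal Dirichlet problem and then transfers both estimates to general continuous local weak solutions by the same approximation/comparison step that proves Theorem~\ref{mainposit}). Your identification of where the real work lies — inside Theorem~\ref{mainposit} and the comparison with the MDP — is also accurate.
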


\noindent \textbf{Remark.} The presence of $T_m$ may seem awkward
since the extinction time is not a direct expression of the data. On
the other side, the above estimates hold with the same form in the
whole range $1<p<2$. We now improve the above estimates by replacing
the $T_m$ with some local information on the data, and for this
reason it is necessary to separate the results that hold in the
supercritical and in the subcritical range.

\medskip

\noindent \textsc{B. Improved estimates in the ``good'' fast diffusion
range}. Let us consider $p$ in the supercritical or ``good'' fast
diffusion range, i.\,e. $p_c<p<2$.
In this range, we can obtain both lower and upper estimates for
$T_m$ in terms of the local $L^1$ norm of $u_0$. We prove the following result:
\begin{theorem}\label{posit.goodFDE}
If $p_c<p<2$, we have the following upper and lower bounds for
the extinction time of the Dirichlet problem $T$ on any ball $B_R$:
\begin{equation}
c_1R^{p-n(2-p)}\|u_0\|_{L^1(B_{R/3})}^{2-p}\leq T\leq
c_2R^{p-n(2-p)}\|u_0\|_{L^1(B_{R})}^{2-p},
\end{equation}
for some $c_1$, $c_2>0$. Then, the lower estimate
\eqref{ineq.mainpos} reads
\begin{equation}\label{lowerH.goodFDE}
\inf_{x\in B_{R}(x_0)}u(x,t)\geq
C(n,p)\left(\frac{t}{R^p}\right)^{\frac{1}{2-p}}, \quad \mbox{for} \
\hbox{any} \ 0<t<t^{*}.
\end{equation}
\end{theorem}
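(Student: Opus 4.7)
The strategy is to couple the Local Smoothing Effect (Theorem \ref{mainupper0}) with an extinction-in-finite-time argument. Since $p_c<p<2$, Theorem \ref{mainupper0} is available with $r=1$; its exponent $\vartheta_1=1/(p-n(2-p))$ is then positive. Evaluating the estimate at the intermediate time $t_0:=c\,R^{p-n(2-p)}\|u_0\|_{L^1(B_R)}^{2-p}$, the two terms on the right-hand side turn out to be of the same order, and I obtain $\|u(t_0)\|_{L^\infty(B_R)}\le C\,R^{-n}\|u_0\|_{L^1(B_R)}$. From this $L^\infty$ bound I would conclude extinction by comparison with the separable (``friendly giant'') supersolution $V(x,t):=(T^{*}-t)_+^{1/(2-p)}F_R(x)$, where $F_R>0$ is the normalized solution of the nonlinear eigenvalue problem $-\Delta_{p}F_R=F_R/(2-p)$ in $B_R$ with zero Dirichlet data. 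A scaling argument gives $\|F_R\|_\infty\simeq R^{-p/(2-p)}$, so choosing $T^{*}$ of the order of $R^{p}\|u(t_0)\|_{L^\infty}^{2-p}$ ensures $V(\cdot,0)\ge u(\cdot,t_0)$; the parabolic comparison principle then forces extinction by $t_0+T^{*}\le c_2\,R^{p-n(2-p)}\|u_0\|_{L^1(B_R)}^{2-p}$.

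\textbf{Lower bound for $T$.} This drops out of the Aronson--Caffarelli type inequality of Theorem \ref{main.AC}. I would apply that theorem with the radius $R/3$ in place of $R$: the corresponding minimal Dirichlet problem is then posed on $B_R$ with initial data $u_0\chi_{B_{R/3}}$, and by comparison its extinction time $T_m$ does not exceed the extinction time $T$ of the present theorem. Letting $t\uparrow T_m$ in \eqref{AC.estimates.main} kills the term involving $\inf u^{p-1}$, and the remaining inequality rearranges into
$$T\ge T_m\ge c_1\,R^{p-n(2-p)}\|u_0\|_{L^1(B_{R/3})}^{2-p}.$$

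\textbf{Derivation of \eqref{lowerH.goodFDE}.} With the upper bound on $T_m$ in hand, I substitute it into the general positivity estimate \eqref{ineq.mainpos} of Theorem \ref{mainposit}. Since $T_m$ enters with the negative power $-1/(2-p)$, replacing it by its upper bound gives a lower bound on the right-hand side. A direct computation of the exponents shows that the factors of $R$ and of $\|u_0\|_{L^1(B_R)}$ combine in just the right way to cancel the mass term, leaving precisely $\inf_{B_R(x_0)}u(x,t)^{p-1}\ge C(n,p)\,(t/R^p)^{(p-1)/(2-p)}$. Taking $(p-1)$-th roots yields the announced bound \eqref{lowerH.goodFDE}, and the time window $0<t<t^{*}$ is inherited unchanged from Theorem \ref{mainposit}.

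\textbf{Main obstacle.} The only genuinely non-algebraic step is the upper bound on $T$: it requires the nonlinear Dirichlet eigenvalue theory on the ball to produce $F_R$, together with the correct boundary behaviour (a Hopf-type comparison for both $u(\cdot,t_0)$ and $F_R$) in order to compare the two functions pointwise on $\overline{B_R}$. Once this is established, the lower bound on $T$ and the derivation of \eqref{lowerH.goodFDE} are routine consequences of the theorems previously proved.
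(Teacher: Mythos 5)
Your overall strategy is sound and the algebra checks out, but your route differs from the paper's at both ends. For the upper bound on $T$ the paper rescales to the unit ball, compares with the Cauchy-problem Barenblatt solution (available exactly for $p>p_c$) to get the $L^\infty$ decay $t^{-n\vartheta_1}$ for the worst-case (Dirac) datum via concentration comparison, and then dominates by a separate-variables solution posed on a \emph{strictly larger} ball $B_r$, $r>1$. You instead invoke the local smoothing effect of Theorem \ref{mainupper0} and compare with the separable profile on the \emph{same} ball. Two points need patching in your version. First, Theorem \ref{mainupper0} bounds $u(x,t_0)$ in terms of the ball $B_\rho(x)\subset B_R$, and its second term $(t_0/\rho^p)^{1/(2-p)}$ blows up as $x\to\partial B_R$; to get the uniform bound $\|u(t_0)\|_{L^\infty(B_R)}\le CR^{-n}\|u_0\|_1$ you must either use the global Dirichlet smoothing effect or observe that the zero extension of the Dirichlet solution is a subsolution on a larger ball (remark (vi) after Theorem \ref{mainupper0} then applies with radius $R$ at every point). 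Second, since $F_R$ vanishes on $\partial B_R$, the initial ordering $(T^*)^{1/(2-p)}F_R\ge u(\cdot,t_0)$ does \emph{not} follow from the $L^\infty$ bound near the boundary: you need $u(\cdot,t_0)\lesssim \mathrm{dist}(x,\partial B_R)$, i.e.\ genuine boundary regularity or a Hopf-type barrier, which is precisely the obstacle you flag but do not resolve. The paper's choice of a larger ball makes the lateral and initial comparisons trivial and is the recommended fix.

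For the lower bound, the paper uses the Flux Lemma together with the DiBenedetto--Herrero gradient estimate (Subsection \ref{lower.T}), whereas you extract it from the Aronson--Caffarelli inequality by letting $t\uparrow T_m$. This works, but only if the inequality is applied to the \emph{minimal Dirichlet solution itself} (i.e.\ in the form \eqref{AC.estimates} of Section 6.4, valid for all $t$ up to its own extinction time, where $u(x_0,t)\to0$). As you state it, with \eqref{AC.estimates.main} for the general solution $u$, the infimum term need not vanish at $t=T_m$, since $T_m$ is only a lower bound for the lifetime of $u$. Your derivation of \eqref{lowerH.goodFDE} by substituting the upper bound on $T_m$ into \eqref{ineq.mainpos} and checking that the mass cancels is exactly what the paper does.
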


This absolute lower bound is nothing but a lower Harnack inequality,
indeed when combined with the upper estimates of Theorem
\ref{mainposit}, it implies the elliptic, forward and even backward
inequalities, as in Theorem \ref{intr.Harnack}, or in \cite{DGV}.

\medskip

\noindent \textsc{C. Improved estimates in the very fast diffusion
range}. We now consider $1<p\le p_c$. In this range the results of
the above part B are no longer valid, since an upper estimate of
$T_m$ in terms of the $L^1$ norm of the data is not possible.
However, when $u_0\in L^r_{loc}(\Omega)$ with $r\geq r_c$, we can
estimate $T_m$ by $\|u_0\|_{L^{r_c}(B_{R})}$, cf. \eqref{upper.T1}
or \eqref{upper.T2}. In this way we obtain:
\begin{theorem}\label{posit.VFDE}
Under the running assumptions, let $1<p\le p_c$ and let $u_0\in
L^{r_c}(\Omega)$. Let $x_0\in\Omega$ and $R>0$ such that
$B_{3R}(x_0)\subset\Omega$. Then, the following Aronson-Caffarelli
type estimate holds true for any $t\in(0,T)$:
\begin{equation}
R^{-n}\|u_0\|_{L^{1}(B_R(x_0))}\leq
C_{1}\,t^{\frac{1}{2-p}}R^{-\frac{p}{2-p}}+C_2\,\|u_0\|_{L^{r_c}(B_R(x_0))}R^{-p}t^{-\frac{p-1}{2-p}}\inf\limits_{x\in
B_{R}(x_0)}u^{p-1}(x,t).
\end{equation}
Moreover we have
\begin{equation}\label{posit.indepT}
\inf_{B_{R}(x_0)}u^{p-1}(\cdot,t)\geq
C\,R^{p-n}t^{\frac{p-1}{2-p}}\|u_0\|_{\LL^{r_c}(B_{R})}^{-1}\|u_0\|_{\LL^{1}(B_{R})},
\end{equation}
for any $0<t<t^{*}$, with $t^{*}$ as in Theorem \ref{mainposit}.
\end{theorem}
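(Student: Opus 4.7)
The strategy is to derive both statements as immediate corollaries of Theorems \ref{main.AC} and \ref{mainposit} by replacing the intrinsic quantity $T_m$, the extinction time of the minimal Dirichlet problem, with an explicit upper bound in terms of the initial datum. Since $u_0\in L^{r_c}(\Omega)$ and $r_c$ is the scaling-critical exponent of the equation in the range $1<p\le p_c$, the upper bounds \eqref{upper.T1}--\eqref{upper.T2} established earlier provide
\begin{equation*}
T_m(u_0,R)\;\le\;C(n,p)\,\|u_0\|_{L^{r_c}(B_R(x_0))}^{2-p}.
\end{equation*}
Under the natural parabolic scaling of the $p$-Laplacian equation, the combination $\|u_0\|_{L^{r_c}(B_R)}^{2-p}$ is already dimensionally homogeneous to a time, which is why no extra power of $R$ multiplies the norm.

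With this bound at hand I would proceed in two steps. First, inserting $T_m^{1/(2-p)}\le C\,\|u_0\|_{L^{r_c}(B_R)}$ in the Aronson--Caffarelli inequality \eqref{AC.estimates.main} leaves the first summand untouched and turns the second into $C_2\,\|u_0\|_{L^{r_c}(B_R)}\,R^{-p}\,t^{-(p-1)/(2-p)}\,\inf_{B_R(x_0)} u^{p-1}(x,t)$, which is exactly the first claimed inequality. Second, inserting the reciprocal bound $T_m^{-1/(2-p)}\ge c\,\|u_0\|_{L^{r_c}(B_R)}^{-1}$ into the lower estimate \eqref{ineq.mainpos} of Theorem \ref{mainposit} yields \eqref{posit.indepT} at once. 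The critical time $t^{*}$ is inherited unchanged from Theorem \ref{mainposit}, since its definition involves only $R$ and $\|u_0\|_{L^1(B_R)}$, quantities already present in the data.

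The whole difficulty is therefore concentrated in the preliminary bound $T_m\le C\,\|u_0\|_{L^{r_c}(B_R)}^{2-p}$, which is the main obstacle in the subcritical regime. In the ``good'' range $p>p_c$ one obtains finite extinction directly from the $L^1$ norm of the data (cf.\ Theorem \ref{posit.goodFDE}), but for $1<p\le p_c$ solutions with merely $L^1$ initial data may fail to extinguish at all in finite time, so one is forced to upgrade to the scaling-critical norm $L^{r_c}$. I would prove this bound by combining the Local Smoothing Effect of Theorem \ref{mainupper0} -- which turns an $L^{r_c}$-type control of $u_0$ into an $L^\infty$ estimate on $u(\cdot,t)$ at short times -- with a comparison of the resulting bounded function against an explicit separated-variables supersolution on $B_{3R}(x_0)$ whose extinction time is computable by inspection; a standard $L^1$-comparison argument of B\'enilan--Crandall type then transfers finite extinction from the supersolution to the minimal Dirichlet solution, with the quantitative dependence on $\|u_0\|_{L^{r_c}(B_R)}$ announced above.
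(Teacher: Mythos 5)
Your reduction is exactly the paper's: Theorem \ref{posit.VFDE} is obtained by inserting an upper bound of the form $T_m^{1/(2-p)}\le C\,\|u_0\|_{L^{r_c}(B_R)}$ into the Aronson--Caffarelli estimate \eqref{AC.estimates.main} and into the positivity estimate \eqref{ineq.mainpos}, and the observation that $t^*$ depends only on $R$ and $\|u_0\|_{L^1(B_R)}$ is also correct. The problem lies entirely in how you propose to prove the key ingredient $T_m\le C(n,p)\,\|u_0\|_{L^{r_c}}^{2-p}$.

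Your route -- invoke the Local Smoothing Effect to get an $L^\infty$ bound from $L^{r_c}$ data, then compare with a separated-variables supersolution -- breaks down at the first step: Theorem \ref{mainupper0} requires $r>r_c$ \emph{strictly} in the range $1<p\le p_c$, and the paper explicitly notes (at the start of the ``Bounds in terms of other $L^r$ norms'' paragraph) that ``the condition $u_0\in L^{r_c}(\Omega)$ does not allow for the Local Smoothing Effect to hold.'' So with data merely in $L^{r_c}$ you cannot bound $u(\cdot,t)$ in $L^\infty$, and the supersolution comparison cannot be set up. The paper's actual argument avoids boundedness altogether: it differentiates the global $L^{r_c}$ norm of the Dirichlet solution along the flow,
\begin{equation*}
\frac{\rd}{\dt}\int_{\Omega}u^{r_c}\,\dx=-\frac{r_c(r_c-1)p^p}{(r_c+p-2)^p}\int_{\Omega}\left|\nabla u^{\frac{r_c+p-2}{p}}\right|^p\dx
\leq -K\left[\int_{\Omega}u^{\frac{(r_c+p-2)p^*}{p}}\dx\right]^{p/p^*},
\end{equation*}
and uses that $(r_c+p-2)p^*/p=r_c$ precisely at the critical exponent, so the Sobolev inequality closes the differential inequality and integration gives $T\le K^{-1}\|u_0\|_{r_c}^{2-p}$ (formula \eqref{upper.T1}). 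This is why the estimate works exactly at $r=r_c$ while the smoothing effect does not. You should replace your final paragraph with this energy/Sobolev argument (or with the Poincar\'e--Sobolev variant \eqref{upper.T2} if you are willing to assume $r>r_c$); the rest of your proof then goes through as written.
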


\noindent \textbf{Sharpness of Theorem \ref{posit.VFDE}}. The
estimates of Theorem \ref{posit.VFDE} are sharp, in the sense that a
better estimate in terms of the $L^1$ norm of $u_0$ is impossible in
the range $1<p<p_c$. To show this, we produce the following
counterexample, imitating a similar one in \cite{BV}.

Consider first a radially symmetric function $\varphi\in
L^1(\real^n)$, with total mass 1 (i.\,e. $\int\varphi\,\dx=1$),
compactly supported and decreasing in $r=|x|$, and rescale it, in
order to approximate the Dirac mass $\delta_0$:
$\varphi_{\lambda}(x)=\lambda^n\varphi(\lambda x)$. Let $u(x,t)$ be
the solution of the Cauchy problem for the fast $p$-Laplacian
equation with initial data $\varphi$, and let $T_1>0$ be its finite
extinction time. From the scale invariance of the equation, it
follows that the solution corresponding to $\varphi_{\lambda}$ is
\begin{equation*}
u_{\lambda}(x,t)=\lambda^{n}u(\lambda\,x,\lambda^{np-2n+p}t), \quad
T_{\lambda}=T_1\lambda^{-(np-2n+p)}\to\infty \ \mbox{as} \
\lambda\to\infty,
\end{equation*}
while the initial data $\varphi_{\lambda}$ has always total mass 1.
Hence, estimating $T$ in terms of $\|u_0\|_{L^1}$ is impossible,
proving that our estimates are sharp also in the range $1<p<p_c$.

\noindent \textbf{The limit case $p\to1$.} The positivity result is
false in this case, indeed formulas \eqref{posit.indepT} and
\eqref{AC.estimates.main} degenerate for $p=1$. Moreover solutions
of the 1-Laplacian equation of the form below  clearly do not
satisfy none of the above positivity estimates. Indeed the function
\begin{equation*}
u(x,t)=(1-\lambda_{\Omega}t)_{+}\chi_{\Omega}(x), \quad
\lambda_{\Omega}=\frac{P(\Omega)}{|\Omega|}, \quad u_0=\chi_{\Omega}
\end{equation*}
is a weak solution to the total variation flow, i.e. the
1-Laplacian, whenever $\Omega$ is a set of finite perimeter
$P(\Omega)$, satisfying certain condition on the curvature of the
boundary, we refer to \cite{ACM, CBN} for further details.

\subsection{Harnack inequalities}\label{subs.Harnack}

Joining the lower and upper estimates obtained before, we can prove
intrinsic Harnack inequalities for any $1<p<2$. Let $u$ be a
nonnegative, continuous local weak solution of the fast
$p$-Laplacian equation in a cylinder $Q=\Omega\times (0,T)$, with
$1<p<2$, taking an initial datum $u_0\in L^r_{\rm loc}(\Omega)$,
where $r\geq\max\{1,r_c\}$. Let $x_0\in\Omega$ be a fixed point, and
let $\dist(x_0,\partial\Omega)>5R$. We have
\begin{theorem}\label{FBH}
Under the above conditions,  there exists constants $h_1\,,h_2$
depending only on $d,p,r$, such that, for any $\varepsilon\in[0,1]$  the following inequality holds

\begin{equation}\label{intr.Harnack}
\inf\limits_{x\in B_{R}(x_0)}u(x,t\pm\theta)\geq
h_1\e^{\frac{rp\vartheta_r}{2-p}}\left[\frac{\|u(t_0)\|_{L^1(B_R)}R^{\frac{n}{r}}}
    {\|u(t_0)\|_{L^r(B_R)}R^{n}}\right]^{rp\vartheta_r+\frac{1}{2-p}}u(x_0,t),
\end{equation}
for any
\begin{equation*}
t_0+\e t^{*}(t_0)<t\pm\theta<t_0+t^{*}(t_0), \quad
t^{*}(t_0)=h_2R^{p-n(2-p)}\|u(t_0)\|_{L^{1}(B_R(x_0))}^{2-p}.
\end{equation*}
\end{theorem}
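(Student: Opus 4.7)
The plan is to combine an upper bound for $u(x_0,t)$ coming from the local smoothing effect (Theorem~\ref{mainupper0}) with a lower bound for $\inf_{B_R(x_0)}u(\cdot,t\pm\theta)$ coming from the quantitative positivity estimates (Theorems~\ref{posit.VFDE} and~\ref{posit.goodFDE}), both applied to the time-shifted problem in which $u(\cdot,t_0)$ plays the role of initial datum. The common reference time $t_0$ is what produces, after dividing the two bounds, the intrinsic ratio $\|u(t_0)\|_{L^1(B_R)}/\|u(t_0)\|_{L^r(B_R)}$ appearing in~\eqref{intr.Harnack}, while the $\varepsilon$-factor records the distance of $t\pm\theta$ from the endpoints of the critical window.

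For the upper bound, Theorem~\ref{mainupper0} yields
\[
u(x_0,t)\le \frac{C_1}{(t-t_0)^{n\vartheta_r}}\,\|u(t_0)\|_{L^r(B_R(x_0))}^{rp\vartheta_r}
+C_2\left(\frac{t-t_0}{R^p}\right)^{1/(2-p)}.
\]
Because $t-t_0<t^*(t_0)=h_2 R^{p-n(2-p)}\|u(t_0)\|_{L^1(B_R)}^{2-p}$, the second term is bounded by $CR^{-n}\|u(t_0)\|_{L^1}$, which the H\"older inequality $\|u(t_0)\|_{L^1(B_R)}\le CR^{n(1-1/r)}\|u(t_0)\|_{L^r(B_R)}$ dominates by a multiple of the first, so one effectively retains $u(x_0,t)\le C(t-t_0)^{-n\vartheta_r}\|u(t_0)\|_{L^r(B_R)}^{rp\vartheta_r}$. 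For the lower bound, Theorem~\ref{posit.VFDE} after the same time shift gives
\[
\inf_{x\in B_R(x_0)} u(x,t\pm\theta)^{p-1}\ge C R^{p-n}(t\pm\theta-t_0)^{(p-1)/(2-p)}\|u(t_0)\|_{L^{r_c}(B_R)}^{-1}\|u(t_0)\|_{L^1(B_R)},
\]
and for $p>p_c$ the absolute bound~\eqref{lowerH.goodFDE} of Theorem~\ref{posit.goodFDE} plays the same role (on taking $r=1=r_c$ the ratio in~\eqref{intr.Harnack} collapses to~$1$). When $r>r_c$, H\"older converts the $L^{r_c}$ norm into an $L^r$ norm at the cost of a power of~$R$ that is absorbed in the final step.

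Taking $(p-1)$-th roots of the lower bound and dividing by the upper one, the time factors reorganise into $(t\pm\theta-t_0)^{1/(2-p)}(t-t_0)^{n\vartheta_r/(p-1)}$. Since both $t-t_0$ and $t\pm\theta-t_0$ lie in $(\varepsilon t^*(t_0),t^*(t_0))$, each is comparable to $t^*(t_0)$ up to a factor bounded below by $\varepsilon$. Substituting $t^*(t_0)\sim R^{p-n(2-p)}\|u(t_0)\|_{L^1}^{2-p}$ and using the algebraic identity $rp\vartheta_r-n(2-p)\vartheta_r=1$ (equivalent to $1/\vartheta_r=rp-n(2-p)$), the $R$-powers and $\|u(t_0)\|_{L^1}$-powers recombine to produce exactly the exponent $rp\vartheta_r+1/(2-p)$ on the normalised ratio $\|u(t_0)\|_{L^1}R^{n/r}/(\|u(t_0)\|_{L^r}R^n)$, while the $\varepsilon$-powers collapse to $\varepsilon^{rp\vartheta_r/(2-p)}$, yielding~\eqref{intr.Harnack}.

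The main obstacle is not conceptual but rather the accounting: one must carefully match all exponents, confirm that the second term in the smoothing bound really is absorbed (via the H\"older step above), and verify that $t$ and $t\pm\theta$ lie simultaneously in the admissible window $(t_0,t_0+t^*(t_0))$, which is arranged by choosing the constant $h_2$ small enough relative to the constant $k^*$ of Theorem~\ref{mainposit} and to the range of validity of~\eqref{mainsmooth0}. A further subtlety is bypassing the $T_m$-factor present in Theorem~\ref{mainposit} by invoking the refined positivity statements~\ref{posit.VFDE} and~\ref{posit.goodFDE}, in which $T_m$ has already been replaced by local norms of $u(t_0)$; this is what makes the resulting inequality truly local and intrinsic.
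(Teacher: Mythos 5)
Your proposal follows essentially the same route as the paper: the upper bound from the time-shifted local smoothing effect (with the second term absorbed via $t-t_0<t^*(t_0)$ and H\"older, and the waiting time $t-t_0>\e t^*(t_0)$ producing the $\e$-factor) is divided into the positivity estimate in which $T_m$ has been replaced by local norms of $u(t_0)$, and the exponents are matched using $\vartheta_r^{-1}=rp-n(2-p)$. The only (inessential) difference is that you invoke Theorem~\ref{posit.VFDE} and convert $L^{r_c}$ to $L^r$ by H\"older, whereas the paper removes $T_m$ directly through the bound $T_m^{1/(2-p)}\le C\,R^{p/(2-p)-n/r}\|u(t_0)\|_{L^r(B_R)}$ from Subsection~\ref{FET}.
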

The proof of this theorem is given in Section \ref{sec.Harnack},
together with an alternative form that avoids the intrinsic
geometry.

\noindent \textbf{Remarks.} (i) In the ``good fast-diffusion range''
$p>p_c$, we can let $r=1$ and we recover the intrinsic Harnack
inequality of \cite{DGV}, that is
\begin{equation*}
\inf\limits_{x\in B_{R}(x_0)}u(x,t\pm\theta)\geq
h_1\e^{\frac{rp\vartheta_r}{2-p}}u(x_0,t),
\qquad\mbox{for any }t_0+\e t^{*}(t_0)<t\pm\theta<t_0+t^{*}(t_0).
\end{equation*}
Let us notice that in this inequality, the ratio of $L^r$ norms
simplifies, and the constants $h_1,h_2$ do not depend on $u_0$. The
size of the intrinsic cylinder is given by $t^*$ as above, in
particular we observe that $t^{*}(t_0)\sim R^{p-n(2-p)}\|u(t_0)\|_{L^{1}(B_R(x_0))}^{2-p}
\sim R^p u(t_0,x_0)^{2-p}$.

\noindent (ii) In the subcritical range $p\le p_c$, the Harnack
inequality cannot have a universal constant, independent of $u_0$,
cf. \cite{DGV}. We have thus shown that, if one allows for the
constant to depend on $u_0$, we obtain an intrinsic Harnack
inequality, which is a natural continuation of the one in the good
range $p>p_c$.  The size of the intrinsic cylinders is proportional
to a ratio of local $L^r$ norms, but this ratio
simplifies only when $p>p_c$.

\noindent (iii) We also notice that we need a small waiting time
$\e\in(0,1]$. This waiting time is necessary for
the regularization to take place, and thus for the intrinsic
inequality to hold, and it can be taken as small as we wish.

\noindent (iv) The backward Harnack inequality, i.\,e., estimate
\eqref{intr.Harnack} taken at time $t-\theta$, is typical of the
fast diffusion processes, reflecting an important feature that these
processes enjoy, that is extinction in finite time, the solution
remaining positive until the finite extinction time. It is easy to
see that the backward Harnack inequality does not hold either for
the linear heat equation, i.\,e. $p=2$, or for the degenerate
$p$-Laplacian equation, i.\,e. $p>2$.

\noindent (v) \textit{The Size of Intrinsic Cylinders.} The critical
time $t^{*}(t_0)$ above represents the size of the intrinsic
cylinders. In the supercritical fast diffusion range this time can
be chosen ``a priori'' just in terms of the initial datum at
$t_0=0$, but in the subcritical range its size must change with
time; roughly speaking the diffusion is so fast that the local
information at $t_0$ is not relevant after some time, which is
represented by $t^*(t_0)$. We must bear in mind that a large class
of solutions  completely extinguish in finite time.

\subsection{Special local energy inequality}

\begin{theorem}\label{inequality}
Let $u$ be a continuous local weak solution of the fast
$p$-Laplacian equation in a cylinder  $Q=\Omega\times (0,T)$, with
$1< p<2$, in the sense of Definition \ref{local.weak}, and let $0\le
\varphi\in C^{2}_c(\Omega)$ be any admissible test function. Then
the following inequality holds:
\begin{equation}\label{mainid}
\frac{\rd}{\dt}\int_{\Omega}|\nabla
u|^{p}\varphi\,\dx+\frac{p}{n}\int_{\Omega}(\Delta_{p}u)^{2}\varphi\,\dx\leq \frac{p}{2}\int_{\Omega}
|\nabla u|^{2(p-1)}\Delta\varphi\,\dx,
\end{equation}
in the sense of distributions in $\cd^{'}(0,T)$.
\end{theorem}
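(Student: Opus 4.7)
The plan is to prove the inequality first for smooth solutions of a suitable regularization and then pass to the limit in the distributional form. I would use the non-degenerate regularization $\partial_t u_\varepsilon=\mathrm{div}((|\nabla u_\varepsilon|^2+\varepsilon)^{(p-2)/2}\nabla u_\varepsilon)$ combined with Steklov averaging in time, which produces smooth approximants for which every formal manipulation below is rigorous. Local uniform convergence of $u_\varepsilon$ together with convergence of $\nabla u_\varepsilon$ in $L^p_{\mathrm{loc}}$ (and the standard regularity recalled in Appendix~A2) will then allow passing to the limit $\varepsilon\to 0$ in the weak formulation of \eqref{mainid}.

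For smooth $u$, set $V:=|\nabla u|^{p-2}\nabla u$, so that $\Delta_p u=\mathrm{div}\,V=u_t$, and use the identity $\partial_t|\nabla u|^p = pV\cdot\nabla u_t = pV\cdot\nabla(\mathrm{div}\,V)$. Multiplying by $\varphi$ and integrating by parts once in $x$ yields
\begin{equation*}
\frac{d}{dt}\int_\Omega|\nabla u|^p\varphi\,\dx
=-p\int_\Omega(\Delta_p u)^2\varphi\,\dx
-p\int_\Omega(\Delta_p u)\,V\cdot\nabla\varphi\,\dx,
\end{equation*}
so that the target \eqref{mainid} is equivalent to the scalar estimate
\begin{equation*}
-\int_\Omega(\Delta_p u)\,V\cdot\nabla\varphi\,\dx\;\leq\;\frac{n-1}{n}\int_\Omega(\Delta_p u)^2\varphi\,\dx+\frac{1}{2}\int_\Omega|V|^2\Delta\varphi\,\dx.
\end{equation*}
This reformulation isolates the dimensional factor $1/n$ and pinpoints the precise term that must be tamed.

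The core algebraic step is a second integration by parts on the cross term, giving $-\int(\Delta_p u)V\cdot\nabla\varphi = \int V_i(\partial_iV_j)\partial_j\varphi+\int V_iV_j\partial_i\partial_j\varphi$. I would then extract the $\Delta\varphi$-contribution through the traceless decomposition $V_iV_j=\tfrac{|V|^2}{n}\delta_{ij}+\bigl(V_iV_j-\tfrac{|V|^2}{n}\delta_{ij}\bigr)$ in the Hessian contraction, handle the traceless remainder by a further integration by parts, and apply the sharp trace Cauchy--Schwarz inequality $(\Delta_p u)^2=(\mathrm{tr}\,DV)^2\leq n|DV|^2$ to the remaining quadratic first-derivative terms. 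The deficit $(1-1/n)$ of this trace inequality is exactly what produces the coefficient $\tfrac{n-1}{n}$ on the right-hand side of the reformulated estimate.

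The main obstacle is precisely this algebraic bookkeeping: because $V$ is not a gradient field when $p\neq 2$, the Jacobian $DV$ has a nontrivial antisymmetric part whose divergence couples into the cross term, and one must verify that all terms involving the antisymmetric part of $DV$ and the off-diagonal entries of $D^2\varphi$ either cancel or can be absorbed into the $(\Delta_p u)^2\varphi$ term — so that the final right-hand side involves only $\Delta\varphi$, and not the full Hessian $D^2\varphi$ or a weaker weighted object like $|\nabla\varphi|^2/\varphi$. A minor, purely technical difficulty is the behaviour at the degenerate set $\{\nabla u=0\}$, which is what is handled by the $\varepsilon$-regularization. Once the pointwise-in-time inequality is established for each $u_\varepsilon$, testing against $0\leq\psi\in C_c^\infty(0,T)$ and letting $\varepsilon\to 0$ yields the statement in $\mathcal{D}'(0,T)$.
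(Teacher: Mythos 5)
Your overall architecture coincides with the paper's: regularize with $\Delta_{\Phi_\e}u=\mathrm{div}\big[(|\nabla u|^2+\e^2)^{(p-2)/2}\nabla u\big]$, prove the inequality for the smooth approximants, and pass to the limit. The genuine gap sits exactly where you place ``the main obstacle'': you never prove that the contribution of the antisymmetric part of $DV$ cancels or absorbs, and that is the entire analytic content of the theorem. Concretely, after your two integrations by parts the quantity to be dominated contains
\begin{equation*}
\int_\Omega V_i A_{ij}\,\partial_j\varphi\,\dx,\qquad A_{ij}=\tfrac12\big(\partial_iV_j-\partial_jV_i\big),
\end{equation*}
and for $V=|\nabla u|^{p-2}\nabla u$ one has $A=\tfrac{p-2}{2}|\nabla u|^{p-4}\,(w\otimes\nabla u-\nabla u\otimes w)$ with $w=\tfrac12\nabla|\nabla u|^2$, so this term equals $\tfrac{p-2}{2}\int|\nabla u|^{2p-6}\big[(\nabla u\cdot w)(\nabla u\cdot\nabla\varphi)-|\nabla u|^2(w\cdot\nabla\varphi)\big]\dx$: it is first order in $D^2u$, has no sign, and is not obviously controlled by $(\Delta_{p}u)^2\varphi$ plus $|V|^2\Delta\varphi$. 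Saying ``one must verify that these terms cancel or can be absorbed'' is restating the theorem, not proving it. Relatedly, the trace inequality you invoke, $(\mathrm{tr}\,DV)^2\le n|DV|^2$ with the Frobenius norm, is not the one that closes the paper's scheme: there one needs $\mathrm{Tr}[(DV)^2]\ge\frac1n(\mathrm{tr}\,DV)^2$, which is false for general matrices (the two quadratic forms differ by the squared norm of the antisymmetric part) and holds here only because $DV=|\nabla u|^{p-2}\,D^2u\,\big(I+(p-2)\tfrac{\nabla u\otimes\nabla u}{|\nabla u|^2}\big)$ is a product of symmetric matrices with one positive definite and therefore has real eigenvalues. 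The paper's route is the pointwise identity \eqref{formula} applied to $F=|\nabla u|^{p-2}\nabla u$ together with \eqref{ineq}; if you insist on index-by-index bookkeeping you must exhibit the cancellation explicitly.

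The limit passage is also underpowered as described. Weak $L^p_{loc}$ convergence of $\nabla u_\e$ does not let you pass to the limit in the nonlinear right-hand side $\int(\e^2+|\nabla u_\e|^2)^{p-1}\Delta\varphi\,\dx$; the paper has to prove convergence in measure of the gradients (Step 5 of Section \ref{Sec.Energy}, via the monotonicity inequality \eqref{ineq.vect2} of Appendix A3) so as to upgrade to strong $L^{2(p-1)}_{loc}$ convergence, and it handles $\int(\Delta_{\Phi_\e}u_\e)^2\varphi\,\dx$ by weak lower semicontinuity after extracting a uniform local $L^2$ bound on $u_{\e,t}$ from the inequality itself. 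You also need the uniform local energy estimates and the uniform H\"older bounds up to the boundary for the approximating Dirichlet problems in order to identify $\lim_{\e\to0}u_\e$ with $u$ by comparison; none of this is delivered by ``standard regularity'' alone.
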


Beyond the interest in itself, Theorem \ref{inequality} has the
following consequence that will be important in the sequel:
\begin{corollary}\label{CorEnergy}
Let $u$ be a continuous local weak solution. Then
$u_t=\Delta_{p}u\in L^2_{loc}(Q_T)$, in particular $u$ is a local
strong solution in the sense of Definition \ref{local.weak}.
\end{corollary}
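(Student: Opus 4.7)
The plan is to read off $L^{2}_{loc}$\,-integrability of $\Delta_{p}u$ directly from the energy inequality \eqref{mainid}, and then to identify $u_{t}$ with $\Delta_{p}u$ via the weak formulation \eqref{weak}.

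Pick any nonnegative cut-off $\varphi\in C^{2}_{c}(\Omega)$ with $\varphi\equiv 1$ on a prescribed compact $K\subset\Omega$, and set
\[
F(t)=\int_{\Omega}|\nabla u(t)|^{p}\varphi\,\dx,\quad G(t)=\int_{\Omega}(\Delta_{p}u)^{2}\varphi\,\dx,\quad H(t)=\frac{p}{2}\int_{\Omega}|\nabla u|^{2(p-1)}|\Delta\varphi|\,\dx.
\]
By the very definition of local weak solution $|\nabla u|^{p}\in L^{1}_{loc}(Q_T)$, so $F\in L^{1}_{loc}(0,T)$; and since $2(p-1)<p$ for $1<p<2$, Young's inequality gives $|\nabla u|^{2(p-1)}\le 1+|\nabla u|^{p}$, whence $H\in L^{1}_{loc}(0,T)$ as well. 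Testing the distributional inequality \eqref{mainid} against an arbitrary nonnegative $\eta\in C^{\infty}_{c}(0,T)$ produces the integral inequality
\[
\frac{p}{n}\int_{0}^{T}G(t)\,\eta(t)\,\dt\;\le\;\int_{0}^{T}H(t)\,\eta(t)\,\dt+\int_{0}^{T}F(t)\,\eta'(t)\,\dt\;<\;\infty,
\]
and approximating $\chi_{[t_{1},t_{2}]}$ by such $\eta$'s, together with monotone convergence on the nonnegative integrand $G\,\eta$, shows that $G\in L^{1}_{loc}(0,T)$. Since $K\subset\Omega$ is arbitrary, this proves $\Delta_{p}u\in L^{2}_{loc}(Q_T)$.

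With this regularity at hand, the spatial flux term in the definition \eqref{weak} admits an integration by parts: for any admissible test function $\varphi$ compactly supported in $K$, both $|\nabla u|^{p-2}\nabla u\cdot\nabla\varphi$ and $\Delta_{p}u\cdot\varphi$ lie in $L^{1}_{loc}(Q_T)$, so
\[
\int_{t_{1}}^{t_{2}}\!\int_{K}|\nabla u|^{p-2}\nabla u\cdot\nabla\varphi\,\dx\,\dt=-\int_{t_{1}}^{t_{2}}\!\int_{K}\Delta_{p}u\,\varphi\,\dx\,\dt,
\]
and substituting this back into \eqref{weak} yields the distributional identity $u_{t}=\Delta_{p}u$ in $\cd^{'}(Q_T)$. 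Consequently $u_{t}\in L^{2}_{loc}(Q_T)$ and \eqref{PLE} holds a.e.\ in $Q_T$, which is the requirement in Definition \ref{local.weak} for $u$ to be a local strong solution.

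The only delicate point I anticipate is the step of extracting control of the nonnegative term $G$ from a purely distributional inequality. The argument above sidesteps it cleanly by testing only against $\eta\ge 0$, which turns \eqref{mainid} into a genuine integral inequality and avoids having to assign pointwise values to $F$ or to assume an a priori finite-energy bound; the remaining integration by parts in space is then routine.
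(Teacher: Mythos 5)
Your proposal is correct as a formal deduction, but it is worth being explicit about how it relates to what the paper actually does, because the corollary is not proved in the paper as a standalone consequence of the \emph{statement} of Theorem \ref{inequality}: it is established \emph{inside} the rigorous proof of that theorem (Step 6 of Section \ref{Sec.Energy}). There, the approximate inequality \eqref{ineq.eps} — in which every term is classically defined because $u_\e$ is smooth — yields a bound on $\int(\Delta_{\Phi_\e}u_\e)^2\varphi=\int u_{\e,t}^2\varphi$ as the difference of the other two terms, both of which are controlled uniformly in $\e$ by Step 2; weak compactness then gives $u_{\e,t}\rightharpoonup u_t$ in $L^2_{loc}$, and weak lower semicontinuity \eqref{in3} is what gives \emph{meaning} to the term $\int(\Delta_p u)^2\varphi$ appearing in \eqref{mainid}. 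Your route instead takes \eqref{mainid} as a black box and tests the distributional inequality against $\eta\ge 0$ in time; this is exactly the reading suggested by the paper's Remark (i) after the proof, and your bookkeeping ($F,H\in L^1_{loc}(0,T)$ from the definition of weak solution and $2(p-1)<p$, correct sign on $\int F\eta'$) is fine — indeed you could skip the approximation of $\chi_{[t_1,t_2]}$ and its attendant Lebesgue-point issue by using a single fixed bump $\eta\ge\chi_{[t_1,t_2]}$. The one latent circularity you should acknowledge is that writing $G(t)=\int(\Delta_p u)^2\varphi\,\dx$ already presupposes that the distribution $\Delta_p u=\mathrm{div}(|\nabla u|^{p-2}\nabla u)$ is a measurable function, which is part of what is being proved; this is only resolved by appealing to the construction in the theorem's proof, where that term arises as the weak $L^2$ limit of $u_{\e,t}$. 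Your second step — identifying $u_t=\Delta_p u$ in $\cd'(Q_T)$ by integrating the flux term in \eqref{weak} by parts and concluding that the equation holds a.e. — is correct and usefully makes explicit a point the paper leaves implicit.
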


We present here a short formal calculation that leads
to the inequality \eqref{mainid}. The complete and rigorous proof
of Theorem \ref{inequality} is longer and technical and will be given
in Section \ref{Sec.Energy}.

\noindent\textbf{Formal Proof of Theorem \ref{inequality}.} We start
by differentiating the energy, localized with an admissible test
function $\varphi\ge 0$
\begin{equation}\label{der.1}
\begin{split}
\frac{\rd}{\dt}\int_{\Omega}|\nabla
u|^{p}\varphi\,\dx&=p\int_{\Omega}(|\nabla u|^{p-2}\nabla
u\cdot\nabla u_{t})\varphi\,\dx\\
&=-p\int_{\Omega}\hbox{div}(|\nabla u|^{p-2}\nabla
u\varphi)\Delta_{p}u\,\dx\\
&=-p\int_{\Omega}(\Delta_{p}u)^{2}\varphi\,\dx-p\int_{\Omega}\Delta_{p}u\,|\nabla
u|^{p-2}\nabla u\cdot\nabla\varphi\,\dx.
\end{split}
\end{equation}
Next, we estimate the last term in the above calculation. To this
end, we use the following formula, (cf. also \cite{CLMT})
\begin{equation}\label{formula}
\left(\hbox{div}F\right)^{2}=\hbox{div}\left(F\hbox{div}F\right)
-\frac{1}{2}\Delta\left(|F|^{2}\right)+\hbox{Tr}\left[\left(\frac{\partial
F}{\partial x}\right)^{2}\right],
\end{equation}
which holds true for any vector field $F$. We combine it with the
following inequality
\begin{equation}\label{ineq}
\hbox{Tr}\left[\left(\frac{\partial F}{\partial
x}\right)^{2}\right]\geq\frac{1}{n}\left[\hbox{Tr}\left(\frac{\partial
F}{\partial
x}\right)\right]^{2}=\frac{1}{n}\left(\hbox{div}F\right)^{2}.
\end{equation}
and we then apply these to the vector field $F=|\nabla
u|^{p-2}\nabla u$. We obtain
\begin{equation*}
(\Delta_{p}u)^{2}
    \geq\hbox{div}\big[|\nabla u|^{p-2}\nabla u\,\hbox{div}(|\nabla u|^{p-2}\nabla u)\big]
        -\frac{1}{2}\Delta\left(|\nabla u|^{2(p-1)}\right)
        +\frac{1}{n}(\Delta_{p}u)^{2}.
\end{equation*}
We multiply by $\varphi$ and integrate the above inequality in
space, then we plug it into \eqref{der.1}, and thus get
\begin{equation*}
\begin{split}
\frac{1}{p}\frac{\rd}{\dt}\int_{\Omega}|\nabla
u|^{p}\varphi\,\dx&=-\int_{\Omega}(\Delta_{p}u)^{2}\varphi\,\dx-
\int_{\Omega}(\hbox{div}F)(F\cdot\nabla\varphi)\,\dx\\
&=-\int_{\Omega}(\Delta_{p}u)^{2}\varphi\,\dx+\int_{\Omega}\hbox{div}(F\hbox{div}F)\varphi\,\dx\\
&\leq-\frac{1}{n}\int_{\Omega}(\Delta_{p}u)^{2}\varphi\,\dx+\frac{1}{2}\int_{\Omega}\Delta\left(|\nabla
u|^{2(p-1)}\right)\varphi\,\dx
\end{split}
\end{equation*}
where the notation $F=|\nabla u|^{p-2}\nabla u$ is kept for sake of
simplicity. A double integration by parts in the last term gives
(\ref{mainid}). Let us finally notice that, in order to perform the
integration by parts in the last inequality step above, we need that
$\varphi=0$ and $\nabla\varphi=0$ on $\partial\Omega$. \qed

\noindent \textbf{Remarks.}  (i)  The second term in the left-hand
side can also be written as $(p/n)\int_{\Omega}u_t^{2}
\,\varphi\,\dx$ and accounts for local dissipation of the `energy
integral' of the left-hand side.  This result continues to hold and
it is well known for the linear heat equation, i.\,e., when $p=2$.

\noindent (ii) Theorem \ref{inequality} may be extended to more
general operators, the so-called $\Phi$-Laplacians, under suitable
conditions, we refer to Proposition \ref{Ineq.Phi} and to the remarks at the
end of Section \ref{Sec.Energy} for such extensions.

\noindent (iii) Inequality \eqref{mainid} is  new and holds also in
the limit $p\to 1$ at least formally. In any case, our proof relies
on some results concerning regularity that fail when $p=1$. When
$p\to 1$ our inequality reads
\begin{equation*}
\frac{\rd}{\dt}\int_{\Omega}|\nabla
u|\varphi\,\dx +\frac{1}{n}\int_{\Omega}(\Delta_{1}u)^{2}\varphi\,\dx\le 0,
\end{equation*}
in $\cd'(0,T)$, showing in particular that the local energy, in this
case the local total variation associated to the 1-Laplacian (or
total variation flow) decays in time with some rate. This inequality
can be helpful when studying the asymptotic of the total variation
flow, a difficult open problem that we do not attack here. A
slightly different version of this inequality for $p=1$ is proven in
\cite{ACM} in the framework of entropy solutions, and is the key
tool in proving the $\LL^2_{loc}$ regularity of the time derivative
of entropy solutions.

\section{Local smoothing effect for bounded strong solutions}\label{sec.LSE}

We turn now to the proof of Theorem \ref{mainupper0}, that will be
divided into two parts: first, we prove it for bounded strong
solutions, then (in Section \ref{EOP}) we prove the result in the
whole generality, for any local strong solution. The result of
Theorem \ref{mainupper0} is obtained as an immediate corollary of
the following slightly stronger form of the result.

\begin{theorem}\label{mainupper}
Let $u$ be a local strong solution of the fast $p$-Laplacian
equation, with $1<p<2$, as in Definition \ref{local.weak},
corresponding to an initial datum $u_0\in L^r_{\rm loc}(\Omega)$,
where $\Omega\subseteq\real^n$ is any open domain containing the
ball $B_{R_0}(x_0)$. If either $1<p\le p_c$ and $r>r_c$ or $p_c<p<2$
and $r\ge 1$, then there exist two positive constants $C_1$ and
$C_2$ such that for any $0<R<R_0$ we have:
\begin{equation}\label{mainsmooth}
\sup_{(x,\tau)\in B_{R}\times(\tau_0
t,t]}u(x,\tau)\leq\frac{C_1}{t^{n\vartheta_r}}
\left[\int_{B_{R_0}}|u_{0}(x)|^{r}\,\dx\right]^{p\vartheta_r}
+C_2\left(\frac{t}{R_{0}^{p}}\right)^{\frac{1}{2-p}},
\end{equation}
where $\tau_0=[(R_0-R)/(R_0+R)]^p$ and
\begin{equation*}
\begin{split}
&C_1=K_1\left[\frac{R_0+R}{R_0-R}\right]^{p(n+p)\vartheta_r}, \quad
C_2=K_2\left[\frac{R_0+R}{R_0-R}\right]^{p(n+p)\vartheta_r}
    \left[K_3\left(\frac{R_0-R}{R_0+R}\right)^{\frac{2-p-rp}{2-p}}+K_4\right]^{p\vartheta_r},
\end{split}
\end{equation*}
with $K_i$, $i=1,2,3$, depending only on $r$, $p$, $n$, and
$K_4=\omega_n$ if $r>1$, $K_4=\omega_n+L(n,p)>\omega_n$ if $r=1$.
$\omega_n$ is the measure of the unit ball of $\real^n$, and we
recall that $\vartheta_r=1/[rp+(p-2)n]>0$.
\end{theorem}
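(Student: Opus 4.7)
The plan is to prove Theorem \ref{mainupper} by combining two building blocks, as signaled in the introduction: a pure space--time $L^\infty$-from-$L^r$ smoothing statement obtained by a Moser-type iteration on energy inequalities, and an $L^r_{loc}$ stability estimate that propagates the local $L^r$ norm of the initial datum forward in time. The logical shape of the argument is classical: first bound $\sup u$ in a cylinder by a space--time integral of $u^r$ on a slightly larger cylinder, then bound that space--time integral by the initial datum's local $L^r$ norm plus a lower-order correction that accounts for mass coming from outside through $\partial B_{R_0}$. The explicit dependence of $C_1, C_2$ on $(R_0+R)/(R_0-R)$ will be produced by a nested-ball iteration.

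For the space--time smoothing, I would proceed \textit{\`a la} De Giorgi--Moser. Testing \eqref{weak} against $(u-k)_+^{q-1}\varphi^p$ with a smooth cutoff $\varphi$, a truncation level $k$, and an exponent $q$, and using integration by parts yields the standard Caccioppoli estimate
\begin{equation*}
\sup_\tau\!\int_{B}(u-k)_+^{q}\varphi^p\,\dx + \int\!\!\int|\nabla((u-k)_+^{(q+p-2)/p}\varphi)|^{p}\,\dx\,\dt \leq C\int\!\!\int (u-k)_+^{q+p-2}|\nabla\varphi|^p\,\dx\,\dt,
\end{equation*}
modulo the boundary terms at $t=0$ (for which the $L^r$ norm of $u_0$ enters). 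Combining this with the parabolic Sobolev embedding $L^\infty_t L^q_x \cap L^p_tW^{1,p}_x \hookrightarrow L^{q(1+p/n)}_{t,x}$ sets up a geometrically convergent Moser iteration on shrinking cylinders whose limit gives the bound $\sup u \leq C(\iint u^r)^{p\vartheta_r/(\dots)}$ on the inner cylinder, together with a scale-invariant constant controlled by $1/(R_0-R)$.

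For the $L^r$ stability, I would test the equation against $u^{r-1}\varphi^p$ with $\varphi\in C^\infty_c(B_{R_0})$, $\varphi\equiv 1$ on $B_R$. The resulting identity
\begin{equation*}
\frac{1}{r}\frac{\rd}{\dt}\int u^{r}\varphi^{p}\,\dx + (r-1)\int u^{r-2}|\nabla u|^{p}\varphi^{p}\,\dx = -p\int u^{r-1}|\nabla u|^{p-2}\nabla u\cdot\nabla\varphi\,\varphi^{p-1}\,\dx
\end{equation*}
is handled by Young's inequality with exponents $p/(p-1)$ and $p$, absorbing a fraction of the gradient term into the left-hand side and leaving a remainder of the schematic form $\int u^{r+p-2}|\nabla\varphi|^{p}\,\dx$. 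In the range $r>r_c$ the exponent $r+p-2$ is strictly smaller than $r$, so one further Young's inequality converts this into $\varepsilon\int u^{r}\,\dx + C_\varepsilon R_0^{-p(r)/(2-p-rp+\dots)}$, producing a differential inequality whose integration yields a bound on $\sup_{\tau\le t}\int_{B_R}u^r(\tau)\,\dx$ by $\int_{B_{R_0}}|u_0|^{r} + (t/R_0^{p})$ to an appropriate power; when $r=1$ an extra boundary term generates the additive constant $L(n,p)$ appearing in $K_4$.

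The combination step is the interpolation between the two: apply the space--time smoothing on $B_\rho\times(\tau_0 t,t]$ with $R<\rho<R_0$, bound the resulting space--time integral by $(t/R_0^p)\sup_{\tau\le t}\int_{B_\rho}u^r$, and insert the $L^r$ stability on that intermediate ball. The exponents then check against $\vartheta_r$, which is precisely the scaling exponent forced by the invariance $u\mapsto\lambda u$, $x\mapsto \mu x$, $t\mapsto \mu^p\lambda^{2-p}t$ of the equation when one requires the $L^r$ norm to be preserved. The main obstacle I expect is keeping the very-fast-diffusion range $p\le p_c$ in play: there $r_c>1$ is non-trivial and the Moser iteration barely closes, which is exactly why the hypothesis $r>r_c$ appears and why the additive correction $C_2(t/R_0^p)^{1/(2-p)}$ is unavoidable — the counterexample stated after Theorem \ref{posit.VFDE} shows that no estimate of the first type in terms of $\|u_0\|_{L^1}$ alone can exist below $p_c$.
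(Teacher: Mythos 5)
Your two-block architecture (space--time $L^\infty$-from-$L^r$ smoothing via Moser iteration, plus an $L^r_{loc}$ stability estimate, glued by rescaling) is exactly the paper's strategy for \emph{bounded} strong solutions, and the scaling heuristics identifying $\vartheta_r$ are right. However, there are two genuine problems. First, in the stability step you propose to handle the remainder $\int u^{r+p-2}|\nabla\varphi|^p$ by a further Young inequality, splitting $u^{r+p-2}\le \varepsilon u^{r}+C_\varepsilon$. That turns the differential inequality for $Y(t)=\int u^{r}\varphi$ into a \emph{linear} one, $Y'\le \varepsilon C\,Y+C'$, whose integration produces a factor $e^{\varepsilon Ct}$; this destroys the additive structure of \eqref{mainsmooth} and cannot yield the sharp correction $C_2(t/R_0^p)^{1/(2-p)}$, which is rigid (it matches the explicit large solution with zero initial data). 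The correct move, and the one the paper makes, is to keep the sublinear power and integrate the Bernoulli inequality $Y'\le C\,Y^{1-(2-p)/r}$ directly, obtaining $Y(t)^{(2-p)/r}\le Y(s)^{(2-p)/r}+C(t-s)$; raising to the power $r/(2-p)$ is precisely what generates the $t^{r/(2-p)}$ term that, after rescaling, becomes the second summand in \eqref{mainsmooth}. Relatedly, the case $r=1$ cannot be reached by letting $r\to1$ in this argument (the constant blows up); the paper needs a separate proof with a carefully designed convex weight $J$ and an additional scaling step, which your one-line remark about ``an extra boundary term'' does not supply.

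Second, and more seriously for the statement as written: every manipulation you describe (testing with $(u-k)_+^{q-1}\varphi^p$ or $u^{r-1}\varphi^p$, the Caccioppoli estimates, the iteration) presupposes that $u$ has enough local integrability at all the iterated exponents, i.e.\ it proves the theorem only for \emph{locally bounded} strong solutions. Theorem \ref{mainupper} is asserted for general local strong solutions with initial trace merely in $L^r_{loc}$, and a priori such a solution could fail to be locally bounded. The paper removes the boundedness hypothesis by a separate, non-trivial argument: it constructs extended large solutions $V$ on a ball with initial trace $u_0$ (Sections 4--5), proves an approximate $L^1$ contraction principle $\int_{B_R}[u-v_n]_+\,\dx\le\int_{B_{R_1}}[u-v_n]_+\,\dx+C_n$ with $C_n\to0$, and concludes $u\le V$ with $V$ satisfying \eqref{mainsmooth}. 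Your proposal contains no mechanism for this step, so as it stands it proves a strictly weaker statement.
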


Note that Theorem \ref{mainupper0} follows immediately from Theorem
\ref{mainupper} by letting $R=0$ and considering, for $x_0\in\Omega$
fixed, the ball centered in $x_0$ and tangent to $\partial\Omega$.

\medskip

\noindent\textbf{Interpreting the two terms in the estimate. } The
right-hand side of (\ref{mainsmooth}) is the sum of two independent
terms. Let us discuss them separately.

\noindent (i) The first term concentrates the influence of the
initial data $u_0$. It has the exact form of the global smoothing
effect (i.\,e. the smoothing effect for solutions defined in the
whole space with initial data in $L^r(\real^n)$ or in the
Marcinkiewicz space $\cm^r(\RR^n)$), cf. Theorem 11.4 of
\cite{VazquezSmoothing}. Hence, if we pass to the limit in
(\ref{mainsmooth}) as $R_0\to\infty$, we recover the global
smoothing effect on $\real^n$ (however, the constant need not to be
optimal).

\noindent (ii) The second term appears as a correction term when
passing from global estimates to local upper bounds. It can be
interpreted as an {\sl absolute damping} of all external influences
due to the form of the diffusion operator, more precisely, due to
fast diffusion. Let us note that, by shrinking the ball $B_{R_0}$
(and at the same time the smaller ball $B_R$), the influence of this
term increases, while that of the first one tends to disappear.

A remarkable consequence of this absolute damping is the existence
of large solutions that we will discuss in Section
\ref{sec.largesol}. Indeed, there is an explicit large solution with
zero initial data that has precisely the form of the last term in
\eqref{mainsmooth} with $R=0$ -- or in the corresponding term in
\eqref{mainsmooth0} -- which means that such term has an optimal
form that cannot be improved without information on the boundary
data (again, the constant need not to be optimal).

\medskip

We first prove Theorem \ref{mainupper} for bounded local strong
solutions, then we will remove the assumption of local boundedness
in Section \ref{EOP}. The proof of Theorem \ref{mainupper} for
bounded local strong solutions consists of combining
$\LL_{loc}^r$-stability estimates, together with a space-time local
smoothing effect, proved via Moser-style iteration. This will be the
subject of the next subsections.

\subsection{Space-time local smoothing
effects}\label{sec.space.time}

In this section we prove a form of the Local Smoothing Effect for
the $p$-Laplacian equation, with $1<p<2$. More precisely, we are
going to prove that $L^{r}_{loc}$ regularity in space-time for some
$r\ge 1$ implies $L^{\infty}_{loc}$ estimates in space-time.
\begin{theorem}\label{localsm}
Let $u$ be a bounded local strong solution of the $p$-Laplacian
equation, $1<p<2$, and let either $1<p\le p_c$ and $r>r_c$ or
$p_c<p<2$ and $r\ge 1$. Then, for any two parabolic cylinders
$Q_1\subset Q$, where $Q=B_{R_0}\times(T_0,T]$ and
$Q_1=B_{R}\times(T_1,T]$, with $0<R<R_0$ and $0\leq T_0<T_1<T$, we
have:
\begin{equation}\label{localsmooth}
\sup_{Q_1}|u|\leq
K\,\left[\frac{1}{(R_0-R)^{p}}+\frac{1}{T_1-T_0}\right]^{\frac{p+n}{rp+n(p-2)}}\left(\int\!\!\!\int_{Q}u^r\,\dx\,\dt+|Q|\right)^{\frac{p}{rp+n(p-2)}},
\end{equation}
where $K>0$ is a constant depending only on $r$, $p$ and $n$.
\end{theorem}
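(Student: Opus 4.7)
The plan is to prove Theorem \ref{localsm} by a Moser-type iteration on a nested family of shrinking parabolic cylinders. Since the equation is invariant under $u\mapsto -u$, it suffices to bound $\sup_{Q_1} u_+$; and one may even assume $u\ge 0$ after absorbing $\pm$ constants into the cutoff arguments. All formal manipulations below are legitimate because $u$ is assumed bounded and is a strong solution, so the chain rule and integration by parts are justified.

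The first step is a standard Caccioppoli inequality. Test \eqref{PLE} against $u^{q-1}\varphi^p$ for a smooth nonnegative cutoff $\varphi$ vanishing near $\partial Q$, and for any $q>\max\{1,r\}$. After integration by parts and an application of Young's inequality on the cross term, this yields
\begin{equation*}
\sup_{t\in(T_0,T]}\int u(t)^q\varphi^p\,\dx + c(q)\int\!\!\int_{Q}|\nabla u^{(q+p-2)/p}|^p\varphi^p\,\dx\,\dt
\leq C(q)\int\!\!\int_{Q}\bigl(u^{q+p-2}|\nabla\varphi|^p+u^q\varphi^{p-1}|\varphi_t|\bigr)\,\dx\,\dt.
\end{equation*}
Writing $\alpha=(q+p-2)/p$ and $v=u^{\alpha}\varphi$, the left-hand side controls both $\sup_t\int v^{q/\alpha}\,\dx$ and $\int\!\!\int|\nabla v|^p\,\dx\,\dt$ up to harmless terms involving only $\varphi$.

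Next I would feed this into the parabolic Gagliardo--Nirenberg inequality
\begin{equation*}
\int\!\!\int v^{p(n+m)/n}\,\dx\,\dt\le C\Bigl(\sup_{t}\int v^{m}\,\dx\Bigr)^{p/n}\int\!\!\int|\nabla v|^p\,\dx\,\dt,
\end{equation*}
taking $m=q/\alpha$, so that the Caccioppoli inequality on the larger cylinder controls $\int\!\!\int u^{q'}\varphi^{q'}\,\dx\,\dt$ on the smaller cylinder, with the shifted exponent
\begin{equation*}
q'=\alpha p+\frac{pq}{n}=q\Bigl(1+\tfrac{p}{n}\Bigr)+(p-2).
\end{equation*}
This is the key affine recurrence. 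Its unique fixed point is $q^{*}=n(2-p)/p=r_c$, and the map is expanding exactly when $q>r_c$: this is precisely the assumption $r>r_c$ when $1<p\le p_c$, while in the range $p_c<p<2$ we have $r_c<1\le r$ automatically. Consequently, starting from $q_0=r$ the iterated exponents $q_k$ blow up to $+\infty$.

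To conclude, I would iterate on cylinders $Q_k=B_{R_k}\times(T_k,T]$ with $R_k\searrow R$ and $T_k\nearrow T_1$, say $R_k-R_{k+1}\sim (R_0-R)2^{-k}$ and $T_{k+1}-T_k\sim (T_1-T_0)2^{-k}$, so that every cutoff obeys $|\nabla\varphi_k|^p+|\partial_t\varphi_k|\le C_k[(R_0-R)^{-p}+(T_1-T_0)^{-1}]$ for a geometrically growing $C_k$. The ``lower order'' term $u^{q+p-2}$ in the Caccioppoli inequality (strictly smaller power than $u^q$ since $p<2$) is absorbed by Young's inequality $u^{q+p-2}\le \varepsilon u^{q}+C_{\varepsilon}$, which is exactly where the additive $|Q|$ on the right-hand side of \eqref{localsmooth} originates. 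Iterating, taking logarithms and summing the resulting geometric series in $k$ leads to the announced bound; the exponents $(n+p)/[rp+n(p-2)]$ and $p/[rp+n(p-2)]$ in \eqref{localsmooth} emerge from tracking, after $k$ steps, the cumulative factors $\chi^k$ with $\chi=1+p/n$ and the correction $(p-2)(\chi^k-1)/(\chi-1)$.

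The main obstacle, and the reason why the proof is genuinely more delicate than in the usual $p\ge 2$ Moser scheme, is the additive shift $(p-2)<0$ in the exponent recurrence: convergence of the series that produces the final constants relies on $rp+n(p-2)>0$, which fails exactly at $r=r_c$. All the quantitative sharpness of the statement, and in particular the assumption $r>r_c$, comes from this point. Once this bookkeeping is done, the rest is routine: Young's inequality to absorb lower-order integrands, summability of geometric factors $\sum_k k\chi^{-k}$, and the fact that the bounded strong solution hypothesis makes every test and every chain-rule identity rigorous.
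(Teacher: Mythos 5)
Your proposal follows essentially the same route as the paper's proof: the Caccioppoli energy inequality obtained by testing with $u^{q-1}\varphi^p$, the iterative (parabolic Sobolev/Gagliardo--Nirenberg) inequality, the affine exponent recurrence $q'=q(1+p/n)+(p-2)$ with fixed point $r_c=n(2-p)/p$, and the Moser iteration on nested shrinking cylinders, with the additive $|Q|$ arising from the lower-order power $u^{q+p-2}$. The only cosmetic differences are that the paper absorbs that lower-order term once and for all by working with $v=\max\{u,1\}$ (so $v^{q+p-2}\le v^q$) rather than via Young's inequality at each step, and it uses $k^{-2}$ rather than $2^{-k}$ shrinking rates; neither changes the substance of the argument.
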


\noindent \textbf{Remarks}: (i) Under the assumptions of Theorem
\ref{localsm}, the local boundedness in terms of some space-time
integrals of the solution $u$ is proved as Theorem 3.8 in
\cite{DUV}. In this section, we only give a slight, quantitative
improvement of it, which in fact appears in this form in \cite{DGV},
but only for the``good'' range $p_c<p<2$ and for $L^1_{loc}$ initial
data. We prove it here for all $1<p<2$.

The proof of our main local upper bound, in the form of Theorems \eqref{mainsmooth0}
or \eqref{mainsmooth} is given in Subsection \ref{Proof.SE} , in which we combine the above time-space
smoothing effect \eqref{mainsmooth} with the $L^r_{loc}$ stability estimates \eqref{Lr.stab} of session \ref{subsect.Lr.stab}.

\noindent (ii) This space-time Smoothing Effect holds also for the
equation with bounded variable coefficients, as well as for more
general operators such as $\Phi$-Laplacians or as in the general
framework treated in \cite{DGV}. We are not addressing this
generality since the rest of the theory is not immediate.

\medskip

We divide the proof into several steps, following the same general
program used by two of the authors in \cite{BV} for the fast
diffusion equation.

\noindent\textbf{Step 1. A space-time energy inequality}

\noindent Let us consider a bounded local strong solution $u$
defined in a parabolic cylinder $Q=B_{R_0}\times(T_0,T]$. Take
$R<R_0$, $T_1\in(T_0,T]$ and consider a smaller cylinder
$Q_1=B_{R}\times(T_1,T]\subset Q$. Under these assumptions, we
prove:
\begin{lemma}\label{space-timeenergy}
For every $1<p<2$ and $r>1$, the following inequality holds:
\begin{equation}
\begin{split}
\int_{B_R}u^{r}(x,T)\,\dx+\int\!\!\!\int_{Q_1}\left|\nabla
u^{\frac{p+r-2}{p}}\right|^{p}\,\dx\,\dt&\leq
C(p,r,n)\left[\frac{1}{(R_0-R)^p}+\frac{1}{T_1-T_0}\right]\\
&\times\left[\int\!\!\!\int_{Q}(u^r+u^{r+p-2})\,\dx\,\dt\right].
\end{split}
\end{equation}
The same holds also for local subsolutions in the sense of
Definition \ref{local.weak}.
\end{lemma}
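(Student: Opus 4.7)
The plan is a standard Caccioppoli/local energy argument. I would test the equation against $\varphi = u^{r-1}\zeta^p$, where $\zeta(x,t) = \eta(x)\chi(t)$ is a product cutoff: $\eta\in C^\infty_c(B_{R_0})$ with $\eta\equiv 1$ on $B_R$ and $|\nabla\eta|\le C/(R_0-R)$, while $\chi$ is smooth on $[T_0,T]$ with $\chi(T_0)=0$, $\chi\equiv 1$ on $[T_1,T]$, and $|\chi'|\le C/(T_1-T_0)$. Since $u$ is a bounded local strong solution, one may multiply $u_t=\Delta_p u$ pointwise by $\varphi$ and integrate over $B_{R_0}\times[T_0,T]$.

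The time term rewrites as $\frac{1}{r}\int\!\!\!\int(u^r)_t\zeta^p$, so a single time integration by parts produces the boundary contribution $\frac{1}{r}\int_{B_R}u^r(x,T)\,\dx$ (using $\chi(T)=1$ and $\eta\equiv 1$ on $B_R$), no contribution at $T_0$, and a remainder controlled by $\frac{C}{T_1-T_0}\int\!\!\!\int_Q u^r\,\dx\,\dt$. The elliptic term, after one spatial integration by parts, splits as
\begin{equation*}
-(r-1)\!\int\!\!\!\int u^{r-2}|\nabla u|^p\zeta^p\,\dx\,\dt \;-\; p\!\int\!\!\!\int u^{r-1}\zeta^{p-1}|\nabla u|^{p-2}\nabla u\cdot\nabla\zeta\,\dx\,\dt.
\end{equation*}
The first summand is exactly the energy quantity wanted, via the chain-rule identity $u^{r-2}|\nabla u|^p = \bigl(\tfrac{p}{p+r-2}\bigr)^p|\nabla u^{(p+r-2)/p}|^p$, which is valid since $r>1>2-p$ forces $p+r-2>0$.

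To handle the cross term I would bound it in absolute value by $p\int\!\!\!\int u^{r-1}\zeta^{p-1}|\nabla u|^{p-1}|\nabla\zeta|\,\dx\,\dt$ and apply Young's inequality with conjugate exponents $p/(p-1)$ and $p$, using the splitting
\begin{equation*}
u^{r-1}|\nabla u|^{p-1}\zeta^{p-1}|\nabla\zeta|
=\bigl[u^{(r-2)(p-1)/p}|\nabla u|^{p-1}\zeta^{p-1}\bigr]\cdot\bigl[u^{r-1-(r-2)(p-1)/p}|\nabla\zeta|\bigr].
\end{equation*}
The $p/(p-1)$-power of the first factor reproduces $u^{r-2}|\nabla u|^p\zeta^p$, while the $p$-power of the second factor is $u^{r+p-2}|\nabla\zeta|^p$, since $(r-1)p-(r-2)(p-1)=r+p-2$. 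Choosing $\varepsilon$ small enough to absorb the $u^{r-2}|\nabla u|^p\zeta^p$ piece into the left-hand side, and restricting the remaining gradient integral to $Q_1$ (where $\zeta\equiv 1$), yields the claimed inequality with a constant $C(p,r,n)$.

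The main obstacle is technical and purely about admissibility: when $1<r<2$ the exponents $r-2$ and $(r-2)(p-1)/p$ are negative, so $\varphi=u^{r-1}\zeta^p$ is not directly an admissible test function in Definition~\ref{local.weak} where $u$ may vanish. The standard remedy is the regularization $u\mapsto u+\delta$ inside the nonlinear power, i.e., test with $(u+\delta)^{r-1}\zeta^p$, carry out every step above on this admissible test function, and pass to $\delta\to 0^+$: boundedness of $u$ together with dominated convergence justifies the limit term by term, and the $u^{r+p-2}$ term remains integrable because $r+p-2>0$ and $u$ is bounded. The subsolution case is identical: the weak formulation holds with $\le$, the modified test function is nonnegative for $u\ge 0$, and no sign of the cross term was exploited.
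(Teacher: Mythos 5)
Your proposal is correct and follows essentially the same route as the paper's proof: the same test function $u^{r-1}\zeta^p$, the same spatial integration by parts, the identical Young-inequality splitting producing $u^{r-2}|\nabla u|^p\zeta^p$ and $u^{r+p-2}|\nabla\zeta|^p$, absorption into the left-hand side, and the time integration by parts with a cutoff vanishing at $T_0$. Your closing remark on regularizing via $u+\delta$ to handle the negative exponent $r-2$ when $1<r<2$ addresses a technical admissibility point that the paper passes over in silence (in the paper's application the lemma is invoked for $v=\max\{u,1\}\ge 1$, so the issue does not arise there), and is a welcome refinement rather than a different argument.
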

\begin{proof}
(i) We multiply first the $p$-Laplacian equation by
$u^{r-1}\varphi^{p}$, where   $\varphi=\varphi(x,t)$ is a smooth
test function with compact support. Integrating in $Q$ we obtain:
\begin{equation*}
\begin{split}
\int\!\!\!\int_{Q}u^{r-1}u_t&\varphi^{p}\,\dx\,\dt=\int\!\!\!\int_{Q}u^{r-1}\Delta_{p}u\varphi^{p}\,\dx\,\dt
    =-\int\!\!\!\int_{Q}|\nabla u|^{p-2}\nabla u\cdot\nabla(u^{r-1}\varphi^{p})\,\dx\,\dt\\
    &=-(r-1)\int\!\!\!\int_{Q}|\nabla u|^{p}u^{r-2}\varphi^{p}\,\dx\,\dt
        -p\int\!\!\!\int_{Q}u^{r-1}|\nabla u|^{p-2}\nabla u\cdot\varphi^{p-1}\nabla\varphi\,\dx\,\dt\\
    &=-(r-1)\int\!\!\!\int_{Q}\left|u^{\frac{r-2}{p}}\nabla u\right|^{p}\varphi^{p}\,\dx\,\dt
        -p\int\!\!\!\int_{Q}u^{r-1}|\nabla u|^{p-2}\nabla u\cdot\varphi^{p-1}\nabla\varphi\,\dx\,\dt,
\end{split}
\end{equation*}
hence
\begin{equation}\label{spt}
\int\!\!\!\int_{Q}\left[u^{r-1}u_t+\frac{(r-1)p^p}{(r+p-2)^p}\left|\nabla
u^{\frac{r+p-2}{p}}\right|^{p}\right]\varphi^{p}\,\dx\,\dt=-p\int\!\!\!\int_{Q}|\nabla
u|^{p-2}u^{r-1}\varphi^{p-1}\nabla u\cdot\nabla\varphi\,\dx\,\dt.
\end{equation}
In order to estimate the term in the right-hand side we use the
inequality $ \overrightarrow{a}\cdot\overrightarrow{b}\le
\frac{|\overrightarrow{a}|^{\sigma}}{\e
\sigma}+\frac{|\overrightarrow{b}|^{\gamma}}{\gamma}\e^{\frac{\gamma}{\sigma}}$,
which holds for any vectors $\overrightarrow{a}$,
$\overrightarrow{b}$, for any $\e >0$ and for any exponents $\sigma$
and $\gamma$ with $\sigma^{-1}+\gamma^{-1}=1$. The choice of vectors
and exponents as below
\begin{equation*}
\overrightarrow{a}=u^{\frac{r+p-2}{p}}\nabla\varphi, \
\sigma=p/(p-1), \
\overrightarrow{b}=u^{\frac{(r-2)(p-1)}{p}}\,\varphi^{p-1}\,|\nabla
u|^{p-2} \nabla u, \ \gamma=p \,,
\end{equation*}
lead to
\begin{equation*}
\begin{split}
-p\int\!\!\!\int_{Q}|\nabla u|^{p-2}u^{r-1}\varphi^{p-1}\nabla
u\cdot\nabla\varphi\,\dx\,\dt&\leq\frac{(p-1)p^{p}}{(r+p-2)^{p}}\e^{\frac{1}{p-1}}
\int\!\!\!\int_{Q}\left|\nabla
u^{\frac{r+p-2}{p}}\right|^{p}\varphi^{p}\,\dx\,\dt\\&+\frac{1}{\e}\int\!\!\!\int_{Q}u^{r+p-2}|\nabla\varphi|^{p}\,\dx\,\dt.
\end{split}
\end{equation*}
On the other hand, we integrate the first term by parts with respect
to the time variable:
\begin{equation*}
\begin{split}
\frac{1}{r}\int\!\!\!\int_{Q}&\partial_{t}(u^{r})\varphi^{p}\,\dx\,\dt
=\frac{1}{r}\int_{0}^{T}\int_{B_{R_0}}\partial_{t}(u^{r})\varphi^{p}\,\dx\,\dt\\
&=\frac{1}{r}\int_{B_{R_0}}\Big[u(x,T)^{r}\varphi(x,T)^{p}-u(x,0)^{r}\varphi(x,0)^{p}\Big]\,\dx-\frac{p}{r}\int\!\!\!\int_{Q}u^{r}\varphi^{p-1}\partial_{t}\varphi\,\dx\,\dt.
\end{split}
\end{equation*}
Joining  equality (\ref{spt}) and the previous estimates, and
choosing
\begin{equation*}
\e=\left(\frac{r-1}{r+p-2}\right)^{p-1},
\end{equation*}
we obtain:
\begin{equation*}
\begin{split}
&\frac{1}{r}\int_{B_{R_0}}\Big[u(x,T)^{r}\varphi(x,T)^{p}-u(x,0)^{r}\varphi(x,0)^{p}\Big]\,\dx-\frac{p}{r}\int\!\!\!\int_{Q}u^{r}\varphi^{p-1}\partial_{t}\varphi\,\dx\,\dt\\
&+\frac{(r-1)^{2}p^p}{(r+p-2)^{p+1}}\int\!\!\!\int_{Q}\left|\nabla
u^{\frac{r+p-2}{p}}\right|^{p}\varphi^{p}\,\dx\,\dt\leq\left(\frac{r+p-2}{r-1}\right)^{p-1}\int\!\!\!\int_{Q}u^{r+p-2}|\nabla\varphi|^{p}\,\dx\,\dt.
\end{split}
\end{equation*}

\noindent (ii) We now impose some additional conditions on
$\varphi$, namely we assume that $0\leq\varphi\leq1$ in $Q$,
$\varphi\equiv0$ outside $Q$ and $\varphi\equiv1$ in
$\overline{Q_1}$. Moreover, we ask $\varphi$ to satisfy:
\begin{equation*}
|\nabla\varphi|\leq\frac{C(\varphi)}{R_0-R}, \qquad
|\partial_{t}\varphi|\leq\frac{C(\varphi)^p}{T_1-T_0}
\end{equation*}
in the annulus $B_{R_0}\setminus B_{R}$, and $\varphi(x,0)=0$ for
any $x\in B_{R}$. With these notations, we can continue the previous
estimates as follows:
\begin{equation*}
\begin{split}
\frac{1}{r}&\int_{B_{R_0}}u(x,T)^{r}\varphi(x,T)^{p}\,\dx
        +\frac{(r-1)^{2}p^p}{(r+p-2)^{p+1}}
        \int\!\!\!\int_{Q}\left|\nabla u^{\frac{r+p-2}{p}}\right|^{p}\varphi^{p}\,\dx\,\dt\\
    &\leq\frac{p}{r}\int\!\!\!\int_{Q}u^{r}\varphi^{p-1}|\partial_{t}\varphi|\,\dx\,\dt
        +\left(\frac{r+p-2}{r-1}\right)^{p-1}\int\!\!\!\int_{Q}u^{r+p-2}|\nabla\varphi|^{p}\,\dx\,\dt\\
    &\leq C^{p}\left[\frac{p}{r}\frac{1}{T_1-T_0}+\left(\frac{r+p-2}{r-1}\right)^{p-1}
        \frac{1}{(R_0-R)^p}\right]\int\!\!\!\int_{Q}\left(u^r+u^{r+p-2}\right)\,\dx\,\dt\\
    &\leq2C^{p}\max\left\{\frac{p}{r},\left(\frac{r+p-2}{r-1}\right)^{p-1}\right\}
        \left[\frac{1}{T_1-T_0}+\frac{1}{(R_0-R)^p}\right]
        \int\!\!\!\int_{Q}\left(u^r+u^{r+p-2}\right)\dx\dt\,.
\end{split}
\end{equation*}
We now estimate the left-hand side of the
last inequality, in view of the properties of $\varphi$:
\begin{equation*}
\begin{split}
\frac{1}{r}\int_{B_{R_0}}&u(x,T)^{r}\varphi(x,T)^{p}\,\dx+\frac{(r-1)^{2}p^p}{(r+p-2)^{p+1}}
        \int\!\!\!\int_{Q}\left|\nabla u^{\frac{r+p-2}{p}}\right|^{p}\varphi^{p}\,\dx\,\dt\\
    &\geq\frac{1}{r}\int_{B_R}u^{r}(x,T)\,\dx+\frac{(r-1)^{2}p^p}{(r+p-2)^{p+1}}
        \int\!\!\!\int_{Q_1}\left|\nabla u^{\frac{r+p-2}{p}}\right|^{p}\,\dx\,\dt\\
    &\geq\min\left\{\frac{1}{r},\frac{(r-1)^{2}p^p}{(r+p-2)^{p+1}}\right\}
        \left(\int_{B_R}u(x,T)^{r}\,\dx
        +\int\!\!\!\int_{Q_1}\left|\nabla u^{\frac{r+p-2}{p}}\right|^{p}\,\dx\,\dt\right).
\end{split}
\end{equation*}
Joining all the previous calculations, we arrive to the conclusion.
The same proof can be repeated also for local subsolutions as in Definition
\ref{local.weak}.
\end{proof}

\noindent \textbf{Remark}: A closer inspection of the above proof
allows us to evaluate the constant $C(r,p)$ in a more precise way.
Indeed, we observe that
\begin{equation*}
C(r,p)=2C(\varphi)\max\left\{\frac{p}{r},\left(\frac{r+p-2}{r-1}\right)^{p-1}\right\}
\min\left\{\frac{1}{r},\frac{(r-1)^{2}p^p}{(r+p-2)^{p+1}}\right\}^{-1}.
\end{equation*}
By evaluating the dependence in $r$ of the constants, we remark
that, for $r$ sufficiently large, $C(r,p)=O(r)$. Hence, $C(r,p)$ is
bounded from below by a constant independent of $r$, but from above
it is not. In any case, as we will see, the rate $C(r,p)=O(r)$ is
good for our aims. We will use the space-time energy inequality in
the following improved version.

\begin{lemma}
Under the running assumptions, we have:
\begin{equation}\label{improv1}
\begin{split}
\sup_{s\in(T_{1},T)}\int_{B_R}u^{r}(x,s)\,\dx
&+\int_{T_1}^{T}\int_{B_R}\left|\nabla
u^{\frac{r+p-2}{p}}\right|^{p}\\&\leq
C(r,p)\left[\frac{1}{T_1-T_0}+\frac{1}{(R_0-R)^{p}}\right]
\int_{T_0}^{T}\int_{B_{R_0}}\left(u^{r+p-2}+u^r\right)\,\dx\,\dt.
\end{split}
\end{equation}
Moreover, if $u$ is a weak subsolution and $u\geq1$, we have:
\begin{equation}\label{improv2}
\begin{split}
\sup_{s\in(T_1,T)}\int_{B_R}u^{r}(x,s)\,\dx&+\int_{T_1}^{T}\int_{B_R}\left|\nabla
u^{\frac{r+p-2}{p}}\right|^{p}\,\dx\,\dt\\&\leq
C(r,p)\left[\frac{1}{T_1-T_0}+\frac{1}{(R_0-R)^{p}}\right]\int_{T_0}^{T}\int_{B_{R_0}}u^r\,\dx\,\dt.
\end{split}
\end{equation}
\end{lemma}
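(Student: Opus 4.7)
The plan is to rerun the argument of the preceding Lemma \ref{space-timeenergy} with the upper time limit left free, and then take a supremum on the left. For \eqref{improv2}, the hypothesis $u\geq 1$ combined with $p<2$ is used at the end to absorb the $u^{r+p-2}$ contribution into $u^{r}$.

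I would first observe that in the choice of $\varphi$ made in the preceding proof the cutoff is $t$-independent on $\overline{Q_1}=\overline{B_R}\times[T_1,T]$, so $\partial_t\varphi\equiv 0$ there. Fix an arbitrary $s\in(T_1,T]$ and repeat the derivation of \eqref{spt} with the time integration running on $(T_0,s)$ instead of $(T_0,T)$. The integration by parts in time produces the boundary term $\tfrac{1}{r}\int_{B_R}u^{r}(x,s)\,\dx$ (because $\varphi(\cdot,s)\equiv 1$ on $B_R$), while the contribution from $\partial_t\varphi$ is unchanged because that derivative is supported in $[T_0,T_1]$. Since all the remaining pointwise estimates in part (ii) of the earlier proof are monotone in the upper integration limit, the resulting right-hand side is dominated by the original one with integral over the full cylinder $Q$. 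I thus obtain, for every $s\in(T_1,T]$,
\begin{equation*}
\frac{1}{r}\int_{B_R}u^{r}(x,s)\,\dx+c(r,p)\int_{T_1}^{s}\!\!\int_{B_R}\Bigl|\nabla u^{\tfrac{r+p-2}{p}}\Bigr|^{p}\,\dx\,\dt\leq C(r,p)\Bigl[\tfrac{1}{T_1-T_0}+\tfrac{1}{(R_0-R)^{p}}\Bigr]\int_{T_0}^{T}\!\!\int_{B_{R_0}}(u^{r}+u^{r+p-2})\,\dx\,\dt.
\end{equation*}
Discarding the gradient term and passing to the supremum in $s$ yields the first half of \eqref{improv1}; choosing $s=T$ and discarding the $L^{r}$ piece gives the second half. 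Summing the two estimates (at the cost of a constant factor absorbed into $C(r,p)$) produces \eqref{improv1}.

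For \eqref{improv2} I would use that $u\geq 1$ together with $p-2<0$ implies $u^{r+p-2}=u^{r}\,u^{p-2}\leq u^{r}$ pointwise on $Q$; hence $u^{r}+u^{r+p-2}\leq 2u^{r}$, which is substituted on the right-hand side of \eqref{improv1}. The preceding Lemma already covers local subsolutions, and the multiplier $u^{r-1}\varphi^{p}$ is nonnegative, which is precisely what one needs to preserve the sign in the subsolution inequality, so no further modification of the argument is necessary.

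The main obstacle, which is really a bookkeeping issue rather than a substantive one, is to make sure that truncating the time integral at an arbitrary $s$ does not introduce a spurious boundary term elsewhere in the energy identity. This is precisely guaranteed by $\partial_t\varphi\equiv 0$ on $[T_1,T]$ in the construction of $\varphi$; the whole reduction is thus a mild modification of the previous proof.
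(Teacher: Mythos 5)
Your proposal is correct and follows essentially the same route as the paper: the paper selects a near-maximizing time $t_0\in(T_1,T)$ with $\tfrac12\sup_s\int_{B_R}u^r(\cdot,s)\le\int_{B_R}u^r(\cdot,t_0)$ and applies Lemma \ref{space-timeenergy} on the truncated cylinder $B_{R_0}\times(T_0,t_0]$, which is exactly your ``free upper time limit'' observation, then adds the gradient estimate on the full cylinder and uses $u^{r+p-2}\le u^r$ for $u\ge1$ to get \eqref{improv2}. Your version merely replaces the factor-of-two supremum selection by taking the supremum over $s$ directly, after noting that $\partial_t\varphi$ can be taken supported in $[T_0,T_1]$; this is a cosmetic difference only.
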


\begin{proof}
We recall a well known property of the supremum,
namely there exists $t_0\in(T_1,T)$ such that
\begin{equation*}
\frac{1}{2}\sup_{s\in(T_{1},T)}\int_{B_R}u^{r}(x,s)\,\dx\leq\int_{B_R}u^{r}(x,t_0)\,\dx.
\end{equation*}
Since $T_0\leq T_1<t_0<T$, we can apply Lemma \ref{space-timeenergy}
and obtain
\begin{equation*}
\int_{B_R}u^{r}(x,t_0)\,\dx\leq
C(r,p)\left[\frac{1}{T_1-T_0}+\frac{1}{(R_0-R)^{p}}\right]
\int_{T_0}^{t_0}\int_{B_{R_0}}\left(u^{r+p-2}+u^r\right)\,\dx\,\dt.
\end{equation*}
On the other hand, also the second term can be estimated by Lemma
\ref{space-timeenergy}:
\begin{equation*}
\int_{T_1}^{T}\int_{B_R}|\nabla
u^{\frac{r+p-2}{p}}|^{p}\,\dx\,\dt\leq
C(r,p)\left[\frac{1}{T_1-T_0}+\frac{1}{(R_0-R)^{p}}\right]\int_{T_0}^{T}\int_{B_{R_0}}(u^{r+p-2}+u^r)\,\dx\,\dt.
\end{equation*}
Summing up the two previous inequalities, we obtain inequality
(\ref{improv1}). If $u\geq1$, is a subsolution, then $u^{r+p-2}\leq
u^{r}$, hence we immediately get (\ref{improv2}).
\end{proof}

\noindent \textbf{Step 2: An iterative form of the Sobolev
inequality}

\noindent We state the classical Sobolev inequality in a different
form, adapted for the Moser-type iteration.
\begin{lemma}
Let $f\in L^{p}(Q)$ with $\nabla f\in L^{p}(Q)$. Then, for any
$\sigma\in(1,\sigma^{*})$, for any $0\leq T_0<T_1$ and $R>0$, the
following inequality holds:
\begin{equation}\label{Sob}\begin{split}
\int_{T_0}^{T_1}\int_{B_R}f^{p\sigma}\,\dx\,\dt
&\leq2^{p-1}\cs_{p}^{p}\left[\int_{T_0}^{T_1}\int_{B_R}\big(f^{p}+R^{p}\left|\nabla
f\right|^{p}\big)\,\dx\,\dt\right]\\
&\times\sup_{t\in(T_0,T_1)}\left[\frac{1}{R^{n}}\int_{B_R}f^{p(\sigma-1)q}(t,x)\,\dx\right]^{\frac{1}{q}},
\end{split}
\end{equation}
where
\begin{equation*}
p^{*}=\frac{np}{n-p}, \qquad
\sigma^{*}=\frac{p^{*}}{p}=\frac{n}{n-p}, \qquad
q=\frac{p^{*}}{p^{*}-p}=\frac{n}{p},
\end{equation*}
and the constant $\cs_{p}$ is the constant of the classical Sobolev
inequality.
\end{lemma}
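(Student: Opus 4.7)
The inequality is a standard Moser-type interpolation between the Sobolev embedding $W^{1,p}\hookrightarrow L^{p^*}$ and a higher $L^q$ norm. The plan is to work pointwise in $t$, interpolate via H\"older, then apply a scale-invariant Sobolev inequality on $B_R$, and finally integrate in time and pull out the supremum.

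\textbf{Step 1 (H\"older interpolation in space).} For each fixed $t\in(T_0,T_1)$, I split
\begin{equation*}
f^{p\sigma}=f^{p}\cdot f^{p(\sigma-1)}
\end{equation*}
and apply H\"older in $B_R$ with conjugate exponents $q'=p^{*}/p$ and $q=p^{*}/(p^{*}-p)=n/p$. Since $pq'=p^{*}$ and $p(\sigma-1)q$ appears on the right-hand side of the target inequality, this gives
\begin{equation*}
\int_{B_R}f^{p\sigma}\,\dx\leq\left(\int_{B_R}f^{p^{*}}\,\dx\right)^{p/p^{*}}\left(\int_{B_R}f^{p(\sigma-1)q}\,\dx\right)^{1/q}.
\end{equation*}
The condition $\sigma<\sigma^{*}$ is precisely what guarantees $p(\sigma-1)q<p^{*}$, so that the interpolation is valid.

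\textbf{Step 2 (scale-invariant Sobolev on $B_R$).} The classical Sobolev inequality on the ball $B_R$, with the correct scaling, reads
\begin{equation*}
\|f\|_{L^{p^{*}}(B_R)}\leq\cs_{p}\left(\|\nabla f\|_{L^{p}(B_R)}+R^{-1}\|f\|_{L^{p}(B_R)}\right).
\end{equation*}
Raising this to the $p$-th power and using the elementary convexity bound $(a+b)^{p}\leq 2^{p-1}(a^{p}+b^{p})$ yields
\begin{equation*}
\left(\int_{B_R}f^{p^{*}}\,\dx\right)^{p/p^{*}}\leq 2^{p-1}\cs_{p}^{p}\int_{B_R}\bigl(|\nabla f|^{p}+R^{-p}f^{p}\bigr)\,\dx=2^{p-1}\cs_{p}^{p}R^{-p}\int_{B_R}\bigl(R^{p}|\nabla f|^{p}+f^{p}\bigr)\,\dx.
\end{equation*}

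\textbf{Step 3 (bookkeeping in $R$ and integration in time).} Substituting into Step 1, the factor $R^{-p}$ must be absorbed by rewriting the second factor as
\begin{equation*}
\left(\int_{B_R}f^{p(\sigma-1)q}\,\dx\right)^{1/q}=R^{n/q}\left[\frac{1}{R^{n}}\int_{B_R}f^{p(\sigma-1)q}\,\dx\right]^{1/q}=R^{p}\left[\frac{1}{R^{n}}\int_{B_R}f^{p(\sigma-1)q}\,\dx\right]^{1/q},
\end{equation*}
since $n/q=p$. The factors $R^{-p}$ and $R^{p}$ cancel exactly, giving the pointwise-in-$t$ estimate
\begin{equation*}
\int_{B_R}f^{p\sigma}\,\dx\leq 2^{p-1}\cs_{p}^{p}\int_{B_R}\bigl(R^{p}|\nabla f|^{p}+f^{p}\bigr)\,\dx\cdot\left[\frac{1}{R^{n}}\int_{B_R}f^{p(\sigma-1)q}(t,x)\,\dx\right]^{1/q}.
\end{equation*}
Integrating in $t$ over $(T_0,T_1)$ and pulling out the last factor as a supremum delivers \eqref{Sob}.

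\textbf{Main difficulty.} There is no real obstacle — everything is bookkeeping. The only point that requires care is matching the powers of $R$: the Sobolev inequality must be invoked in its scale-invariant form (with the $R^{-1}\|f\|_{L^p}$ term), and one must verify the identity $n/q=p$ so the $R$-factors cancel and yield the clean form in \eqref{Sob} with the stated constant $2^{p-1}\cs_p^p$.
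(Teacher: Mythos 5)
Your proof is correct and follows essentially the same route as the paper: the same splitting $f^{p\sigma}=f^{p}\cdot f^{p(\sigma-1)}$, H\"older with exponents $p^{*}/p$ and $q$, the Sobolev embedding with the convexity bound $(a+b)^p\le 2^{p-1}(a^p+b^p)$, and finally integration in time with the supremum pulled out. The only (cosmetic) difference is that the paper proves the case $R=1$ and then rescales via $x=Ry$, whereas you invoke the scale-invariant Sobolev inequality on $B_R$ directly and track the powers of $R$ by hand; both are equivalent.
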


\begin{proof}
We first prove the inequality for $R=1$. We write:
\begin{equation*}
\int_{B_1}f^{p\sigma}\,\dy=\int_{B_1}f^{p}f^{p(\sigma-1)}\,\dy
\leq\left(\int_{B_1}f^{p^{*}}\,\dy\right)^{\frac{p}{p^{*}}}
\left(\int_{B_1}f^{p(\sigma-1)q}\,\dy\right)^{\frac{1}{q}},
\end{equation*}
We use now the standard Sobolev inequality in the first factor of
the right-hand side:
\begin{equation*}
\|f\|_{p^{*}}^{p}\leq\cs_{p}^{p}(\|f\|_{p}+\|\nabla f\|_{p})^{p}\leq
2^{p-1}\cs_{p}^{p}(\|f\|_{p}^{p}+\|\nabla f\|_{p}^{p}).
\end{equation*}
Passing to the supremum in time in the second factor of the
right-hand side, then integrating  the inequality in time, over
$(T_0,T_1)$, we obtain the desired form for $R=1$. Finally, the
change of variable $x=Ry$ allow to obtain (\ref{Sob}) for any $R>0$.
\end{proof}

\noindent \textbf{Step 3: Preparation of the iteration}.

\noindent Let us first define $v(x,t)=\max\{u(x,t),1\}$. We remark that $v$ is a
local weak subsolution of the $p$-Laplacian equation in the sense of
Definition \ref{local.weak}. Moreover $u\leq v\leq 1+v$ for any
$(x,t)\in Q$.

We now let $f^p=v^{r+p-2}$ in the iterative Sobolev inequality
(\ref{Sob}) and we apply it for $Q_1\subset Q$ as  in the statement
of Theorem \ref{localsm}. We then obtain:
\begin{equation}\label{start1}
\begin{split}
\int_{T_1}^{T}\int_{B_{R}}v^{\sigma(r+p-2)} \dx\,\dt&\leq
2^{p-1}\cs_{p}^{p}\left[\int\!\!\!\int_{Q}\left(v^{r+p-2}+R^{p}\left|\nabla
v^{\frac{r+p-2}{p}}\right|^{p}\right)\,\dx\,\dt\right]\\&\times\left[\sup_{t\in[T_1,T]}\frac{1}{R^n}\int_{B_R}v^{(r+p-2)(\sigma-1)q}\,\dx\right]^{\frac{1}{q}}.
\end{split}
\end{equation}
Since $v\geq1$, we can use the space-time energy inequality
(\ref{improv2}) to estimate both terms in the right-hand side:
\begin{equation}\label{est.1}
\begin{split}
\int\!\!\!\int_{Q}\Big(v^{r+p-2}
    &+\left. R^{p}\left|\nabla v^{\frac{r+p-2}{p}}\right|^{p} \right)\,\dx\,\dt\\
        &\leq R^{p}C(r,p)\left[\frac{1}{(R_0-R)^p}+\frac{1}{T_1-T_0}\right]
        \left[\int\!\!\!\int_{Q}v^{r}\,\dx\,\dt\right]
     +\int\!\!\!\int_{Q}v^{r}\,\dx\,\dt\\
        &\leq 2R^{p}C(r,p)\left[\frac{1}{(R_0-R)^p}+\frac{1}{T_1-T_0}\right]
    \left[\int\!\!\!\int_{Q}v^{r}\,\dx\,\dt\right].
\end{split}
\end{equation}
As for the other term, we use again the space-time energy inequality
(\ref{improv2}), but we replace $r>1$ by $(r+p-2)(\sigma-1)q>1$, and we get
\begin{equation}\label{est.2}
\begin{split}
\sup_{t\in[T_1,T]}\frac{1}{R^n}\int_{B_R}v^{(r+p-2)(\sigma-1)q}\,\dx
& \leq\frac{C(r,p)}{R^n}\left[\frac{1}{(R_0-R)^p}+\frac{1}{T_1-T_0}\right]\\
&\times
\left[\int\!\!\!\int_{Q_0}v^{(r+p-2)(\sigma-1)q}\,\dx\,\dt\right].
\end{split}
\end{equation}
Plugging \eqref{est.1} and \eqref{est.2}  into (\ref{start1}), we
obtain
\begin{equation}\label{start2}
\begin{split}
\int\!\!\!\int_{Q}v^{\sigma(r+p-2)}\,\dx\,\dt
&\leq2^{p-1}\cs_{p}^{p}C(r,p)^{1+\frac{1}{q}}\left[\frac{1}{(R_0-R)^p}+\frac{1}{T_1-T_0}\right]^{1+\frac{1}{q}}\\
&\times\left[\int\!\!\!\int_{Q_0}v^{r}\,\dx\,\dt\right]
\left[\int\!\!\!\int_{Q_0}v^{(\sigma-1)(r+p-2)q}\,\dx\,\dt\right]^{\frac{1}{q}}.
\end{split}
\end{equation}
notice that $R$ cancels, since $R^{p-n/q}=1$.\qed

\noindent \textbf{Step 4. Choosing the exponents}

\noindent We begin by choosing $r=q(r+p-2)(\sigma-1):=r_0$, with
$\sigma\in(1,\sigma^{*})$. This implies that $\sigma=1+rp/n(r+p-2)$.
This is always larger than 1, but it has to satisfy
$\sigma<\sigma^{*}=n/(n-p)$, hence we need that $r>n(2-p)/p:=r_c$. We
remark that $r_c>1$ if and only if $p<p_c$. We define next
\begin{equation*}
r_1=(r_0+p-2)\sigma=\left(1+\frac{1}{q}\right)r_0+p-2,
\end{equation*}
and we see that $r_1>r_0$ if and only if $r_0>r_c$. In a natural
way, we iterate this construction:
\begin{equation*}
r_{k+1}=r_{k}\left(1+\frac{1}{q}\right)+p-2,
\end{equation*}
and we note that $r_{k+1}>r_k$ if and only if $r_k>r_0>r_c$.
Moreover, we can provide an explicit formula for the exponents
\begin{equation}\label{expiter}
\begin{split}
r_{k+1}&=r_{k}(1+\frac{1}{q})+p-2=\left(1+\frac{1}{q}\right)^{k+1}r_0+(p-2)
\sum_{j=0}^{k}\left(1+\frac{1}{q}\right)^{j}\\
&=\left(1+\frac{1}{q}\right)^{k+1}\left[r_0+(p-2)\sum_{l=1}^{k+1}\left(1+\frac{1}{q}\right)^{-l}\right]
=\left(1+\frac{1}{q}\right)^{k+1}\big[r_0-(2-p)q\big]+q(2-p).
\end{split}
\end{equation}
Let us calculate two useful limits of the exponents:
\begin{equation}\label{limpropexp}
\lim_{k\to\infty}\frac{\Big(1+\frac{1}{q}\Big)^{k+1}}{r_{k+1}}=\frac{1}{r_0+(p-2)q},
\
\lim_{k\to\infty}\frac{1}{r_{k+1}}\sum_{j=0}^{k}\left(1+\frac{1}{q}\right)^{j}=\frac{q}{r_0+(p-2)q}.
\end{equation}
We are now ready to rule the iteration process.

\noindent \textbf{Step 5. The iteration}

\noindent The iteration process consists in writing the inequality
(\ref{start2}) with the exponents introduced in the previous step.
The first step then reads
\begin{equation}
\begin{split}
\left[\int\!\!\!\int_{Q}v^{r_1}\,\dx\,\dt\right]^{\frac{1}{r_1}}
&\leq\left\{2^{p-1}\cs_{p}^{p}C(r_0,p)^{1+\frac{1}{q}}
\left[\frac{1}{(R_0-R)^p}+\frac{1}{T_1-T_0}\right]^{1+\frac{1}{q}}\right\}^{\frac{1}{r_1}}\\
&\times\left[\int\!\!\!\int_{Q_0}v^{r_0}\,\dx\,\dt\right]^{\left(1+\frac{1}{q}\right)\frac{1}{r_1}}
=I_{0,1}^{\frac{1}{r_1}}\left[\int\!\!\!\int_{Q_0}v^{r_0}\,\dx\,\dt\right]^{\left(1+\frac{1}{q}\right)\frac{1}{r_1}}.
\end{split}
\end{equation}
As for the general iteration step, we have to construct a sequence
of cylinders $Q_{k}$ such that $Q_{k+1}\subset Q_{k}$, with the
convention $Q_1=Q$, and apply it to inequality (\ref{start2}). We
let $Q_{k}=B_{R_k}\times(T_k,T]$, with $R_{k+1}<R_k$ and
$T_{k}<T_{k+1}<T$. The $k$-th step then reads
\begin{equation}\label{stepk}
\left[\int\!\!\!\int_{Q_{k+1}}v^{r_{k+1}}\,\dx\,\dt\right]^{\frac{1}{r_{k+1}}}\leq
I_{k,k+1}^{\frac{1}{r_{k+1}}}\left[\int\!\!\!\int_{Q_k}v^{r_k}\,\dx\,\dt\right]^{\frac{1}{r_k}
\left(1+\frac{1}{q}\right)\frac{r_k}{r_{k+1}}},
\end{equation}
where
\begin{equation*}
I_{k,k+1}:=2^{p-1}\cs_{p}^{p}C(r_k,p)^{1+\frac{1}{q}}\left[\frac{1}{(R_{k}-R_{k+1})^p}+\frac{1}{T_{k+1}-T_k}\right]^{1+\frac{1}{q}}.
\end{equation*}
Iterating now (\ref{stepk}) we obtain
\begin{equation}\label{kprod}
\left[\int\!\!\!\int_{Q_{k+1}}v^{r_{k+1}}\,\dx\,\dt\right]^{\frac{1}{r_{k+1}}}\leq
I_{k,k+1}^{\frac{1}{r_{k+1}}}I_{k-1,k}^{\left(1+\frac{1}{q}\right)\frac{1}{r_{k+1}}}
\ ... \ I_{0,1}^{\left(1+\frac{1}{q}\right)^{k}\frac{1}{r_{k+1}}}
\left[\int\!\!\!\int_{Q_{0}}v^{r_{0}}\,\dx\,\dt\right]^{\left(1+\frac{1}{q}\right)^{k+1}\frac{1}{r_{k+1}}}\,.
\end{equation}
In order to get uniform estimates for  $I_{k,k+1}$, we have to
impose some further conditions on $R_k$ and $T_k$. More precisely,
we choose a decreasing sequence $R_k\to R_{\infty}>0$ such that
$R_k-R_{k+1}=\rho/k^2$ and an increasing sequence of times $T_{k}\to
T_{\infty}<T$ such that $T_{k+1}-T_k=\tau/k^{2p}$. Moreover, we see
that
\begin{equation*}
\tau=\frac{T_{\infty}-T_0}{\sum_{k}\frac{1}{k^{2p}}}>0, \qquad
\rho=\frac{R_0-R_{\infty}}{\sum_{k}\frac{1}{k^2}}>0.
\end{equation*}
Recall also that the constants $C(r_j,p)\leq C_{0}(p)r_j$, so that
\begin{equation*}
I_{j,j+1}\leq2^{p-1}\cs_{p}^{p}C(p)\left[j^{2p}r_{j}\left(\frac{1}{\tau}+\frac{1}{\rho^p}\right)\right]^{1+\frac{1}{q}}\leq
J_{0}J_{1}^{1+\frac{1}{q}}\big(j^{2p}r_{j}\big)^{1+\frac{1}{q}},
\end{equation*}
where $J_0=2^{p-1}\cs_{p}^{p}C(p)$, $J_{1}=\tau^{-1}+\rho^{-p}$ are
constants that do not depend on $r$. Hence, we obtain:
\begin{equation*}
\begin{split}
I_{k,k+1}^{\frac{1}{r_{k+1}}}I_{k-1,k}^{\left(1+\frac{1}{q}\right)\frac{1}{r_{k+1}}}
\ ... \ I_{0,1}^{\left(1+\frac{1}{q}\right)^{k}\frac{1}{r_{k+1}}}
\leq\left[J_{0}J_{1}^{1+\frac{1}{q}}\right]^{\frac{1}{r_{k+1}}\sum_{j=0}^{k}\left(1+\frac{1}{q}\right)^{j}}
\prod_{j=0}^{k}\left(j^{2p}r_{j}\right)^{\frac{1}{r_{k+1}}\left(1+\frac{1}{q}\right)^{k+1-j}}
\end{split}
\end{equation*}
and it remains to study the convergence of the products in the
right-hand side. To this end, we take logarithms and we write:
\begin{equation*}
\begin{split}
\log\left[\prod_{j=0}^{k}(j^{2p}r_{j})^{\frac{1}{r_{k+1}}\big(1+\frac{1}{q}\big)^{k+1-j}}\right]
&=\sum_{j=0}^{k}\frac{1}{r_{k+1}}\left(1+\frac{1}{q}\right)^{k+1-j}\big(\log(r_j)+2p\log
j\,\big)\\&=\frac{1}{r_{k+1}}\left(1+\frac{1}{q}\right)^{k+1}\sum_{j=0}^{k}\left[\frac{\log(r_j)}
{\left(1+\frac{1}{q}\right)^{j}}+2p\frac{\log
j}{\left(1+\frac{1}{q}\right)^{j}}\right]\,.
\end{split}
\end{equation*}
It is immediate to check that the series obtained in the right-hand
side are convergent.

\noindent We can pass to the limit as $k\to\infty$ in (\ref{kprod})
taking into account of (\ref{limpropexp})
\begin{equation}\label{limit}
\sup_{Q_{\infty}}|v|\leq
J_{0}^{\frac{q}{r_0+(p-2)q}}J_{1}^{\frac{q+1}{r_0+(p-2)q}}C(n,p)
\left[\int\!\!\!\int_{Q_{0}}v^{r_{0}}\,\dx\,\dt\right]^{\frac{1}{r_0+(p-2)q}}.
\end{equation}
This last estimate holds true for cylinders $Q_{\infty}\subset Q_0$
and it blows-up as $Q_{\infty}\to Q_0$, since the constant
$J_{1}=\tau^{-1}+\rho^{-p}$ blows-up in such a limit. Once we let
$Q_{\infty}=B_{R_{\infty}}\times(T_{\infty},T]$, we see that
$Q_{\infty}$ has to be strictly contained in the initial cylinder
$Q_0=Q$. We finally rewrite the estimate (\ref{limit}) in terms of
$T_{\infty}$ and $R_{\infty}$, in the following way
\begin{equation*}
\sup_{Q_{\infty}}|v|\leq
C(r_0,p,n)\left[\frac{1}{(R_0-R_{\infty})^p}+\frac{1}{T_{\infty}-T_0}\right]^{\frac{q+1}{r_0+(p-2)q}}
\left[\int\!\!\!\int_{Q_{0}}v^{r_{0}}\,\dx\,\dt\right]^{\frac{1}{r_0+(p-2)q}}.\mbox{\qed}
\end{equation*}

\noindent \textbf{Step 6. End of the proof of Theorem \ref{localsm}}

\noindent The result of Theorem \ref{localsm} is given in terms of
the local strong solution $u$. We then recall that $u\leq v\leq
1+u$, hence
\begin{equation*}
\begin{split}
\sup_{Q_{\infty}}|u|&\leq\sup_{Q_{\infty}}|v|\leq
C(r_0,p,n)\left[\frac{1}{(R_0-R_{\infty})^p}+\frac{1}{T_{\infty}-T_0}\right]^{\frac{q+1}{r_0+(p-2)q}}
\left[\int\!\!\!\int_{Q_{0}}v^{r_{0}}\,\dx\,\dt\right]^{\frac{1}{r_0+(p-2)q}}\\
&\leq
C(r_0,p,n)\left[\frac{1}{(R_0-R_{\infty})^p}+\frac{1}{T_{\infty}-T_0}\right]^{\frac{q+1}{r_0+(p-2)q}}
\left[\int\!\!\!\int_{Q_{0}}u^{r_{0}}\,\dx\,\dt+|Q_0|\right]^{\frac{1}{r_0+(p-2)q}}.
\end{split}
\end{equation*}
The proof is concluded once we go back to the original notations as
in the statement of Theorem \ref{localsm}, namely we let $r=r_0$,
$R_{\infty}=R<R_0$, $T_{\infty}=T_1\in(T_0,T)$ and $q=n/p$.\qed

\subsection{Behaviour of local $L^r$-norms. $L^r$-stability}\label{subsect.Lr.stab}

In this subsection we state and prove an $L^r_{loc}$-stability
results, namely we compare local $L^r$ norms at different times.

\begin{theorem}\label{main-normr}
Let $u\in C\big((0,T):W^{1,p}_{loc}(\Omega)\big)$ be a bounded local
strong solution of the fast $p$-Laplacian equation, with $1<p<2$.
Then, for any $r>1$ and any $0<R<R_0\le \dist(x_0,\partial\Omega)$ we
have the following upper bound for the local $L^{r}$ norm:
\begin{equation}\label{Lr.stab}
\begin{split}
\left[\int_{B_{R}(x_0)}|u|^{r}(x,t)\,\dx\right]^{\frac{2-p}{r}}
\leq&\left[\int_{B_{R_0}(x_0)}|u|^{r}(x,s)\,\dx\right]^{\frac{2-p}{r}}+C_{r}\,(t-s),
\end{split}
\end{equation}
for any $0\leq s\leq t\leq T$, where
\begin{equation}
C_{r}=\frac{C_0}{(R_{0}-R)^{p}}|B_{R_0}\setminus
B_{R}|^{\frac{2-p}{r}}\,,\qquad\mbox{if $r>1$},
\end{equation}
with $C_1$ and $C_0$ depending on $p$ and on the dimension $n$.
Moreover, $C_0$ depends also on $r$ and blows up when $r\to
+\infty$.
\end{theorem}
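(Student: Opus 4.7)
The plan is to derive a closed differential inequality for a weighted local $L^r$ integral and then integrate it explicitly. Fix a smooth cutoff $\varphi\in C_c^{\infty}(B_{R_0}(x_0))$ with $0\le\varphi\le 1$, $\varphi\equiv 1$ on $B_R(x_0)$, and $|\nabla\varphi|\le C_n/(R_0-R)$. The key algebraic choice is the exponent $\beta:=pr/(2-p)$, which satisfies $\beta\ge p$ since $r>1>(2-p)/p$ for $1<p<2$. I set
\begin{equation*}
F(\tau):=\int_{B_{R_0}(x_0)}|u(x,\tau)|^{r}\varphi(x)^{\beta}\,\dx.
\end{equation*}
Because $u$ is a bounded strong solution, $|u|^{r-2}u\,\varphi^{\beta}$ is an admissible test function (after a standard approximation to handle the possible singularity of $|u|^{r-2}$ at $u=0$ when $r<2$), and testing the equation against it and integrating by parts in space yields the energy identity
\begin{equation*}
\frac{1}{r}F'(\tau)+(r-1)\int|u|^{r-2}|\nabla u|^{p}\varphi^{\beta}\,\dx=-\beta\int|u|^{r-2}u\,|\nabla u|^{p-2}\nabla u\cdot\nabla\varphi\,\varphi^{\beta-1}\,\dx,
\end{equation*}
in the sense of distributions in $\tau$.

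I next apply Young's inequality with conjugate exponents $p/(p-1)$ and $p$ to the right-hand side, splitting the integrand as the product $(|u|^{(r-2)(p-1)/p}\varphi^{\beta(p-1)/p}|\nabla u|^{p-1})\cdot(|u|^{(r+p-2)/p}\varphi^{(\beta-p)/p}|\nabla\varphi|)$. The $(p/(p-1))$-th power of the first factor reproduces the dissipation integrand $|u|^{r-2}|\nabla u|^{p}\varphi^{\beta}$ and can be absorbed into the left, while the $p$-th power of the second factor is $|u|^{r+p-2}\varphi^{\beta-p}|\nabla\varphi|^{p}$ (the condition $\beta\ge p$ is used precisely here, to keep this factor finite near $\partial B_{R_0}$). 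The result is
\begin{equation*}
F'(\tau)\le C(r,p)\int|u|^{r+p-2}\varphi^{\beta-p}|\nabla\varphi|^{p}\,\dx.
\end{equation*}
To the right-hand side I now apply H\"older with conjugate exponents $r/(r+p-2)$ and $r/(2-p)$, writing $|u|^{r+p-2}\varphi^{\beta-p}|\nabla\varphi|^{p}$ as $(|u|^{r+p-2}\varphi^{\beta(r+p-2)/r})\cdot(\varphi^{\beta-p-\beta(r+p-2)/r}|\nabla\varphi|^{p})$; the residual $\varphi$-exponent $\beta-p-\beta(r+p-2)/r$ vanishes exactly because $\beta=pr/(2-p)$, so (using $|\nabla\varphi|\le C_n/(R_0-R)$ on its support $B_{R_0}\setminus B_R$)
\begin{equation*}
F'(\tau)\le A\,F(\tau)^{(r+p-2)/r},\qquad A:=C_0(r,p,n)\,(R_0-R)^{-p}|B_{R_0}\setminus B_R|^{(2-p)/r}.
\end{equation*}

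Since $(r+p-2)/r<1$, this ODE integrates explicitly: $(d/d\tau)F(\tau)^{(2-p)/r}=\frac{2-p}{r}F(\tau)^{-(r+p-2)/r}F'(\tau)\le\frac{2-p}{r}A$, hence $F(t)^{(2-p)/r}\le F(s)^{(2-p)/r}+\frac{2-p}{r}A(t-s)$ for $0\le s\le t\le T$. The estimate \eqref{Lr.stab} then follows at once by comparing $F$ to the stated quantities: since $\varphi\equiv 1$ on $B_R$ we have $\int_{B_R}|u(\cdot,t)|^{r}\,\dx\le F(t)$, and since $\varphi\le 1$ and is supported in $B_{R_0}$ we have $F(s)\le\int_{B_{R_0}}|u(\cdot,s)|^{r}\,\dx$, so applying the monotone map $x\mapsto x^{(2-p)/r}$ transports the ODE estimate to the desired inequality, with the stated form of $C_r$. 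The main obstacle is the algebraic bookkeeping: both the Young step and the H\"older step require non-negative residual powers of $\varphi$, and these two constraints pin down $\beta=pr/(2-p)$ as the natural (minimal) choice; any smaller $\beta$ would produce a negative power of $\varphi$ that blows up where $\varphi\to 0$ on $\partial B_{R_0}$, destroying the closure of the ODE for $F$.
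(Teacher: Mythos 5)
Your proposal is correct and follows essentially the same route as the paper's proof: testing with $|u|^{r-2}u$ times a power of a cutoff, absorbing the dissipation via Young, closing with H\"older using the conjugate pair $r/(r+p-2)$, $r/(2-p)$ (the paper's $\delta'$, $\delta$), and integrating the resulting ODE $F'\le A\,F^{1-(2-p)/r}$; your explicit $\varphi^{\beta}$ with $\beta=pr/(2-p)$ plays exactly the role of the test function $\psi^{\gamma}$ in the paper's Lemma on admissible cutoffs with $\alpha=pr/(2-p)$. The only differences are presentational (the paper works with a general convex $J$, specialized to $|u|^r$, so as to reuse the computation for the $r=1$ case, and performs H\"older before Young), so no further comment is needed.
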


\noindent \textbf{Remarks}: (i) Theorem
\ref{main-normr} implies that, whenever $u(\cdot,s)\in
L^{r}_{loc}(\Omega)$, for some time $s\ge 0$ and some $r\ge 1$, then
$u(\cdot,t)\in L^{r}_{loc}(\Omega)$, for all $t>s$, and there is a quantitative estimate of
the evolution of the $L^r_{loc}$-norm. This is what we call $L^{r}_{loc}$-stability.

\noindent (ii) We remark that the result of Theorem
\ref{main-normr} is false for $p\geq2$, since any $L^{r}_{loc}$ stability result
necessarily involves the control of the boundary data; on the other hand, this
local upper bound may be extended also to the limit case $p\to 1$.

\noindent (iii) Let us examine the behaviour of the constant $C_r$. We see
that it blows-up as $R\to R_{0}$. Indeed, we can write in that limit:
\begin{equation*}
C_{r}(R,R_0,p,n) \sim(R_{0}-R)^{\frac{2-p-rp}{r}},
\end{equation*}
and in our conditions $2-p-rp<0$. On the other hand, if we choose proportional radii, say $R=R_0/2$, we get
$$C_r=C(n,p,r)R_0^{-(r-r_c)p/r}.
$$
In the limit $R_0\to\infty$, we recover the standard monotonicity of the global $L^r(\RR^n)$-norms, when $r>r_c$.

\noindent (iv) Theorem \ref{main-normr} holds true also for more
general nonlinear operators, the so-called $\Phi$-Laplacians, or for
operators with variable coefficients satisfying the standard
structure conditions of \cite{DiB}, recalled in Section 8.
The proof is similar and we leave it to the interested reader.

\begin{proof} Let us calculate
\begin{equation}\begin{split}
\frac{\rd}{\dt}\int_\Omega J(u)\varphi \dx
    &= \int_\Omega |J'(u)|\Delta_p(u)\varphi \dx
     = -\int_\Omega \left|\nabla u\right|^{p-2}\nabla u \cdot\nabla\left(J'(u)\varphi\right)\dx\\
    &=-\int_\Omega \left|\nabla u\right|^{p}J''(u)\varphi \dx
      -\int_\Omega \left|\nabla u\right|^{p-2}|J'(u)|\nabla\varphi\cdot\nabla u \dx\\
    &\le-\int_\Omega \left|\nabla u\right|^{p}J''(u)\varphi \dx
      +\int_\Omega \left|\nabla u\right|^{p-1}\,|J'(u)|\,\left|\nabla\varphi\right|\dx,
 \end{split}
\end{equation}
where $J$ is a suitable convex function that will be explicitly
chosen afterwards. All the integration by parts are justified in
view of the H\"older regularity of the solution and by Corollary
\ref{CorEnergy}. We now estimate the last integral
\begin{equation}\label{I1.HP}\begin{split}
  I_1= &\int_\Omega \left|\nabla u\right|^{p-1}\,|J'(u)|\,\left|\nabla\varphi\right|\dx\le
        \left[\int_\Omega \left|\nabla u\right|^{p}\,J''(u)\varphi\right]^{\frac{p-1}{p}}
        \left[\int_\Omega \frac{|J'(u)|^p}{[J''(u)]^{p-1}}
        \frac{\left|\nabla\varphi\right|^p}{\varphi^{p-1}}\dx\right]^{\frac{1}{p}}\\
        &\le \underbrace{\left[\int_\Omega \left|\nabla u\right|^{p}\,J''(u)\varphi\right]^{\frac{p-1}{p}}}\limits_{a^{\frac{p-1}{p}}}
        \underbrace{\left[\int_\Omega \frac{|J'(u)|^{p\delta'}}{[J''(u)]^{(p-1)\delta'}}\varphi\dx\right]^{\frac{1}{p\delta'}}
        \left[\int_\Omega \frac{\left|\nabla\varphi\right|^{p\delta}}
        {\varphi^{\gamma}}\dx\right]^{\frac{1}{p\delta}}}\limits_{b},
        \end{split}
\end{equation}
where $\gamma=\delta(p-1+1/\delta')=p\delta-1$. In the first line we have used H\"older's inequality with conjugate exponents $p/(p-1)$ and $p$. In the second line, H\"older's inequality with conjugate exponents $\delta > 1$ and $\delta'=\delta/(\delta-1)$. We now use the numerical inequality
\[
a^{\frac{p-1}{p}}b\le \frac{p-1}{\varepsilon p}a+\frac{\varepsilon^{\frac{1}{p-1}}}{p}b^p
    = a + \frac{(p-1)^{\frac{1}{p-1}}}{p^{\frac{p}{p-1}}}b^p
\]
if we choose $\varepsilon=(p-1)/p$. In this way we can write
\begin{equation}
      I_1 \le          \int_\Omega \left|\nabla u\right|^{p}J''(u)\varphi \dx
      +\frac{(p-1)^{\frac{1}{p-1}}}{p^{\frac{p}{p-1}}}
        \left[\int_\Omega \frac{|J'(u)|^{p\delta'}}{[J''(u)]^{(p-1)\delta'}}\varphi\dx\right]^{\frac{1}{\delta'}}
        \left[\int_\Omega \frac{\left|\nabla\varphi\right|^{p\delta}}
        {\varphi^{p\delta-1}}\dx\right]^{\frac{1}{\delta}}
\end{equation}
All together, we have proved that
\begin{equation}\label{HP.J.general}
\frac{\rd}{\dt}\int_\Omega J(u)\varphi \dx
    \le C_1        \left[\int_\Omega \frac{|J'(u)|^{p\delta'}}{[J''(u)]^{(p-1)\delta'}}\varphi\dx\right]^{\frac{1}{\delta'}}
        \left[\int_\Omega\frac{\left|\nabla\varphi\right|^{p\delta}}
        {\varphi^{p\delta-1}}\dx\right]^{\frac{1}{\delta}}\,,
\end{equation}
with $C_1=(p-1)^{1/(p-1)}/p^{p/(p-1)}$.
\medskip

We now specialize $J$ and $\delta$ to get the result for $r>1$. We  let $\delta'=r/(r+p-2)$ and
$\delta=r/(2-p)$ in \eqref{HP.J.general} and estimate the last
integral in \eqref{HP.J.general} using inequality
\eqref{Const.Phi.Append.1} of Lemma \ref{choice.varphi} with
$\alpha=pr/(2-p)$, and $C_{2,r}$ that depends only on $p$, $r$ and
$n$, after choosing the test function $\varphi$ as there. We obtain
\begin{equation}\begin{split}
\frac{\rd}{\dt}\int_\Omega J(u)\varphi \dx
    &= C_{2,r}
        \frac{\left|B_{R_0}\setminus B_R\right|^{\frac{2-p}{r}}}{(R_0-R)^{p}}
        \left[\int_\Omega \frac{|J'(u)|^{\frac{pr}{r+p-2}}}{[J''(u)]^{\frac{(p-1)r}{r+p-2}}}\varphi\dx\right]^{\frac{r+p-2}{r}}\\
      &:= C_{3,r}(R_0,R)\left[\int_\Omega \frac{|J'(u)|^{\frac{pr}{r+p-2}}}{[J''(u)]^{\frac{(p-1)r}{r+p-2}}}\varphi\dx\right]^{\frac{r+p-2}{r}}.
\end{split}
\end{equation} We now choose the convex function
$J:[0,+\infty)\to[0,+\infty)$ to be $J(u)=|u|^r$, so that
$$
|J'(u)|^{\frac{pr}{r+p-2}}/[J''(u)]^{\frac{(p-1)r}{r+p-2}}= \frac{r^{\frac{pr}{r+p-2}}}{(r-1)^{\frac{(p-1)r}{r+p-2}}}J(u).
$$
All together, putting $X(t)= \int_\Omega J(u(\cdot,t))\varphi \dx$,
we have proved that
\begin{equation}\label{ineq.Lr}\begin{split}
\frac{\rd X(t)}{\dt}
    &\le  \frac{r^p\,C_{3,r}(R_0,R)}{(r-1)^{p-1}}\, X(t)^{\frac{r+p-2}{r}}
    := C_{4,r}(R_0,R)\,X(t)^{1-\frac{2-p}{r}}\,.
\end{split}
\end{equation}
We integrate the closed differential inequality \eqref{ineq.Lr} over
$(s,t)$ to obtain
\[
\left[\int_\Omega J\big(u(t)\big)\varphi \dx\right]^{\frac{2-p}{r}}
    \le \left[\int_\Omega J\big(u(s)\big)\varphi \dx\right]^{\frac{2-p}{r}} + \frac{2-p}{r}C_{4,r}(R_0,R)(t-s).
\]
with $C_{4,r}$ as above. We finally estimate both integral terms, using the special form of the test function $\varphi$, see e.g. Lemma \ref{choice.varphi}, and get
\begin{equation}\label{Quasi-HP.r}
\left[\int_{B_R}|u|^r(t)\dx\right]^{2-p} \le \left[\int_{B_{R_0}} |u|^r(s)\dx\right]^{\frac{2-p}{r}} +C_r(t-s)\,,
\end{equation}
with $C_r$ as in the statement.

It only remains to remove the initial assumption $u(t)\in
L^\infty_{loc}$: consider the sequence of essentially bounded
functions $u_n(\tau)\to u(\tau)$ in $L^r_{loc}$, when $n\to\infty$,
for a.e. $\tau\in(s,t)$. It is then clear that inequality
\eqref{Lr.stab} holds for any $u_n$ and we can pass to the
limit.\end{proof}

\medskip

The reader will notice that the constant $C_{r}$ above blows up as
$r\to 1$, hence the need for a different proof in that limit case.

\begin{theorem}\label{main-norm1}
Let $u\in C\big((0,T):W^{1,p}_{loc}(\Omega)\big)$ be a nonnegative
bounded local strong solution of the fast $p$-Laplacian equation,
with $1<p<2$. Let  $0<R<R_0\le \dist(x_0,\partial\Omega)$. Then we
have
\begin{equation}\label{Lr.stab}
\int_{B_{R}(x_0)}|u(x,t)|\,\dx
\leq C_1 \int_{B_{R_0}(x_0)}|u(x,s)|\,\dx +C_2(t-s)^{1/(2-p)},
\end{equation}
for any $0\leq s\leq t\leq T$. There $C_1$ is a constant near 1 that
depends on $n,p$, while $C_2$ depends also on $R$ and $R_0$.
\end{theorem}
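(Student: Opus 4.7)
The case $r=1$ lies precisely at the boundary of applicability of Theorem \ref{main-normr}: taking $J(u)=|u|$ makes $J''(u)\equiv 0$ a.e., so the coercive term that enabled the Young absorption step is lost, and the explicit constant $r^p/(r-1)^{p-1}$ in $C_r$ blows up as $r\to 1^+$. Since $u\ge 0$ is assumed, no outer approximation of $|\cdot|$ is needed; fixing a cutoff $\varphi\in C^2_c(B_{R_0})$ with $\varphi\equiv 1$ on $B_R$ and $|\nabla\varphi|\lesssim(R_0-R)^{-1}$, and recalling that $u$ is a local strong solution with $u_t\in L^2_{loc}$ by Corollary \ref{CorEnergy}, the basic identity is
\[
\frac{\rd}{\dt}\int u\,\varphi\,\dx
    = -\int|\nabla u|^{p-2}\nabla u\cdot\nabla\varphi\,\dx
    \le \int_A|\nabla u|^{p-1}|\nabla\varphi|\,\dx,
\]
with $A\subset B_{R_0}\setminus B_R$. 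Everything reduces to bounding this annular integral so as to produce an $L^1$-contraction statement on the inner ball with a prefactor independent of $R/R_0$, plus a universal correction scaling as $(t-s)^{1/(2-p)}$.

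A naive H\"older splitting of $\int_A|\nabla u|^{p-1}|\nabla\varphi|$ with conjugate exponents $p/(p-1)$ and $p$ would immediately give a prefactor depending on $R_0-R$ in front of $\|u(s)\|_{L^1(B_{R_0})}$, which is incompatible with the claim that $C_1$ depends only on $n,p$. The strategy consistent with the rest of the paper is to combine the Local Smoothing Effect (Theorem \ref{mainupper0}), which instantaneously promotes $u$ to $L^\infty_{loc}$ and allows standard Caccioppoli-type control of $\int_A|\nabla u|^p$, with a barrier argument against a profile of the same shape as the absolute damping term $C(t/R_0^p)^{1/(2-p)}$ visible in \eqref{mainsmooth}. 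The excess of $u$ over such a barrier on the annulus feeds $C_2(t-s)^{1/(2-p)}$, while the $L^1$-contractive nature of the pure fast $p$-Laplace evolution controls the inner mass with a constant arbitrarily close to $1$, up to a small Young-type price $\eta>0$ paid on the residual.

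Concretely, I would integrate the differential inequality from $s$ to $t$, apply $ab\le \eta a^p+ C_\eta b^{p/(p-1)}$ to the integrand so that the $|\nabla u|^p$ contribution is absorbable through Theorem \ref{mainupper0} plus a Caccioppoli bound, and collect the geometric residue into $C_2$. The main obstacle I foresee is making the coefficient in front of $\|u(s)\|_{L^1(B_{R_0})}$ \emph{scale-invariant}, i.e.\ dimensionless and independent of the annulus thickness $R_0-R$: this requires that the constants produced by the smoothing and Caccioppoli steps combine purely through the intrinsic parabolic scaling of the fast $p$-Laplacian, which is a delicate bookkeeping exercise. A natural alternative, cleaner but relying on later material, is a direct comparison with the extended large solutions of Section \ref{sec.largesol}, which by design absorb all the boundary/annular pressure into the correct $(t-s)^{1/(2-p)}$ term and leave the $L^1$-dissipation of the equation to furnish $C_1$ close to $1$.
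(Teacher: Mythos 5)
There is a genuine gap: your argument never closes the key estimate. Starting from $\frac{\rd}{\dt}\int u\varphi\,\dx\le\int_A|\nabla u|^{p-1}|\nabla\varphi|\,\dx$ and applying Young's inequality leaves a term $\eta\int_A|\nabla u|^{p}\,\dx$ that has nothing on the left-hand side to be absorbed into (the left-hand side carries no gradient), so it must be estimated a priori; any Caccioppoli-type or energy bound for $\int_s^t\int_A|\nabla u|^p$ introduces either $\|\nabla u(s)\|_{L^p}$ or $\|u\|_{L^\infty}$ on the right, neither of which is allowed in \eqref{Lr.stab}, and the complementary term $C_\eta\int_s^t\int_A|\nabla\varphi|^{p/(p-1)}$ produces a correction of order $(t-s)$, not $(t-s)^{1/(2-p)}$. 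Worse, invoking Theorem \ref{mainupper0} (or the extended large solutions) here is circular: the proof of the smoothing effect for $r=1$ data in Subsection \ref{Proof.SE} uses precisely this $L^1$-stability theorem as an ingredient, and for $1<p\le p_c$ the smoothing effect is not even available for $L^1_{loc}$ data. The appeal to ``the $L^1$-contractive nature of the evolution'' is also not available locally, since the annular flux is exactly the quantity one is trying to control.

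The paper's proof avoids all of this by staying inside the framework of Theorem \ref{main-normr} and repairing the degeneracy of $J(u)=|u|$ directly: one keeps the general inequality \eqref{HP.J.general} but chooses a regularized convex $J$ with $J'(u)=\mathrm{sign}(u)\bigl(1-\lambda(1+|u|)^{-\varepsilon}\bigr)$ for $|u|\ge 1$, so that $J''(u)=\varepsilon\lambda(1+|u|)^{-1-\varepsilon}>0$ and, with $1/\delta'=(p-1)(1+\varepsilon)$, the ratio $|J'(u)|^{p\delta'}/[J''(u)]^{(p-1)\delta'}$ is bounded by $K_1J(u)+K_2$. This closes a differential inequality for $Y(t)=\int J(u)\varphi\,\dx$ whose integration gives $(Y(t)+C_4)^{1/\delta}\le(Y(s)+C_4)^{1/\delta}+C_5(t-s)$; since $(1-\lambda)|u|\le J(u)\le|u|$, the prefactor $C_1=1+O(\lambda+\varepsilon)$ stays near $1$ and is independent of $R_0-R$. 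The exact exponent $1/(2-p)$ on $t-s$ is then recovered not by a barrier but by applying the resulting inequality to the intrinsically rescaled solution $\widehat u(x,t)=h^{-1/(2-p)}u(x,t_1+th)$ on the unit time interval and scaling back. You correctly diagnosed why $r\to1$ fails in Theorem \ref{main-normr}, but the missing idea is this modified $J$ together with the final scaling step, not a smoothing-effect or comparison argument.
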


\begin{proof} (i)
The first part of the proof is identical to the proof of Theorem \ref{main-normr} up to formula \eqref{HP.J.general}, hence we do not repeat it. We proceed then by a different choice of $J$ and $\delta$.
We choose $\lambda$ and $\varepsilon$ small in $(0,1)$ and put for all $|u|\ge 1$
\[
J'(u)=\mbox{\rm sign}(u)\left(1-\frac{\lambda}{(1+|u|)^\varepsilon}\right)
\]
while for $|u|\le 1$ we choose a smooth curve that joins smoothly with the previous values.
Then we have for $|u|\ge 1$
$$
J''(u)=\varepsilon\lambda(1+|u|)^{-1-\varepsilon}, \qquad (1-\lambda)|u|\le J(u) \le |u|.
$$
Since $1<p<2$ we may always choose $\varepsilon$ small enough so
that $(p-1)(1+\varepsilon)<1$. We may then choose
$1/\delta'=(p-1)(1+\varepsilon)$ so that $1/\delta=2-p-\mu$ with
$\mu=\varepsilon(p-1)$ also small and positive. In view of the
behaviour of $J$, $J'$ and $J''$ for large $|u|$  we obtain the
relation
$$
|J'(u)|^{p\delta'}/[J''(u)]^{(p-1)\delta'}\le K_1 J(u)+K_2\,,
$$
for some constants $K_1$ and $K_2>0$ that depend only on $p,n,\varepsilon$ and $\lambda$. Note that $K_1$ blows up if we try to pass to the limit $\varepsilon\to 0$. Then,
\eqref{HP.J.general} implies that
\begin{equation}
\frac{\rd}{\dt}\int_\Omega J(u)\varphi \dx
    \le  C_2
        \frac{\left|B_{R_0}\setminus B_R\right|^{1/\delta}}{(R_0-R)^{p}} \left[\int_\Omega (K_1 J(u)+K_2)\varphi\dx\right]^{1/\delta'},
\end{equation}
where $C_2$ depends on $p,n$, and $p\delta$.  Therefore, if $Y(t):=\int_\Omega J(u(\cdot,t)\varphi \dx$ we get
\begin{equation}
\frac{\rd Y(t)}{\dt}
    \le  C_2
        \frac{\left|B_{R_0}\setminus B_R\right|^{1/\delta}}{(R_0-R)^{p}} \left(K_1 Y(t) +
        K_2|B_{R_0}| \right)^{1/\delta'}\le C_3\left( Y(t) +
        C_4 \right)^{1/\delta'},
        \end{equation}
where now $C_3$ and $C_4$ depend also on $R_0,R$ and $\delta$. Integration of this inequality gives
for every $0<s<t<T$:
\begin{equation}
(Y(t)+C_4)^{1/\delta}\le (Y(s)+C_4)^{1/\delta} + C_5 (t-s).
        \end{equation}
Since $(1-\lambda)|u|\le J(u) \le |u|$ we easily obtain the basic inequality
\begin{equation}
\left(\int_\Omega J(u(\cdot,t))\varphi \dx +C_4\right)^{1/\delta}\le
\left(\int_\Omega J(u(\cdot,s))\varphi \dx+C_4\right)^{1/\delta} + C_5 (t-s).
        \end{equation}

\medskip

\noindent (ii) We now translate this inequality into an $L^1$ estimate. We use the
fact that
$$
J(u)\le |u|+c_1\le c_2 J(u)+c_3.
$$
Therefore, with $a_1=1/c_2=1-\lambda$ and $a_2=(c_1-c_3)/c_2$ we have
$$
\left(\int_\Omega (a_1|u(\cdot,t)|+a_2)\varphi \dx +C_4\right)^{1/\delta}\le
\left(\int_\Omega (|u(\cdot,s)|+c_1)\varphi \dx+C_4\right)^{1/\delta} + C_5 (t-s),
$$
that we can rewrite as
$$
\left(\int_\Omega (|u(\cdot,t)|+a'_2)\varphi \dx
+C'_4\right)^{1/\delta}\le (1-\lambda)^{1/\delta}\left(\int_\Omega
(|u(\cdot,s)|
+c_1)\varphi \dx+C_4\right)^{1/\delta} + C'_5 (t-s).
$$
This means that for every $\ve>0$ we have
$$
\left(\int_\Omega |u(\cdot,t)|\,\varphi \dx \right)^{1/\delta}\le
(1+c(\ve+\lambda))\left(\int_\Omega |u(\cdot,s)|\varphi \dx\right)^{1/\delta} + C_\ve+C'_5(t-s).
$$

\noindent (iii) Let us perform a scaling step. We take a solution $u$ as in the statement and
two fixed times $t_1>t_2>0$. We put $h=t_2-t_1$. We apply now the rule to
the rescaled solution $\widehat u$ defined as  $\widehat u(x,t)= h^{-1/(2-p)} u(x,t_1+t h)$ between $s=0$ and $t=1$. Then, after raising the expression to the power $\delta$ we get
$$
\int_\Omega |\widehat u(\cdot,1)|\,\varphi \dx \le
(1+c'(\ve+\lambda))\int_\Omega |\widehat u(\cdot,0)|\varphi \dx + C_6,
$$
which implies
$$
\int_\Omega |u(\cdot,t_2)|\,\varphi \dx \le
(1+c'(\ve+\lambda))\int_\Omega |u(\cdot,t_1)|\varphi \dx +
C_6(t_2-t_1)^{1/(2-p)}.
$$
We finally eliminate the dependence on $\e$ of the constants by
fixing $\e=(2-p)/2(p-1)>0$.
\end{proof}

\noindent\textbf{Remark}. In the proofs we use and improve on a
technique introduced by Boccardo et al.~in \cite{MR1240399} to
obtain local integral estimates for the $p$-Laplacian equation in
the elliptic framework, both for  $L^r$ and  $L^1$ norms, the latter
being technically more complicated.


\subsection{Proof of Theorem \ref{mainupper} for bounded strong solutions}\label{Proof.SE}

We are now ready to prove Theorem \ref{mainupper}, by joining the
space-time smoothing effect and local $L^r$-norm estimates. We will
work with bounded strong solution, but the same proof holds for
bounded weak solutions, that are indeed are H\"older continuous,
thus strong, cf. Appendix A2 and Theorem \ref{inequality}. The
boundedness assumption will be removed by comparison with suitable
extended large solutions in Section \ref{EOP}.

\begin{proof}
Consider a bounded (hence continuous) local strong solution $u$
defined in $Q_0=B_{R_0}(x_0)\times(0,T)$, noticing that it is not
restrictive to assume $x_0=0$. Consider a smaller ball $B_{R}\subset
B_{R_0}$ and take $\rho>0$, $\e>0$ such that $R=\rho(1-\e)$ and
$R_0=\rho(1+\e)$. Then we consider the following rescaled solution
\begin{equation}\label{resc}
\tilde{u}(x,t)=Ku(\rho x,\tau t), \qquad
K=\Big(\frac{\rho^{p}}{\tau}\Big)^{\frac{1}{2-p}}, \qquad
\tau\in(0,T),
\end{equation}
and we apply the result of Theorem \ref{localsm} to the solution
$\tilde{u}$ in the cylinders $\widetilde{Q}_0=B_1\times[0,1]$ and
$\widetilde{Q}=B_{1-\e}\times[\e^{p},1]$, for some $\e\in(0,1)$.
Recalling the notation $q=n/p$, we obtain
\begin{equation}
\sup_{\widetilde{Q}}|\tilde{u}|\leq\frac{C(r,p,n)}{\e^{\frac{p(q+1)}{r+(p-2)q}}}
\left[\int\!\!\!\int_{\widetilde{Q}_{0}}\tilde{u}^r\,\dx\,\dt+
\omega_{n}\right]^{\frac{1}{r+(p-2)q}}.
\end{equation}
We then use Theorem \ref{main-normr} for $r\ge 1$ on the balls
$B_1\subset B_{1+\e}$
\begin{equation}\label{norm-r}
\int_{B_1}|\tilde{u}(x,t)|^{r}\,\dx\leq2^{\frac{r}{2-p}-1}
\left[\int_{B_{1+\e}}|\tilde{u}(x,0)|^{r}\,\dx+\big(C_{r}(1,1+\e,p,n)\,t\big)^{\frac{r}{2-p}}\right],
\end{equation}
where we use the inequality $(a+b)^l\leq2^{l-1}(a^l+b^l)$ for
$l=r/(2-p)>1$. The constant is
\begin{equation*}\begin{split}
&C_{r}(1,1+\e,p,n)=\frac{C(r,p,n)}{\e^{p}}|B_{1+\e}\setminus
B_1|^{\frac{2-p}{r}}\leq C(r,p,n)\e^{\frac{2-p}{r}-p}, \ \mbox{if} \
r>1\,,\\ &C_{r}(1,1+\e,p,n)=\frac{C(p,n)}{\e^{p}}|B_{1+\e}\setminus
B_1|^{2-p}+|B_{1+\e}|^{2-p}, \ \mbox{if} \ r=1.
\end{split}\end{equation*}
We integrate in time over $(0,1)$ and we obtain:
\begin{equation*}
\int\!\!\!\int_{\widetilde{Q}_{0}}\tilde{u}^r\,\dx\,\dt\leq2^{\frac{r}{2-p}-1}\left[\int_{B_{1+\e}}|\tilde{u}(x,0)|^{r}\,\dx+
\frac{1}{\frac{r}{2-p}+1}C_{r}(1,1+\e,p,n)^{\frac{r}{2-p}}\right].
\end{equation*}
We now join the previous estimates and we obtain:
\begin{equation*}
\begin{split}
\sup_{x\in B_{1-\e},\,t\in[\e^p,1]}\tilde{u}(x,t)
&\leq\frac{C(r,p,n)}{\e^{\frac{p(q+1)}{r+(p-2)q}}}\left\{
\left[2^{\frac{r}{2-p}-1}\int_{B_{1+\e}}\tilde{u}(x,0)^{r}\,\dx\right]^{\frac{1}{r+(p-2)q}}\right.\\
&+\left.\left[\omega_{n}+2^{\frac{r}{2-p}-1}\frac{C_{r}(1,1+\e,p,n)}{1+\frac{r}{2-p}}\right]^{\frac{1}{r+(p-2)q}}\right\}\\
&=\widetilde{C}_{1,\e}\left[\int_{B_{1+\e}}\tilde{u}(x,0)^{r}\,\dx\right]^{\frac{1}{r+(p-2)q}}+\widetilde{C}_{2,\e}.
\end{split}
\end{equation*}
Then we rescale back from $\tilde{u}$ to the initial solution $u$.
From the last estimate, we get
\begin{equation*}
\sup_{x\in B_{1-\e},\,t\in[\e^p,1]}Ku(x\rho,\tau
t)\leq\widetilde{C}_{1,\e}K^{\frac{r}{r+(p-2)q}}
\left[\int_{B_{1+\e}}\tilde{u}(x,0)^{r}\,\dx\right]^{\frac{1}{r+(p-2)q}}+\widetilde{C}_{2,\e},
\end{equation*}
or, after setting $s=\tau t$ and $y=x\rho$, we can write
equivalently:
\begin{equation*}
\sup_{y\in B_{(1-\e)\rho},\,s\in(\tau\e^{p},\tau)}u(y,s)
\leq\widetilde{C}_{1,\e}K^{\frac{r}{r+(p-2)q}-1}\rho^{-\frac{n}{r+(p-2)q}}
\left[\int_{B_{(1+\e)\rho}}u_{0}(y)^{r}\,\dy\right]^{\frac{1}{r+(p-2)q}}+\frac{\widetilde{C}_{2,\e}}{K}.
\end{equation*}
Replacing $K$ with $\rho$ and $\tau$ as in (\ref{resc}), we see that
the term in $\rho$ disappears, so that
\begin{equation*}
\sup_{y\in
B_{(1-\e)\rho},\,s\in(\tau\e^{p},\tau)}u(y,s)\leq\frac{\widetilde{C}_{1,\e}}{\tau^{\frac{n}{rp+(p-2)n}}}
\left[\int_{B_{(1+\e)\rho}}|u_{0}(x)|^{r}\,\dx\right]^{\frac{p}{rp+(p-2)n}}
+\widetilde{C}_{2,\e}\left(\frac{\tau}{\rho^p}\right)^{\frac{1}{2-p}}.
\end{equation*}
We finally let $t=\tau$, $R_0=\rho(1+\e)$, $R=\rho(1-\e)$,
$C_1=\widetilde{C}_{1,\e}$, $C_2=\widetilde{C}_{2,\e}$ and replace
$q$ by its value $n/p$, in order to get the notations of Theorem
\ref{mainupper}. The proof of the main quantitative estimate
(\ref{mainsmooth}) is concluded once we analyze the constants
\begin{equation*}
C_1=\frac{K_{1}(r,p,n)}{\e^{\frac{p(q+1)}{r+(p-2)q}}}, \ \
C_2=\frac{K_{2}(r,p,n)}{\e^{\frac{p(q+1)}{r+(p-2)q}}}\left[K_3(r,p,n)\e^{\left(\frac{2-p}{r}-p\right)\frac{r}{2-p}}+K_{4}(p,n)\right]^{\frac{1}{r+(p-2)q}},
\end{equation*}
where $K_i(r,p,n)$, $i=1,2,3$, are positive constants independent on
$\e$, and
\begin{equation*}
K_4(p,n)=\omega_n \ \mbox{if} \ r>1, \quad
K_4(p,n)=\omega_n+\frac{2-p}{3-p}\,2^{n(2-p)+\frac{p-1}{2-p}}, \
\mbox{if} \ r=1.
\end{equation*}
We conclude by letting $\e=(R_0-R)/(R_0+R)$ in the formulas above.
\end{proof}


\section{Large solutions for the parabolic $p$-Laplacian equation}\label{sec.largesol}

We call \textit{continuous large solution} of the $p$-Laplacian
equation, a function $u$ solving the following boundary problem
\begin{equation*}
\left\{\begin{array}{lll}
u_t=\Delta_{p}u\,, & \mbox{in} \ Q_T,\\
u(x,t)=+\infty\,, & \mbox{on} \ \partial\Omega\times(0,T),\\
u(x,t)<+\infty\,, & \mbox{in} \ Q_T,
\end{array}\right.
\end{equation*}
in the sense that $u$ is satisfies the local weak formulation
\eqref{weak} in the cylinder $Q_T=\Omega\times(0,T)$, where $\Omega$
is a domain in $\RR^n$, is continuous in $Q_T$, and it takes the
boundary data in the continuous sense, that is $u(x,t)\to +\infty$
as $x\to\partial\Omega$. Note that there is no reference to the
initial data in this definition. If initial data are given, they
will be taken as initial traces as mentioned before. In the sequel
we will assume that $\Omega$ is bounded and has a smooth boundary
but such requirement is not essential and is done here for the sake
of simplicity.

Using the results of Theorem \ref{mainupper0} we are ready to
establish the existence of large solutions for general bounded
domains $\Omega$. We have the following:
\begin{theorem}\label{large-parabolic}
Let either $1<p\le p_c$ and $r>r_c$, or $p_c<p<2$ and $r\ge 1$.
Given $u_0\in L^r_{loc}(\Omega)$, there exists a continuous large
solution of the $p$-Laplacian equation in $\Omega$ having $u_0$ as
initial data. Such solutions are moreover H\"older continuous in the
interior, and satisfy the local smoothing effect of Theorem
\ref{mainupper0}.
\end{theorem}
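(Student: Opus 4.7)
The plan is to obtain the large solution as the monotone increasing limit of bounded Dirichlet solutions with larger and larger finite boundary data, and then to use the local smoothing effect of Theorem \ref{mainupper0} to show the limit is finite inside $\Omega$. The monotone approximation by finite boundary data is a classical strategy for large solutions, and its success here hinges precisely on the fact that the right-hand side of \eqref{mainsmooth0} is independent of the behaviour of $u$ near $\partial\Omega$.

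Concretely, for each integer $k\geq 1$ I would set $u_0^{(k)}=\min(u_0,k)$ (or a smoothed version) and consider the Dirichlet problem in $Q_T=\Omega\times(0,T)$ with initial datum $u_0^{(k)}$ and lateral boundary condition $u=k$ on $\partial\Omega\times(0,T)$. By standard theory for the $p$-Laplacian with bounded Dirichlet data (cf.\ \cite{DiB}), this problem admits a unique bounded weak solution $u_k$, which is H\"older continuous up to the lateral boundary, and by the comparison principle the family $\{u_k\}$ is monotone non-decreasing in $k$. Applying Theorem \ref{mainupper0} to each $u_k$ at an arbitrary point $x_0\in\Omega$ with $R=\dist(x_0,\partial\Omega)$ yields
\begin{equation*}
u_k(x_0,t)\leq \frac{C_1}{t^{n\vartheta_r}}\left[\int_{B_R(x_0)}|u_0|^r\dx\right]^{p\vartheta_r}
+C_2\left(\frac{t}{R^p}\right)^{\frac{1}{2-p}},
\end{equation*}
with constants independent of $k$. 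Since the right-hand side depends only on the local $L^r$-norm of the common initial datum and on $R$, we obtain a uniform local upper bound on compact subsets of $Q_T$.

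With these uniform local $L^\infty$ bounds in hand, the interior H\"older regularity recalled in Appendix A2 (cf.\ \cite{DiB}) provides uniform local H\"older estimates for the $u_k$'s, and Ascoli--Arzel\`a combined with the monotonicity of the sequence produces a continuous limit $u_\infty=\lim_k u_k$ which is H\"older continuous on compact subsets of $Q_T$. Local energy estimates (as in Lemma \ref{space-timeenergy}) give $\nabla u_k\to\nabla u_\infty$ strongly in $L^p_{loc}$, so that $u_\infty$ is a local weak solution of \eqref{PLE} in the sense of Definition \ref{local.weak}; moreover the local smoothing bound \eqref{mainsmooth0} is inherited by $u_\infty$ by monotone convergence, and by the remark following Theorem \ref{mainupper0} the limit is locally H\"older continuous. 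Finally, since $u_k\geq u_0^{(k)}\to u_0$ in $L^r_{loc}$ and $u_k$ are monotone, $u_\infty$ takes $u_0$ as its initial trace.

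The main obstacle is verifying the continuous boundary condition $u_\infty(x,t)\to+\infty$ as $x\to\partial\Omega$, because nothing in the above construction forces divergence at $\partial\Omega$. The idea is to construct a sub-barrier of separate-variable type $\underline{u}(x,t)=\phi(t)\,F\bigl(\dist(x,\partial\Omega)\bigr)$ modelled on the sharp asymptotic $t^{1/(2-p)}\dist(x,\partial\Omega)^{p/(p-2)}$ announced in Theorem \ref{asympt.large}. Working in a tubular neighbourhood of $\partial\Omega$ (where $\Omega$ has smooth boundary), the computation reduces essentially to a one-dimensional ODE on $F$ which admits the explicit exponent $p/(p-2)$, and one verifies directly that $\underline{u}$ is a local subsolution of \eqref{PLE} that blows up on $\partial\Omega$ and is bounded on $\{\dist(x,\partial\Omega)\geq\delta\}$ for every $\delta>0$. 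Hence for every $k$ larger than the supremum of $\underline{u}$ on a suitable compact set away from the boundary, comparison on the subdomain $\{\dist(x,\partial\Omega)<\delta\}$ gives $u_k\geq\underline{u}$ there; passing to the limit yields $u_\infty\geq\underline{u}$, and therefore $u_\infty\to+\infty$ on $\partial\Omega$, as required.
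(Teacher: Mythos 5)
Your overall strategy coincides with the paper's: monotone approximation by Dirichlet solutions with lateral data $k$, uniform interior bounds from Theorem \ref{mainupper0} (which is exactly how the paper gets $u<\infty$ inside $Q_T$), and a comparison with a separate-variables object $\sim t^{1/(2-p)}\dist(x,\partial\Omega)^{-p/(2-p)}$ to force blow-up at $\partial\Omega$. Truncating the initial datum is a harmless variant. However, the boundary step as you have written it contains a genuine gap. You claim that ``comparison on the subdomain $\{\dist(x,\partial\Omega)<\delta\}$ gives $u_k\ge\underline u$ there'' for $k$ large. This cannot be right for any fixed $k$: by comparison with the constant solution $k$ one has $u_k\le k$ throughout $Q_T$, while $\underline u=+\infty$ on $\partial\Omega$, so $u_k\ge\underline u$ fails near the outer boundary. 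More concretely, the comparison principle on the strip $\{\dist<\delta\}\times(0,T)$ requires $u_k\ge\underline u$ on its entire parabolic boundary, and this fails both on $\partial\Omega\times(0,T)$ (where $u_k=k<+\infty=\underline u$) and on the inner boundary $\{\dist=\delta\}\times(0,T)$ (where $\underline u$ is a fixed positive quantity but $u_k$ need not dominate it, e.g.\ for small $t$ when $u_0\equiv 0$). Making $k$ large does not enlarge $u_k$ on the inner boundary, so the argument does not close.

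The standard repair --- and the paper's route --- is to run the comparison at the level of the approximations rather than against the singular barrier directly: let $L_k$ solve the Dirichlet problem in $\Omega$ with initial datum $0$ and the \emph{same} lateral datum $k$; then $u_k\ge L_k$ everywhere by ordinary comparison (ordered initial data, equal boundary data), hence $u_\infty\ge L:=\lim_k L_k$, and $L$ is the zero-initial-data large solution, which by \cite{DL} (via the explicit radial solution $k(p)\,t^{1/(2-p)}(R-|x|)^{-p/(2-p)}$ in interior tangent balls) takes the value $+\infty$ on $\partial\Omega$ in the continuous sense. Your sub-barrier computation is still useful, but only to bound $L$ from below, not $u_k$. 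A smaller point: uniform energy bounds as in Lemma \ref{space-timeenergy} give only \emph{weak} $L^p_{loc}$ compactness of $\nabla u_k$, not strong convergence; to pass to the limit in the nonlinear flux one needs either the monotonicity/convergence-in-measure argument of Section \ref{Sec.Energy} or a Minty-type identification, which you should at least invoke.
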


\begin{proof}
We obtain first the solution by an approximation procedure. We
consider the following Dirichlet problem:
\begin{equation}
(P_n)\left\{\begin{array}{ll}u_t=\Delta_{p}u, \ \hbox{in} \ Q_T,\\
u(x,t)=n, \ \hbox{on} \ \partial\Omega\times(0,T),\\
u(x,0)=u_{0}(x), \ \hbox{in} \ \Omega,\end{array}\right.
\end{equation}
which admits a unique continuous weak solution $u_n$, by well
established theory (see e.g. \cite{DiB}). It is easy to observe that
the unique solution $u_n$ of $(P_n)$ becomes a subsolution for the
problem $(P_{n+1})$. Since any subsolution is below any solution of
the standard Dirichlet problem, we find that $u_n\leq u_{n+1}$ in
$Q_T$. By monotonicity we can therefore define the pointwise limit
$u(x,t)=\lim\limits_{n\to\infty}u_{n}(x,t)$. Moreover, $u_n$
satisfies the local bounds for the gradient, Theorem
\ref{main-grad}, since any weak solution is in particular a local
weak solution. Using the energy estimates of Theorem
\ref{main-grad}, it is then easy to check that the sequence
$\{|\nabla u_{n}|\}$ is uniformly bounded in $L^{p}_{loc}(Q_T)$,
independently on $n$, hence it converges weakly in this space to a
function $v$. By standard arguments $v=\nabla u$. Next, we write the
local weak formulation for $u_n$, on any compact
$K\times[t_1,t_2]\subset Q_T$:
\begin{equation*}
\int_{K}u_{n}(t_2)\varphi(t_2)\,\dx-\int_{K}u_{n}(t_1)\varphi(t_1)\,
\dx=-\int_{t_1}^{t_2}(u_{n}\varphi_t+|\nabla
u_{n}|^{p-2}\nabla u_n\cdot\nabla\varphi)\,\dx\,\dt,
\end{equation*}
for any test function as in Definition \ref{local.weak}. We can pass
to the limit as $n\to\infty$ by the previous observations and the
monotone convergence theorem, so that the limit $u$ satisfies the
local weak formulation \eqref{weak}. From our local smoothing effect
and Dini's Theorem, we deduce that $u_n\to u$ locally uniformly.

Moreover, $u(x,t)\to+\infty$ as $x\to\partial\Omega$; the fact that
the boundary data is taken in the continuous sense follows from
comparison with the solution of the same problem with initial data
0, which has the separate variables form and takes boundary data in
the continuous sense, cf. \cite{DL}. The last condition is that
$u(x,t)<+\infty$ in $Q_T$; but this follows directly from Theorem
\ref{mainupper0} by our assumptions. Hence, $u$ is a H\"older
continuous large solution for the $p$-Laplacian equation.
\end{proof}

\noindent {\bf Remark.} Large solutions are a typical feature of
fast diffusion equations. We recall that in the case of the fast
diffusion equation $u_t=\Delta u^m$ with $0<m<1$, the theory of
large solutions can be developed as a particular case of the theory
of solutions with general Borel measures as initial data constructed
by Chasseigne and V\'azquez in \cite{ChV} with the name of {\sl
extended continuous solutions}. The existence and uniqueness of
large solutions has been completely settled in that paper for
$m_c=(n-2)/n<m<1$. For $m<m_c$, a general uniqueness result of such
solutions is still open. A similar approach can be applied to the
fast $p$-Laplacian equation considered in this paper, but the
detailed presentation entails modifications that deserve a careful
presentation.

\medskip

We next establish a sharp space-time asymptotic estimate, which also
gives the blow-up rate of large solutions near the boundary or for
large times.

\begin{theorem}\label{asympt.large}
Let $u$ be a continuous large solution with initial datum $u_0$, in
the conditions of Theorem \ref{large-parabolic}. We have the
following bounds:
\begin{equation}\label{asympt.large.eq}
\frac{C_0\,t^{\frac{1}{2-p}}}{{\rm
dist}(x,\partial\Omega)^{\frac{p}{2-p}}}\leq
u(x,t)\leq\frac{C_1\,t^{\frac{1}{2-p}}}{{\rm
dist}(x,\partial\Omega)^{\frac{p}{2-p}}}+C_2,
\end{equation}
for some positive constants $C_0$, $C_1$ and $C_2$. In particular
$u=O({\rm dist}(x,\partial\Omega)^{\frac{p}{p-2}})$ as
$x\to\partial\Omega$.
\end{theorem}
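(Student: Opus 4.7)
The two inequalities in \eqref{asympt.large.eq} can be handled separately, the upper one via a direct application of the local smoothing effect, the lower one via comparison with a self-similar minimal large solution.

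\textbf{Upper bound.} The natural move is to apply Theorem \ref{mainupper0} at the point $(x,t)$ with $R = \dist(x,\partial\Omega)/2$, which is admissible since then $B_R(x) \subset \Omega$. The estimate
\[
u(x,t) \le \frac{C}{t^{n\vartheta_r}} \|u_0\|_{L^r(B_R(x))}^{pr\vartheta_r} + C \left(\frac{t}{R^p}\right)^{\frac{1}{2-p}}
\]
contributes a second summand of the exact form $C_1 t^{1/(2-p)} \dist(x,\partial\Omega)^{-p/(2-p)}$ up to an absolute factor. The first summand stays bounded on any compact time slab $[t_0,T]$ since $u_0 \in L^r_{loc}(\Omega)$, and the interior H\"older regularity furnished by Theorem \ref{large-parabolic} rules out blow-up of $u$ in compact subsets of $Q_T$; together they let one absorb this first summand into a single additive constant $C_2$.

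\textbf{Lower bound.} Let $U_\infty$ be the large solution produced by Theorem \ref{large-parabolic} with zero initial datum. The first step is to show that $U_\infty$ is separable,
\[
U_\infty(x,t) = t^{1/(2-p)} V(x), \qquad V(x) := U_\infty(x,1),
\]
by exploiting the scaling $u(x,t)\mapsto \lambda^{1/(2-p)}u(x,t/\lambda)$ preserved by the equation and by the monotone construction $u_n \uparrow U_\infty$ (with $u_n = n$ on $\partial\Omega$ and $u_n(\cdot,0) = 0$), which also yields uniqueness of the minimal large solution. The profile $V$ then satisfies the stationary problem $\Delta_p V = V/(2-p)$ in $\Omega$ with $V = +\infty$ on $\partial\Omega$. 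A local separation-of-variables computation near $\partial\Omega$, using $d(x):=\dist(x,\partial\Omega)$ as the normal coordinate, matches the leading singular terms and identifies the sharp boundary rate $V(x) \sim c_V \, d(x)^{-p/(2-p)}$ as $x \to \partial\Omega$. Finally, since $u_0 \ge 0 = U_\infty(\cdot,0)$ and both $u$ and $U_\infty$ blow up at $\partial\Omega$, passing the comparison principle through the approximating Dirichlet problems of Theorem \ref{large-parabolic} yields $u(x,t) \ge U_\infty(x,t) = t^{1/(2-p)} V(x)$ throughout $Q_T$, and the lower bound in \eqref{asympt.large.eq} follows.

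\textbf{Main obstacle.} The delicate part is the existence and sharp asymptotic analysis of $V$: one needs an explicit upper barrier of the form $C\,d(x)^{-p/(2-p)}$ in a tubular neighborhood of $\partial\Omega$ to keep the approximants from blowing up in the interior, and a matching lower barrier to force the optimal exponent $-p/(2-p)$, together with uniformity of these rates along a smooth boundary. Once the boundary rate of $V$ is in hand, the separability and the comparison step are largely routine.
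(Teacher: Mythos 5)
Your proposal is correct and follows essentially the same route as the paper: the upper bound is a direct application of Theorem \ref{mainupper0}, and the lower bound comes from comparison with the separable large solution with zero initial datum, whose elliptic profile blows up like $\dist(x,\partial\Omega)^{-p/(2-p)}$. The one point you flag as the main obstacle --- the sharp boundary rate of the elliptic large profile --- is settled in the paper by exhibiting the explicit radial solution $k(p)\,t^{1/(2-p)}d(x)^{-p/(2-p)}$ on a ball and invoking the elliptic large-solution asymptotics of Diaz--Letelier for general smooth domains.
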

\begin{proof}
The upper bound comes from a direct application of the Local
Smoothing Effect, Theorem \ref{mainupper0}. For the lower bound, we
compare with the continuous large solution with initial datum
$u_0\equiv0$. We look for a separate variable solution of the form
$u(x,t)=\phi(x)t^{1/(2-p)}$, hence $\phi$ is a large solution of the
elliptic problem:
\begin{equation*}
\left\{\begin{array}{ll}\Delta_{p}\phi=\lambda\phi, & \quad \hbox{in} \
\Omega\\ \phi=+\infty &\quad \hbox{on} \ \partial\Omega.\end{array}\right.
\end{equation*}
Analyzing this problem for a ball $\Omega=B_R$, we find that there
exists a unique radial large solution, namely
\begin{equation*}
u(x,t)=k(p)\,\frac{t^{\frac{1}{2-p}}}{d(x)^{\frac{p}{2-p}}}, \qquad
k(p)^{2-p}=\frac{2(p-1)p^{p-1}}{(2-p)^p}\,,
\end{equation*}
where $d(x)=R-|x|$. This precise expression does not depend on the
radius of the ball, and it is in fact true to first approximation
for the large solution of the elliptic problem in any bounded domain
with a $C^1$ boundary, cf. \cite{DL}.
\end{proof}

The existence and properties of large solutions will be used to
conclude the proof of Theorem \ref{mainupper}. Such conclusion
consists in passing from a bounded local strong solution to a
general local strong solution. This will be done essentially by
showing that any local strong solution can be bounded above by a
large solution in a small ball around the point under consideration,
with the same local initial trace $u_0$. The difficult technical
problem is that we have to take into account the boundary data in
the comparison. The way out of this difficulty is a modification of
the construction of large solutions that leads to the concept of
``extended large solutions''. Such ideas are originated in
\cite{ChV} for the fast diffusion equation.

\medskip

\noindent {\bf Extended large solutions.} We now present an
alternative approach to the construction of continuous large
solutions that will be needed in the sequel to establish some
technical results. We will only need the construction on a ball.
Take $0<R<R_1$, let $B_R\subset B_{R_1}\subset\Omega$ and
$A=B_{R_1}\setminus B_R$, and consider the following family of
Dirichlet problems
\begin{equation*}
(\mathbb{D}_n)\quad\left\{\begin{array}{ll}
\partial_t v_n=\Delta_{p}v_n, & \mbox{in} \ B_{R_1}\times(0,T),\\
v_n(x,t)=n, & \mbox{on} \ \partial B_{R_1}\times(0,T),\\
v_n(x,0)=\left\{\begin{array}{ll}
u_{0}(x),\\
n, \end{array}\right.
&\hspace{-2mm}
\begin{array}{ll}
\mbox{in} \ B_R,\\
\mbox{in} \ A.
\end{array}
\end{array}\right.
\end{equation*}
Let $v_n(x,t)$ be the unique, continuous local strong solution to
the above Dirichlet problem, corresponding to the initial datum
$u_0\in L^r_{loc}(B_{R})$. Such solutions exist for all $0<t<\infty$
and form a family of locally bounded solutions that satisfy the
local smoothing effect of Theorem \ref{mainupper}, since they are
continuous. We stress that the initial datum $v_0$ need not to have
the gradient well defined on $B_{R}$, but in the annulus $A$ we have
$\nabla v_0\equiv 0$. As in the proof of Theorem
\ref{large-parabolic}, we see that the sequence $\{v_n\}$ is
monotone increasing, $v_n(x,t)\le v_{n+1}(x,t)$ for a.e. $(x,t)$,
and converges pointwise to a function $V$ which is a solution of the
fast $p$-Laplacian equation in $B_{R}\subset B_{R_1}$, and that we
will call \textit{extended large solution}. We next investigate the
behaviour of the extended large solution $V$ in the annulus
$A=B_{R_1}\setminus B_R$.
\begin{proposition}
Under the running assumptions on $v_n$ and $V$, The extended large solution satisfies
\begin{itemize}
\item[(i)] The restriction of V to $B_R$ is a continuous large solution in the sense specified at the beginning of this section, and of Theorem \ref{large-parabolic}.
\item[(ii)] V is ``large'' when extended to the annulus $A$, in the
sense that
\[
V(x,t)=\lim_{n\to\infty}v_n(x,t)=+\infty\qquad\mbox{for all \ }(x,t)\in A\times(0,T)
\]
and the divergence is uniform.
\item[(iii)] The initial trace $V_0:=\lim\limits_{t\to 0^+}V(t,\cdot)=u_0$ in $B_R$,
while $V_0=+\infty$ in $A$.
\end{itemize}
\end{proposition}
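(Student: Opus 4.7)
The cleanest part is \emph{(ii)}. Set $z_n:=n-v_n$. By the translation invariance of the $p$-Laplacian, $z_n\ge 0$ is a local strong solution on $B_{R_1}\times(0,T)$ with zero Dirichlet data on $\partial B_{R_1}$ and initial datum $(n-u_0)\chi_{B_R}$, which is supported strictly inside $B_R$. For any $x\in A$, put $\rho:=\mathrm{dist}(x,\partial A)>0$, so that $B_\rho(x)\subset A$ and the initial datum of $z_n$ vanishes identically on $B_\rho(x)$. Applying Theorem~\ref{mainupper0} to $z_n$ at $x$ with inner radius $\rho$ kills the data-dependent term and leaves only the universal one:
\[
0\le z_n(x,t)\le C\Big(\frac{t}{\rho^p}\Big)^{1/(2-p)},
\]
with $C$ depending only on $p$ and the dimension, i.e.\ independent of the integer $n$. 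Therefore $v_n(x,t)\ge n-C(t/\rho^p)^{1/(2-p)}\to+\infty$, uniformly for $x$ in any compact subset of $A$, which is exactly (ii).

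Parts \emph{(iii)} and the easier pieces of \emph{(i)} rely on the same smoothing applied to $v_n$ from the other side. On compact subsets of $B_R\times(0,T)$, Theorem~\ref{mainupper0} applied to $v_n$ at $x\in B_R$ with inner radius $\rho\le\mathrm{dist}(x,\partial B_R)$ gives a bound independent of $n$, since on $B_\rho(x)\subset B_R$ the initial datum of $v_n$ coincides with $u_0$. Hence $V$ is finite on compacta of $B_R\times(0,T)$; the local weak formulation is inherited from those of $v_n$ by monotone convergence together with uniform $L^p_\mathrm{loc}$ gradient estimates; and continuity of $V$ follows by combining this uniform bound with DiBenedetto H\"older regularity, Arzel\`a--Ascoli, and monotonicity (Dini). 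The initial trace $V_0=u_0$ on $B_R$ comes from the $L^r_\mathrm{loc}$-stability of Theorem~\ref{main-normr} applied to the $v_n$ with a modulus at $t=0^+$ uniform in $n$, while on $A$ the identity $V_0=+\infty$ is immediate from $v_n(\cdot,0)=n$ combined with (ii).

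The main obstacle is the large-solution behavior $V(x,t)\to+\infty$ as $x\to\partial B_R$ from inside $B_R$. My plan is to compare $V$ with the continuous large solution $U_0$ on $B_R$ with initial datum $u_0$ from Theorem~\ref{large-parabolic}, written as $U_0=\lim_{m\to\infty}u_m$, where $u_m$ is the Dirichlet solution on $B_R$ with boundary value $m$ and initial value $u_0$. If one can show $v_n\ge u_m$ on $B_R\times(0,T)$ whenever $n\ge n_0(m)$, then $V\ge U_0$, and the sharp asymptotics of $U_0$ in Theorem~\ref{asympt.large} force $V\to+\infty$ at $\partial B_R$. By the parabolic maximum principle on $B_R$, the comparison $v_n\ge u_m$ reduces to $v_n\ge m$ on $\partial B_R\times(0,T)$. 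This is the hard step: the lower bound $v_n\ge n-O(1)$ from (ii) only holds at points of $A$ with fixed positive distance to $\partial B_R$, and it must be transported through a boundary layer of vanishing width $d(n)\sim n^{-(2-p)/p}$ down to the interface itself. The natural tools are the H\"older regularity of $v_n$ across $\partial B_R$ (via the rescaling $\hat v_n=v_n/n$) together with the intrinsic Harnack inequality (Theorem~\ref{FBH}) on small balls straddling $\partial B_R$; the delicate point is that the intrinsic constants degenerate as the amplitude $n$ blows up, so one must balance the growth of $v_n$ with $n$ against this degeneration to ensure the resulting lower bound still diverges.
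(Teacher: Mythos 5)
Your proof of part (ii) is correct and in fact cleaner than the paper's: passing to $z_n=n-v_n$ (the equation is invariant under $u\mapsto c-u$) and applying the local smoothing effect on a ball $B_\rho(x)\subset A$ where the initial trace of $z_n$ vanishes gives $v_n(x,t)\ge n-C_2(t/\rho^p)^{1/(2-p)}$ with $C_2=C_2(n,p,r)$ directly, whereas the paper obtains the same conclusion by a longer route (radially symmetric subsolutions $\widetilde{v_n}$, the uniform local gradient bound \eqref{gradientmain.n}, a flux computation, and the radial monotonicity of $\widetilde{v_n}$). Note only that you do not need $z_n\ge 0$, and the smoothing effect is legitimately applicable here because the $v_n$ are continuous, hence locally bounded, so no circularity with Section \ref{EOP} arises. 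Your sketches of (iii) and of the finiteness/continuity of $V$ inside $B_R$ are also fine.

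The genuine gap is in part (i): the blow-up of $V$ at $\partial B_R$ from inside. You correctly identify that your reduction --- proving $v_n\ge m$ on $\partial B_R\times(0,T)$ --- cannot be reached from (ii), because the lower bound $v_n\ge n-C(t/\rho^p)^{1/(2-p)}$ degenerates precisely as $\rho=\dist(x,\partial B_R)\to 0$ (it becomes vacuous once $\rho\lesssim (t/n^{2-p})^{1/p}$), and the H\"older/intrinsic-Harnack ``transport'' you propose is not carried out; its constants depend on local norms of the solution, which blow up with $n$ across the interface, so there is no reason the balance works out. The paper avoids estimating anything on $\partial B_R$ itself: it compares $\widetilde{v_n}$ (hence $v_n$) with the continuous large solution $L_{R'}$ of the \emph{slightly larger} ball $B_{R'}$, $R<R'<R_1$, with zero initial trace. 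The comparison is performed on $B_{R'-\varepsilon}\times(0,T]$, whose lateral boundary $\partial B_{R'-\varepsilon}$ lies at a \emph{fixed} positive distance inside $A$, where (ii) gives $\widetilde{v_n}\ge n-C_\varepsilon$ uniformly while the smoothing effect bounds $L_{R'}$ by a constant $C'_\varepsilon$; hence $\widetilde{v_{n}}\ge L_{R'}$ there for $n$ large, and in the limit $V\ge L_{R'}$ on $B_{R'-\varepsilon}$. One then lets $\varepsilon\to 0$ and finally $R'\to R$, using the explicit scaling $L_{R'}(x,t)=\lambda^{p/(2-p)}L_R(\lambda x,t)$, $\lambda=R/R'$, to conclude $V\ge L_R$ in $\overline{B_R}\times(0,T]$ and therefore $V(x,t)\to+\infty$ as $x\to\partial B_R$. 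If you replace your comparison on $\partial B_R$ by this comparison on spheres strictly inside $A$ against large solutions of larger balls, your argument closes.
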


\noindent\textbf{Remark. }  The above result somehow proves the
sharpness of Theorem \ref{large-parabolic} and motivates the
terminology ``extended large solution''. Obviously, $V_0$ is not in
$L^r_{loc}$, and the smoothing effect cannot hold in $A$.

\noindent {\it Proof.~}  We only need to prove (i) and (ii), since
(iii) easily follows by construction. Parts (i) and (ii) follow from
local $L^1$ estimates together with a comparison with suitable
radially symmetric subsolutions.

\noindent\textsc{Radially symmetric subsolutions.} We define a
special class of subsolutions $\widetilde{v_n}$: consider the
problem $(\mathbb{D}_n)$, repeat the same construction made for
$v_n$, but now we choose $u_0=0$ in $B_R$. Obviously,
$\widetilde{v_n}\le v_n$ in $B_{R_1}$, $\widetilde{v_n}\le
\widetilde{v_{n+1}}$, and they are all radially symmetric. Moreover,
by the maximum principle we know that each function
$\widetilde{v_n}$ is nondecreasing along the radii, thus
\begin{equation}\label{radial}
\int_{B_{r}(x_0)}\widetilde{v_n}(x,t)\varphi(x)\dx \leq
    \widetilde{v_n}(\overline{x},t)\int_{B_{r}(x_0)}\varphi(x)\dx,
\end{equation}
where $\overline{x}$ is the point of $\overline{B_r(x_0)}$ with
maximum modulus, since $\widetilde{v_n}$ is radially symmetric in
the bigger ball $B_{R_1}$ and $\varphi\ge 0$ is a suitable test
function that will be chosen later.

\medskip

\noindent\textsc{$L^1$ estimates.}  These estimates are possible
thanks to the local $L^p$ bounds \eqref{gradientmain} valid for the
gradient of the solution $\widetilde{v_n}$ to the Dirichlet problems
$\mathbb{D}_n$, namely, for any small ball
$B_{r+\varepsilon}(x_0)\subset A$ and
\begin{equation}\label{gradientmain.n}
\int_{B_{r}(x_0)}|\nabla
\widetilde{v_n}(x,t)|^{p}\,\dx \leq c_0 \int_{B_{r+\varepsilon}(x_0)}|\nabla
\widetilde{v_n}(x,0)|^{p}\,\dx + c_1t^{\frac{p}{2-p}}=c_1t^{\frac{p}{2-p}}\,,
\end{equation}
the last equality holds since by definition the gradient of the
initial data is  zero in $A$.

We now fix a time $t\in(0,T]$, a point $x_0\in A$ and a ball
$B_{r+\varepsilon}(x_0)\subset A$. We choose a suitable test
function $\varphi$ supported in $B_{r}(x_0)$, and we calculate
\begin{equation}\begin{split}
\left|\frac{\rd}{\dt}\int_{B_{r}(x_0)}\widetilde{v_n}(x,t)\varphi(x)\dx\right|
    &= \left|\int_{B_{r}(x_0)}\Delta_p\big(\widetilde{v_n}(x,t)\big)\varphi(x)\dx\right|\\
    &= \left|-\int_{B_{r}(x_0)}\big|\nabla \widetilde{v_n}(x,t)\big|^{p-2}\nabla
        \widetilde{v_n}(x,t)\cdot\nabla\varphi(x)\dx\right|\\
    &\le \int_{B_{r}(x_0)}\big|\nabla \widetilde{v_n}(x,t)\big|^{p-1}\,\big|\nabla\varphi(x)\big|\dx\\
    &\le \left[\int_{B_{r}(x_0)}\big|\nabla\varphi(x)\big|^p\dx\right]^{\frac{1}{p}}
         \left[\int_{B_{r}(x_0)}\big|\nabla \widetilde{v_n}(x,t)\big|^p\dx\right]^{\frac{p-1}{p}}\\
    &\le C_\varphi\,c_1 t^{\frac{p-1}{2-p}}:=\,C\,t^{\frac{p-1}{2-p}},
\end{split}
\end{equation}
where in the second line we performed an integration by parts that
can be justified in view of the H\"older regularity of the solution
and by Corollary \ref{CorEnergy}. In the fourth line we have used
H\"older inequality, and in the last step the inequality
\eqref{gradientmain.n} and the fact that the integral of the test
function is bounded. We integrate such differential inequality over
$(0,t)$ to get
\begin{equation}
\left|\int_{B_{r}(x_0)}\widetilde{v_n}(x,t)\varphi(x)\dx-\int_{B_{r}(x_0)}\widetilde{v_n}(x,0)\varphi(x)\dx\right|\le \,C \,t^{\frac{1}{2-p}}.
\end{equation}
Taking into account that $\widetilde{v_n}(x,0)=n$ and \eqref{radial}, we obtain
\begin{equation}
\widetilde{v_n}(\overline{x},t)\int_{B_{r}(x_0)}\varphi(x)\dx
    \ge \int_{B_{r}(x_0)}\widetilde{v_n}(x,t)\varphi(x)\dx
    \ge n\int_{B_{r}(x_0)}\varphi(x)\dx - C \,t^{\frac{1}{2-p}}\,,
\end{equation}
hence $\widetilde{v_n}(\overline{x},t)\to \infty$ as $n\to \infty$,
since $\overline{x}$ does not depend on $n$. Since $\widetilde{v_n}$
is radially symmetric, we have proved that $\widetilde{v_n}(x,t)\to
\infty$ as $n\to \infty$ for any $|x|=|\overline{x}|$. We can repeat
the argument for any small ball $B_r(x_0)\subset A$, and we obtain
that $\widetilde{v_n}(x,t)\to \infty$ for any $x\in A$ and $t>0$,
but not for $|x|=R$. This result extends to $v_n\ge \widetilde{v_n}$
by comparison.

\medskip

\noindent\textsc{Behaviour of $V$ in $\overline{B_R}$.}
Let $0<R<R'<R_1$ and let $L_{R'}$ be the continuous large solution
in $B_{R'}$ whose initial trace is $0$ in $B_{R'}$. Since $L_{R'}$
satisfies the local smoothing effect \eqref{mainsmooth}, we can
compare it on a smaller ball say $B_{R'-\varepsilon}$, with a
suitably chosen $\widetilde{v_n}$, namely
\[
\forall \varepsilon>0 \qquad \exists n_\varepsilon\quad\mbox{such
that}\quad \widetilde{v_{n_\varepsilon}}(x,t)\ge
L_{R'}(x,t)\quad\mbox{for any }(x,t)\in
B_{R'-\varepsilon}\times(0,T]\,.
\]
This implies that
$\widetilde{V}:=\lim\limits_{n\to\infty}\widetilde{v_n}\ge L_{R'}$
in $B_{R'-\varepsilon}\times (0,T]$ for any $\varepsilon>0$. Letting
now $\varepsilon\to 0$, we obtain that $\widetilde{V}\ge L_{R'}$ in
$B_{R'}\times (0,T]$ and this holds for any $R'\in(R,R_1)$.

By scaling we can identify different continuous large solutions in
different balls, namely let $L_R$ and $L_{R'}$ be the large
solutions corresponding to the balls $B_R\subset B_{R'}$, and
\begin{equation*}
L_{R'}(x,t)=L_{R,\lambda}(x,t):=\lambda^{\frac{p}{2-p}}L_{R}\big(\lambda
x,t\big),\qquad\mbox{with }\lambda=\frac{R}{R'}<1.
\end{equation*}
It is then clear that $L_{R'}\to L_{R}$ when $R'\to R$ at least
pointwise in $\overline{B_{R}}\times(0,T]$, and this implies also
that $\widetilde{V}\ge L_{R}$ in $\overline{B_{R}}\times(0,T]$ and
in particular
\[
\lim_{x\to \partial\!B_R}\widetilde{V}(x,t)\ge\lim_{x\to
\partial\!B_R}L_R(x,t)=+\infty \qquad\mbox{in the continuous sense.}
\]
By comparison, we see that $V\ge \widetilde{V}$, hence
$\lim\limits_{x\to
\partial\!B_R}\widetilde{V}(x,t)=+\infty$ in the continuous sense.
The initial trace of $V$ in $B_R$ is $u_0\in L^r_{loc}$, thus the
local smoothing effect applies and implies, as usual, that $V$ is
locally bounded in $B_R$, therefore it is continuous. The proof is
concluded since we have proved that $V$ is an extended large
solution, in the above sense.\qed

 The uniqueness of the extended large solution is a delicate matter in general.
It is easy to show uniqueness of such solutions in a ball, but a complete
result is not known. We will not tackle this problem here.

\medskip

\section{Local boundedness for general strong solutions. End of proof of Theorem \ref{mainupper}}\label{EOP}

Let us now conclude the proof of Theorem \ref{mainupper}. The
last step in the proof consists in comparing a general (non necessarily
bounded) local strong solution $u$ with the extended large solution
$V$ that is known to satisfy the smoothing effect
\eqref{mainsmooth}.

Let $u$ be the local strong solution, $u_0\in L^r_{loc}$ be its
initial trace, as in the assumption of Theorem \ref{mainupper}. The
comparison $u\le V$ will be proved through an approximated $L^1$
contraction principle, which uses the approximating sequence $v_n$
defined above. We borrow some ideas from Proposition 9.1 of
\cite{PME}. Let us introduce a function $P\in C^1(\real)\cap
L^{\infty}(\real)$, such that $P(s)=0$ for $s\leq 0$, $P^{'}(s)>0$
for $s>0$ which is a smooth approximation of the positive sign
function
\begin{equation*}
\hbox{sgn}^{+}(s)=1 \ \hbox{if} \ s>0, \qquad \hbox{sgn}^{+}(s)=0 \
\hbox{if} \ s\leq0.
\end{equation*}
The primitive $Q(s)=\int\limits_{0}^{s}P(t)\,\dt$, is an
approximation of the positive part: $Q(s)\sim [s]^{+}$.
\begin{proposition}
Under the running notations and assumptions, the following
``approximate $L^1$ contraction principle'' holds:
\begin{equation}\label{L1.contraction}
\int\limits_{B_R}[u(x,t)-v_n(x,t)]_{+}\,\dx\leq\int\limits_{B_{R_1}}[u(x,s)-v_n(x,s)]_{+}\,\dx+C_n,
\end{equation}
where $C_n\to 0$ as $n\to\infty$.
\end{proposition}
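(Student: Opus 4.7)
The plan is a Kruzhkov-type $L^1$-contraction argument adapted to the cutoff geometry of the annulus. Fix a smooth cutoff $\zeta\in C^\infty_c(B_{R_1})$ with $0\le\zeta\le 1$, $\zeta\equiv 1$ on $B_R$, and $|\nabla\zeta|\le C/(R_1-R)$, supported on $\overline{A}=\overline{B_{R_1}\setminus B_R}$. Subtract the equations for $u$ and $v_n$ and test with $\zeta(x)P(u-v_n)$; since both $u$ and $v_n$ are local strong solutions, $u_t,\Delta_p u,(v_n)_t,\Delta_p v_n\in L^1_{loc}$ (in fact $L^2_{loc}$, by Corollary \ref{CorEnergy}), so the manipulations are rigorously justified.

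\textbf{Main calculation.} Integration over $B_{R_1}\times(s,t)$ and integration by parts in space give
\begin{equation*}
\int_{B_{R_1}}\zeta\,\big[Q(u-v_n)(t)-Q(u-v_n)(s)\big]\,\dx
=-\int_s^t\!\!\!\int_{B_{R_1}}\!\!\big(|\nabla u|^{p-2}\nabla u-|\nabla v_n|^{p-2}\nabla v_n\big)\cdot\nabla\big[\zeta P(u-v_n)\big]\,\dx\,\dt.
\end{equation*}
Expanding $\nabla[\zeta P(u-v_n)]=\zeta P'(u-v_n)\nabla(u-v_n)+P(u-v_n)\nabla\zeta$, the first piece has sign $\le 0$ by the strict monotonicity of $\xi\mapsto|\xi|^{p-2}\xi$ and $P'\ge 0$. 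Letting $P\to\mathrm{sgn}^+$ so that $Q\to[\cdot]_+$ (monotone convergence) and exploiting $0\le\zeta\le 1$ together with $\zeta\equiv 1$ on $B_R$, one arrives at
\begin{equation*}
\int_{B_R}[u(t)-v_n(t)]_{+}\,\dx
\le\int_{B_{R_1}}[u(s)-v_n(s)]_{+}\,\dx+J_n,
\end{equation*}
where $J_n:=-\int_s^t\!\int_{A}\!\big(|\nabla u|^{p-2}\nabla u-|\nabla v_n|^{p-2}\nabla v_n\big)\cdot\nabla\zeta\,\mathrm{sgn}^+(u-v_n)\,\dx\,\dt$. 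Identifying $C_n:=J_n$ matches the stated inequality.

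\textbf{The hard part: showing $J_n\to 0$.} By Cauchy-Schwarz/H\"older applied on the annulus,
\begin{equation*}
|J_n|\le\|\nabla\zeta\|_\infty\int_s^t\!\!\!\int_{A}\big(|\nabla u|^{p-1}+|\nabla v_n|^{p-1}\big)\mathbf{1}_{\{u>v_n\}}\,\dx\,\dt.
\end{equation*}
Three inputs are combined. (i) The extended-large-solution construction of Section \ref{sec.largesol} yields $v_n\to+\infty$ uniformly on $\overline{A}\times[s,t]$; hence there exists $M_n\to\infty$ with $\{u>v_n\}\cap(A\times[s,t])\subset\{u>M_n\}\cap(A\times[s,t])$. (ii) The local gradient estimate of Theorem \ref{main-grad} applied to $v_n$ is uniform in $n$ on compact subdomains of $A$, since the initial gradient $\nabla v_n(\cdot,0)\equiv 0$ on $A$ (the initial datum is the constant $n$ there), so only the inhomogeneous term $c_1 t^{p/(2-p)}$ contributes. (iii) $\nabla u\in L^p_{loc}(Q_T)$ is part of the definition of local weak/strong solution. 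Since $u(x,t)<+\infty$ a.e., $|\{u>M_n\}|\to 0$, and a H\"older argument together with the dominated convergence theorem forces $J_n\to 0$. The main technical point to watch is the equiintegrability of $|\nabla v_n|^{p-1}$ on $A$, which is precisely what the vanishing initial gradient on $A$ supplies.
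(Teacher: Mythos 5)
Your proof is correct and follows essentially the same route as the paper: a cutoff supported in the annulus, testing with an approximation of $\mathrm{sgn}^+(u-v_n)$, discarding the monotone term, and killing the remainder via the divergence $v_n\to\infty$ on $A$ combined with the uniform-in-$n$ gradient bound \eqref{gradientmain.n} (which holds precisely because $\nabla v_n(\cdot,0)\equiv 0$ on $A$). The only cosmetic differences are that the paper passes $P\to\mathrm{sgn}^+$ after sending $n\to\infty$ (so a.e.\ convergence of $P(u-v_n)$ to $0$ plus H\"older suffices, with no need for uniform divergence up to $\partial B_R$), and it takes $\varphi\equiv 1$ on the slightly larger ball $B_{R+\e}$ so that $\nabla\varphi$ is supported away from $\partial B_R$ — a refinement you may want to adopt since your $\nabla\zeta$ touches $\partial B_R$, where the uniform blow-up of $v_n$ is delicate.
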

\begin{proof} We choose a function $\varphi\in C_{0}^{\infty}(\Omega)$
such that $\varphi\equiv1$ in $B_{R+\e}\subset B_{R_1}$,
$\hbox{supp}\,\varphi\subset B_{R_1}$ and $0\leq\varphi\leq1$. We
calculate:
\begin{equation*}
\begin{split}
\frac{\rd}{\dt}\int\limits_{B_{R_1}}&Q(u-v_n)\varphi\,\dx=\int\limits_{B_{R_1}}Q^{'}(u-v_n)(\Delta_{p}u-\Delta_{p}v_n)\varphi\,\dx\\
&=\int\limits_{B_{R_1}}P(u-v_n)\,\hbox{div}(|\nabla u|^{p-2}\nabla
u-|\nabla v_n|^{p-2}\nabla v_n)\varphi\,\dx\\
&=-\int\limits_{B_{R_1}}P^{'}(u-v_n)(\nabla u-\nabla
v_n)\cdot(|\nabla u|^{p-2}\nabla u-|\nabla v_n|^{p-2}\nabla
v_n)\varphi\,\dx\\&-\int\limits_{B_{R_1}}P(u-v_n)(|\nabla
u|^{p-2}\nabla u-|\nabla v_n|^{p-2}\nabla
v_n)\cdot\nabla\varphi\,\dx= I_1+I_2,
\end{split}
\end{equation*}
where the calculations are allowed since $u$ and $v_n$ are both local
strong solutions. Taking into account the monotonicity of the
$p$-Laplace operator and the fact that $P^{'}\geq0$, we obtain that $I_1\le 0$ and
\begin{equation}\label{sect5.eq1}
\frac{\rd}{\dt}\int\limits_{B_{R_1}}Q(u-v_n)\varphi\,\dx\leq\int\limits_{A_{\e}}P(u-v_n)(|\nabla
u|^{p-1}+|\nabla v_n|^{p-1})\,|\nabla\varphi|\,\dx=I_3+I_4,
\end{equation}
since $\hbox{supp}\,\nabla\varphi\subset A_{\e}:=B_{R_1}\setminus
B_{R+\e}$. We then have:
\begin{equation*}
I_3:=\int\limits_{A_{\e}}P(u-v_n)|\nabla
u|^{p-1}\,|\nabla\varphi|\,\dx\to0, \quad \hbox{as} \
n\to\infty,
\end{equation*}
since $u(t)\in W^{1,p}_{loc}(\Omega)$ for any $t>0$, and $P(u-v_n)\to 0$ by construction. Moreover
\begin{equation*}
I_4:=\int\limits_{A_{\e}}P(u-v_n)|\nabla
v_n|^{p-1}\,|\nabla\varphi|\,\dx\leq\ck(R)\left(\int\limits_{A_{\e}}P(u-v_n)^p\,\dx\right)^{\frac{1}{p}}\left(\int\limits_{A_{\e}}|\nabla
v_n|^p\,\dx\right)^{\frac{p-1}{p}}.
\end{equation*}
Using the gradient inequality \eqref{gradientmain.n} and the fact that $P(u-v_n)\to0$ a.\,e.,
we obtain that $I_4\to 0$ as $n\to\infty$. It follows that
\begin{equation*}
\frac{\rd}{\dt}\int\limits_{B_{R_1}}Q(u-v_n)\varphi\,\dx\leq\e_n,
\end{equation*}
where $\e_n\to0$ as $n\to\infty$. An integration of the above differential inequality on $(s,t)$,
gives
\[
\int\limits_{B_{R_1}}Q\big(u(x,t)-v_n(x,t)\big)\varphi(x)\,\dx-
\int\limits_{B_{R_1}}Q\big(u(x,t)-v_n(x,t)\big)\varphi(x)\,\dx\le\e_n(t-s)\,.
\]
Letting $P$ tend to $\hbox{sgn}^{+}$ and $Q$ to $[s]^{+}$, and
taking into account the special choice of $\varphi$, we obtain
\begin{equation}\label{L1.contraction2}
\int\limits_{B_R}[u(x,t)-v_n(x,t)]_{+}\,\dx\leq\int\limits_{B_{R_1}}[u(x,s)-v_n(x,s)]_{+}\,\dx+\e_n(t-s),
\end{equation}
for any $0\leq s\leq t<T$. Since $\e_n(t-s)\leq T\e_n$, we have proved
\eqref{L1.contraction} with $C_n=T\e_n$.
\end{proof}
We put $s=0$ in \eqref{L1.contraction}, recalling that $v_n(x,0)=u_0$, and
we pass to the limit as $n\to\infty$ in the left-hand side of \eqref{L1.contraction}, to find
\begin{equation}
\int\limits_{B_R}[u(x,t)-V(x,t)]_{+}\,\dx\leq0,
\end{equation}
hence $u(x,t)\leq V(x,t)$ for a.\,e. $(x,t)\in B_R$.

Since $V$ is
locally bounded in $B_R$, satisfies the local smoothing effect
\eqref{mainsmooth} in $B_R$, and $V_0=u_0$ in $B_R$. The smoothing effect
\eqref{mainsmooth} then holds for any local strong solution $u$ with
initial trace $u_0\in L^r_{loc}$. This concludes the proof of Theorem \ref{mainupper}.\qed

\medskip

\noindent\textbf{Remarks. } (i) A posteriori, we can ``close the circle'' by proving that indeed
\textit{any local strong solution $u$ with initial trace $u_0\in L^r_{loc}$, is H\"older continuous}
(cf. Appendix A2), since it is locally bounded via the local smoothing effect of Theorem \ref{mainupper}.

(ii) The same proof applies to nonnegative strong subsolutions as in
Definition \ref{local.weak}, hence the upper bound
\eqref{mainsmooth} holds for initial traces with any sign, not only
for nonnegative. This can be done by repeating the whole proof,
replacing the local strong solution $u$ and its initial trace $u_0$
with  the nonnegative strong subsolution $u^+$ and its trace $u_0^+$
respectively.


\section{Positivity for a minimal Dirichlet problem}\label{sec.MDP.pos}

We follow the strategy introduced in \cite{BV} for the
fast-diffusion equation to prove quantitative lower bounds for a
suitable Dirichlet problem. More specifically, we will consider what
we call ``minimal Dirichlet problem'', MDP in the sequel, whose
nonnegative solutions lie below any nonnegative continuous local
weak solution. As a by-product of the concept of local weak
solution, the estimates can be extended to continuous weak solutions
to any other problem, such as Neumann, Dirichlet (even non
homogeneous or large), Robin, Cauchy, or any other initial-boundary
problem on any (even unbounded) domain $\Omega$ containing
$B_{R_0}(x_0)$. Let us introduce the \textit{Minimal Dirichlet
Problem}
\begin{equation}\label{MDP}
\mbox{(MDP)}\quad\left\{\begin{array}{ll}u_{t}=\Delta_{p}u, \ \hbox{in} \
B_{R_{0}}\times(0,T), \\u(x,0)=u_{0}(x), \ \hbox{in} \ B_{R_0},\quad
\hbox{supp}(u_{0})\subseteq B_{R}(x_0)
\\u(x,t)=0, \ \hbox{for} \ t>0 \ \hbox{and} \ x\in\partial B_{R_0},
\end{array}\right.
\end{equation}
where $B_{R_{0}}=B_{R_{0}}(x_0)\subset\real^{n}$, and $0<2R<R_0$.
The properties of existence and uniqueness for this problem are
well-known, in particular, for any initial data $u_{0}\in
L^{2}(B_{R_0})$, the problem admits a unique weak solution $u\in
C\big( [0,\infty): L^{2}(B_{R_0})\big) \cap
L^p\big((0,\infty):W_0^{1,p}(B_{R_0})\big)$, cf. \cite{DiB}.

In the range $1<p<2$ any such solution of (\ref{MDP}) extinguishes in finite
time; we denote the finite extinction time by $T=T(u_0)$. In general
it is not possible to have an explicit expression for $T(u_0)$ in
terms of the data, but we have lower and upper estimates for $T$,
cf. \eqref{low.FET} and Subsection \ref{FET} below.

Let $u_D$ be the solution to the MDP posed on a ball $B_{R_0}\subset\Omega$, and let $T_D$
be its finite extinction time. A priori we can not compare $u_D$ with
any local weak solution $u\ge 0$, because the
parabolic boundary data can be discontinuous. We therefore restrict
$u$ to the class of bounded (hence continuous) local weak solutions
and we can compare $u$ with $u_D$, to conclude that \textsl{any
solution of the MDP lies below any nonnegative and continuous local
weak solution, with the same initial trace on the smaller ball
$B_R$}. As a by-product of this comparison, if the local weak
solution also have an extinction time $T$, then we have $T_D\le T$,
for this reason we have called $T_D$ \textit{minimal life time} for
the general local weak solution.

\subsection{The Flux Lemma}

In the previous MDP  all the initial mass is concentrated in a
smaller ball $B_R$. The next result explains in a quantitative way
how in this situation the  mass is  transferred to the annulus $B_{R_0}\setminus
B_{R}$ across the internal boundary $\partial B_{R}$. Throughout
this subsection we will set $A_1:=B_{R_0}\setminus B_{R}$ and we
will consider a cutoff function $\varphi$ supported in $B_{R_0}$ and
taking the value $1$ in $B_{R}\subset B_{R_0}$.

\begin{lemma}\label{fluxlemma}
Let $u$ be a continuous local weak solution to the MDP (\ref{MDP})
and let $\varphi$ be a suitable cutoff function as above. Then the
following equality holds:
\begin{equation}\label{flux}
\int_{B_{R_{0}}}u(x,s)\varphi(x)\,\dx=\int_{s}^{T}\int_{A_{1}}|\nabla
u(x,\tau)|^{p-2}\nabla u(x,\tau)\cdot\nabla\varphi(x)\,\dx\,d\tau,
\end{equation}
for any $s\in[0,T]$. In particular, eliminating the dependence on
$\varphi$, we obtain the following estimate:
\begin{equation}\label{fluxineq}
\int_{B_{R}}u(x,s)\,\dx\leq\frac{k}{R_0-R}\int_{s}^{T}\int_{A_1}|\nabla
u(x,\tau)|^{p-1}\,\dx\,d\tau,
\end{equation}
for a suitable constant $k=k(n)$ and for any $s\in[0,T]$.
\end{lemma}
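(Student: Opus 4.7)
The plan is to test the weak formulation of the MDP against the time-independent spatial cutoff $\varphi(x)$, viewed as an admissible test function, on the time interval $[s,t]$ for $s<t<T$, and then let $t\uparrow T$. Since $\varphi_{t}\equiv 0$, the definition of local weak solution immediately collapses to
\[
\int_{B_{R_0}} u(x,t)\varphi(x)\,\dx - \int_{B_{R_0}} u(x,s)\varphi(x)\,\dx
+\int_{s}^{t}\!\!\int_{B_{R_0}} |\nabla u|^{p-2}\nabla u\cdot\nabla\varphi\,\dx\,d\tau = 0.
\]
Passing to the limit $t\to T^{-}$ and exploiting that $u(\cdot,t)\to 0$ in $L^{2}_{loc}(B_{R_0})$ at the finite extinction time (a consequence of the $C\big((0,T];L^{2}_{loc}\big)$ regularity built into Definition \ref{local.weak}), the first boundary integral drops and one obtains
\[
\int_{B_{R_0}} u(x,s)\varphi(x)\,\dx
= \int_{s}^{T}\!\!\int_{B_{R_0}} |\nabla u|^{p-2}\nabla u\cdot\nabla\varphi\,\dx\,d\tau.
\]
Since $\varphi\equiv 1$ on $B_{R}$, the support of $\nabla\varphi$ lies in the annulus $A_{1}=B_{R_0}\setminus B_{R}$, which is exactly \eqref{flux}.

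For the inequality \eqref{fluxineq}, I would choose $\varphi$ as a standard radial cutoff with $0\le\varphi\le 1$, $\varphi\equiv 1$ on $B_{R}$, $\varphi\equiv 0$ outside $B_{R_0}$, and $|\nabla\varphi|\le k(n)/(R_0-R)$. Nonnegativity of $u$ together with $\varphi\equiv 1$ on $B_{R}$ gives
\[
\int_{B_{R}} u(x,s)\,\dx \le \int_{B_{R_0}} u(x,s)\varphi(x)\,\dx,
\]
while the pointwise bound $\big||\nabla u|^{p-2}\nabla u\cdot\nabla\varphi\big|\le|\nabla u|^{p-1}|\nabla\varphi|$ combined with the chosen gradient estimate on $\varphi$ yields the right-hand side of \eqref{fluxineq} directly from \eqref{flux}.

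The main technical point I anticipate is the justification of two things: (a) that the time-independent spatial bump $\varphi\in C^{\infty}_{c}(B_{R_0})$ is admissible in the class $W^{1,2}_{loc}\big(0,T;L^{2}(B_{R_0})\big)\cap L^{p}_{loc}\big(0,T;W^{1,p}_{0}(B_{R_0})\big)$ required by Definition \ref{local.weak}, which is clear since smooth compactly supported functions embed in both spaces; and (b) the vanishing of the $t=T$ boundary integral, which, rather than asking pointwise regularity of $u$ at the extinction instant, I would handle by an approximation $t_{k}\uparrow T$ using the aforementioned $L^{2}_{loc}$-continuity in time. No comparison principle or stronger structure beyond the weak formulation itself is needed, so the argument is robust and does not rely on the specific form of the boundary data in (MDP).
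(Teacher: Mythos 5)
Your proof is correct, and it reaches \eqref{flux} by a cleaner route than the paper. The paper splits $B_{R_0}$ into the inner ball $B_R$ (where $\varphi\equiv 1$) and the annulus $A_1$, integrates by parts separately on each piece, and observes that the resulting surface integrals $\int_{\partial B_R}|\nabla u|^{p-2}\partial_\nu u\,d\sigma$ cancel — this is the "flux across $\partial B_R$" picture that gives the lemma its name. You instead test the weak formulation once, on all of $B_{R_0}$, with the time-independent cutoff $\varphi$; since $\supp\nabla\varphi\subset A_1$ the space-time integral localizes to the annulus automatically, and no interior surface terms ever appear. What your version buys is rigor at the level of weak solutions: the paper's surface integrals require making sense of the trace of $|\nabla u|^{p-2}\nabla u$ on $\partial B_R$, which a local weak solution does not obviously possess, whereas your argument uses only the admissibility of $\varphi$ in $W^{1,2}(0,T;L^2)\cap L^p(0,T;W^{1,p}_0(B_{R_0}))$. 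What the paper's version buys is the explicit flux-balance interpretation it wants to invoke heuristically later. Two small points: the vanishing of $\int u(\cdot,T)\varphi$ follows not from the $C_{loc}(0,T;L^2_{loc})$ clause of Definition \ref{local.weak} (an open interval) but from the fact that the MDP solution lies in $C([0,\infty);L^2(B_{R_0}))$ and is identically zero for $t\ge T$ by definition of the extinction time — your approximation $t_k\uparrow T$ then closes the argument; and the case $s=0$ likewise uses continuity of the Dirichlet solution down to $t=0$, which holds for the MDP though not for general local weak solutions.
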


\begin{proof}
Let $0\leq s\leq t\leq T$. We begin by calculating
\begin{equation*}
\begin{split}
\int_{s}^{t}\int_{A_1}u_{t}\varphi\,\dx\,\rd\tau&=\int_{s}^{t}\int_{A_1}\hbox{div}(|\nabla
u|^{p-2}\nabla u)\varphi\,\dx\,\rd\tau\\&=-\int_{s}^{t}\int_{A_1}|\nabla
u|^{p-2}\nabla
u\cdot\nabla\varphi\,\dx\,\rd\tau+\int_{s}^{t}\int_{\partial
B_{R}}|\nabla
u|^{p-2}\left(\partial_{\nu}u\right)\,\varphi\,d\sigma\,\rd\tau\\&+\int_{s}^{t}\int_{\partial
B_{R_0}}|\nabla u|^{p-2}\left(\partial_{\nu}u\right)\,
\varphi\,d\sigma\,\rd\tau,
\end{split}
\end{equation*}
where $\nu$ is the outward normal vector to the boundary of the
annulus $A_1$. Since $\varphi=0$ on $\partial B_{R_0}$, the last
integral above vanishes. By integrating the left-hand side and
taking into account that $\varphi=1$ on $\partial B_{R}$, we obtain:
\begin{equation*}
\int_{A_{1}}u(t)\varphi\,\dx-\int_{A_1}u(s)\varphi\,\dx=-\int_{s}^{t}\int_{A_1}|\nabla
u|^{p-2}\nabla
u\cdot\nabla\varphi\,\dx\,\rd\tau+\int_{s}^{t}\int_{\partial
B_{R}}|\nabla u|^{p-2}\partial_{\nu}u\,d\sigma\,\rd\tau.
\end{equation*}
We put in this equality $t=T$, the finite extinction time of the
solution of (\ref{MDP}), hence we have:
\begin{equation}\label{flux1}
\int_{A_1}u(s)\varphi\,\dx=\int_{s}^{T}\int_{A_1}|\nabla
u|^{p-2}\nabla
u\cdot\nabla\varphi\,\dx\,\rd\tau-\int_{s}^{T}\int_{\partial
B_{R}}|\nabla u|^{p-2}\partial_{\nu}u\,d\sigma\,\rd\tau.
\end{equation}
On the other hand, we calculate the same quantity inside the small
ball $B_{R}$. Since $\varphi\equiv1$ in $B_{R}$, we can omit the
test function here. We obtain:
\begin{equation*}
\int_{B_{R}}\big[u(t)-u(s)\big]\,\dx=\int_{s}^{t}\int_{B_R}\hbox{div}(|\nabla
u|^{p-2}\nabla u)\,\dx\,\rd\tau=\int_{s}^{t}\int_{B_R}|\nabla
u|^{p-2}\partial_{\nu^{*}}u\,\dx\,\rd\tau,
\end{equation*}
where we denote by $\nu^{*}$ the outward normal vector to the
boundary of the ball $B_R$. Then $\nu^{*}=-\nu$, hence
$\partial_{\nu}u=-\partial_{\nu^{*}}u$. Letting again $t=T$, we get
\begin{equation}\label{flux2}
\int_{B_{R}}u(x,s)\,dx=\int_{s}^{T}\int_{\partial B_{R}}|\nabla
u|^{p-2}\partial_{\nu}u\,\dx\,\rd\tau\,.
\end{equation}
Joining relations (\ref{flux1}) and (\ref{flux2}), we see that
the terms on the boundary compensate, the flux going out of the ball
$B_R$ across its boundary equals the flux entering $A_1$. By
canceling these flux terms, we obtain exactly the identity
(\ref{flux}). In order to get the estimate (\ref{fluxineq}), it
suffices now to remark that, since $\hbox{supp}\varphi\subset
B_{R_0}$ and $\varphi\equiv1$ in $B_R$, then there exists a choice
of $\varphi$ and an universal constant $k=k(n)$, depending only on
the dimension, such that
\begin{equation*}
|\nabla\varphi(x)|\leq\frac{k(n)}{R_0-R},
\end{equation*}
for any $x\in A_1$. This concludes the proof.
\end{proof}

\noindent \textbf{Remark}: Note that the undesired boundary
term is eliminated only by the fact that $\varphi=0$ on $\partial
B_{R_0}$, independently of $u$. Hence, the same estimates
(\ref{flux}) and (\ref{fluxineq}) are true in any balls
$B_{R}\subset B_{r_1}\subset B_{r_2}\subset B_{R_0}$, the only
difference in the proof being the choice of $\varphi$.

\smallskip

\noindent \textbf{A local Aleksandrov reflection principle}. Here we
state the Aleksandrov reflection principle in the version adapted
for the minimal Dirichlet problem (\ref{MDP}). That is:
\begin{proposition}\label{Aleks}
Let $u$ be a continuous local weak solution to the MDP {\rm
\eqref{MDP}}. Then, for any $t>0$, we have $u(x_0,t)\geq u(x,t)$,
for any $t>0$ and $x\in A_2:=B_{R_0}(x_0)\setminus B_{2R}(x_0)$. In
particular, this implies the following mean-value inequality:
\begin{equation}\label{aleks2}
u(x_0,t)\geq\frac{1}{|A_2|}\int_{A_2}u(x,t)\,dx.
\end{equation}
\end{proposition}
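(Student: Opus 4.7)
The plan is to use an Aleksandrov-type reflection argument. Fix any $x\in A_2$; since $2R\leq|x-x_0|<R_0$, let $H$ denote the affine hyperplane equidistant from $x_0$ and $x$, and let $\Pi_H\colon\RR^n\to\RR^n$ denote the reflection through $H$, which in particular exchanges $x_0$ and $x$. Write $\Sigma^+$ for the closed half-space bounded by $H$ that contains $x_0$ and $\Sigma^-=\RR^n\setminus\Sigma^+$. I would compare $u$ with its reflected copy $v(y,t):=u(\Pi_H(y),t)$ on the lens-shaped domain $D:=B_{R_0}(x_0)\cap\Sigma^-$.

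Two geometric facts make the construction work. First, $\dist(x_0,H)=|x-x_0|/2\geq R$, so $\supp u_0\subset B_R(x_0)\subset\Sigma^+$ lies strictly on the $x_0$-side, whence $u_0\equiv 0$ on $D$. Second, for $y\in D$ one has $|y-x|\leq|y-x_0|<R_0$ (since $y\in\Sigma^-$ is at least as close to $x$ as to $x_0$), and the isometry identity $|\Pi_H(y)-x_0|=|y-x|$ then yields $\Pi_H(D)\subset B_{R_0}(x_0)\cap\Sigma^+$. In particular $v$ is well-defined on $D\times(0,T)$ and, by the rotational/reflectional invariance of the operator $\Delta_p$, it solves the same $p$-Laplacian equation there that $u$ does.

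Next I would verify that $u\leq v$ on the parabolic boundary of $D\times(0,T)$. On the lateral piece $\partial B_{R_0}(x_0)\cap\overline{\Sigma^-}$ the Dirichlet condition of the MDP gives $u=0$, while $v=u(\Pi_H(\cdot))\geq 0$; on the flat piece $B_{R_0}(x_0)\cap H$ the map $\Pi_H$ is the identity, so $u=v$; and at $t=0$ we have $u(\cdot,0)\equiv 0$ on $D$ (as just noted) while $v(\cdot,0)\geq 0$. The standard comparison principle for continuous weak (sub/super)solutions of the parabolic $p$-Laplacian on the bounded cylinder $D\times(0,T)$ then forces $u\leq v$ throughout, and specializing to $y=x$ (where $\Pi_H(x)=x_0$) gives $u(x,t)\leq u(x_0,t)$.

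Since $x\in A_2$ was arbitrary, the pointwise bound $u(\cdot,t)\leq u(x_0,t)$ holds on all of $A_2$, and \eqref{aleks2} follows by averaging this inequality over $A_2$. The only delicate point is the geometric verification that $\Pi_H$ maps $D$ into $B_{R_0}(x_0)$ and that $\supp u_0$ is strictly on the $x_0$-side of $H$; both are built into the hypotheses $2R<R_0$ and $x\in A_2$. Once the reflection is set up, everything reduces to the classical comparison principle for the parabolic $p$-Laplacian.
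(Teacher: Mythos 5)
Your proof is correct and is essentially the argument the paper has in mind: the paper does not spell out the reflection argument but defers to the local Aleksandrov principle of \cite{BV2}, which is exactly the hyperplane-reflection-plus-comparison scheme you carry out (reflect through the perpendicular bisector of $[x_0,x]$, check that $\supp u_0$ lies on the $x_0$-side because $|x-x_0|\ge 2R$, verify the ordering on the parabolic boundary of the lens, and invoke comparison). The averaging step to get \eqref{aleks2} is the same.
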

In other words, this inequality says that the mean value
of the solution of (\ref{MDP}) in an annulus is less than the
value at the center of the ball where the whole mass was
concentrated at the initial time. The proof is a straightforward
adaptation of the proof of the corresponding local Aleksandrov
principle for the fast diffusion equation, given by two of the
authors in \cite{BV2}. Indeed, the unique property of the equation
involved in the proof is the comparison principle, which both the
fast diffusion equation and the $p$-Laplacian equation enjoy.

\subsection{A lower bound for the finite extinction
time}\label{lower.T}

A first application of the Flux Lemma is a lower bound for the
finite extinction time.
\begin{lemma}
Under the assumptions of Lemma {\rm \ref{fluxlemma}} and in the
running notations, assuming moreover that $0<R<2R<R_0$, we have the
following lower bound for the FET:
\begin{equation}\label{lowerbd}
T\geq
\mathcal{K}R(R_0-2R)^{p-1}\left[\frac{1}{|B_{R_0}|}\int_{B_{R}}u_{0}(x)\,\dx\right]^{2-p},
\end{equation}
where $\mathcal{K}$ is a constant depending only on $n$ and $p$. In
particular, we obtain the lower bound for $T$ in Theorem
\ref{posit.goodFDE}.
\end{lemma}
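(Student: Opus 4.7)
My approach combines the Flux Lemma with a Hölder inequality in space--time and the standard Dirichlet energy identity.

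First, by the remark after Lemma~\ref{fluxlemma}, I apply the flux identity with a cutoff $\varphi\in C^\infty_c(B_{R_0})$ chosen so that $\varphi\equiv 1$ on $B_{2R}$ and $|\nabla\varphi|\le k(n)/(R_0-2R)$ on $A_2:=B_{R_0}\setminus B_{2R}$. At $s=0$, using $\chi_{B_R}\le\varphi$, this gives
\begin{equation*}
M_0\;\le\;\int_{B_{R_0}}u_0\varphi\,\dx\;=\;\int_0^T\!\!\int_{A_2}|\nabla u|^{p-2}\nabla u\cdot\nabla\varphi\,\dx\,d\tau\;\le\;\frac{k}{R_0-2R}\int_0^T\!\!\int_{A_2}|\nabla u|^{p-1}\dx\,d\tau.
\end{equation*}
Next, I apply Hölder with conjugate exponents $p/(p-1)$ and $p$ to the space--time integral on the right and invoke the standard Dirichlet energy identity $\int_0^T\!\int_{B_{R_0}}|\nabla u|^p\dx\,d\tau=\tfrac12\|u_0\|_{L^2(B_R)}^{2}$ (obtained by testing $u_t=\Delta_p u$ against $u$ itself on $B_{R_0}$ and using the extinction $u(\cdot,T)\equiv 0$ together with the zero Dirichlet data) to arrive at
\begin{equation*}
M_0\;\le\;\frac{k\,(T|A_2|)^{1/p}}{R_0-2R}\Bigl(\tfrac12\|u_0\|_{L^2(B_R)}^{2}\Bigr)^{(p-1)/p}.
\end{equation*}
Raising to the $p$-th power and solving for $T$ already gives $T\ge c(n,p)\,(R_0-2R)^p M_0^p/\bigl(|A_2|\|u_0\|_{L^2(B_R)}^{2(p-1)}\bigr)$.

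The last step is to replace $\|u_0\|_{L^2(B_R)}^{2(p-1)}$ by a quantity depending only on $M_0$ and the geometry. For a uniform profile $u_0=(M_0/|B_R|)\chi_{B_R}$ one has $\|u_0\|_{L^2}^2=M_0^2/|B_R|$; substituting and choosing $R_0\simeq 3R$ the factor reorganises exactly as $R(R_0-2R)^{p-1}|B_{R_0}|^{p-2}M_0^{2-p}$ (the arithmetic $R^p\cdot R^{n(p-1)}/R^n=R^{p+n(p-2)}$ matches the target). For a general $u_0\in L^1$ I rerun the same flux--Hölder--energy scheme starting at a small time $t_1\in(0,T)$ rather than at $0$: the local smoothing effect (Theorem~\ref{mainupper0} with $r=1$, available in the supercritical range $p>p_c$ of Theorem~\ref{posit.goodFDE} to which this lemma is tailored) gives $\|u(t_1)\|_\infty\lesssim t_1^{-n\vartheta_1}M_0^{p\vartheta_1}+(t_1/R_0^{p})^{1/(2-p)}$, whence $\|u(t_1)\|_{L^2}^2\le\|u(t_1)\|_\infty M_0$ is controlled purely by $M_0,R_0,t_1$; optimizing $t_1$ to balance the two smoothing terms yields the stated bound.

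\textbf{Main obstacle.} The technical heart is precisely this final conversion. Cauchy--Schwarz produces $\|u_0\|_{L^2(B_R)}^2\ge M_0^2/|B_R|$, going \emph{the wrong way}, so the direct scheme is sharp only for essentially constant data and too weak for concentrated initial mass. The detour through the smoothing effect is the fix, and it is also what ties the lemma to the ``good'' range $p>p_c$, consistent with its explicit use in Theorem~\ref{posit.goodFDE}.
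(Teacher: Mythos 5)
Your first chain (flux identity, space--time H\"older, Dirichlet energy identity $\int_0^T\!\!\int_{B_{R_0}}|\nabla u|^p\dx\dt=\tfrac12\|u_0\|_{L^2}^2$) is correct, and you have correctly diagnosed that it terminates in $\|u_0\|_{L^2}^{2(p-1)}$, which Cauchy--Schwarz cannot convert into the required $L^1$ quantity. The problem is that the proposed repair does not close. If you restart the flux identity at a time $t_1>0$, its left-hand side is $\int_{B_{R_0}}u(x,t_1)\varphi\,\dx$, not $\int_{B_R}u_0\,\dx$, and you give no argument that these are comparable: the balanced choice $t_1\sim \|u_0\|_{L^1}^{2-p}R_0^{\,p-n(2-p)}$ is of the same order as the extinction-time scale you are trying to bound from below, so by time $t_1$ a substantial fraction of the mass may already have left the region $\{\varphi=1\}$ or the domain --- and ruling that out is essentially the positivity information this lemma is meant to feed into later sections. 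If instead you keep $s=0$ and split the time integral at $t_1$, the piece $\int_0^{t_1}\!\!\int_{B_{R_0}\setminus B_{2R}}|\nabla u|^{p-1}\dx\dt$ is no longer controlled by the energy identity, because for $u_0\in L^1\setminus L^2$ the total dissipation $\tfrac12\|u_0\|_{L^2}^2$ may be infinite or arbitrarily large compared to $\|u_0\|_{L^1}$. There is also a circularity ($T\ge t_1$ must be known before the smoothing bound at $t_1$ says anything useful) and a loss of generality: the detour needs the $r=1$ smoothing effect, hence $p>p_c$, whereas the lemma is stated under the assumptions of the Flux Lemma, i.e.\ for all $1<p<2$, and is used in that generality.

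The paper closes exactly this gap with a different tool: the DiBenedetto--Herrero gradient estimate (formula (0.8) of \cite{DBH90}), which bounds $\int_s^T\!\!\int|\nabla u|^{p-1}\dx\,d\tau$ directly by $(T-s)^{(1-p)/p}$ times $\int_s^T\!\!\int_{B_{R_0}}(u+\e)^{2(p-1)/p}\dx\,d\tau$ (up to the factor $D(s)$). Since $2(p-1)/p<1$, H\"older in space brings the latter down to the $L^1$ norm of $u+\e$, which is controlled uniformly in time by $L^1$-contraction; the choice $\e|B_{R_0}|=\int_{B_R}u_0\,\dx$ then yields the stated lower bound in the whole range $1<p<2$ with no reference to $\|u_0\|_{L^2}$. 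To salvage your scheme you would need some substitute estimate of $\iint|\nabla u|^{p-1}$ purely in terms of $L^1$ data near $t=0$; the energy identity alone cannot supply it.
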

\begin{proof}
In order to derive this lower bound, we apply (\ref{fluxineq}) to
the annulus $A_{0}:=B_{2R}\setminus B_{R}$:
\begin{equation}\label{fluxineq2}
\int_{B_{2R}}u(x,s)\,\dx\leq\frac{k}{R}\int_{s}^{T}\int_{A_0}|\nabla
u(x,\tau)|^{p-1}\,\dx\,d\tau,
\end{equation}
We are going to use the following estimate for the gradient
due to DiBenedetto and Herrero, cf. formula (0.8) in \cite{DBH90},
that reads
\begin{equation}\label{DBHestimate}
\begin{split}
\int_{s}^{T}\int_{B_{2R}}|\nabla
u|^{p-1}\,\dx\,d\tau&\leq\gamma(n,p)\left[1+\frac{T-s}{\e^{2-p}(R_0-2R)^{p}}\right]^{\frac{p-1}{p}}\\
&\times\int_{s}^{T}\int_{B_{R_0}}(T-\tau)^{\frac{1-p}{p}}(u+\e)^{\frac{2(p-1)}{p}}\,\dx\,d\tau\\
&\leq\gamma(n,p)\left[1+\frac{T-s}{\e^{2-p}(R_0-2R)^{p}}\right]^{\frac{p-1}{p}}(T-s)^{\frac{1-p}{p}}\\
&\times\int_{s}^{T}\int_{B_{R_0}}(u+\e)^{\frac{2(p-1)}{p}}\,\dx\,d\tau,
\end{split}
\end{equation}
when applied to any ball $B_{2R}\subset B_{R_0}$, for any $0<s<T$ and for any $\e>0$. The constant $\gamma(n,p)$ depends only on $n$ and $p$. We join
(\ref{fluxineq2}) and (\ref{DBHestimate}) and we let
\begin{equation*}
D(s)=\left(1+\frac{T-s}{\e^{2-p}(R_0-2R)^{p}}\right)^{\frac{p-1}{p}},
\end{equation*}
to obtain
\begin{equation*}
\int_{B_{2R}}u(x,s)\,\dx\leq\frac{k(n,p)}{R}D(s)(T-s)^{\frac{1-p}{p}}\int_{s}^{T}\int_{B_{R_0}}(u+\e)^{\frac{2(p-1)}{p}}\,\dx\,d\tau.
\end{equation*}
Then there exists $\bar{s}\in(s,T)$ such that we have
\begin{equation*}
\begin{split}
\int_{B_{2R}}u(x,s)\,\dx
&\leq\frac{k(n,p)}{R}D(s)(T-s)^{\frac{1-p}{p}}(T-s)
    \int_{B_{R_0}}\big(u(x,\bar{s})+\e\big)^{\frac{2(p-1)}{p}}\,\dx\\
&\leq\frac{k(n,p)}{R}D(s)(T-s)^{\frac{1}{p}}|B_{R_0}|
\left[\frac{1}{|B_{R_0}|}\int_{B_{R_0}}\big(u(x,\bar{s})+\e\big)\,\dx\right]^{\frac{2(p-1)}{p}}\\
&=\frac{k(n,p)}{R}D(s)(T-s)^{\frac{1}{p}}|B_{R_0}|^{\frac{2-p}{p}}
    \left[\int_{B_{R_0}}\big(u(x,\bar{s})+\e\big)\,\dx\right]^{\frac{2(p-1)}{p}}.
\end{split}
\end{equation*}
where in the first step we have used the mean-value theorem for the time
integral in the right-hand side, and in the second step the H\"older
inequality. Using now the contractivity of the $L^{1}$ norm, we
obtain
\begin{equation}\label{est.3}
\int_{B_{2R}}u(x,s)\,\dx\leq\frac{k(n,p)}{R}D(s)(T-s)^{\frac{1}{p}}|B_{R_0}|^{\frac{2-p}{p}}
\left[\int_{B_{R_0}}\big(u(x,s)+\e\big)\,\dx\right]^{\frac{2(p-1)}{p}}.
\end{equation}
We put now $s=0$. On the other hand, we take $\e>0$ such that the
following condition holds true:
\begin{equation*}
\e|B_{R_0}|=\int_{B_R}u_{0}(x)\,\dx.
\end{equation*}
This condition implies that
\begin{equation*}\begin{split}
D(0)&\leq\frac{C(n,p)}{\e^{\frac{(2-p)(p-1)}{p}}(R_0-2R)^{p-1}}T^{\frac{p-1}{p}},\\[3mm]
\int_{B_{R_0}}(u_{0}+\e)\,\dx&=2\int_{B_{R_0}}u_{0}(x)\,\dx=2\int_{B_{R}}u_{0}(x)\,\dx,\\
\end{split}
\end{equation*}
the last equality being justified by the fact that
$\hbox{supp}(u_0)\subset B_{R}$. Coming back to \eqref{est.3},
letting there $s=0$, replacing the precise value of $\e$ and taking
into account the previous remarks, we obtain:
\begin{equation*}
\begin{split}
\int_{B_R}u_{0}(x)\,\dx
&\leq\frac{K(n,p)}{\e^{\frac{(2-p)(p-1)}{p}}R(R_0-2R)^{p-1}}T|B_{R_{0}}|^{\frac{2-p}{p}}
\left(\int_{B_R}u_{0}(x)\,\dx\right)^{\frac{2(p-1)}{p}}\\[2mm]
&\leq\frac{K(n,p)}{R(R_0-2R)^{p-1}}T|B_{R_0}|^{2-p}\left(\int_{B_R}u_{0}(x)\,\dx\right)^{p-1},
\end{split}
\end{equation*}
where $K(n,p)=2^{2(p-1)/p}C(n,p)k\gamma(n,p)$, $k$ being the constant
in (\ref{fluxineq}). It follows that:
\begin{equation*}
\left(\int_{B_R}u_{0}(x)\,\dx\right)^{2-p}\leq\frac{K(n,p)}{R(R_0-2R)^{p-1}}T|B_{R_{0}}|^{2-p},
\end{equation*}
hence the lower bound follows in the stated form, once we let
$\mathcal{K}=K(n,p)$.
\end{proof}

\subsection{Positivity for the minimal Dirichlet problem}\label{subs.7.3}

The result of the Flux Lemma \ref{fluxlemma} can be interpreted as
the transformation of the positivity information coming from the initial mass
into positivity information in terms of energy. Our next goal is to transfer the
positivity information for the energy obtained so far, to positivity for the
solution itself in an annulus. To this end we will use again the above mentioned
gradient estimate of \cite{DBH90}, formula (0.8). We split the proof of the positivity
estimate into several steps.

\noindent \textbf{Step 1. Reversed space-time Sobolev inequalities along the
flow}. Let  $u$ be the solution of the MDP (\ref{MDP}), in the
assumption that $R_0>3R$. We begin by writing the estimate
(\ref{fluxineq}) in the ball of radius $7R/3$:
\begin{equation}\label{est.4}
\int_{B_{7R/3}}u(x,s)\,\ds\leq\frac{k}{R}\int_{s}^{T}\int_{B_{8R/3}\setminus
B_{7R/3}}|\nabla u(x,\tau)|^{p-1}\,\dx\,d\tau\,.
\end{equation}
We now want to estimate the right-hand side in terms of a suitable
mean value of $u$. The estimate we would like to have is quite uncommon,
indeed it can be interpreted as a reversed Sobolev inequality on an annulus
$A_1$, along the $p$-Laplacian flow. In general this kind of reversed
inequalities tend to be false.

\noindent To this end, we cover the annulus $B_{8R/3}\setminus
B_{7R/3}$ by smaller balls, of ``good'' radius, then we consider a
covering with larger balls and we apply the estimate
\eqref{DBHestimate} for $|\nabla u|^{p-1}$ . More precisely, we
consider a family of balls $\{B_{i}\}_{i=1,N}$ with radius $R_i$,
satisfying the following two conditions: that $B_{8R/3}\setminus
B_{7R/3}\subset\bigcup_{i=1}^{N}B_i$ and that $R/6<R_i<R/3$. For any
ball $B_i$, we consider a larger, concentric ball $B_i^{'}$ with
radius $R_i^{'}$, such that $R_i<R_i^{'}<R/3$. From this
construction, we deduce that
\begin{equation*}
B_{8R/3}\setminus
B_{7R/3}\subset\bigcup_{i=1}^{N}B_i\subset\bigcup_{i=1}^{N}B_{i}^{'}\subset
B_{3R}\setminus B_{2R}\subset B_{R_0}\setminus B_{2R},
\end{equation*}
which is useful, since we remain in a region where the Aleksandrov
principle applies. We apply the estimate from \cite{DBH90} for any
of the pairs $(B_i,B_i^{'})$ and we sum up to finally obtain the
desired form for the reversed space-time Sobolev inequality:
\begin{equation}
\int_{s}^{T}\int_{B_{8R/3}\setminus B_{7R/3}}|\nabla
u(x,\tau)|^{p-1}\,\dx\,d\tau \le
\frac{N\gamma(n,p)}{(T-s)^{\frac{p-1}{p}}}D(s,\e)\int_{s}^{T}\int_{B_{R_0}\setminus
B_{2R}}(u+\e)^{\frac{2(p-1)}{p}}\,\dx\,d\tau,
\end{equation}
Joining this with \eqref{est.4} we get
\begin{equation}\label{posit1}
\int_{B_{7R/3}}u(x,s)\,\ds\leq\frac{Nk(n)\gamma(n,p)}{R}D(s,\e)(T-s)^{\frac{1-p}{p}}\int_{s}^{T}\int_{B_{R_0}\setminus
B_{2R}}(u+\e)^{\frac{2(p-1)}{p}}\,\dx\,d\tau,
\end{equation}
which holds for any $s\in[0,T]$ and $\e>0$, where we have used the
following notations
\begin{equation}\label{est.K}
D(s,\e):=\left(1+\frac{T-s}{\e^{2-p}K^{p}}\right)^{\frac{p-1}{p}},
\qquad\quad K:=\min_{i=1,N}(R_{i}^{'}-R_i).
\end{equation}
\noindent \textbf{Remark.} In the estimates above, the condition
$B_{3R}\subset B_{R_0}$ can be replaced by $B_{2R+\e}\subset
B_{R_0}$, for any $\e>0$ fixed, with the same proof. That is why,
the condition $R_0>2R$ is sufficient for the result to hold.

\noindent \textbf{Step 2. Estimating time integrals}. We are going
to estimate the time integral in the right-hand side of
(\ref{posit1}) by splitting it in two parts. For any $0\leq s\leq
t\leq T$ we have
\begin{equation*}
\begin{split}
\int_{s}^{t}\int_{B_{R_0}\setminus
B_{2R}}(u+\e)^{\frac{2(p-1)}{p}}\,\dx &\leq|B_{R_0}\setminus
B_{2R}|^{\frac{2-p}{p}}\int_{s}^{t}
    \left[\int_{B_{R_0}\setminus B_{2R}}(u+\e)\,\dx\right]^{\frac{2(p-1)}{p}}\,d\tau\\
&\leq|B_{R_0}\setminus B_{2R}|^{\frac{2-p}{p}}\int_{s}^{t}
    \left[\int_{B_{R_0}}(u+\e)\,\dx\right]^{\frac{2(p-1)}{p}}\,d\tau\\
&\leq|B_{R_0}\setminus B_{2R}|^{\frac{2-p}{p}}\int_{s}^{t}
    \left[\int_{B_{R_0}}u_0 \,\dx  +\e|B_{R_0}| \right]^{\frac{2(p-1)}{p}}\,d\tau\\
&=(t-s)|B_{R_0}\setminus B_{2R}|^{\frac{2-p}{p}}
    \left[\int_{B_{R}}u_0 \,\dx  +\e|B_{R_0}| \right]^{\frac{2(p-1)}{p}}\,,
\end{split}
\end{equation*}
where we have used  H\"older inequality in the first step, and then
the $L^1(B_{R_0})$-contractivity for the MDP in the third step,
while in the last step we take into account that
$\hbox{supp}\,u_0\subset B_{R}$. We rescale $\e$ in such a way that
$\e=\a\int_{B_{R}}u_0\,\dx/|B_{R_0}|$, leaving $\a>0$ as a free
parameter that will be chosen later on. The final result of this
step reads
\begin{equation}\label{est.9}
\int_{s}^{t}\int_{B_{R_0}\setminus
B_{2R}}(u+\e)^{\frac{2(p-1)}{p}}\,\dx\,d\tau\leq(1+\a)^{\frac{2(p-1)}{p}}(t-s)|B_{R_0}\setminus
B_R|^{\frac{2-p}{p}}\left[\int_{B_{R}}u_0\,\dx\right]^{\frac{2(p-1)}{p}}.
\end{equation}

\noindent \textbf{Step 3. The critical time}. Let us come back to
(\ref{posit1}) and put $s=0$, so that
\begin{equation*}
\begin{split}
\int_{B_R}u_0(x)\,\dx&\leq\frac{Nk(n)\gamma(n,p)}{R}D(0,\e)T^{\frac{1-p}{p}}\int_{0}^{T}\int_{B_{R_0}\setminus
B_{2R}}(u+\e)^{\frac{2(p-1)}{p}}\,\dx\,d\tau\\
&=\frac{Nk(n)\gamma(n,p)}{R}D(0,\e)T^{\frac{1-p}{p}}\left[\int_{0}^{t^{*}}\int_{B_{R_0}\setminus
B_{2R}}(u+\e)^{\frac{2(p-1)}{p}}\,\dx\,d\tau\right.\\&+\left.\int_{t^{*}}^{T}\int_{B_{R_0}\setminus
B_{2R}}(u+\e)^{\frac{2(p-1)}{p}}\,\dx\,d\tau\right]\\
&\leq\frac{Nk(n)\gamma(n,p)}{R}D(0,\e)T^{\frac{1-p}{p}}\Bigg[(1+\a)^{\frac{2(p-1)}{p}}t^{*}|B_{R_0}\setminus
B_{2R}|^{\frac{2-p}{p}}\left(\int_{B_R}u_0\,\dx\right)^{\frac{2(p-1)}{p}}\\&\left.+\int_{t^{*}}^{T}\int_{B_{R_{0}}\setminus
B_{R}}(u+\e)^{\frac{2(p-1)}{p}}\,\dx\,d\tau\right],
\end{split}
\end{equation*}
where in the last step we have used \eqref{est.9} to estimate the
first integral. Here $t^{*}$ is a particular time that will be
chosen later. We estimate now $D(0,\e)$, with our choice of $\e$,
starting from the numeric inequality $(1+y)^{(p-1)/p}\leq
(2y)^{(p-1)/p}:= \kappa\, y^{(p-1)/p}$, which holds for any $y>1$,
\begin{equation*}
D(0,\e)=\left(1+\frac{T}{\e^{2-p}K^{p}}\right)^{\frac{p-1}{p}}\leq
\frac{\kappa
T^{\frac{p-1}{p}}}{\a^{\frac{(2-p)(p-1)}{p}}K^{p-1}}\left[\frac{1}{|B_{R_0}|}\int_{B_R}u_0(x)\,\dx\right]^{-\frac{(2-p)(p-1)}{p}},
\end{equation*}
where we have chosen $y= T/\big(\e^{2-p}K^{p}\big)>1$. The
condition, in terms of $K$ (defined in \eqref{est.K}), becomes
\begin{equation}\label{Cond.1}
K^p:=\left[\min_{i=1,N}(R_{i}^{'}-R_i)\right]^p<
T\,\varepsilon^{p-2}= T\,
\left[\a\int_{B_{R}}u_0\,\frac{\dx}{|B_{R_0}|}\right]^{p-2}.
\end{equation}
We will check the compatibility of this condition after our choice
of $\varepsilon$. Joining the above two estimates, we get
\begin{equation}\label{est.10}
\begin{split}
\left(\int_{B_R}u_0\,\dx\right)^{1+\frac{(2-p)(p-1)}{p}}
&\leq\frac{k_0|B_{R_0}|^{\frac{(2-p)(p-1)}{p}}}{RK^{p-1}\a^{\frac{(2-p)(p-1)}{p}}}\Bigg[
(1+\a)^{\frac{2(p-1)}{p}}t^{*}|B_{R_0}|^{\frac{2-p}{p}}\left(\int_{B_R}u_0\,\dx\right)^{\frac{2(p-1)}{p}}\\
&+\left.\int_{t^{*}}^{T}\int_{B_{R_0}\setminus
B_{2R}}(u+\e)^{\frac{2(p-1)}{p}}\,\dx\,d\tau\right],
\end{split}
\end{equation}
where we have used that $|B_{R_0}\setminus B_{2R}|<|B_{R_0}|$, and
we have defined $k_0:=Nk(n)\gamma(n,p)\kappa$. We choose now the
critical time $t^{*}$ as
\begin{equation}\label{crittime}
t^{*}=\frac{R}{2k_0}\left(\frac{K}{\alpha}\right)^{p-1}
\left(\frac{\alpha}{1+\alpha}\right)^{\frac{2(p-1)}{p}}
\left(\frac{1}{|B_{R_0}|}\int_{B_R}u_0\,\dx\right)^{2-p}.
\end{equation}
It remains to check that $t^*\le T$, and this will be done after we
fix the values of $\alpha$ and $K$.

\noindent \textbf{Step 4. The mean-value theorem}. First we
substitute the value \eqref{crittime} of $t^*$ in \eqref{est.10}
\begin{equation*}
\left(\int_{B_{R}}u_{0}\,\dx\right)^{1+\frac{(2-p)(p-1)}{p}}
\leq\frac{2k_0\,|B_{R_0}|^{\frac{(2-p)(p-1)}{p}}}{RK^{p-1}\a^{\frac{(2-p)(p-1)}{p}}}
\int_{t^{*}}^{T}\int_{B_{R_0}\setminus
B_{2R}}(u+\e)^{\frac{2(p-1)}{p}}\,\dx\,d\tau,
\end{equation*}
then we apply the mean-value theorem to the time integral in the
right-hand side and we obtain that there exists $t_1\in[t^{*},T]$
such that
\begin{equation}\label{est.11}
\frac{RK^{p-1}}{2k_0(T-t^{*})}\left(\int_{B_{R}}u_{0}\,\dx\right)^{1+\frac{(2-p)(p-1)}{p}}
\leq\left[\frac{|B_{R_0}|}{\a}\right]^{\frac{(2-p)(p-1)}{p}}
\int_{B_{R_0}\setminus B_{2R}}\big(u(x,t_1)+\e\big)^{\frac{2(p-1)}{p}}\,\dx.
\end{equation}

\noindent \textbf{Step 5. Application of the Aleksandrov reflection
principle}. We are now in position to apply Proposition \ref{Aleks},
in the form \eqref{aleks2}, to the right-hand side of the above
estimate
\begin{equation}\label{est.12}
\int_{B_{R_0}\setminus
B_{2R}}\big(u(x,t_1)+\e\big)^{\frac{2(p-1)}{p}}\,\dx\leq|B_{R_0}|\big(u(x_0,t_1)+\e\big)^{\frac{2(p-1)}{p}},
\end{equation}
note that the presence of $\varepsilon$ does not affect the
estimate. Joining \eqref{est.11} and \eqref{est.12}, and recalling
that we have rescaled $\e=\a\int_{B_{R}}u_0\,\dx/|B_{R_0}|$ we get
\begin{equation*}
\left[u(x_0,t_1)+\frac{\a}{|B_{R_0}|}\int_{B_R}u_0\,\dx\right]^{\frac{2(p-1)}{p}}
\geq\frac{\a^{\frac{(2-p)(p-1)}{p}}RK^{p-1}}{2k_0 T}
\left(\frac{1}{|B_{R_0}|}\int_{B_R}u_0\,\dx\right)^{1+\frac{(2-p)(p-1)}{p}},
\end{equation*}
or, equivalently,
\begin{equation*}
u(x_0,t_1)\geq\a^{\frac{2-p}{2}}\left(\frac{RK^{p-1}}{2k_0T}\right)^{\frac{p}{2(p-1)}}
\left(\frac{1}{|B_{R_0}|}\int_{B_R}u_0\,\dx\right)^{1+\frac{p(2-p)}{2(p-1)}}
-\frac{\a}{|B_{R_0}|}\int_{B_R}u_0\,\dx= \mathcal{H}(\alpha),
\end{equation*}
which holds for any $\a>0$. Immediately we see that
$\mathcal{H}(0)=0$ and in the limit $\alpha\to +\infty$ we get
$\mathcal{H}(\alpha)\to -\infty$, since $1<p<2$. An optimization of
$\mathcal{H}$ in $\alpha$ shows that it achieves its maximum value
at the point
\begin{equation}\label{alpha.max}
\overline{\alpha}= \left(\frac{2-p}{2}\right)^{\frac{2}{p}}
\frac{K}{\big[2k_0\big]^{\frac{1}{p-1}}}
\left(\frac{R}{T}\right)^{\frac{1}{p-1}}
\left(\frac{1}{|B_{R_0}|}\int_{B_R}u_0(x)\,\dx\right)^{\frac{2-p}{p-1}}.
\end{equation}
The value of the function $\mathcal{H}(\overline{\alpha})$ is
strictly positive and takes the form
\begin{equation}\label{posit2}
u(x_0,t_1)\geq \mathcal{H}(\overline{\alpha})=
\frac{p}{2-p}\left[\frac{2-p}{2}\right]^{\frac{2}{p}}
\frac{K}{\big[2k_0\big]^{\frac{1}{p-1}}}
\left[\frac{R}{T}\right]^{\frac{1}{p-1}}
\left[\frac{1}{|B_{R_0}|}\int_{B_{R}}u_0\,\dx\right]^{\frac{1}{p-1}},
\end{equation}
which finally gives our first positivity estimate at the point
$t_1$, once we check that all the choices of the parameters are
compatible. Indeed, we first have to check the compatibility between
\eqref{Cond.1} and \eqref{alpha.max}, that is
\begin{equation}\label{KKK}
K^2:=\left[\min_{i=1,N}\big\{R_{i}^{'}-R_i\big\}\right]^2:=\rho^2 R^2 <
\frac{2^{\frac{2}{p}}\big(2k_0\big)^{\frac{1}{p-1}}}{\big(2-p\big)^{\frac{2}{p}}}
\left[\frac{T}{R^{2-p}}\right]^{\frac{1}{p-1}}
\left[\frac{1}{|B_{R_0}|}\int_{B_{R}}u_0\,\dx\right]^{\frac{p-2}{p-1}},
\end{equation}
which is nothing but a restriction on the choice of the radii $R_i$
and $R_{i}^{'}$ in terms of the data of the MDP, and allow to fix a
value of $\rho$ in terms of the data. It only remains to check that
substituting the value $\overline{\alpha}$ in the expression
\eqref{crittime} of $t^*$, we have $t^*\le T$, where $T$ is the
finite extinction time. From \eqref{crittime} and \eqref{alpha.max}
we obtain
\begin{equation}\label{crittime2b}
\begin{split}
t^{*}&=\frac{R}{2k_0}\left(\frac{K}{\overline{\alpha}}\right)^{p-1}
\left(\frac{\overline{\alpha}}{1+\overline{\alpha}}\right)^{\frac{2(p-1)}{p}}
\left(\frac{1}{|B_{R_0}|}\int_{B_R}u_0\,\dx\right)^{2-p}\\
&=
\left[\frac{2\,\overline{\alpha}}{(2-p)(1+\overline{\alpha})}\right]^{\frac{2(p-1)}{p}}
:= k T\,,
\end{split}\end{equation}
where $k\le 1$ if and only if
\begin{equation}\label{alpha.bar}
\overline{\alpha}= \left(\frac{2-p}{2}\right)^{\frac{2}{p}}
\frac{K}{\big(2k_0\big)^{\frac{1}{p-1}}}
\left(\frac{R}{T}\right)^{\frac{1}{p-1}}
\left(\frac{1}{|B_{R_0}|}\int_{B_R}u_0(x)\,\dx\right)^{\frac{2-p}{p-1}}
\le\frac{\left(\frac{2-p}{2}\right)^{\frac{2}{p}}}{1-\left(\frac{2-p}{2}\right)^{\frac{2}{p}}}\,,
\end{equation}
and this condition is satisfied, since $K$ is bounded as in \eqref{KKK},
but the constant $k_0$ can be chosen arbitrarily large, since it
comes from the upper bound \eqref{est.10}.

\noindent \textsl{Removing the dependence on $T$ in the expression \eqref{alpha.bar} of $\overline{\alpha}$.} Let us note that formula \eqref{crittime2b} expresses $t^{*}$ as an
increasing function of $\overline{\alpha}$ whenever
\begin{equation*} \overline{\alpha}\le\frac{\left(\frac{2-p}{2}\right)^{\frac{2}{p}}}{1-\left(\frac{2-p}{2}\right)^{\frac{2}{p}}}.
\end{equation*}
Letting equality in the above expression we can remove $T$  from the
expression of $\overline{\alpha}$ and a posteriori we can conclude
that $t^{*}$ given by \eqref{crittime2b}, does not depend on $T$. A
convenient expression for $t^*$ is given by
\begin{equation}\label{crittime2}
t^{*}=k^{*} R^{p-n(2-p)}\|u_0\|_{L^{1}(B_R(x_0))}^{2-p},
\end{equation}
where the constant $k^{*}$ depends only on $n,p$.

\noindent \textbf{Step 6. Positivity backward in time}. In this step
we recover positivity for any time $0<t<t_1$, using an extension of
the celebrated  Benilan-Crandall estimates, cf. \cite{BC}. Indeed,
the Benilan-Crandall estimate for the MDP  reads
\begin{equation}
u_{t}(x,t)\leq\frac{u(x,t)}{(2-p)t},
\end{equation}
hence the function $u(x,t)t^{-1/(2-p)}$ is non-increasing in time.
It follows that for any time $t\in(0,t_1)$, we have:
\begin{equation*}
u(x,t_1)\leq t^{-\frac{1}{2-p}}t_{1}^{\frac{1}{2-p}}u(x,t)\leq
t^{-\frac{1}{2-p}}T^{\frac{1}{2-p}}u(x,t).
\end{equation*}
We join this last inequality with \eqref{posit2} and we obtain our
main positivity result for solutions to MDP:
\begin{equation}
\left(\frac{p}{2-p}\right)^{p-1}\left(\frac{2-p}{2}\right)^{\frac{2(p-1)}{p}}
\frac{\rho^{p-1}R^p}{2k_0T}\frac{1}{|B_{R_0}|}\int_{B_{R}}u_0\,\dx
\leq t^{-\frac{p-1}{2-p}}T^{\frac{p-1}{2-p}}u(x_0,t)^{p-1}.
\end{equation}
We conclude by letting
\begin{equation*}
k(n,p)=2k_0\,\rho^{p-1}\,\frac{2-p}{p}\left(\frac{2}{2-p}\right)^{\frac{2}{p}}.
\end{equation*}
We thus proved the following positivity theorem for solutions to
MDP.
\begin{theorem}\label{thm.MDP.pos}
Let $1<p<2$, let $u$ be the solution to the Minimal Dirichlet
Problem {\rm (\ref{MDP})} and let $T$ be its finite extinction time.
Then $T>t^{*}$ and the following inequality holds true for any
$t\in(0,t^{*}]$:
\begin{equation}\label{posit3}
u(x_{0},t)^{p-1} \geq k(n,p)
t^{\frac{p-1}{2-p}}T^{-\frac{1}{2-p}}\frac{R^p}{|B_{R_0}|}\int_{B_{R}}u_0\,\dx.
\end{equation}
In particular, the estimate {\rm (\ref{posit3})} establishes the
positivity of $u$ in the interior ball of the annulus up to the
critical time $t^{*}$ expressed by {\rm (\ref{crittime2})}.
\end{theorem}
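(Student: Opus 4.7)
The plan is to combine four tools developed in the preceding subsections: the Flux Lemma \ref{fluxlemma}, the DiBenedetto--Herrero gradient estimate \eqref{DBHestimate}, the local Aleksandrov reflection principle (Proposition \ref{Aleks}), and the Benilan--Crandall pointwise time-monotonicity estimate. The architecture is: first produce a pointwise lower bound for $u(x_0,t_1)$ at some intermediate time $t_1\in[t^*,T]$, and then propagate this bound backward in time to cover the entire interval $(0,t^*]$.

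First I would apply the Flux Lemma on an intermediate ball, say $B_{7R/3}(x_0)$, to convert $\int_{B_R}u_0\,\dx$ into a space-time integral of $|\nabla u|^{p-1}$ over a thin annulus $B_{8R/3}\setminus B_{7R/3}$. In order to bound this gradient integral by a quantity involving $u$ itself, I would cover the thin annulus by a finite family of pairs of concentric balls $(B_i,B_i')$ with $B_i\Subset B_i'\subset B_{3R}\setminus B_{2R}$, and apply the gradient estimate \eqref{DBHestimate} on each pair. Summing and absorbing the geometric constants yields a kind of reversed space-time Sobolev inequality of the form
\begin{equation*}
\int_{B_R}u_0\,\dx \le \frac{C\,D(s,\varepsilon)}{R\,(T-s)^{(p-1)/p}}\int_s^T\!\!\int_{B_{R_0}\setminus B_{2R}}(u+\varepsilon)^{2(p-1)/p}\,\dx\,\rd\tau,
\end{equation*}
valid for every $s\in[0,T]$ and $\varepsilon>0$, where $D(s,\varepsilon)=(1+(T-s)/(\varepsilon^{2-p}K^p))^{(p-1)/p}$.

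Next I would set $s=0$, split the time integral at a critical threshold $t^*$ to be determined, and estimate the $[0,t^*]$ contribution via H\"older inequality together with the $L^1$-contractivity of the Minimal Dirichlet flow (which keeps the mass controlled by $\int_{B_R}u_0\,\dx$). The natural choice is $\varepsilon=\alpha\|u_0\|_{L^1(B_R)}/|B_{R_0}|$, with $\alpha>0$ a free parameter to be optimized later; this scaling bounds $D(0,\varepsilon)$ in terms of the data. After absorbing the $[0,t^*]$ contribution, the mean-value theorem applied to the remaining integral on $[t^*,T]$ produces a time $t_1\in[t^*,T]$ at which the spatial $L^{2(p-1)/p}$-norm of $u(\cdot,t_1)+\varepsilon$ on the outer annulus is small. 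Here the hypothesis $R_0>3R$ is essential, since it ensures that $B_{R_0}\setminus B_{2R}$ lies in the reflection region, so that the Aleksandrov principle in the form \eqref{aleks2} dominates the annular integral by $|B_{R_0}|\,(u(x_0,t_1)+\varepsilon)^{2(p-1)/p}$. Optimizing in $\alpha$ yields the desired pointwise lower bound of the form $u(x_0,t_1)^{p-1}\gtrsim (R^p/T)\,|B_{R_0}|^{-1}\int_{B_R}u_0\,\dx$.

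Finally, to transfer the bound from $t_1$ down to arbitrary $t\in(0,t^*]$, I would invoke the Benilan--Crandall estimate $u_t\le u/((2-p)t)$, which shows that $t\mapsto u(x_0,t)\,t^{-1/(2-p)}$ is non-increasing, hence $u(x_0,t_1)\le (T/t)^{1/(2-p)}u(x_0,t)$. Combined with the bound at $t_1$ this gives precisely \eqref{posit3}. The main obstacle, and the most delicate bookkeeping step, will be to verify that the critical time $t^*$ produced by the optimization is independent of the extinction time $T$: the optimizer $\overline{\alpha}$ formally depends on $T$ through $(R/T)^{1/(p-1)}$, but saturating the compatibility constraint $t^*\le T$ (together with the condition $K^p<T\varepsilon^{p-2}$ required in \eqref{DBHestimate}) allows $T$ to be eliminated, leaving the clean scaling $t^*=k^*(n,p)\,R^{p-n(2-p)}\|u_0\|_{L^1(B_R)}^{2-p}$ stated in \eqref{crittime2}.
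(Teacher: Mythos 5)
Your proposal follows the paper's own proof essentially step for step: Flux Lemma on $B_{7R/3}$, covering the thin annulus by pairs of concentric balls to apply the DiBenedetto--Herrero gradient estimate and obtain the reversed space-time Sobolev inequality, the split of the time integral at $t^*$ with the scaling $\e=\a\|u_0\|_{L^1(B_R)}/|B_{R_0}|$, the mean-value theorem, the Aleksandrov reflection, optimization in $\a$, and finally Benilan--Crandall to propagate the bound backward in time. You also correctly flag the two delicate points (the compatibility constraints on $K$ and $\overline{\a}$, and the elimination of $T$ from $t^*$), so the plan is sound and matches the paper's argument.
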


\subsection{Aronson-Caffarelli type estimates}

We have obtained positivity estimates for initial times, namely
$t\in(0,t^*)$ and now we want to see whether it is possible to
extend such positivity estimates globally in time, i.\,e.,  for any
$t\in(0,T)$. This can be done and leads to some kind of inequalities
in the form of the celebrated Aronson-Caffarelli estimates valid for
the degenerate/slow diffusions, cf. \cite{AC83}. As a precedent two
of the authors proved in \cite{BV} some kind of Aronson-Caffarelli
estimates for the fast diffusion equation.

We begin by rewriting the positivity estimates in the form of the
following alternative: either $t>t^{*}$, or
\begin{equation*}
u(x_{0},t)^{p-1} \geq k(n,p)
t^{\frac{p-1}{2-p}}T^{-\frac{1}{2-p}}\frac{R^p}{|B_{R_0}|}\int_{B_{R}}u_0\,\dx,
\end{equation*}
We recall now the expression of $t^{*}$ given in \eqref{crittime2}
\[
t^{*}=k_* R^{p-n(2-p)}\|u_0\|_{L^{1}(B_R(x_0))}^{2-p}.
\]
The above inequalities can be summarized in the following equivalent alternative: either
\begin{equation*}
\frac{1}{|B_{R_0}|}\int_{B_R}u_0(x)\,\dx\leq
C_{1}(n,p)t^{\frac{1}{2-p}}R^{-\frac{p}{2-p}},
\end{equation*}
or
\begin{equation*}
\frac{1}{|B_{R_0}|}\int_{B_R}u_0(x)\,\dx\leq
k(n,p)t^{-\frac{p-1}{2-p}}T^{\frac{1}{2-p}}\,R^{-p}u(x_0,t)^{p-1}.
\end{equation*}
Summing up the above estimates, we obtain, for any $t\in(0,T)$,
\begin{equation}\label{AC.estimates}
R^{-n}\int_{B_R}u_0(x)\,\dx\leq
C_{1}t^{\frac{1}{2-p}}R^{-\frac{p}{2-p}}+C_{2}t^{-\frac{p-1}{2-p}}T^{\frac{1}{2-p}}R^{-p}u(x_0,t)^{p-1},
\end{equation}
where $C_1$ and $C_2$ are constants depending only on $n$ and $p$.

As already mentioned, the above Aronson-Caffarelli type estimates
are global in time, but they provide quantitative lower bounds only
for $0<t<t^*$. As far as we know, this kind of lower parabolic
Harnack inequalities are new for the $p$-Laplacian.

\noindent \textbf{Remark.} Let us notice that, even
working with initial data $u_0\in L^2(B_R)$, we never use the $L^2$
norm of the initial datum in a quantitative way, but only its $L^1$
norm. This observation allows for the approximation argument
described in the next section.

\section{Positivity for continuous local weak solutions}\label{sec.mainposit}

Throughout this section, $u$ will be a non-negative and continuous
local weak solution, cf. Definition \ref{local.weak}, defined in
$Q_T=\Omega\times(0,T)$, taking initial data $u_0\in
L^{1}_{loc}(\Omega)$. We recall that $B_{R_0}(x_0)\subset\Omega$ and
assume in all this section that $R_{0}>5R$, in order to compare $u$
and the solution $u_D$ of a suitable Minimal Dirichlet Problem. We
never use the modulus of continuity of $u$.

\subsection{Proof of Theorems \ref{mainposit} and \ref{main.AC}}

Fix a time $t\in(0,T_1)$ and a point
$x_1\in\overline{B_{R}(x_0)}$, so that $B_{R}(x_0)\subset
B_{2R}(x_1)\subset B_{(4+\e)R}(x_1)\subset B_{R_0}(x_0)$, for some
$\e>0$ sufficiently small (more precisely, $\e>0$ should satisfy
$R_0>(5+\e)R$). Since $u_{0}\chi_{B_{R}(x_0)}\in L^1(B_{R}(x_0))$,
we can approximate it with functions $u_{0,j}\in L^2(B_{R}(x_0))$,
such that $u_{0,j}\to u_0\chi_{B_{R}(x_0)}$ as $j\to\infty$ in the
space $L^{1}(B_{R}(x_0))$. We consider now the following sequence of
minimal Dirichlet problems in a ball centered at $x_1$:
\begin{equation*}
\left\{\begin{array}{ll}u_t=\Delta_{p}u, \ \ &
\hbox{in} \ B_{(4+\e)R}(x_1)\times(0,T),\\
u(x,0)=u_{0,j}(x)\chi_{B_{R}(x_0)}(x), \ \  &\hbox{in} \ B_{(4+\e)R}(x_1),\\
u(x,t)=0, \ \ & \hbox{for} \
t>0 \ \hbox{and} \ x\in\partial B_{(4+\e)R}(x_1),\end{array}\right.
\end{equation*}
which, by standard theory (see \cite{DiB}), admits a unique
continuous weak solution $u_{D,j}$, for which Theorem
\ref{thm.MDP.pos} applies. We then compare $u_{D,j}$ with the
continuous solution to the problem $(\mathbb{D})$, which is our
local weak solution $u$ restricted to $B_{(4+\e)R}(x_1)\times(0,T)$.
It follows that
\begin{equation*}
u(x,t)\ge u_{D,j}(x,t), \qquad\mbox{and}\qquad T\geq T_{m,j},
\end{equation*}
where $T_{m,j}$ is the finite extinction time for $u_{D,j}$. We then
apply Theorem \ref{thm.MDP.pos} to $u_{D,j}$ to obtain
\begin{equation*}
\begin{split}
u_{D,j}(x_1,t)^{p-1}&\geq
c\,R^{p}\,t^{\frac{p-1}{2-p}}T_{m,j}^{-\frac{1}{2-p}}\frac{1}{|B_{R_0}(x_1)|}\int_{B_{(4+\e)R}(x_1)}u_{0,j}(x)\chi_{B_R(x_0)}(x)\,\dx\\&\geq
c(n,p)R^{p-n}t^{\frac{p-1}{2-p}}T_{m,j}^{-\frac{1}{2-p}}\int_{B_{R}(x_0)}u_{0,j}(x)\,\dx,
\end{split}
\end{equation*}
provided that $t<t^{*}_{j}$, with $t^{*}_{j}$ as in the previous
section (but applied to $u_{0,j}$). Taking into account that
$u_{D,j}(x_1,t)\leq u(x_1,t)$ and that, in the previous estimates,
$t^{*}_{j}$ and $T_{m,j}$ depend only on the $L^1$ norm of
$u_{0,j}$, we can pass to the limit in order to find that
\begin{equation*}
u(x_1,t)^{p-1}\geq
c(n,p)R^{p-n}t^{\frac{p-1}{2-p}}T_{m}^{-\frac{1}{2-p}}\int_{B_{R}(x_0)}u_{0}(x)\,\dx\,,
\end{equation*}
where $T_m=T_m(u_0)=\lim\limits_{j\to\infty}T_{m,j}$, provided that
$t<t^{*}=\lim\limits_{j\to\infty}t^{*}_{j}$, as in the previous
section. Moreover, $t^{*}$ and $T_m$ do not depend on
the choice of the point $x_1\in\overline{B_{R}(x_0)}$, but only on
the support of the initial data which is fixed, we can take
$x_1=x_1(t)$ as the point where
\begin{equation*}
u(x_1,t)=\inf\limits_{x\in B_{R}(x_0)}u(x,t).
\end{equation*}
Thus, we arrive to the desired inequality \eqref{ineq.mainpos}.
Moreover, by the same comparison we get the Aronson-Caffarelli type
estimates \eqref{AC.estimates.main} for any continuous local weak
solution. \qed

\noindent \textbf{Remark.} The fact that $T(u)\ge T_m=T_m(u_0)$ for
any continuous local weak solution $u$ justifies the name of
\textit{minimal life time} that we give to $T_m$ in the
Introduction.

\subsection{Proof of Theorem \ref{posit.goodFDE}}

Let $p_c<p<2$. We divide the proof of Theorem \ref{posit.goodFDE}
into several steps, following the lines of the similar result in
\cite{BV}.

\noindent \textbf{Step 1. Scaling.} Let $u_R$ be the solution of the
homogeneous Dirichlet problem in the ball $B_R(x_0)$, with initial
datum $u_0\in L^{1}(B_R)$ and with extinction time
$T(u_0,R)<\infty$. Then the rescaled function
\begin{equation*}
u(x,t)=\frac{M}{R^n}\,\overline{u}\left(\frac{x-x_0}{R},\frac{t}{R^{np-2n+p}M^{2-p}}\right),
\quad M=\int\limits_{B_R}u_0\,\dx\,,
\end{equation*}
solves the homogeneous Dirichlet problem in $B(0,1)$, with initial
datum $\overline{u}_{0}$ of mass 1 and with extinction time
$\overline{T}$ such that $T(u_0,R)=R^{np-2n+p}M^{2-p}\overline{T}$.
Therefore, we can work in the unit ball and with rescaled solutions.

\noindent \textbf{Step 2. Barenblatt-type solutions.} Consider the
solution $\cb$ of the homogeneous Dirichlet problem in the unit ball
$B(0,1)$, with initial trace the Dirac mass, $\cb(0)=\delta_0$. By
comparison with the Barenblatt solutions of the Cauchy problem (that
exist precisely for $p_c<p<2$), we find that
\begin{equation*}
\cb(x,t)\leq C(n,p)t^{-n\vartheta_1}, \quad \mbox{for} \ \mbox{any}
\ (x,t)\in B(0,1)\times[0,\infty).
\end{equation*}
By the concentration-comparison principle (see \cite{PME},
\cite{VazquezSmoothing}), it follows that the solution $\cb$
extinguishes at the later time among all the solutions with initial
datum of mass 1, call $T(\cb)$ its extinction time. We have to prove
that $T(\cb)<\infty$, that will be done by comparison with another
solution, described below.

\noindent \textbf{Step 3. Separate variable solution.} Let us
consider the solution
\begin{equation*}
U_{r}(x,t)=(T_1-t)^{\frac{1}{2-p}}X(x), \quad \mbox{in} \ B_r, \
r>1,
\end{equation*}
with extinction time $T_1$ to be chosen later. Then, $X$ is a
solution of the elliptic equation $\Delta_{p}X+X/(2-p)=0$ in
$B_{R_0}$, hence it can be chosen radially symmetric and bounded
from above and from below by the distance to the boundary. On the
other hand, fix $t_0>0$ and let $T_1$ be given by
$X(1)(T_1-t_0)^{1/(2-p)}=C(p,n)t_0^{-n\vartheta_1}$.

\noindent \textbf{Step 4. Comparison and end of proof.} We compare
the solutions $\cb$ and $U_r$ constructed above in the cylinder
$Q_1=B_1(0)\times[t_0,T_1)$. The comparison on the boundary is
trivial and the initial data (at $t=t_0$) are ordered by the choice
of $t_0$. It follows that $\cb(x,t)\leq U_{r}(x,t)$ in $Q_1$, hence
their extinction times are ordered: $T(\cb)\leq T_1<\infty$.
Moreover, it is easy to check (by optimizing in $t_0$) that $T_1$
depends only on $p$ and $n$, hence $\overline{T}\leq T(\cb)\leq
K(n,p)$, for any solution of the homogeneous Dirichlet problem in
$B_1$ with extinction time $\overline{T}$. Coming back to the
original variables, we find that
\begin{equation*}
T(u_0,R)\leq K(n,p)R^{np-2n+p}\|u_0\|_{L^{1}(B_R)}^{2-p},
\end{equation*}
which is the upper bound of Theorem \ref{posit.goodFDE}. The lower
bound has been obtained in Subsection \ref{lower.T}. The lower
Harnack inequality \eqref{lowerH.goodFDE} follows immediately from
estimate \eqref{ineq.mainpos}. \qed

\subsection{Upper bounds for the extinction time and proof of Theorem \ref{posit.VFDE}}\label{FET}

In this subsection we prove universal upper estimates for the finite
extinction time $T$, in the range $1<p<p_c$, in terms of suitable
norms of the initial datum $u_0$, and we subsequently prove Theorem
\ref{posit.VFDE}. Throughout this subsection, $u$ is a solution to a
global homogeneous Dirichlet or Cauchy problem in
$\Omega\subseteq\real^n$, with initial datum $u_0$, whose regularity
will be treated below.

\medskip

\noindent \textbf{Bounds in terms of the $L^{r_c}$ norm.} Following
the ideas of Benilan and Crandall \cite{BC2}, we begin by
differentiating in time the global $L^r$ norm of the solution $u(t)$
to a global (Cauchy or Dirichlet) problem:
\begin{equation}\label{est.5}
\begin{split}
\frac{\rd}{\dt}\int\limits_{\Omega}u^r\,\dx&=-r(r-1)\int\limits_{\Omega}u^{r-2}|\nabla
u|^p\,\dx=-\frac{r(r-1)p^p}{(r+p-2)^p}\int\limits_{\Omega}\left|\nabla
u^{\frac{r+p-2}{p}}\right|^p\,\dx\\&\leq-\frac{r(r-1)p^p\cs_{p}^p}{(r+p-2)^p}
\left[\int\limits_{\Omega}u^{\frac{(r+p-2)p^*}{p}}\,\dx\right]^{\frac{p}{p^*}},
\end{split}
\end{equation}
where in the last step we used the Sobolev inequality; here,
$p^*=np/(n-p)$ and $\cs_p$ is the Sobolev constant. Note that
$(r+p-2)p^*/p=r$ if and only if $r=r_c$. If $p>p_c$, then $r_c<1$,
hence the global $L^{r_c}$ norm increases, originating a
\textsl{Backward Effect} (see \cite{VazquezSmoothing}).

\noindent We thus restrict ourselves to $p<p_c$, in which case the
constant $r_c(r_c-1)p^p/(r_c+p-2)^p$ is positive. Then,
\eqref{est.5} implies the following closed differential inequality
\begin{equation*}
\frac{\rd}{\dt}\|u(t)\|_{r_c}^{r_c}\leq-\frac{r_c(r_c-1)p^p\cs_{p}^p}{(r_c+p-2)^p}
\|u(t)\|_{r_c}^{\frac{pr_c}{p^*}},
\end{equation*}
whose integration leads to
\begin{equation}\label{est.BC}
\|u(t)\|_{r_c}^{2-p}\leq\|u(s)\|_{r_c}^{2-p}-K(t-s), \qquad
K=\frac{r_c(r_c-1)p^{p+1}\cs_{p}^p}{n(r+p-2)^p},
\end{equation}
which holds for any $0\leq s\leq t\leq T$ and for any $p<p_c$.
Letting now $s=0$ and $t=T$ in \eqref{est.BC}, we obtain the
following universal upper bound for the extinction time:
\begin{equation}\label{upper.T1}
T\leq K^{-1}\|u_0\|_{r_c}^{2-p}.
\end{equation}
In particular, if the initial datum $u_0\in L^{r_c}(\Omega)$, then
the solution $u$ extinguishes in finite time.

\medskip

\noindent \textbf{Bounds in terms of other $L^r$ norms.} As we have
seen, the condition $u_0\in L^{r_c}(\Omega)$ does not allow for the
Local Smoothing Effect to hold. That is why, in this part we obtain
upper bounds for the extinction time $T$ in terms of other global
$L^r$ norms, with the expected condition $r>r_c$, but only in
bounded domains $\Omega$. Following ideas from \cite{BGV} and
\cite{BV}, we consider  a function $f\in
W^{1,p}_{0}(\Omega)$, and we apply the Poincar\'e, Sobolev and
H\"older inequalities as follows:
\begin{equation}\label{est.6}
\|f\|_{q}\leq\|f\|_{p}^{\vartheta}\|f\|_{p^*}^{1-\vartheta}\leq\cp_{\Omega}^{\vartheta}\cs_{p}^{1-\vartheta}\|\nabla
f\|_{p},
\end{equation}
for any $q\in(p,p^*)$, where $\vartheta\in(0,1)$, $\cp_{\Omega}$ is
the Poincar\'e constant of the domain $\Omega$ and $\cs_{p}$ is the
Sobolev constant. We let in \eqref{est.6}
\begin{equation*}
f=u^{\frac{r+p-2}{p}}, \qquad q=\frac{pr}{r+p-2}, \qquad
\vartheta=\frac{r-r_c}{r},
\end{equation*}
which are in the range where this inequality applies, since $q>p$
for any $p<2$ and $q<p^*$ if and only if $r>r_c$. We then restrict
ourselves to the case $r>r_c$ and, replacing in \eqref{est.6}, we
obtain
\begin{equation}\label{est.7}
\|u\|_{r}^{\frac{r+p-2}{rp}}\leq\cp_{\Omega}^{1-\frac{r_c}{r}}\cs_{p}^{\frac{r_c}{r}}\left\|\nabla
u^{\frac{r+p-2}{p}}\right\|_{p}.
\end{equation}
We elevate \eqref{est.7} at power $p$ and join it then with the
inequality \eqref{est.5} for the derivative of the global $L^r$
norm. It follows that
\begin{equation*}
\frac{\rd}{\dt}\|u(t)\|_{r}^{r}=-\frac{r(r-1)p^p}{(r+p-2)^p}\left\|\nabla
u^{\frac{r+p-2}{p}}\right\|_p^{p}\leq
K_0\|u(t)\|_{r}^{\frac{r+p-2}{r}},
\end{equation*}
where
\begin{equation*}
K_0:=\frac{r(r-1)p^p\,\cs_{p}^{\frac{r}{p\,r_c}}\cp_{\Omega}^{\frac{p(r-r_c)}{r}}}{(r+p-2)^p}\,.
\end{equation*}
By integration over $[s,t]\subseteq[0,T]$, we obtain that
\begin{equation}\label{est.8}
\|u(t)\|_{r}^{2-p}\leq\|u(s)\|_{r}^{2-p}-K_0(t-s),
\end{equation}
for any $0\leq s\leq t\leq T$ and for any $r>r_c$. We let now $s=0$,
$t=T$ in \eqref{est.8} and we obtain an upper bound for the
extinction time:
\begin{equation}\label{upper.T2}
T^{\frac{1}{2-p}}\leq K_0^{-\frac{1}{2-p}}\|u_0\|_{r}
=\left[\frac{r(r-1)p^p\,\cs_{p}^{\frac{r}{p\,r_c}}
\cp_{\Omega}^{\frac{p(r-r_c)}{r}}}{(r+p-2)^p}\right]^{-\frac{1}{2-p}}\|u_0\|_{r}
=c_1R^{-\frac{rp+n(p-2)}{r(2-p)}} \|u_0\|_{r}\,,
\end{equation}
since the Poincar\'e constant $\mathcal{P}_{\Omega}\sim R$ and where
$c_1$ only depends on $p,r,n$ and goes to zero as $r\to 1$. In
particular, any solution $u$ of a homogeneous Dirichlet problem in
$\Omega$, with $u_0\in L^r$, $r>r_c$, extinguishes in finite time.

\noindent \textbf{Remarks.} (i) The above results prove that a
global Sobolev and Poincar\'e inequality implies that the solution
extinguishes in finite time and gives quantitative upper bounds for
the extinction time $T$.

\noindent (ii) Direct applications of these bounds in the estimates
\eqref{ineq.mainpos} and \eqref{AC.estimates.main} prove Theorem
\ref{posit.VFDE}.

\section{Harnack inequalities}\label{sec.Harnack}

By joining the local lower and upper bounds obtained
in the previous parts of the paper, we obtain various forms of
Harnack inequalities. These are expressions
relating the maximum and minimum of a solution inside certain
parabolic cylinders. In the well known linear case one has
\begin{equation}
\sup_{Q_1} u(x,t)\le C \,\inf_{Q_2} u(x,t).
\end{equation}
The main idea is that the formula applies for a large class of
solutions and the constant $C$ that enters the relation does not
depend on the particular solution, but only on the data like $p,n$
 and the  size of the cylinder. The
cylinders in the standard case are supposed to be ordered,
$Q_1=B_{R_1}(x_0)\times[t_1,t_2]$,
$Q_2=B_{R_2}(x_0)\times[t_3,t_4]$, with $t_1\le t_2<t_3\le t_4$ and
$R_1<R_2$.

It is well-known that in the degenerate nonlinear elliptic or
parabolic problems a plain form of the inequality does not hold. In
the  work of DiBenedetto and collaborators, see the book \cite{DiB}
or the recent work \cite{DGV}, versions are obtained where some
information of the solution is used to define so-called intrinsic
sizes, like the size of the parabolic cylinder(s), that usually
depends on $u(x_0,t_0)$. They are called {\sl intrinsic Harnack
inequalities}.

The Harnack Inequalities of \cite{DiB,DGV}, in the supercritical
range then read: \textit{There exist positive constants
$\overline{c}$ and $\overline{\delta}$ depending only on $p,n$, such
that for all $(x_0,t_0)\in \Omega\times(0,T)$ and all cylinders of
the type
\begin{equation}\label{Intr.cyl.Dib}
B_R(x_0)\times\left(t_0-c\,u(x_0,t_0)^{2-p}(8R)^p,
t_0+c\,u(x_0,t_0)^{2-p}(8R)^p\right)\subset\Omega\times(0,T)\,,
\end{equation}
we have
\[
\overline{c}\,u(x_0,t_0)\le \inf_{x\in B_R(x_0)}u(x,t)\,,
\]
for all times $t_0-\overline{\delta}\,u(x_0,t_0)^{2-p}\,R^p<t<t_0
+\overline{\delta}\,u(x_0,t_0)^{2-p}\,R^p$. The constants
$\overline{\delta}$ and $\overline{c}$ tend to zero as $p\to 2$ or
as $p\to p_c$\,.}

They also give a counter-example in the lower range $p<p_c$, by
producing an explicit local solution that does not satisfy any kind
of Harnack inequality (neither of the types called intrinsic,
elliptic, forward,  backward) if one fixes ``a priori'' the
constant $c$\,. At this point a natural question is posed:

\textit{What form may take the Harnack estimate, if any,  when $p$ is
in the subcritical range $1<p \le p_c$?}

\noindent We will give an answer to this question.

\medskip

If one wants to apply the above result to a local weak solution
defined on $\Omega\times[0,T]$, where $T$ is possibly the extinction
time, one should care about the size of the intrinsic cylinder,
namely the intrinsic hypothesis \eqref{Intr.cyl.Dib} reads
\begin{equation}\label{Intr.hyp}
\overline{c}\,u(x_0,t_0) \le
\left[\frac{\min\{t_0,T-t_0\}}{(8R)^p}\right]^{\frac{1}{2-p}}\qquad\mbox{and
}\qquad \dist(x_0,\partial\Omega)<\frac{R}{8}\,,
\end{equation}
This hypothesis is guaranteed in the good range by the fact that solutions with initial data in $\LL^1_{\rm loc}$ are bounded, while in the very fast diffusion range hypothesis
\eqref{Intr.hyp} fails, and should be replaced by :
\[
u(x_0,t)
    \le \frac{c_{p,n} }{\varepsilon^{\frac{2r\vartheta_r}{2-p}}} \left[\frac{\|u(t_0)\|_{\LL^r({B_R})}\,R^d}{\|u(t_0)\|_{\LL^1({B_R})}R^{\frac{n}{r}}}
            \right]^{2r\vartheta_r}\;\left[\frac{t_0}{R^p}\right]^{\frac{1}{2-p}}\,.
\]
This local upper bound can be derived by our local smoothing effect
of Theorem \ref{mainupper0}, whenever $t_0+\varepsilon
t^{*}(t_0)<t<t_0+t^{*}(t_0)$, where the critical time is defined by
a translation in formula  \eqref{crittime2} as follows
\begin{equation}\label{t*t0}
t^{*}(t_0)=k^{*} R^{p-n(p-2)}\|u(t_0)\|_{L^{1}(B_R(x_0))}^{2-p}\,,
\end{equation}
full details are given below, in the proof of Theorem \ref{FBH}. In this new intrinsic geometry
we obtain the plain form of intrinsic Harnack inequalities of Theorem \ref{FBH}, namely

\textit{There exists constants $h_1\,,h_2$
depending only on $d,p,r$, such that, for any $\varepsilon\in[0,1]$  the following inequality holds
\begin{equation*}
\inf\limits_{x\in B_{R}(x_0)}u(x,t\pm\theta)\geq
h_1\e^{\frac{rp\vartheta_r}{2-p}}\left[\frac{\|u(t_0)\|_{L^1(B_R)}R^{\frac{n}{r}}}
    {\|u(t_0)\|_{L^r(B_R)}R^{n}}\right]^{rp\vartheta_r+\frac{1}{2-p}}u(x_0,t),
\end{equation*}
for any $t_0+\e t^*(t_0)< t \pm \theta<t_0+t^*(t_0).$}

\medskip

We have obtained various forms of Harnack inequalities, namely

\noindent\textsl{Forward Harnack inequalities.} These inequalities compare
the supremum at a time $t_0$ with the infimum of the solution at a later time
$t_0+\vartheta$. These kind of Harnack inequalities hold
for the linear heat equation as well: we recover the classical result
just by letting $p\to 2$.

\noindent \textsl{Elliptic-type Harnack inequalities.} These
inequalities are typical of the fast diffusion range, indeed they
compare the infimum and the supremum of the solution at the same
time, namely consider $\theta=0$ above. It is false for the Heat
equation and for the degenerate $p$-Laplacian, as one can easily
check by plugging respectively the gaussian heat kernel or the
Barenblatt solutions. This kind of inequalities are true for the
fast diffusion processes, as noticed by two of the authors in
\cite{BV, BV2} and by DiBenedetto et al. in \cite{DGV, DUV} in the
supercritical range.

\noindent \textsl{Backward Harnack inequalities}. These inequalities compare
the supremum at a time $t_0$ with the infimum of the solution at a previous time
$t_0-\vartheta$. This backward inequality is a typical feature of the fast diffusion
processes, that somehow takes into account the phenomena of extinction in finite time,
as already mentioned in Subsection 2.4.

In the very fast diffusion range $1<p\le p_c$ our intrinsic Harnack inequality represents the first and only known result. In the good range, $p_c<p<1$ we can take $r=1$, so that the ratio of $L^r$ norms simplifies and we recover the result of \cite{DiB, DGV} with a different proof.

Throughout this section $T_m$ will denote
the finite extinction time for the minimal Dirichlet problem
\eqref{MDP}, i.\,e. the so-called minimal life time of any
continuous local weak solution.

\subsection{Intrinsic Harnack inequalities. Proof of Theorem \ref{FBH}}\label{subsect.Harnack.FBH}

Let $u$ be a nonnegative, continuous local weak solution of the fast
$p$-Laplacian equation in a cylinder $Q=\Omega\times (0,T)$, with
$1<p<2$, taking an initial datum $u_0\in L^r_{\rm loc}(\Omega)$,
with $r\geq\max\{1,r_c\}$. Let $x_0\in\Omega$ be a fixed point, such
that $\dist(x_0,\partial\Omega)>5R$. We recall the notation
$T_m$ for the minimal life time associated to the initial data $u_0$
and the ball $B_R(x_0)$, and we denote the critical time
$$
t^{*}(s)=k^{*} R^{p-n(p-2)}\|u(s)\|_{L^{1}(B_R(x_0))}^{2-p}, \quad
t^{*}=t^{*}(0),
$$
which is a shift in time of the expression \eqref{crittime2}.

With these notations and assumptions, we first prove a generalized
form of the Harnack inequality, that holds for initial times, or equivalently for small intrinsic cylinders,  and in which we allow the constants to depend also on $T_m$.
\begin{theorem}\label{Har4}
For any $t_0\in(0,t^{*}]$, and any $\theta\in[0,t_0/2]$ such that
$t_0+\theta\leq t^{*}$, the following forward/backward/elliptic
Harnack inequality holds true:
\begin{equation}\label{Harnack3}
\inf_{x\in B_{R}(x_0)}u(x,t_0\pm\theta)\geq Hu(x_0,t_0),
\end{equation}
where
\begin{equation*}
H=CR^{\frac{np-2n+p}{(p-1)(2-p)}}\left[\frac{\|u_0\|_{L^{1}(B_R)}}{T_m^{\frac{1}{2-p}}}\right]^{\frac{1}{p-1}}
\left[R^{\frac{p}{2-p}}\frac{\big\|u_0\big\|_{L^{r}(B_{R})}^{rp\vartheta_r}}
{t_{0}^{\frac{rp\vartheta_r}{2-p}}}+1\right]^{-1}.
\end{equation*}
and $C$ depends only on $r$, $p$, $n$.  $H$ goes to 0 as
$t_0\to 0$.
\end{theorem}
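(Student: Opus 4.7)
The plan is to combine the two pillars that have been established earlier in the paper: the local smoothing effect (Theorem~\ref{mainupper0}) and the positivity estimate (Theorem~\ref{mainposit}). The first produces an upper bound on $u(x_0,t_0)$ in terms of $\|u_0\|_{L^r(B_R)}$, while the second produces a lower bound on $\inf_{B_R(x_0)}u(\cdot,t_0\pm\theta)$ in terms of $\|u_0\|_{L^1(B_R)}$ and the minimal life time $T_m$. Taking the ratio and cleaning up exponents will yield $H$.

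First I would apply Theorem~\ref{mainposit} at the time $t_0\pm\theta$. Since $\theta\in[0,t_0/2]$ we have $t_0/2\le t_0\pm\theta\le 3t_0/2$, and the upper constraint $t_0+\theta\le t^{*}$ keeps us in the range of validity. Extracting the $(p-1)$-th root gives, up to a constant depending only on $n,p$,
\[
\inf_{x\in B_R(x_0)} u(x,t_0\pm\theta)
    \ge C\,R^{\frac{p-n}{p-1}}\,t_0^{\frac{1}{2-p}}\,T_m^{-\frac{1}{(p-1)(2-p)}}\,
        \|u_0\|_{L^1(B_R)}^{\frac{1}{p-1}}.
\]
Here the constraint $\theta\le t_0/2$ is what allows the replacement of $(t_0\pm\theta)^{(p-1)/(2-p)}$ by $t_0^{(p-1)/(2-p)}$ up to a universal factor.

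Next, I would apply Theorem~\ref{mainupper0} at $(x_0,t_0)$, which produces
\[
u(x_0,t_0) \le \frac{C_1}{t_0^{n\vartheta_r}}\,\|u_0\|_{L^r(B_R)}^{rp\vartheta_r}
    +C_2\left(\frac{t_0}{R^{p}}\right)^{\frac{1}{2-p}}.
\]
Factoring out $(t_0/R^p)^{1/(2-p)}$ from both summands gives
\[
u(x_0,t_0)\le C\left(\frac{t_0}{R^p}\right)^{\frac{1}{2-p}}
    \left[\,R^{\frac{p}{2-p}}\,\frac{\|u_0\|_{L^r(B_R)}^{rp\vartheta_r}}{t_0^{rp\vartheta_r/(2-p)}}+1\right],
\]
once I use the identity $n\vartheta_r+\frac{1}{2-p}=\frac{rp\vartheta_r}{2-p}$, which follows directly from $\vartheta_r=[rp+(p-2)n]^{-1}$.

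Taking the quotient of the two displayed bounds, the $t_0^{1/(2-p)}$ factors cancel and the $R$-exponent combines to
\[
\frac{p-n}{p-1}+\frac{p}{2-p} = \frac{np-2n+p}{(p-1)(2-p)},
\]
which is exactly the exponent appearing in the stated $H$. Everything else matches the announced form, so the theorem follows. The main obstacle is purely bookkeeping: verifying the exponent identities above and making sure the factor $C_2$ (the ``absolute'' part of the smoothing estimate) gets absorbed into the constant $1$ inside the bracket so that the final $H$ has the clean product form given in the statement; the behaviour $H\to 0$ as $t_0\to 0^+$ is then transparent from the presence of the $\|u_0\|_{L^r}^{rp\vartheta_r}/t_0^{rp\vartheta_r/(2-p)}$ term in the denominator, which blows up.
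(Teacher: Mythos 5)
Your proposal is correct and follows essentially the same route as the paper: the lower bound comes from the positivity estimate of Theorem \ref{mainposit} applied at $t_0\pm\theta$ (with $t_0\pm\theta$ comparable to $t_0$ thanks to $\theta\le t_0/2$), the upper bound comes from the smoothing effect of Theorem \ref{mainupper0} with the factor $(t_0/R^p)^{1/(2-p)}$ pulled out via the identity $n\vartheta_r+\tfrac{1}{2-p}=\tfrac{rp\vartheta_r}{2-p}$, and the quotient yields $H$ with the exponent $\tfrac{p-n}{p-1}+\tfrac{p}{2-p}=\tfrac{np-2n+p}{(p-1)(2-p)}$. All the exponent bookkeeping checks out.
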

\begin{proof}
Let us recall first that, from Theorem \ref{mainposit},
$u(x_0,t_0)>0$ for $t_0<t^{*}$. Let us fix $t_0\in(0,t^{*})$ and
choose $\theta>0$ sufficiently small such that $t_0+\theta\leq
t^{*}$ and $t_0\pm\theta\ge t_0/3$. We plug these quantities into
the lower estimate \eqref{mainposit} to get:
\begin{equation*}
\begin{split}
\inf_{x\in B_R}u(x,t_0\pm\theta)&\geq
C(t_0\pm\theta)^{\frac{1}{2-p}}R^{\frac{p-n}{p-1}}T_m^{-\frac{1}{(2-p)(p-1)}}\|u_0\|_{L^{1}(B_R)}^{\frac{1}{p-1}}\\&\geq
C\left(\frac{t_0}{3R_0^p}\right)^{\frac{1}{2-p}}R^{\frac{np-2n+p}{(p-1)(2-p)}}T_m^{-\frac{1}{(2-p)(p-1)}}\|u_0\|_{L^{1}(B_R)}^{\frac{1}{p-1}}.
\end{split}
\end{equation*}
On the other hand, we use the local upper bound
\eqref{ineq.mainpos}, in the following way:
\begin{equation*}
u(x_0,t_0)\leq
C_{3}\left[R^{\frac{p}{2-p}}\frac{\big\|u_0\big\|_{L^{r}(B_{2R})}^{rp\vartheta_r}}
{t_{0}^{\frac{rp\vartheta_r}{2-p}}}+1\right]\left(\frac{t_{0}}{R^p}\right)^{\frac{1}{2-p}}.
\end{equation*}
Joining the two previous estimates, we obtain the desired form of
the inequality.
\end{proof}
We are now ready to prove Theorem \ref{FBH}, which is our main
intrinsic Harnack inequality.

\noindent \textbf{Proof of Theorem \ref{FBH}} We may assume that
$t_0=0$, hence $t^{*}(t_0)=t^{*}$; the general result follows by
translation in time. We use again the local smoothing effect of
Theorem \ref{mainupper0} as before and we estimate:
\begin{equation}\label{Har.upper}
\begin{split}
u(x_0,t_0)&\leq
C_3\left[1+\frac{\|u_0\|_{L^{r}(B_R)}^{rp\vartheta_r}}{t_0^{\frac{rp\vartheta_r}{2-p}}}R^{\frac{p}{2-p}}\right]\left[\frac{t_0}{R^{2}}\right]^{\frac{1}{2-p}}
\leq C_4\left[\frac{\|u_0\|_{L^{r}(B_R)}^{rp\vartheta_r}}{(\e
t^{*})^{\frac{rp\vartheta_r}{2-p}}}R^{\frac{p}{2-p}}\right]\left[\frac{t_0}{R^{2}}\right]^{\frac{1}{2-p}}\\
&\leq\frac{C_5}{\e^{\frac{rp\vartheta_r}{2-p}}}\left[\frac{\|u_0\|_{L^r(B_R)}R^{n}}{\|u_0\|_{L^1(B_R)}R^{\frac{n}{r}}}\right]^{rp\vartheta_r}\left[\frac{t_0}{R^{2}}\right]^{\frac{1}{2-p}}\,,
\end{split}
\end{equation}
where the second step in the inequality above follows from the
assumption that $t_0\ge\e t^{*}$. On the other hand, we can remove the dependence on $T_m$
in the lower estimate of Theorem \ref{Har4}, using the results in Subsection \ref{FET}, namely:
\begin{equation*}
T_m^{\frac{1}{2-p}}\leq
C(r,p,n)R^{\frac{p}{2-p}-\frac{n}{r}}\|u_0\|_{L^r(B_R)}, \quad
r\geq\max\{1,r_c\},
\end{equation*}
hence the lower estimate becomes
\begin{equation}\label{Har.lower}
\inf\limits_{x\in B_{R}(x_0)}u(x,t\pm\theta)\geq
C_6\left[\frac{\|u_0\|_{L^1(B_R)}R^{\frac{n}{r}}}{\|u_0\|_{L^r(B_R)}R^{n}}\right]^{-\frac{1}{p-1}}\left[\frac{t_0}{R^{2}}\right]^{\frac{1}{2-p}}.
\end{equation}
Joining the inequalities \eqref{Har.upper} and \eqref{Har.lower}, we
obtain the estimate \eqref{intr.Harnack} as stated. We pass from
$[0,t^{*}]$ to any interval $[t_0,t_0+t^{*}(t_0)]$ by translation in
time. \qed

\noindent \textbf{Alternative form of the Harnack inequality}. The
following alternative form of the Harnack inequality is
given avoiding the intrinsic geometry and the waiting time $\e\in [0,1]$.
An analogous version, for the degenerate diffusion of $p$-Laplacian type, can be found
in \cite{DGV-alt}.

\begin{theorem}\label{Har1}
Under the running assumptions, there exists $C_1$, $C_2>0$,
depending only on $r$, $n$, $p$, such that the following inequality
holds true:
\begin{equation}\label{Harnack1}
\sup_{x\in B_R}u(x,t)\leq
C_1\frac{\|u(t_0)\|_{L^{r}(B_{2R})}^{rp\vartheta_r}}{t^{n\vartheta_r}}+
C_2\left[\frac{\|u(t_0)\|_{L^r(B_R)}R^{n}}{\|u(t_0)\|_{L^1(B_R)}R^{\frac{n}{r}}}\right]^{\frac{1}{p-1}}
\inf_{x\in B_{R}}u(x,t\pm\theta),
\end{equation}
for any $0\leq t_0<t\pm\theta<t_0+t^{*}(t_0)<T$.
\end{theorem}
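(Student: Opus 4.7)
\textbf{Proof Plan for Theorem \ref{Har1}.} The strategy is to combine, on the same intrinsic cylinder, the local upper bound from Theorem \ref{mainupper0} with the local lower bound from Theorem \ref{mainposit} (supplemented by the upper bound on $T_m$ from Subsection \ref{FET}), and simply rearrange. The claim decouples two kinds of information carried by the solution at time $t_0$: the first term on the right captures the direct smoothing action on $\|u(t_0)\|_{L^r(B_{2R})}$, while the second term absorbs the ``local damping'' term in the upper bound into a multiple of the infimum at time $t\pm\theta$, through the positivity estimate. This is essentially the same calculation that produces the intrinsic Harnack inequality of Theorem \ref{FBH}, only written in non-intrinsic form.

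First, I would shift time so that without loss of generality $t_0=0$ (by translation invariance) and apply the local smoothing effect of Theorem \ref{mainupper0} with initial datum $u(\cdot,t_0)\in L^r_{\rm loc}$ on the ball $B_{2R}$, giving for every $x\in B_R$
\begin{equation*}
u(x,t)\le \frac{C_1\,\|u(t_0)\|_{L^r(B_{2R})}^{rp\vartheta_r}}{t^{n\vartheta_r}}
  + C_2\Bigl(\frac{t}{R^p}\Bigr)^{\frac{1}{2-p}},
\end{equation*}
so the first summand already matches the first summand of \eqref{Harnack1}. It remains to estimate the second summand by a multiple of $\inf_{B_R}u(\cdot,t\pm\theta)$.

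Second, I would invoke the positivity estimate \eqref{ineq.mainpos} at time $t\pm\theta$, which is legitimate precisely because the hypothesis $t\pm\theta<t_0+t^*(t_0)$ places us in the range where Theorem \ref{mainposit} applies. Raising it to the power $1/(p-1)$ and then using the subcritical upper bound \eqref{upper.T2} to eliminate $T_m$ in favor of $\|u(t_0)\|_{L^r(B_R)}$ (as was done in \eqref{Har.lower}), one obtains
\begin{equation*}
\inf_{x\in B_R}u(x,t\pm\theta)
\;\ge\; C_6\left[\frac{\|u(t_0)\|_{L^1(B_R)}R^{n/r}}{\|u(t_0)\|_{L^r(B_R)}R^{n}}\right]^{\frac{1}{p-1}}
\Bigl(\frac{t}{R^p}\Bigr)^{\frac{1}{2-p}},
\end{equation*}
which after solving for $(t/R^p)^{1/(2-p)}$ gives exactly the factor appearing in the second summand of \eqref{Harnack1}. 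Substituting back into the upper bound then yields the inequality with $C_1,C_2$ depending only on $r,n,p$.

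The two points requiring some care, rather than any genuine obstacle, are: (i) ensuring that the time $t-\theta$ in the backward version still lies in the window where Theorem \ref{mainposit} is valid — this is already built into the hypothesis $t_0<t\pm\theta<t_0+t^*(t_0)$ and, for the backward direction, is compatible with the Benilan--Crandall monotonicity used in Subsection \ref{subsect.Harnack.FBH}; and (ii) verifying that when $r=1$ the ratio $\|u(t_0)\|_{L^1(B_R)}R^{n/r}/\|u(t_0)\|_{L^r(B_R)}R^n$ reduces to $1$, recovering the universal intrinsic Harnack inequality of \cite{DGV} in the supercritical range $p>p_c$ as a particular case. No new analytical input beyond Theorems \ref{mainupper0}, \ref{mainposit} and the finite-extinction-time bound \eqref{upper.T2} is needed. \qed
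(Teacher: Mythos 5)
Your proposal is correct and follows essentially the same route as the paper, which obtains \eqref{Harnack1} by joining the upper estimate \eqref{mainsmooth0} of Theorem \ref{mainupper0} with the lower estimate \eqref{Har.lower} (i.e.\ Theorem \ref{mainposit} with $T_m$ eliminated via \eqref{upper.T2}). The only difference is that you spell out the rearrangement and the $r=1$ sanity check that the paper leaves to the reader.
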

The proof is very easy and it consists only in joining the upper
estimate \eqref{mainsmooth0} with the lower estimate
\eqref{Har.lower} above. We leave the details to the interested
reader.

\noindent \textbf{Remark.} In the good fast-diffusion range $p>p_c$,
we can let $r=1$ and obtain
\begin{equation*}
\sup_{x\in B_R}u(x,t)\leq
C_1\frac{\|u(t_0)\|_{L^{1}(B_{2R})}^{p\vartheta_1}}{t^{n\vartheta_1}}+C_2\inf_{x\in
B_{R}}u(x,t\pm\theta).
\end{equation*}


\section{Special Energy Inequality. Rigorous Proof of Theorem \ref{inequality}}\label{Sec.Energy}

We devote this section to the proof of Theorem \ref{inequality}, and
to further generalizations and applications of it. Throughout this
section, by admissible test function we mean $\varphi\in
C^{2}_c(\Omega)$ as specified in the statement of Theorem
\ref{inequality}.

We have presented in the Introduction the basic, formal calculation
leading to inequality \eqref{mainid}. Our task here will be to give
a detailed justification of this formal proof. To this end we state
and prove in full detail an auxiliary result.

\begin{proposition}\label{Ineq.Phi}
Let $\Phi:\real\to\real$ be a strictly positive smooth function, let
$\varphi\ge 0$ be a nonnegative admissible test function. Define the associated
$\Phi$-Laplacian operator
\begin{equation}
\Delta_{\Phi}u:={\rm div}\left[\Phi'\left(|\nabla u|^2\right)\nabla
u\right].
\end{equation}
Then the following inequality holds true for continuous weak
solutions to the $\Phi$-Laplacian evolution equation
\begin{equation}\label{mainidPhi}
\frac{\rd}{\dt}\int_{\Omega}\Phi\left(|\nabla
u|^2\right)\varphi\,\dx+
\frac{2}{n}\int_{\Omega}(\Delta_{\Phi}u)^{2}\varphi\,\dx\leq
\int_{\Omega}\left[\Phi'\left(|\nabla
u|^2\right)\right]^{2}\left(|\nabla u|^2\right)\Delta\varphi\,\dx.
\end{equation}
\end{proposition}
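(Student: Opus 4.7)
The plan is to reproduce the formal argument sketched for Theorem~\ref{inequality} with the vector field $F=\Phi'(|\nabla u|^2)\nabla u$ in place of $|\nabla u|^{p-2}\nabla u$, and then to rigorize it by a standard regularization scheme. The algebraic ingredients carry over verbatim: one has $\mathrm{div}\,F=\Delta_{\Phi}u$ and $|F|^2=[\Phi'(|\nabla u|^2)]^2|\nabla u|^2$, while $2F\cdot\nabla u_t=\partial_t\Phi(|\nabla u|^2)$, so the Bochner-type identity \eqref{formula} and the trace inequality \eqref{ineq} apply to this $F$ unchanged.

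First I would approximate by a non-degenerate $\Phi_\varepsilon$-Laplacian, for example by replacing $\Phi'(s)$ by $\Phi'(s)+\varepsilon$ so that the diffusion becomes uniformly elliptic, and work with classical $C^{2,1}$ solutions $u_\varepsilon$ — available via standard quasilinear parabolic theory after mollifying the initial trace. On this approximate level every step of the formal calculation is pointwise legitimate. Namely, differentiating the energy and integrating by parts once using $u_{\varepsilon,t}=\Delta_{\Phi_\varepsilon}u_\varepsilon$ and the compact support of $\varphi$ yields
\[
\frac{\rd}{\dt}\int_\Omega\Phi_\varepsilon(|\nabla u_\varepsilon|^2)\varphi\,\dx
 = -2\int_\Omega(\Delta_{\Phi_\varepsilon}u_\varepsilon)^2\varphi\,\dx
   +2\int_\Omega\mathrm{div}\bigl(F_\varepsilon\,\mathrm{div}\,F_\varepsilon\bigr)\varphi\,\dx.
\]
Next, applying the Bochner identity together with $\mathrm{Tr}[(\partial F_\varepsilon/\partial x)^2]\ge\tfrac{1}{n}(\mathrm{div}\,F_\varepsilon)^2$ to the integrand of the second term, and shifting $\Delta$ off $|F_\varepsilon|^2$ by two integrations by parts (legitimate because $\varphi=|\nabla\varphi|=0$ on $\partial\Omega$), produces \eqref{mainidPhi} for each $u_\varepsilon$. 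To pass to the limit $\varepsilon\to 0$ I would invoke monotonicity and Caccioppoli-type estimates for $\Phi$-Laplacian operators to obtain $\nabla u_\varepsilon\to\nabla u$ strongly in $L^p_{\rm loc}$ and almost everywhere, so the right-hand side converges by dominated convergence while the time derivative passes in $\cd^{'}(0,T)$; the regularized inequality itself furnishes a uniform $L^2_{\rm loc}$ bound on $u_{\varepsilon,t}=\Delta_{\Phi_\varepsilon}u_\varepsilon$, which then converges weakly in $L^2_{\rm loc}$ to $u_t=\Delta_\Phi u$, and the dissipation term is preserved in the correct direction by weak lower semicontinuity.

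The hard part will be closing this bootstrap in a self-consistent manner. The dissipation term $\int(\Delta_\Phi u)^2\varphi\,\dx$ is not known to be finite a priori for a merely continuous weak solution — indeed, its finiteness is precisely the content of Corollary~\ref{CorEnergy}, which is itself deduced from the inequality. So the uniform $L^2_{\rm loc}$ bound on $u_{\varepsilon,t}$ must be extracted exclusively from the regularized version of \eqref{mainidPhi}, closed by $\varepsilon$-uniform $W^{1,p}_{\rm loc}$ bounds on $u_\varepsilon$ that control its right-hand side; only after the passage to the limit does one recover, a posteriori, that $\Delta_\Phi u\in L^2_{\rm loc}$. A subsidiary technical point is to select $\Phi_\varepsilon$ so that $\Phi_\varepsilon'\to\Phi'$ locally uniformly and the associated flux remains monotone, which is what guarantees the almost-everywhere convergence of the approximating gradients used in the limit passage.
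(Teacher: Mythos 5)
Your core argument --- differentiating the localized energy, integrating by parts using $u_t=\Delta_{\Phi}u$, and applying the identity \eqref{formula} together with the trace inequality \eqref{ineq} to the field $F=\Phi'(|\nabla u|^2)\nabla u$ --- is exactly the paper's proof of Proposition \ref{Ineq.Phi}: the paper simply reruns the formal computation of Theorem \ref{inequality} for this $F$, taking for granted that for a smooth, non-degenerate $\Phi$ the solutions are classical, so every step is pointwise legitimate. Where you diverge is in bolting a second regularization ($\Phi'\mapsto\Phi'+\varepsilon$) and a limit passage onto the Proposition itself. That is not wrong, but it duplicates work the paper deliberately postpones: in the paper's architecture the Proposition is the \emph{smooth, non-degenerate} statement, and the entire approximation apparatus --- the problems $(P_{\varepsilon})$ with $\Phi_{\varepsilon}(w)=\tfrac{2}{p}(w+\varepsilon^2)^{p/2}$, the $\varepsilon$-uniform energy and H\"older bounds, the convergence in measure of the gradients via the monotonicity inequality \eqref{ineq.vect2}, and the lower semicontinuity of the dissipation term --- is carried out once, in Steps 1--6 of Section \ref{Sec.Energy}, to pass from the Proposition to the genuine $p$-Laplacian. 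Your closing concern about circularity (that the $L^2_{loc}$ bound on $u_t$ must be \emph{extracted} from the regularized inequality rather than assumed) is precisely how the paper closes that later argument, so your instinct is sound; for the Proposition as stated, however, it suffices to observe that classical solutions exist and the computation is exact.
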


\noindent\textbf{Remark. } Let us remark that the $p$-Laplacian is
obtained by taking $\Phi(w)=\frac{2}{p}w^{p/2}$, but we stress the
fact that this choice of $\Phi$ falls out the smoothness requirement
of the above proposition.

\noindent {\sl Proof.~} This proof is a straightforward
generalization of the above formal proof of Theorem
\ref{inequality}. Denote $w=|\nabla u|^2$. Take a test function
$\varphi\ge 0$ as in the assumptions. We perform a time derivation
of the energy associated to the  $\Phi$-Laplacian
\begin{equation*}
\begin{split}
\frac{\rd}{\dt}\int_{\Omega}\Phi(w)\varphi\,\dx&=-2\int_{\Omega}\hbox{div}\big[\Phi'(w)(\nabla
u)\,\varphi\big]\Delta_{\Phi}u\,\dx\\&=-2\int_{\Omega}(\Delta_{\Phi}u)^{2}\varphi\,\dx-
2\int_{\Omega}(\Delta_{\Phi}u)\Phi'(w) (\nabla
u\cdot\nabla\varphi)\,\dx.
\end{split}
\end{equation*}
We then apply identity (\ref{formula}) and inequality (\ref{ineq})
for the vector field $F=\Phi(w)|\nabla u|$ and finally obtain
\eqref{mainidPhi}.\qed

\medskip

\noindent The rest of the argument is based on suitable
approximations of the $p$-Laplacian equation by the
$\Phi$-Laplacians introduced above; it will be divided into several
steps.

\medskip

\noindent \textsc{Step 1. Approximating problems}. We now let
$\Phi_{\e}(w)=\frac{2}{p}(w+\e^2)^{p/2}$, which is our approximation
for the $p$-Laplacian nonlinearity. We also consider a fixed
sub-cylinder $Q'\subset Q_T$ of the form  $Q'=B_{R}\times (T_1,T_2)$
where $B_{R}\subset\Omega$ is a small ball and $0<T_1<T_2<T$.
 Choose moreover $T_1$ such that $\|\nabla
u(T_1)\|_{L^p(B_R)}=K<\infty$, which is true for a.\,e. time

We introduce the following approximating Dirichlet problem in $Q'$:
\begin{equation}\label{Eps.Problem}
(P_{\e})
\left\{\begin{array}{ll}u_{\e,t}=\Delta_{\Phi_{\e}}u_{\e}:=\hbox{div}\left[\left(|\nabla
u_{\e}|^2+\e^2\right)^{(p-2)/2}\nabla u_{\e}\right], \ \hbox{in} \
Q',\\u_{\e}(x,T_1)=u(x,T_1), \ \hbox{for} \ \hbox{any} \ x\in
B_R,\\u_{\e}(x,t)=u(x,t), \ \hbox{for} \ x\in\partial B_{R}, \
t\in(T_1,T_2).\end{array}\right.
\end{equation}
Since the equation in this problem is neither degenerate, nor
singular, and the boundary data are continuous by our assumptions,
the solution $u_{\e}$ of $(P_{\e})$ is unique and belongs to
$C^{\infty}(Q')$ (see \cite{LSU} for the standard parabolic theory),
hence the result of Proposition \ref{Ineq.Phi} holds true for
$u_\varepsilon$. Moreover, $u_{\e}$ satisfies the following weak
formulation:
\begin{equation}\label{local.eps}
\begin{split}
\int_{B_R}u_{\e}(x,t_2)&\varphi(x,t_2)\,\dx-\int_{B_R}u_{\e}(x,t_1)\varphi(x,t_1)\,\dx\\&+
\int_{t_1}^{t_2}\int_{B_R}\left[-u_{\e}(x,s)\varphi_{t}(x,s)+\left(|\nabla
u_{\e}|^2+\e^2\right)^{\frac{p-2}{2}}\nabla
u_{\e}(x,s)\cdot\nabla\varphi(x,s)\right]\,\dx\,\ds=0,
\end{split}
\end{equation}
for any times $T_1\leq t_1<t_2\leq T_2$ and for any test function
$\varphi\in W^{1,2}\big(T_1,T_2;L^{2}(B_R)\big)\cap
L^{p}\big(T_1,T_2;W_{0}^{1,p}(B_R)\big)$. Conversely, if a function
$v\in C^{\infty}(Q')$ satisfies the weak formulation
\eqref{local.eps} and takes as boundary values $u$ in the continuous
sense, then by uniqueness of the Dirichlet problem, we can conclude
$v=u_{\e}$.

\medskip

\noindent \textsc{Step 2: Uniform local energy estimates for
$u_{\e}$}. In the next steps, we are going to establish uniform
estimates (i.\,e. independent of $\e$) for some suitable norms of
the solution $u_{\e}$ to $(P_{\e})$. In the first part, we deal with
the local $L^p$ norm of the gradient of the solution. Starting from
\eqref{mainidPhi}, we have:
\begin{equation*}
\begin{split}
\frac{\rd}{\dt}\int_{B_R}\left(\e^2+|\nabla
u_{\e}|^{2}\right)^{\frac{p}{2}}&\varphi\,\dx\leq\frac{p}{2}\int_{B_R}\left(\e^2+|\nabla
u_{\e}|^{2}\right)^{p-1}\Delta\varphi\,\dx\\&\leq\frac{p}{2}\left[\int_{B_R}\left(\e^2+|\nabla
u_{\e}|^{2}\right)^{\frac{p}{2}}\varphi\,\dx\right]^{\frac{2(p-1)}{p}}\left[\int_{B_R}\varphi^{-\frac{2(p-1)}{2-p}}
(\Delta\varphi)^{\frac{p}{2-p}}\,\dx\right]^{\frac{2-p}{p}}\\&=C(\varphi)\left[\int_{B_R}\left(\e^2+|\nabla
u_{\e}|^{2}\right)^{\frac{p}{2}}\varphi\,\dx\right]^{\frac{2(p-1)}{p}},
\end{split}
\end{equation*}
where in the last inequality we applied H\"older inequality with the
exponents $p/(2-p)$ and $p/2(p-1)$, and we have set
\begin{equation}\label{c.phi}
C(\varphi)=\frac{p}{2}\left[\int_{B_R}\varphi^{-\frac{2(p-1)}{2-p}}
(\Delta\varphi)^{\frac{p}{2-p}}\,\dx\right]^{\frac{2-p}{p}}.
\end{equation}
We assume for the moment that $C(\varphi)<\infty$. We then arrive to
the following closed differential inequality:
\begin{equation*}
\frac{\rd}{\dt}Y_{\e}(t)\leq C(\varphi)Y_{\e}(t)^{\frac{2(p-1)}{p}},
\end{equation*}
where
\begin{equation*}
Y_{\e}(t)=\int_{B_R}\left(\e^2+|\nabla
u_{\e}(x,t)|^{2}\right)^{\frac{p}{2}}\varphi(x)\,\dx.
\end{equation*}
An integration over $(t_0,t_1)$ gives
\begin{equation*}
Y_{\e}(t_1)^{\frac{2-p}{p}}-Y_{\e}(t_0)^{\frac{2-p}{p}}\leq
C(\varphi)(t_1-t_0),
\end{equation*}
for any $T_1\leq t_0<t_1\leq T_2$. Letting $t_0=T_1$ and observing
that $t:=t_1-t_0<T$, we find:
\begin{equation*}
\left[\int_{B_R}\left(\e^2+|\nabla
u_{\e}(t)|^{2}\right)^{\frac{p}{2}}\varphi\,\dx\right]^{\frac{2-p}{p}}\leq
C(\varphi)T+\left[|B_R|+\|\nabla
u(T_1)\|_{L^{p}(B_R)}^p\right]^{\frac{2-p}{p}},
\end{equation*}
where in the last step we have used the numerical inequality
$(a+b)^{p/2}\leq a^{p/2}+b^{p/2}$, valid for any $a$, $b>0$ and
$p<2$. On the other hand, we see that
\begin{equation}\label{step2.ineq}
\int_{B_R}|\nabla
u_{\e}|^{p}\varphi\,\dx\leq\int_{B_R}\left(\e^2+|\nabla
u_{\e}|^2\right)^{\frac{p}{2}}\varphi\,\dx\leq \left[|B_R|+\|\nabla
u(T_1)\|_{L^{p}(B_R)}^p\right]^{\frac{2-p}{p}}+C(\varphi)T\,.
\end{equation}
From the choice of $T_1$ such that $\|\nabla
u(T_1)\|_{L^{p}(B_R)}<\infty$, it follows that the right-hand side
is uniformly bounded. Hence the family $\{|\nabla u_{\e}|\}$ has a
uniform bound in $L^{\infty}([T_1,T_2];L^{p}_{loc}(B_R))$, which
does not depend on $\e$. The choice of $\varphi$ such that
$C(\varphi)<\infty$ follows from Lemma \ref{choice.varphi}, part
(b), applied for $\b=p/(2-p)$.

 Finally, from standard results in measure theory we
know that the set of times $t\in(0,T)$ such that $\|\nabla
u(t)\|_{L^p(B_R)}<\infty$ is a dense set. Hence, for any $t_0\in
(0,T)$ given, there exists $T_1<t_0$ with the above property, and,
consequently, a generic parabolic cylinder $B_R\times[t_0,T_2]$ can
be considered as part of a bigger cylinder $B_R\times[T_1,T_2]$ with
$T_1$ as above, for which our approximation process applies.

\medskip

\noindent \textsc{Step 3. A uniform H\"older estimate for
$\{u_{\e}\}$}. We prove that the family $\{u_{\e}\}$ admits a
uniform H\"older regularity up to the boundary. We will use Theorem
1.2, Chapter 4 of \cite{DiB}, and to this end we change the
notations to $a(x,t,u,\nabla u)=\left(|\nabla
u|^2+\e^2\right)^{\frac{p-2}{2}}\nabla u$ and we prove the following
inequalities:

\noindent (a) Since $(2-p)/2<1$, we have that $\left(|\nabla
u|^2+\e^2\right)^{\frac{2-p}{2}}\leq|\nabla u|^{2-p}+\e^{2-p}$, and
\begin{equation*}
a(x,t,u,\nabla u)\cdot\nabla u=\frac{|\nabla u|^2}{(\e^2+|\nabla
u|^2)^{(2-p)/2}}\geq\frac{|\nabla u|^2}{\e^{2-p}+|\nabla u|^{2-p}}.
\end{equation*}
In order to apply the above mentioned result of \cite{DiB}, we have to find a constant
$C_0>0$ and a nonnegative function $\varphi_{0}$ such that
\begin{equation*}
\frac{|\nabla u|^2}{\e^{2-p}+|\nabla u|^{2-p}}\geq C_0|\nabla
u|^p-\varphi_{0}(x,t),
\end{equation*}
or equivalently
\begin{equation*}
\varphi_{0}(x,t)\geq\frac{C_0\e^{2-p}|\nabla u|^p-(1-C_0)|\nabla
u|^2}{\e^{2-p}+|\nabla u|^{2-p}}=\frac{1}{2}\frac{\e^{2-p}|\nabla
u|^p-|\nabla u|^2}{\e^{2-p}+|\nabla u|^{2-p}},
\end{equation*}
by taking $C_0=1/2$. If $|\nabla u|\geq\e$, then the right-hand side
in the previous inequality is nonpositive and the existence of
$\varphi_0$ is trivial. If $|\nabla u|<\e$, we can write:
\begin{equation*}
\begin{split}
\frac{\e^{2-p}|\nabla u|^p-|\nabla u|^2}{\e^{2-p}+|\nabla
u|^{2-p}}&\leq\frac{\e^{2-p}|\nabla u|^p-|\nabla u|^2}{2|\nabla
u|^{2-p}}=\frac{\e^{2-p}-|\nabla u|^{2-p}}{2|\nabla u|^{2(1-p)}}\\&=
\frac{1}{2}\left(\e^{2-p}-|\nabla u|^{2-p}\right)|\nabla
u|^{2(p-1)}\leq\frac{\e^p}{2},
\end{split}
\end{equation*}
hence we can take $\varphi_0\equiv1$.

\noindent (b) Since $p-2<0$, it follows that $\left(|\nabla
u|^2+\e^2\right)^{(p-2)/2}\leq|\nabla u|^{p-2}$, hence
\begin{equation*}
\big|a(x,t,u,\nabla u)\big|=\left(|\nabla u|^2+\e^2\right)^{(p-2)/2}|\nabla
u|\leq|\nabla u|^{p-1}.
\end{equation*}
Joining the inequalities in (a) and (b) and taking into account that
$u$ is H\"older continuous (cf. \cite{DUV} and Appendix A2), the
family of Dirichlet problems $(P_{\e})$ that we consider satisfies
the assumptions of Theorem 1.2, Chapter 4 of \cite{DiB} in an
uniform way, independent on $\e$, since the boundary and initial
data are H\"older continuous with the same exponent as $u$. We
conclude then that the family $\{u_{\e}\}$ is uniformly H\"older
continuous up to the boundary in $\overline{Q'}$. By the
Arzel\`a-Ascoli Theorem, we obtain that, eventually passing to a
subsequence, $u_{\e}\to\tilde{u}$ uniformly in $\overline{Q'}$.

\medskip

\noindent \textsc{Step 4. Passing to the limit in $(P_{\e})$}. The
strategy will be the following: we pass to the limit $\varepsilon\to
0$ in the weak formulation \eqref{local.eps} for $(P_{\e})$, in
order to get the local weak formulation \eqref{weak} for the
original problem. We can pass to the limit in the terms without
gradients using the uniform convergence proved in the previous step.
On the other hand, we recall that $\{|\nabla u_{\e}|\,:\e>0\}$ is
uniformly bounded in $L^{\infty}\big(T_1,T_2;
L^{p}_{loc}(B_R)\big)$, by Step 2, then, up to subsequences, there
exists $v$ such that, $\nabla u_{\e}\to v$ weakly in
$L^q\big(T_1,T_2 ; L^{p}_{loc}(B_R)\big)$, for any $1\le q<+\infty$.
Next, we can identify $v=\nabla\tilde{u}$, which gives that
$u_{\e}\to\tilde{u}$ in
$L^{\infty}\big(T_1,T_2;W^{1,p}_{loc}(B_R)\big)$. From this, we can
pass to the limit also in the term containing gradients in the local
weak formulation of $(P_{\e})$.

From the uniform convergence in $\overline{Q^{'}}$ (cf. Step 3) and
the considerations above, we deduce that the limit $\tilde{u}$ is
actually a continuous weak solution of the following Dirichlet
problem
\begin{equation}\label{DP}
(DP) \left\{\begin{array}{ll}v_{t}=\Delta_{p}v, \ \hbox{in} \
Q',\\v(x,T_1)=u(x,T_1), \ \hbox{for} \ \hbox{any} \ x\in
B_R,\\v(x,t)=u(x,t), \ \hbox{for} \ x\in\partial B_{R}, \
t\in(T_1,T_2).\end{array}\right.
\end{equation}
On the other hand, the continuous local weak solution $u$ is a
solution of the same Dirichlet problem. By comparison (that holds,
since both solutions are continuous up to the boundary), it follows
that $u=\tilde{u}$. We have thus proved that our approximation
converges to the continuous solutions of the $p$-Laplacian equation.

\medskip

\noindent \textsc{Step 5: Convergence in measure of the gradients}.
In this step, we will improve the convergence of $\nabla u_{\e}$ to
$\nabla u$. More precisely, we prove that the gradients converge in
measure, which is stronger than the weak $L^p$ convergence
established in the previous steps. We follow ideas from the paper
\cite{V6}, having as starting point the following inequality for
vectors $a$, $b\in\real^n$
\begin{equation}\label{ineq.vect2}
(a-b)\cdot(|a|^{p-2}a-|b|^{p-2}b)\ge
c_p\frac{|a-b|^2}{|a|^{2-p}+|b|^{2-p}},
\end{equation}
for some $c_p>0$ for all $1<p<2$. This inequality is proved in
Appendix A3 with optimal constant $c_p=\min\{1,2(p-1)\}$. To prove
the convergence in measure, take $\lambda>0$ and decompose as in
\cite{V6}
\begin{equation*}
\begin{split}
\{|\nabla u_{\e_1}-\nabla u_{\e_2}|>\lambda\}&\subset\Big\{\,\{|\nabla
u_{\e_1}|>A\}\cup\{|\nabla
u_{\e_2}|>A\}\cup\{|u_{\e_1}-u_{\e_2}|>B\}\Big\}\\
&\cup\Big\{|\nabla u_{\e_1}|\le A, \ |\nabla u_{\e_2}|\le A, \
|\nabla u_{\e_1}-\nabla u_{\e_2}|>\lambda, \ |u_{\e_1}-u_{\e_2}|\le
B\Big\}:=S_1\cup S_2,
\end{split}
\end{equation*}
for any $\e_1$, $\e_2>0$ and for any $A>0$, $B>0$ and $\lambda>0$;
we will choose $A$ and $B$ later. Since $\{\nabla u_{\e}\,:\e>0\}$
is uniformly bounded in $L^p(B_R)$, for $t$ fixed, and that
$\{u_{\e}\}$ is Cauchy in the uniform norm, for any $\delta>0$
given, we can choose $A=A(\delta)>0$ sufficiently large and
$B=B(\delta)>0$ such that $|S_1|<\delta$. On the other hand, in
order to estimate $|S_2|$, we observe that
\begin{equation*}
S_2\subset\Big\{|u_{\e_1}-u_{\e_2}|\le B, \ (\nabla u_{\e_1}-\nabla
u_{\e_2})\cdot(|\nabla u_{\e_1}|^{p-2}\nabla u_{\e_1}-|\nabla
u_{\e_2}|^{p-2}\nabla u_{\e_2})\ge
\frac{C\lambda^2}{2A^{2-p}}\Big\},
\end{equation*}
where we have used the definition of $S_2$ and the inequality
\eqref{ineq.vect2}. Letting $\mu=C\lambda^2/2A^{2-p}$ and estimating
further, we obtain
\begin{equation*}
\begin{split}
|S_2|&\le\frac{1}{\mu}\iint\limits_{\{|u_{\e_1}-u_{\e_2}|\le
B\}}(\nabla u_{\e_1}-\nabla u_{\e_2})\cdot(|\nabla
u_{\e_1}|^{p-2}\nabla u_{\e_1}-|\nabla u_{\e_2}|^{p-2}\nabla
u_{\e_2})\,\dx\,\dt\\
&\leq\frac{1}{\mu}\int_{T_1}^{T_2}\int_{B_R}(u_{\e_1}-u_{\e_2})(\Delta_{p}u_{\e_1}-\Delta_{p}u_{\e_2})\,\dx\,\dt,
\end{split}
\end{equation*}
where the integration by parts does not give boundary integrals,
since $u_{\e_1}=u_{\e_2}=u$ on the parabolic boundary of the
cylinder $Q^{'}$. From the previous steps, we can replace
$\Delta_pu_{\e_i}$ by
$\Delta_{\Phi_{\e_i}}u_{\e_i}=\partial_tu_{\e_i}$, $i=1,2$ without
losing too much (less than $\delta/3$ for $\e_1$, $\e_2$
sufficiently small), and the last estimate becomes
\begin{equation*}
|S_2|\le\frac{1}{2\mu}\int_{B_R}\int_{T_1}^{T_2}
\left[\frac{\rd}{\dt}(u_{\e_1}-u_{\e_2})^{2}\right]\,\dx\,\dt+\frac{2\delta}{3}\le\delta,
\end{equation*}
for $\mu$ sufficiently large (or, equivalently, for $\lambda>0$
sufficiently large) and for $\e_1$, $\e_2<\e=\e(\delta)$
sufficiently small. This proves that for any $\delta>0$, there exist
$\lambda=\lambda(\delta)>0$ and $\e=\e(\delta)>0$ such that
\begin{equation*}
\big|\{|\nabla u_{\e_1}|-|\nabla u_{\e_2}|>\lambda\}\big|\le\delta,
\quad \forall\, \e_1, \e_2<\e(\delta), \ \lambda>\lambda(\delta),
\end{equation*}
that is, the family $\{\nabla u_{\e}\}$ is Cauchy in measure, hence
convergent in measure. The limit coincides with the already
established weak limit, which is $\nabla u$.

\medskip

\noindent \textsc{Step 6: Passing to the limit in the inequality}.
We have already proved that the weak solution $u_{\e}$ of $(P_{\e})$
satisfies the inequality
\begin{equation}\label{ineq.eps}
\frac{\rd}{\dt}\int_{B_R}\left(\e^2+|\nabla
u_{\e}|^{2}\right)^{\frac{p}{2}}\varphi\,\dx
+\frac{p}{n}\int_{B_R}(\Delta_{\Phi_{\e}}u)^{2}\varphi\,\dx\leq\frac{p}{2}\int_{B_R}\left(\e^2+|\nabla
u_{\e}|^{2}\right)^{p-1}\Delta\varphi\,\dx,
\end{equation}
where $\Phi_{\e}(w)=\frac{2}{p}\left(w+\e^2\right)^{\frac{p}{2}}$.
From the previous step we know that $\nabla u_{\e}\to\nabla u$ in
measure, hence, by passing to a suitable subsequence if necessary,
the convergence is also true a.\,e. in $Q^{'}$. From this fact, we
obtain that
\begin{equation}\label{in1}
\frac{\rd}{\dt}\int_{B_R}\left(\e^2+|\nabla
u_{\e}|^{2}\right)^{\frac{p}{2}}\varphi\,\dx\to\frac{\rd}{\dt}\int_{B_R}|\nabla
u|^{p}\varphi\,\dx\,,
\end{equation}
as $\e\to0$, in distributional sense in $\cd^{'}(T_1,T_2)$, for any
suitable test function $\varphi$. On the other hand, the continuous
embedding $L^{p}(B_R)\subset L^{2(p-1)}(B_R)$, valid since
$2(p-1)<p$ whenever $1<p<2$, implies
\begin{equation*}
\int_{B_R}|\nabla u_{\e}|^{2(p-1)}\varphi\,\dx\leq
C\int_{B_R}|\nabla u_{\e}|^{p}\varphi\,\dx,
\end{equation*}
with a positive constant $C$ independent of $u$, and for any
suitable test function $\varphi$.   We can easily see that the sequence
$|\nabla u_{\e}|$ is weakly convergent in $\LL^p(B_R)$, since
\begin{equation}\label{ineq.1}
\int_{B_R}\left(\e^2+|\nabla u_{\e}|^2\right)^{p-1}\dx
    \le \int_{B_R}\big(1+|\nabla u_{\e}|^{2(p-1)}\big)\dx
    \le C\left(\int_{B_R}1+|\nabla u_{\e}|^p\right)\dx\le K<+\infty\,,
\end{equation}
where in the last step we have used inequality
\eqref{step2.ineq} of Step 2, and $K$ does not depend on $\varepsilon>0$.
It is a well known fact that if a sequence is uniformly bounded in $\LL^p(B_R)$ and converges
in measure, then it converges strongly in any $\LL^q(B_R)$, for any $1\le q<p$, and
in particular for $q=2(p-1)<p$, whenever $p<2$.
The same holds for $\left(\e^2+|\nabla
u_{\e}|^2\right)^{p-1}$, by inequality \eqref{ineq.1}. Summing up, we have proved that
\begin{equation}\label{in2}
\int_{B_R}\left(\e^2+|\nabla
u_{\e}|^{2}\right)^{p-1}\Delta\varphi\,\dx\to\int_{B_R}|\nabla
u|^{2(p-1)}\Delta\varphi\,\dx\,.
\end{equation}
It remains to analyze the second term in \eqref{ineq.eps}, which is
bounded as a difference of the other two terms, and this implies
that $u_{\e,t}$ is uniformly bounded in
$L^2([T_1,T_2];L^{2}_{loc}(B_R))$. Up to subsequences, there exists
$v\in L^2(T_1,T_2;L^{2}_{loc}(B_R))$ such that $u_{\e,t}\to v$
weakly in $L^2(T_1,T_2;L^{2}_{loc}(B_R))$ and we can identify easily
$v=u_t$. Using the weak lower semicontinuity of the (local) $L^2$
norm, we obtain:
\begin{equation}\label{in3}
\liminf\limits_{\e\to0}\int_{B_R}(\Delta_{\Phi_{\e}}u)^{2}\varphi\,\dx=\liminf\limits_{\e\to0}\int_{B_R}u_{\e,t}^{2}\varphi\,\dx\ge
\int_{B_R}u_{t}^{2}\varphi\,\dx=\int_{B_R}(\Delta_{p}u)^{2}\varphi\,\dx,
\end{equation}
that finally implies inequality \eqref{mainid} for the solution $u$
in $B_R$. Since the ball $B_R$ and the time interval $[T_1,T_2]$
were arbitrarily chosen, we obtain \eqref{mainid} as in the
statement of the theorem.\qed

\noindent \textbf{Remarks}. (i) From \eqref{mainid}, we deduce
directly that $u_t\in L^2(0,T ; L^{2}_{loc}(\Omega))$, which is an
improvement with respect to the $L^1_{loc}$ regularity.

\noindent(ii) A closer inspection of the proof reveals that with
minor modifications we can prove the inequality \eqref{mainidPhi} of
Proposition \ref{Ineq.Phi} also for general nonnegative $\Phi$, thus
allowing degeneracies and singularities of the corresponding
$\Phi$-Laplacian equation.  More precisely, let us consider
nonnegative functions $\Phi$ satisfying the following inequalities:
\begin{equation*}
\Phi'(|\nabla u|^2)|\nabla u|^2\geq C_0|\nabla u|^p-\psi_{0}(x,t),
\end{equation*}
and
\begin{equation*}
|\Phi'(|\nabla u|^2)||\nabla u|\geq C_1|\nabla
u|^{p-1}+\psi_{1}(x,t),
\end{equation*}
where $C_0$, $C_1>0$ and $\psi_0$, $\psi_1$ are nonnegative
functions such that $\psi_0\in L^{s}(0,T;L^{q}(\Omega))$ and
$\psi_1^{p/(p-1)}\in L^{s}(0,T;L^{q}(\Omega))$, where $1\leq
s,q\leq\infty$ and
\begin{equation*}
\frac{1}{s}+\frac{n}{p\,q}<1.
\end{equation*}
These technical hypothesis appear in DiBenedetto's book \cite{DiB}.

\subsection{Local upper bounds for the energy}

In this subsection we derive local upper energy estimates, as
an application of Theorem \ref{inequality}.

\begin{theorem}\label{main-grad}
Let $u$ be a continuous local weak solution of the fast
$p$-Laplacian equation, with $1<p<2$, as in Definition
\ref{local.weak}, corresponding to an initial datum $u_0\in L^r_{\rm
loc}(\Omega)$, where $\Omega\subseteq\real^n$ is any open domain
containing the ball $B_{R_0}(x_0)$. Then, for any $0\leq s\leq t$,
and any $0<R<R_0$ and any $x_0\in \Omega$ such that
$B_{R_0}(x_0)\subset\Omega$\,, the following inequality holds true:
\begin{equation}\label{gradientmain}
\left[\int_{B_{R}(x_0)}|\nabla
u(x,t)|^{p}\,\dx\right]^{(2-p)/p}\leq\left[\int_{B_{R_{0}}(x_0)}|\nabla
u(x,s)|^{p}\,\dx\right]^{(2-p)/p}+K(t-s),
\end{equation}
where the positive constant $K$ has the form
\begin{equation}
K=\frac{C_{p,n}}{(R_{0}-R)^{2}}|B_{R_{0}}\setminus B_{R}|^{(2-p)/p},
\end{equation}
and where $C_{p,n}$ is a positive constant depending only on $p$ and
$n$.
\end{theorem}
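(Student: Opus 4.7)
\textbf{Proof proposal for Theorem \ref{main-grad}.} My plan is to derive the stated local energy estimate as a direct consequence of the special energy inequality \eqref{mainid} of Theorem \ref{inequality}, by discarding the nonnegative $L^2$ dissipation term and reducing the inequality to a closed ordinary differential inequality of Bernoulli type. Let $\varphi\in C^{2}_c(B_{R_0}(x_0))$ be a nonnegative admissible cutoff, with $\varphi\equiv1$ on $B_R(x_0)$, $0\le\varphi\le 1$, and chosen so that the quantity $C(\varphi)$ introduced in \eqref{c.phi} is finite; the existence of such a $\varphi$, together with the key scaling bound
\[
\int_{B_{R_0}}\varphi^{-\frac{2(p-1)}{2-p}}|\Delta\varphi|^{\frac{p}{2-p}}\,\dx
   \le \frac{C_{p,n}}{(R_0-R)^{\frac{2p}{2-p}}}\,\bigl|B_{R_0}\setminus B_R\bigr|,
\]
will be obtained from Lemma \ref{choice.varphi} with the choice $\beta=p/(2-p)$, exactly as in Step 2 of the proof of Theorem \ref{inequality}.

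Starting from \eqref{mainid}, drop the nonnegative term $\frac{p}{n}\int(\Delta_p u)^2\varphi\,\dx$ on the left hand side, so that $Y(t):=\int|\nabla u(t)|^{p}\varphi\,\dx$ satisfies, in the sense of distributions,
\[
\frac{\rd Y}{\dt}
 \le \frac{p}{2}\int_{B_{R_0}}|\nabla u|^{2(p-1)}\,|\Delta\varphi|\,\dx.
\]
Writing $|\nabla u|^{2(p-1)}\,|\Delta\varphi|=\bigl(|\nabla u|^{p}\varphi\bigr)^{2(p-1)/p}\cdot\varphi^{-2(p-1)/p}|\Delta\varphi|$ and applying H\"older's inequality with the conjugate exponents $p/[2(p-1)]$ and $p/(2-p)$ (which are indeed conjugate since $1<p<2$), I obtain the closed Bernoulli inequality
\[
\frac{\rd Y}{\dt}\le C(\varphi)\,Y(t)^{\frac{2(p-1)}{p}},
\]
with $C(\varphi)$ as in \eqref{c.phi}.

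Integrating this ODE on $(s,t)$ and using that $1-2(p-1)/p=(2-p)/p>0$, I obtain
\[
Y(t)^{\frac{2-p}{p}}-Y(s)^{\frac{2-p}{p}}\le \frac{2-p}{p}\,C(\varphi)\,(t-s).
\]
Finally, the properties $\varphi\equiv1$ on $B_R$ and $\supp\varphi\subset B_{R_0}$, together with $(2-p)/p>0$, give
\[
\left[\int_{B_R}|\nabla u(t)|^{p}\,\dx\right]^{\!\frac{2-p}{p}}\le Y(t)^{\frac{2-p}{p}}
\quad\text{and}\quad
Y(s)^{\frac{2-p}{p}}\le \left[\int_{B_{R_0}}|\nabla u(s)|^{p}\,\dx\right]^{\!\frac{2-p}{p}},
\]
and substituting the above bound on $C(\varphi)$ yields \eqref{gradientmain} with $K=C_{p,n}(R_0-R)^{-2}|B_{R_0}\setminus B_R|^{(2-p)/p}$.

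The main technical point is the justification of the chain rule and H\"older estimate at the level of a continuous local weak solution, but this is exactly the content of Theorem \ref{inequality} (and the accompanying Corollary \ref{CorEnergy}, which upgrades a continuous local weak solution to a local strong solution with $u_t\in L^2_{\rm loc}$), so no further regularization argument is required here. The only delicate calibration is the choice of $\varphi$, for which one takes $\varphi=\psi^{p/(2-p)}$ with $\psi$ a standard radial cutoff between $B_R$ and $B_{R_0}$ satisfying $|\nabla\psi|\le c(R_0-R)^{-1}$, $|D^2\psi|\le c(R_0-R)^{-2}$, so that the negative powers of $\varphi$ are compensated by the exponent in $\Delta\varphi$; this is the same choice already employed in Step 2 of the proof of Theorem \ref{inequality}.
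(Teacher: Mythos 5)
Your argument is essentially identical to the paper's own proof: drop the nonnegative dissipation term in \eqref{mainid}, apply H\"older with exponents $p/[2(p-1)]$ and $p/(2-p)$ to close a Bernoulli differential inequality for $Y(t)=\int|\nabla u|^p\varphi\,\dx$, integrate, and bound $C(\varphi)$ via Lemma \ref{choice.varphi}(b) with $\beta=p/(2-p)$. The only slip is your final parenthetical choice $\varphi=\psi^{p/(2-p)}$, which does not satisfy the requirement $\gamma>2\beta=2p/(2-p)$ needed in Lemma \ref{choice.varphi}(b) to keep the negative power of $\varphi$ under control; since you also invoke the lemma itself, which makes the correct choice, this does not affect the validity of the proof.
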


\begin{proof}
We begin with inequality \eqref{mainid} and we drop the first term
in the right-hand side, which is nonpositive:
\begin{equation*}
\frac{\rd}{\dt}\int_{\Omega}|\nabla
u|^{p}\varphi\,\dx\leq\frac{p}{2}\int_{\Omega}|\nabla
u|^{2(p-1)}\Delta\varphi\,\dx.
\end{equation*}
An application of H\"older inequality, with conjugate exponents
$p/2(p-1)$ and $p/(2-p)$, leads to
\begin{equation}\label{gradinterm}
\frac{\rd}{\dt}\int_{\Omega}|\nabla u|^{p}\varphi\,\dx\leq
C(\varphi)\left[\int_{\Omega}|\nabla
u|^{p}\varphi\,\dx\right]^{2(p-1)/p},
\end{equation}
where
\begin{equation*}
C(\varphi)
=\frac{p}{2}\left[\int_{\Omega}|\Delta\varphi|^{\frac{p}{2-p}}
\varphi^{-\frac{2(p-1)}{2-p}}\,\dx\right]^{(2-p)/p}<+\infty\,,
\end{equation*}
since has the same expression as in \eqref{c.phi}. An integration
over $(s,t)$ gives
\begin{equation*}
\left[\int_{\Omega}|\nabla
u(x,t)|^{p}\varphi(x)\,\dx\right]^{(2-p)/p}\leq\left[\int_{\Omega}|\nabla
u(x,s)|^{p}\varphi(x)\,\dx\right]^{(2-p)/p}+\frac{(2-p)}{p}C(\varphi)(t-s).
\end{equation*}
We conclude by observing that the constant $C(\varphi)$ is exactly
the same as \eqref{c.phi} and thus we can repeat the same
observation made there to express it in the desired form.\end{proof}

\noindent\textbf{Remarks.} (i) It is essential in the above
inequality that $p<2$, since the constant explodes in the limit
$p\to2$. Indeed such kind of estimates are false for the heat
equation, that is for $p=2$.

\noindent (ii) The constant also explodes when $R/R_{0}\to 1$.
Indeed,
\begin{equation*}
K\sim C\frac{(R_{0}^{n}-R^{n})^{(2-p)/p}}{(R_{0}-R)^{2}}\sim
C(R_{0}-R)^{(2-3p)/p}.
\end{equation*}

\subsection{Lower bounds for the $\LL^1_{\rm loc}-$norm}

In this subsection, we establish local lower bounds for the mass, in
the following form:
\begin{theorem}\label{main-norm1}
Let $u$ be a local weak solution of the fast $p$-Laplacian equation,
with $1<p<2$, as in Definition \ref{local.weak}, corresponding to an
initial datum $u_0\,,\,\left|\nabla u_0\right|^p\in
L^1_{loc}(\Omega)$, where $\Omega\subseteq\real^n$ is any open
domain containing the ball $B_{R_0}(x_0)$. Then, for any $0\leq
t\leq s$ and for any $0<R<R_0$\,, the following inequality holds
true:
\begin{equation}\label{loc.norm.1}
\int\limits_{B_{R}(x_0)}u(x,t)\,\dx
\leq\int\limits_{B_{R_0}(x_0)}u(x,s)\,\dx+C\left[\left(\int_{\Omega}|\nabla
u(x,s)|^{p}\varphi(x)\,\dx\right)^{1/p}+K^{\frac{1}{2-p}}\,|t-s|^{\frac{1}{2-p}}\right],
\end{equation}
where
\begin{equation}
C=\overline{C}_{p,n}(R_{0}-R)|B_{R_0}\setminus
B_{R}|^{\frac{p-1}{p}},\qquad
K=\frac{C_{p,n}}{(R_{0}-R)^{2}}|B_{R_{0}}\setminus B_{R}|^{(2-p)/p},
\end{equation}
with $\overline{C}_{p,n}$ and $C_{p,n}$ depending only on $p$ and $n$.
\end{theorem}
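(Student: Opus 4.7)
The plan is to test the equation $\partial_\tau u = \Delta_p u$ against a smooth spatial cutoff $\varphi$ adapted to $B_R\subset B_{R_0}$ and then control the resulting boundary-flux integral using the local gradient energy at time $s$ plus an absolute correction term. Concretely, I would fix $\varphi\in C^2_c(\Omega)$ with $\varphi\equiv 1$ on $B_R(x_0)$, $\mathrm{supp}\,\varphi\subset B_{R_0}(x_0)$, $0\le\varphi\le 1$, and $|\nabla\varphi|\le c_0/(R_0-R)$, of the same kind used in the proofs of Theorems \ref{inequality} and \ref{main-grad}. Using $\varphi$ as test function (integration by parts being legitimate by Corollary \ref{CorEnergy}) and integrating the identity $\frac{\rd}{\rd\tau}\int u\varphi\,\dx=-\int|\nabla u|^{p-2}\nabla u\cdot\nabla\varphi\,\dx$ over $[t,s]$ yields
\[
\int_{B_R}u(x,t)\,\dx-\int_{B_{R_0}}u(x,s)\,\dx\le \int_t^s\!\!\int_A|\nabla u|^{p-1}|\nabla\varphi|\,\dx\,\rd\tau,\qquad A:=B_{R_0}\setminus B_R,
\]
thanks to $0\le\varphi\le 1$ and $\varphi\equiv 1$ on $B_R$.

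I would then apply H\"older's inequality in space with conjugate exponents $p/(p-1)$ and $p$, in the weighted form
\[
\int_A|\nabla u|^{p-1}|\nabla\varphi|\,\dx \le Y(\tau)^{(p-1)/p}\left(\int_A\frac{|\nabla\varphi|^p}{\varphi^{p-1}}\,\dx\right)^{\!1/p} \le \frac{c_1\,|A|^{1/p}}{R_0-R}\,Y(\tau)^{(p-1)/p},
\]
where $Y(\tau):=\int|\nabla u(\tau)|^p\varphi\,\dx$ and the geometric factor is finite by a suitable choice of $\varphi$ (the same kind as in the proof of Theorem \ref{inequality}). The problem thus reduces to estimating $\int_t^s Y(\tau)^{(p-1)/p}\,\rd\tau$ in terms of $Y(s)^{1/p}$, with an absolute remainder of order $|t-s|^{1/(2-p)}$.

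This last step is the real core of the proof and the main technical obstacle. The natural tool is the differential form of Theorem \ref{main-grad}, namely $Y'\le c(\varphi)Y^{2(p-1)/p}$, equivalently $Z'\le K(2-p)/p$ for $Z:=Y^{(2-p)/p}$. Integrating this forward from $\tau$ to $s$ yields only the lower bound $Z(\tau)\ge Z(s)-K(s-\tau)$, which is the \emph{wrong} direction for our purposes. To upgrade this to an upper bound on $Y(\tau)$ by $Y(s)$ plus an absolute correction, I would compare $Y$ with the solution of the worst-case ODE $z'=\mathrm{const}\cdot K$ anchored at time $s$, exploiting the absolute fast-diffusion damping (the same intrinsic mechanism that underlies the $C_2(t/R_0^p)^{1/(2-p)}$ term in Theorem \ref{mainupper0} and the existence of extended large solutions in Section \ref{sec.largesol}). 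Once a pointwise bound of the shape $Y(\tau)\le c_2 Y(s)+c_3 K^{p/(2-p)}(s-\tau)^{p/(2-p)}$ is available, the subadditivity $(a+b)^{(p-1)/p}\le a^{(p-1)/p}+b^{(p-1)/p}$ (valid since $(p-1)/p\in(0,1)$) plus a direct integration in $\tau$ produces the claimed two-term bound, with the stated constants $C=\overline{C}_{p,n}(R_0-R)|A|^{(p-1)/p}$ and $K=C_{p,n}(R_0-R)^{-2}|A|^{(2-p)/p}$ emerging after absorption of the geometric factors. I expect the comparison/absolute-damping step to be the one that requires the most care.
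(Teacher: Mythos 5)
Your first two steps --- testing against the cutoff, integrating by parts over $[t,s]$, and the weighted H\"older estimate combined with Lemma \ref{choice.varphi} for $\alpha=p$ --- coincide exactly with the paper's argument. The gap is the third step. The pointwise bound $Y(\tau)\le c_2\,Y(s)+c_3\,K^{p/(2-p)}(s-\tau)^{p/(2-p)}$ that you invoke (control of the local gradient energy at an \emph{earlier} time by its value at a \emph{later} time plus an absolute term) is not established anywhere in the paper and does not follow from any ``absolute damping'' mechanism proved there: inequality \eqref{mainid} and Theorem \ref{main-grad} only control how fast $Y$ can \emph{grow}, not how fast it can decay, so nothing prevents $Y(\tau)\gg Y(s)$ for $\tau<s$ (think of a solution close to extinction at time $s$). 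Moreover, even granting that bound, integrating $\bigl[c_2Y(s)+\dots\bigr]^{(p-1)/p}$ in $\tau$ produces a first term of order $(s-t)\,Y(s)^{(p-1)/p}$, with a time factor and the exponent $(p-1)/p$, not the stated $Y^{1/p}$ with no time factor; so the shape of the conclusion would not come out right by your route either.

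The completion the paper intends is precisely the direction you discarded: anchor the ODE of Theorem \ref{main-grad} at the \emph{earlier} endpoint, $Z(\tau)\le Z(t)+K(\tau-t)$ for $\tau\in[t,s]$, i.e.
\[
Y(\tau)^{\frac{p-1}{p}}\le\Bigl[Y(t)^{\frac{2-p}{p}}+K(\tau-t)\Bigr]^{\frac{p-1}{2-p}} .
\]
Since $\frac{p-1}{2-p}+1=\frac{1}{2-p}$, integration in $\tau$ over $(t,s)$ gives
\[
\int_t^s Y(\tau)^{\frac{p-1}{p}}\,\rd\tau\le\frac{2-p}{K}\Bigl[Y(t)^{\frac{2-p}{p}}+K(s-t)\Bigr]^{\frac{1}{2-p}}
\le\frac{c}{K}\Bigl[Y(t)^{\frac{1}{p}}+K^{\frac{1}{2-p}}(s-t)^{\frac{1}{2-p}}\Bigr],
\]
and the resulting prefactor $|A|^{1/p}(R_0-R)^{-1}K^{-1}=c\,(R_0-R)|A|^{(p-1)/p}$ is exactly the stated constant $C$. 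The price is that the gradient term is then evaluated at the earlier time $t$ (consistently with the hypothesis $|\nabla u_0|^p\in L^1_{loc}$); the appearance of $s$ inside the gradient integral of \eqref{loc.norm.1} should be read in that light. So while you correctly diagnosed that a backward-in-time bound on $Y$ is unavailable, the cure is not to manufacture one: it is to accept the anchor at $t$, which closes the proof with the stated exponents and constants.
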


\noindent \textbf{Remarks.} (i) The limits as $R\to+\infty$ give
mass conservation for the Cauchy problem, when $p_c<p<2$, while in
the subcritical range $1<p<p_c$ it indicates how much mass is lost
at infinity. This estimates are new as far as we know.

\noindent (ii) The estimate \eqref{loc.norm.1} is different from
that of Theorem \ref{main-normr}, since it applies for $0\leq t\leq
s$, and provides a local lower bound for the mass. Moreover, it
allows us to obtain lower bounds for the finite extinction time $T$,
in the cases it occurs, just by letting $s=T$ and $t=0$ in
\eqref{loc.norm.1}, as follows:
\begin{equation}\label{low.FET}
C^{p-2}\,K^{-1}\left[\int_{B_{R}(x_0)}u_0(x)\right]^{2-p}\,\dx \leq
\,T\,.
\end{equation}

\begin{proof}
We begin by performing a time derivative of the local mass
\begin{equation*}\begin{split}
\frac{\rd}{\dt}\int_{\Omega}u(x,t)\varphi(x)\,\dx
&=\int_{\Omega}\hbox{div}\left(|\nabla u(x,t)|^{p-2}\nabla
u(x,t)\right)\varphi(x)\,\dx\\
&\geq-\int_{\Omega}|\nabla u(x,t)|^{p-1}|\nabla\varphi(x)|\,\dx.
\end{split}
\end{equation*}
We then estimate the right-hand side by applying H\"older inequality
with exponents $(p-1)/p$ and $1/p$, then using Lemma
\ref{choice.varphi} with $\a=p$. Integrating over the time interval
$(t,s)$, we obtain the desired estimate. We leave the
straightforward details to the reader.
\end{proof}

\section{Panorama, open problems and existing literature}
\label{sec.panorama}

We recall here the values of $p_c=2n/(n+1)$ and of the critical line $r_c=\max\{n(2-p)/p\,,\,1\}$.

\begin{figure}[ht!]
\centering
\includegraphics[width=11cm,height=7cm]{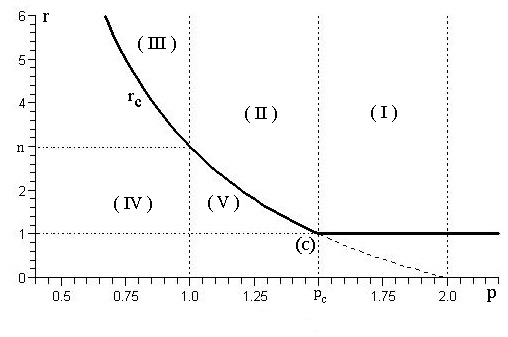}
\caption{The critical line for the Smoothing Effect}\label{Fig}
\end{figure}

\begin{itemize}
\item[(I)] \textsl{Good Fast Diffusion Range:}
$p\in(p_c,2)$ and $r\ge 1$. In this range the local smoothing effect
holds, cf. Theorem  \ref{mainupper0}, as well as the positivity
estimates of Theorem \ref{mainposit} and the Aronson-Caffarelli type
estimates of Theorem \ref{main.AC}. The intrinsic forward/backward/elliptic Harnack
inequality Theorem \ref {FBH} holds in this range. This is the only range in
which there are some other works on Harnack inequalities. Indeed in
the pioneering work of DiBenedetto and Kwong \cite{DiBK92} there
appeared for the first time the intrinsic Harnack inequalities for
fast diffusion processes related to the $p$-Laplacian, now
classified as forward Harnack inequalities. See also \cite{DUV} for
an excellent survey on these topics. In a recent paper DiBenedetto,
Gianazza and Vespri \cite{DGV} improve on the previous work by
proving elliptic, forward and backward Harnack inequalities for more
general operators of $p$-Laplacian type, but always in this ``good
range''.

\item[(II)] \textsl{Very Fast Diffusion
Range:} $p\in(1,p_c)$ and $r\ge r_c>1$. In this range the local
smoothing effect holds, cf. Theorem  \ref{mainupper0}, as well as
the positivity estimates of Theorem \ref{mainposit} and the
Aronson-Caffarelli type estimates of Theorem \ref{main.AC}.
The intrinsic forward/backward/elliptic Harnack
inequality Theorem \ref {FBH} holds in this range as well,
showing that if one allow the constants to depend on
the initial data, then the form of Harnack inequalities is the same.
No other kind of positivity, smoothing or Harnack estimates are
known in this range, and our results represent a breakthrough in the
theory of the singular $p$-Laplacian, indeed in \cite{DGV}
there is an explicit counterexample that shows that Harnack inequalities of
backward, forward or elliptic type, are not true in general in this
range, if the constants depend only on $p$ and $n$.

The open question is now: If one wants absolute constants, what is
the relation between the supremum and the infimum, if any?

\item[(c)] \textsl{Critical Case:} $p=p_c$ and $r>r_c=1$. The local upper
and lower estimates of zone (II) apply, as well as the Harnack
inequalities. As previously remarked, all of our results are stable
and consistent when $p=p_c$.

\item[(III)] and (IV) \textsl{Very Singular Range:} $0<p\le 1$ with $r>r_c$ or
$0<p\le 1$ with $r<r_c$. In the range $p<1$ the multidimensional $p$-Laplacian formula
does not produce a parabolic equation. A theory in one dimension has been started
in \cite{BarVa, RodVa}, while radial self-similar solutions in several dimensions are classified
in \cite{IS}. For reference to $p=1$, the so-called total variation flow, cf. \cite{ACM, CBN}.

\item[(V)] \textsl{Very Fast Diffusion Range:}
$1<p<p_c$ and $r\in[1,r_c]$. It is
well known that the smoothing effect is not true in general, since
initial data are not in $\LL^p$ with $p>p_c$, cf. \cite{VazquezSmoothing}.
Lower estimates are as in (II). In general, Harnack
inequalities are not possible in this range since solution may no be
(neither locally) bounded.
\end{itemize}

\subsection{Some general remarks}
\label{ss.sgr}

\begin{itemize}
\item We stress  the fact that our results are
completely local, and they apply to any kind of initial-boundary
value problem, in any Euclidean domain: Dirichlet, Neumann, Cauchy,
or problem for large solutions, namely  when $u=+\infty$ on the
boundary, etc. Natural extensions are fast diffusion problems for more general $p$-Laplacian operators and fast diffusion problems on manifolds.

\item We calculate (almost) explicitly
all the constants, through all the paper.

\item We have not entered either into the derivation of H\"older
continuity and further regularity from the Harnack inequalities.
This is a subject extensively treated in the works of DiBenedetto et
al., see \cite{DUV, DiB, DGV} and references therein.

\item Summing up, no other results but ours are known in the lower range $p\le p_c$,
and essentially one is known in the good range, and it refers to a
different point of view.

\item A combination of the techniques developed in this paper and in \cite{BV},
allow to extend the local smoothing effects,
or the positivity estimates as well as the intrinsic Harnack inequalities
to the \textsl{doubly nonlinear equation}
\begin{equation*}
\partial_t u=\Delta_{p}u^{m},
\end{equation*}
for which the fast diffusion range is understood as the set of
exponents $m>0$ and $p>1$ such that $m(p-1)\in(0,1)$. Basic
existence, uniqueness and regularity results on this equation, that
allow for extensions of our results, appear in
\cite{EstebanVazquez1} and in \cite{IU}. We will not enter into
the analysis of the extension in this paper.
\end{itemize}

\section*{Appendix}
\setcounter{section}{10}

\subsection*{A1. Choice of particular test functions}

In this appendix we show how we choose special test functions
$\varphi$ in various steps of the proof of our Local Smoothing
Effect. We express these technical results in the form of the
following
\begin{lemma}\label{choice.varphi}
(a) For any open set $\Omega\subset\real^n$, for any two balls
$B_{R}\subset B_{R_0}\subset\Omega$, and for any $\a>0$, there
exists a test function $\varphi\in C_{c}^{\infty}(\Omega)$ such that
\begin{equation}\label{prop.test}
0\leq\varphi\leq1, \qquad \varphi\equiv1 \ {\rm in} \ B_{R}, \qquad
\varphi\equiv0 \ {\rm outside} \ B_{R_0}\,,
\end{equation}
and
\begin{equation}\label{Const.Phi.Append.1}
\int\limits_{\Omega}|\nabla\varphi|^{\a}\varphi^{1-\a}\,\dx<\frac{C_1}{(R_0-R)^{\a}}|A|<\infty,
\end{equation}
where $C_1(n,\alpha)$ is a positive constant and $A=B_{R_0}\setminus
B_R$.

\noindent (b) In the same conditions as in part (a), for any $\b>0$,
there exists a test function $\varphi\in C_{c}^{\infty}(\Omega)$
satisfying \eqref{prop.test} and such that
\begin{equation*}
\int\limits_{\Omega}|\Delta\varphi|^{\b}\varphi^{1-\b}\,\dx<\frac{C_2}{(R_0-R)^{2\b}}|A|<\infty,
\end{equation*}
where $C_2(n,\beta)$ is a positive constant.
\end{lemma}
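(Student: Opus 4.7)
\textbf{Proof proposal for Lemma \ref{choice.varphi}.} The plan is to construct both test functions as suitable powers of a single smooth cutoff. The only subtlety is that when $\alpha>1$ (resp.\ $\beta>1$) the factor $\varphi^{1-\alpha}$ (resp.\ $\varphi^{1-\beta}$) is singular where $\varphi$ vanishes, so one cannot take $\varphi$ itself to be a standard cutoff; the vanishing of $|\nabla\varphi|$ and $|\Delta\varphi|$ near $\partial B_{R_0}$ must dominate this singularity, and this is achieved by raising a cutoff to a sufficiently high power.

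First I would pick a standard radial mollified cutoff $\psi\in C_c^\infty(\Omega)$ such that $0\le\psi\le 1$, $\psi\equiv 1$ on $B_R$, $\operatorname{supp}\psi\subset B_{R_0}$, with the usual pointwise bounds
\[
|\nabla\psi(x)|\le\frac{c_n}{R_0-R},\qquad |\Delta\psi(x)|\le\frac{c_n'}{(R_0-R)^2},\qquad x\in A=B_{R_0}\setminus B_R,
\]
obtained for instance by convolving the indicator of an intermediate ball with a mollifier of width $\sim (R_0-R)/2$. Then define $\varphi:=\psi^k$ for an integer $k$ to be chosen. Clearly $\varphi\in C_c^\infty(\Omega)$ and satisfies \eqref{prop.test}. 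Direct differentiation gives
\[
\nabla\varphi=k\,\psi^{k-1}\nabla\psi,\qquad \Delta\varphi=k(k-1)\,\psi^{k-2}|\nabla\psi|^2+k\,\psi^{k-1}\Delta\psi.
\]

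For part (a), a direct computation yields
\[
|\nabla\varphi|^{\alpha}\varphi^{1-\alpha}=k^{\alpha}\,\psi^{k-\alpha}\,|\nabla\psi|^{\alpha},
\]
so choosing $k\ge\lceil\alpha\rceil$ makes the exponent of $\psi$ nonnegative and the integrand bounded. Since the integrand vanishes on $B_R$ (where $|\nabla\psi|=0$), we get
\[
\int_\Omega|\nabla\varphi|^{\alpha}\varphi^{1-\alpha}\,\dx
\le k^{\alpha}\,\|\psi\|_\infty^{k-\alpha}\,\sup_{A}|\nabla\psi|^{\alpha}\,|A|
\le \frac{C_1(n,\alpha)}{(R_0-R)^{\alpha}}\,|A|,
\]
which is \eqref{Const.Phi.Append.1}.

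For part (b), using the triangle inequality and $(a+b)^\beta\le 2^{\max\{\beta-1,0\}}(a^\beta+b^\beta)$,
\[
|\Delta\varphi|^{\beta}\varphi^{1-\beta}
\le C_\beta\Big[k^{2\beta}\psi^{k-2\beta}|\nabla\psi|^{2\beta}
+k^{\beta}\psi^{k-\beta}|\Delta\psi|^{\beta}\Big].
\]
Picking $k\ge\lceil 2\beta\rceil$ makes both exponents of $\psi$ nonnegative, hence both terms are bounded and supported in $A$. Using the bounds on $|\nabla\psi|$ and $|\Delta\psi|$ one obtains
\[
\int_\Omega|\Delta\varphi|^{\beta}\varphi^{1-\beta}\,\dx
\le C_\beta\left[\frac{k^{2\beta}c_n^{2\beta}}{(R_0-R)^{2\beta}}
+\frac{k^{\beta}(c_n')^{\beta}}{(R_0-R)^{2\beta}}\right]|A|
\le \frac{C_2(n,\beta)}{(R_0-R)^{2\beta}}\,|A|,
\]
which is the desired estimate. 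The main obstacle, namely the compatibility of the singular weight $\varphi^{1-\alpha}$ (or $\varphi^{1-\beta}$) with the vanishing of the derivatives at $\partial B_{R_0}$, is overcome entirely by the choice of the exponent $k$, which in each case is fixed by the requirement that the power of $\psi$ in the integrand remain nonnegative.
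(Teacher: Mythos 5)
Your proof is correct and coincides with the paper's argument: both take $\varphi=\psi^{k}$ for a standard cutoff $\psi$ and choose the exponent $k$ large enough (at least $\alpha$, resp.\ $2\beta$) so that the power of $\psi$ surviving in $|\nabla\varphi|^{\alpha}\varphi^{1-\alpha}$ and $|\Delta\varphi|^{\beta}\varphi^{1-\beta}$ is nonnegative, after which the pointwise bounds on $\nabla\psi$ and $\Delta\psi$ over the annulus give the stated estimates. No issues.
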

\begin{proof}
Let $\psi$ be a radially symmetric $C_{c}^{\infty}$ function which
satisfies \eqref{prop.test}. It is easy to find $\psi$ (see also
\cite{BV}) satisfying the following estimates:
\begin{equation*}
|\nabla\psi(x)|\leq\frac{K_1}{(R_0-R)}, \qquad
|\Delta\psi(x)|\leq\frac{K_2}{(R_0-R)^2},
\end{equation*}
where $K_1$ and $K_2$ are positive constants depending only on
 $n$. Take $\varphi=\psi^{\gamma}$, where $\gamma>0$ will be chosen later. It
is clear that $\varphi$ satisfies \eqref{prop.test}. We calculate:
\begin{equation*}
|\nabla\varphi|=\gamma\psi^{\gamma-1}|\nabla\psi|, \qquad
\Delta\varphi=\gamma\psi^{\gamma-1}\Delta\psi+\gamma(\gamma-1)\psi^{\gamma-2}|\nabla\psi|^2.
\end{equation*}
In order to prove part (a), we take  $\gamma\ge
\max\{1,\a\}$ and we remark that $\nabla\varphi$ is
supported in the annulus $A$ to estimate:
\begin{equation*}
\int\limits_{\Omega}|\nabla\varphi|^{\a}\varphi^{1-\a}\,\dx\leq
\gamma^{\a}\int\limits_{A}\psi^{\gamma-\a}\frac{K_1^{\a}}{(R_0-R)^{\a}}
\,\dx<C_1(n)\frac{(K_1\gamma)^{\alpha}}{(R_0-R)^{\a}}|A|\,.
\end{equation*}

In order to prove part (b), we estimate:
\begin{equation*}
|\Delta\varphi|^{\b}\varphi^{1-\b}\leq
c\left[\gamma(\gamma-1)\right]^{\b}\psi^{(\gamma-2)\b+\gamma(1-\b)}(|\Delta\psi|+|\nabla\psi|^2)^{\b}.
\end{equation*}
Thus, choosing $\gamma>\max\{1,2\b\}$ and taking into account that
$\Delta\varphi$ is supported in the annulus $A$, we obtain
\begin{equation*}
\int\limits_{\Omega}|\Delta\varphi|^{\b}\varphi^{1-\b}\,\dx\leq\frac{C_2}{(R_0-R)^{2\b}}|A|\,,
\end{equation*}
where $C_2=C_2(p,n,\beta,\gamma)$ is a positive constant.
\end{proof}

\subsection*{A2. Boundedness, regularity and local comparison}

Let us recall now some well known regularity results for local weak
solutions as introduced in Definition \ref{local.weak}, given in
Theorem 2.25 of \cite{DUV}:
\begin{theorem*}
If $u$ is a bounded local weak solution of {\rm \eqref{PLE}} in
$Q_T$, then $u$ is locally H\"older continuous in $Q_T$. More
precisely, there exist constants $\a\in(0,1)$ and $\gamma>0$ such
that, for every compact subset $K\subset Q_T$, and for every points
$(x_1,t_1)$, $(x_2,t_2)\in K$, we have:
\begin{equation*}
\big|u(x_1,t_1)-u(x_2,t_2)\big|\leq\gamma\|u\|_{L^{\infty}(Q_T)}
\left[\frac{|x_1-x_2|+\|u\|_{L^{\infty}(Q_T)}^{\frac{p-2}{p}}|t_1-t_2|^{\frac{1}{p}}}{{\rm
dist}(K,\partial Q_T)}\right]^{\a},
\end{equation*}
where
\[
{\rm dist}(K,\partial Q_T)=\inf_{(x,t)\in K,
(y,s)\in\partial\Omega}\left\{|x-y|,\|u\|_{L^{\infty}(Q_T)}^{(p-2)/p}|t-s|^{1/p}\right\}\,,
\]
and by $\partial Q_T$ we understand the parabolic boundary of $Q_T$.
The constants $\a$ and $\gamma$ depend only on $n$ and $p$.
\end{theorem*}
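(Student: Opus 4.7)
Since this statement is essentially the classical interior H\"older regularity theorem for local weak solutions of the singular $p$-Laplacian (cf. Theorem 2.25 of \cite{DUV} and Chapters III--IV of \cite{DiB}), my strategy would be to follow the \emph{intrinsic scaling} method of DiBenedetto. The central difficulty, and what dictates the whole architecture, is that for $p\neq 2$ equation \eqref{PLE} is not homogeneous: a standard parabolic cylinder $B_\rho\times[-\rho^2,0]$ does not reproduce the equation under scaling. Instead one must work in cylinders of the form $Q_\rho(\theta)=B_\rho(x_0)\times[t_0-\theta\rho^p,\,t_0]$ with a time scale $\theta\approx \omega^{2-p}$, where $\omega$ is an upper bound for the oscillation of $u$ on $Q_\rho(\theta)$; the rescaled function $v(y,s)=u(x_0+\rho y,\,t_0+\theta\rho^p s)/\omega$ then solves an equation of the same form with bounded coefficients, making the parabolic geometry ``see'' the equation as homogeneous.

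The proof proceeds by establishing an oscillation decay estimate: there exist constants $\eta\in(0,1)$ and $\lambda\in(0,1)$, depending only on $n$ and $p$, such that for any intrinsic cylinder $Q_\rho(\theta)\subset Q_T$ with $\theta=(\omega/2^k)^{2-p}$ suitably chosen,
\[
 \operatorname*{osc}_{Q_{\lambda\rho}(\theta)} u \;\le\; \eta\,\omega.
\]
The derivation of this inequality uses the local energy and logarithmic estimates valid for weak solutions (truncations $(u-k)_\pm$ multiplied by cutoffs, tested against \eqref{weak}), combined with a De Giorgi--type analysis in the following dichotomy: either the set where $u$ is close to its supremum occupies only a small measure fraction of $Q_\rho(\theta)$, in which case a De Giorgi lemma drives $(u-\mu^+ +\omega/2)_+$ down in $L^\infty$ on the smaller cylinder; or the complementary set is small and the same argument applies to $(u-\mu^- -\omega/2)_-$. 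To pass from ``small measure in a time slice'' to ``small measure in a cylinder'' one uses the logarithmic estimate to propagate the information forward in time over a layer of thickness $\theta\rho^p$, which is precisely where the intrinsic choice of $\theta$ is essential.

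Once the oscillation decay is available, the H\"older estimate is standard: iterating on a geometric sequence of intrinsic cylinders $Q_{\lambda^j\rho}(\theta_j)$ with $\theta_j$ recomputed at each stage gives $\operatorname*{osc}_{Q_{\lambda^j\rho}(\theta_j)} u \le \eta^j \omega$, which combined with a comparison of intrinsic and standard parabolic distances yields the H\"older modulus
\[
 |u(x_1,t_1)-u(x_2,t_2)| \le \gamma\|u\|_\infty\Bigl[\frac{|x_1-x_2|+\|u\|_\infty^{(p-2)/p}|t_1-t_2|^{1/p}}{\operatorname{dist}(K,\partial Q_T)}\Bigr]^{\alpha}
\]
with $\alpha=\log\eta/\log\lambda$ and $\gamma$ absorbing the conversion constants. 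The normalization by $\|u\|_\infty$ and the factor $\|u\|_\infty^{(p-2)/p}$ in the time variable are precisely the footprints of the intrinsic rescaling $v=u/\omega$, $s=t/\theta$, with $\theta\approx\omega^{2-p}\le \|u\|_\infty^{2-p}$.

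\textbf{Main obstacle.} The hardest part is the oscillation decay itself, specifically the passage from an $L^\infty$ bound on a time slice to an $L^\infty$ bound on an intrinsic cylinder when the singular character of the equation ($1<p<2$) makes the logarithmic test function $\log(H/(H-(u-k)_+))$ the natural quantity. The argument requires balancing three parameters --- the oscillation level, the cylinder aspect ratio $\theta$, and the geometric contraction $\lambda$ --- so that the De Giorgi iteration actually closes with a universal $\eta<1$; this is the step that fixes the exponent $\alpha$ and forces the intrinsic time scaling $|t_1-t_2|^{1/p}\,\|u\|_\infty^{(p-2)/p}$ appearing in the statement. Given the length and delicacy of this analysis, and since it is fully worked out in \cite{DiB, DUV}, I would simply cite those references at the appropriate juncture rather than reproduce the argument.
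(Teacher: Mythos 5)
Your proposal is correct and matches the paper's treatment: this theorem is recalled in Appendix A2 as Theorem 2.25 of \cite{DUV} and is not reproved there, so deferring to \cite{DiB, DUV} is exactly what the paper does. Your sketch of the intrinsic-scaling/oscillation-decay architecture (cylinders $B_\rho\times[t_0-\theta\rho^p,t_0]$ with $\theta\approx\omega^{2-p}$, De Giorgi dichotomy plus logarithmic estimates, iteration giving $\alpha=\log\eta/\log\lambda$) is an accurate account of the cited proof and correctly explains the provenance of the factors $\|u\|_{L^\infty}$ and $\|u\|_{L^\infty}^{(p-2)/p}|t_1-t_2|^{1/p}$ in the stated modulus.
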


\noindent\textbf{Remark. } The above theorem holds whenever $u$ is a
locally bounded function of space and time. We have used this result
just in some technical steps: we begin with bounded local strong
solution, which thanks to the above result are H\"older continuos.
By the way we can prove the smoothing effect for any local strong
solution, independently of this continuity result, we thus obtain a
posteriori that any local strong solution is H\"older continuous.

\medskip

\subsection*{A3. A useful inequality related to the $p$-Laplacian}

We prove the following inequality, used in some technical steps of
the proof of Theorem \ref{inequality}.
\begin{lemma}
For any vectors $a,b\in\real^n$, and for $1< p\le 2$, we have:
\begin{equation}\label{ineq.vect}
(a-b)\cdot(|a|^{p-2}a-|b|^{p-2}b)
    \ge c_p\frac{|a-b|^2}{|a|^{2-p}+|b|^{2-p}},
\end{equation}
where the optimal constant is achieved when $a\cdot b=|a||b|$ and is given by $c_p=\min\{1,2(p-1)\}$, if $1<p<2$, and  $c_2=2$.
\end{lemma}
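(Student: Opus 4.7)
The plan is to exploit the symmetries of the inequality to reduce it to a one-dimensional computation. First, the expansion
\[
(a-b)\cdot(|a|^{p-2}a-|b|^{p-2}b) = |a|^p + |b|^p - (|a|^{p-2}+|b|^{p-2})(a\cdot b)
\]
shows that, with $|a|$ and $|b|$ held fixed, both sides are affine in the scalar $\alpha := a\cdot b$. A short calculation gives that the derivative of ``LHS $-\,c_p\,$RHS'' with respect to $\alpha$ equals
\[
-(|a|^{p-2}+|b|^{p-2}) + \frac{2\,c_p}{|a|^{2-p}+|b|^{2-p}},
\]
which is strictly negative, because $(|a|^{p-2}+|b|^{p-2})(|a|^{2-p}+|b|^{2-p})\ge 4$ by the AM--GM inequality applied to the pair $(|a|^{p-2},|b|^{p-2})$ and its reciprocals, while $c_p\le 1<2$. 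Hence ``LHS $-\,c_p\,$RHS'' is monotone decreasing in $\alpha$, so it suffices to check the inequality at the largest admissible value $\alpha=|a||b|$; in other words, it is enough to treat the case where $a$ and $b$ are parallel and point in the same direction.

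In that reduced case, write $a=ue$ and $b=ve$ with $u,v\ge 0$ and $|e|=1$. Dividing out the trivial factor $(u-v)^2$ (the case $u=v$ being evident) and using homogeneity of degree zero in $(u,v)$ to set $v=1$ and $u=t$, the problem becomes
\[
F(t) := 1 + \frac{t^{p-1}-t^{2-p}}{t-1} \;\ge\; c_p
\qquad\text{for all } t>0,\ t\ne 1,
\]
with $c_p=\min\{1,\,2(p-1)\}$. One immediately checks $F(t)\to 1$ as $t\to 0^+$ and as $t\to\infty$, while $\lim_{t\to 1}F(t)=2(p-1)$ by L'H\^opital. Two cases then arise. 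If $p\ge 3/2$, the exponents satisfy $p-1\ge 2-p$, so $t^{p-1}-t^{2-p}$ and $t-1$ share the same sign for all $t\neq 1$, and therefore $F(t)\ge 1=c_p$. If $1<p<3/2$, introduce
\[
h(t):= t^{p-1}-t^{2-p}-(2p-3)(t-1),
\]
which satisfies $h(1)=h'(1)=0$ and $h''(t)=(p-1)(2-p)\bigl(t^{-p}-t^{p-3}\bigr)$; the factor $t^{-p}-t^{p-3}$ is positive for $t>1$ and negative for $t<1$ when $p<3/2$, so $h\ge 0$ on $(1,\infty)$ and $h\le 0$ on $(0,1)$. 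Combined with the sign of $t-1$, this translates precisely into $F(t)\ge 2(p-1)=c_p$ in both regimes.

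The main obstacle is obtaining the sharp constant. The convexity of $x\mapsto |x|^p/p$ together with an integral representation along the segment $[b,a]$ quickly yields the weaker constant $p-1$ (or bounds of the form $(p-1)(|a|+|b|)^{p-2}|a-b|^2$), but these fall short of the optimal $c_p=\min\{1,2(p-1)\}$, whose two components are attained in genuinely different regimes: the bound $1$ is saturated as $b\to 0$ and the bound $2(p-1)$ as $a\to b$. The monotonicity-in-$\alpha$ reduction is what removes the angular degeneracy and lets the one-dimensional calculus pin down the crossover value $p=3/2$ where the two candidate constants coincide.
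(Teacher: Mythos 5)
Your proof is correct and follows essentially the same strategy as the paper: reduce to the collinear case by noting that the difference of the two sides is affine and decreasing in $a\cdot b$, then settle a one-variable inequality whose crossover between the candidate constants $1$ and $2(p-1)$ occurs at $p=3/2$. The only difference is in the execution of the one-dimensional step --- the paper keeps it in the form $\Phi_p(\lambda)\ge 0$ on $\lambda\ge1$ and runs a third-derivative convexity argument, whereas you divide out $(t-1)^2$ and give a cleaner second-derivative analysis of $h$ on $(0,\infty)$ --- but the underlying inequality is the same.
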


\begin{proof}
When $p=2$, the inequality becomes a trivial equality with $c_2=2$.
We next assume that $1<p<2$ and we rewrite inequality
\eqref{ineq.vect} as follows
\[
\left(|a|^{2-p}+|b|^{2-p}\right)\left[|a|^p+|b|^p-\left(|a|^{p-2}+|b|^{p-2}\right)a\cdot b\right]
    \ge c_p \left(|a|^{2}+|b|^{2}-2a\cdot b\right)
\]
or, equivalently, in the form
\[
(1-c_p)\left(|a|^{2}+|b|^{2}-2a\cdot b\right) + |a|^{2-p}|b|^p+|a|^p|b|^{2-p}
    -\left(\frac{|a|^{2-p}}{|b|^{2-p}}+\frac{|b|^{2-p}}{|a|^{2-p}}\right)a\cdot b
    \ge 0
\]
that can be reduced to
\[
\left(\frac{|a|^{2-p}}{|b|^{2-p}}+\frac{|b|^{2-p}}{|a|^{2-p}}+2(1-c_p)\right)a\cdot b
    \le |a|^{2-p}|b|^p+|a|^p|b|^{2-p}
     +(1-c_p)\left(|a|^2+|b|^2\right).
\]
Now it is clear that the worst case occurs when $a\cdot b=|a||b|$,
since we always have $a\cdot b \le |a||b|$. Hence, proving
inequality \eqref{ineq.vect} is equivalent to prove the numerical
inequality
\[
|a|^{2-p}|b|^p+|a|^p|b|^{2-p}
     +(1-c_p)\left(|a|-|b|\right)^2
     -\left(\frac{|a|^{2-p}}{|b|^{2-p}}+\frac{|b|^{2-p}}{|a|^{2-p}}\right)|a||b|\ge 0,
\]
when $|a|\ge |b|$. Dividing the above inequality by $|b|^2$ and letting $\lambda=|a|/|b|$, we get
\[
\Phi_p(\lambda)=\lambda^{2-p}+\lambda^p+(1-c_p)(\lambda-1)^2-\lambda^{3-p}-\lambda^{1-p}\ge 0\qquad\mbox{for any $1<p\le 2$ and $\lambda\ge 1$.}
\]
In the range $3/2<p<2$, we can always let $c_p=1$, since
$\lambda^{2-p}+\lambda^p\ge \lambda^{3-p}+\lambda^{1-p}$, and this
guarantees that $\Phi_p(\lambda)\ge 0$; again this constant is
optimal and achieved when $\lambda=1$, that is when $a=b$. When
$p=3/2$, we have $\Phi_{3/2}(\lambda)=(1-c_p)(\lambda-1)^2 \ge 0$,
so the inequality holds again with $c_p=1$. When $1<p<3/2$ we have
to work a bit more. We calculate
\[
\Phi''_p(\lambda)
    =-(2-p)(p-1)\lambda^{-p}+p(p-1)\lambda^{p-2}-(3-p)(2-p)\lambda^{1-p}+(2-p)(p-1)\lambda^{p-3}+2(1-c_p)
\]
and we observe that $\Phi''_p(1)=-6+4p+2(1-c_p)\ge 0$ if $c_p\le
2(p-1)$. Moreover, in the limit $\lambda\to \infty$,
$\Phi''_p(\lambda)\to 2(1-c_p)= 6-4p > 0$, when $1<p<3/2$. Then it
is easy to check that
\[\begin{split}
\Phi'''_p(\lambda)
    &=(p-1)(2-p)\left[p\lambda^{-p-1}-p\lambda^{p-3}+(3-p)\lambda^{-p}-(3-p)\lambda^{p-4}\right]\\
    &\ge p(p-1)(p-2)\left(\frac{1}{\lambda}+1\right)
        \left(\frac{1}{\lambda^{p}}-\frac{1}{\lambda^{3-p}}\right)\ge 0
\end{split}
\]
since $3-p>p$ when $p<3/2$ and $t\ge 1$. We have thus proved that
$\Phi''_p(\lambda)$ is a non-decreasing function of $\lambda$, which
is zero in $\lambda=1$ and  $\Phi_p(\lambda)\le
\Phi_p(\infty)=2(1-c_p)= 6-4p$. This implies that $\lambda=1$ is a
minimum for $\Phi_p$, since $\Phi_p(1)=0$, $\Phi'_p(1)=0$. As a
consequence $\Phi_p(\lambda)\ge 0$ for any $\lambda\ge 1$. Equality
is attained for $\lambda=1$ and $c_p=2(p-1)$, and this fact proves
optimality of the constant when $a=b$.
\end{proof}

\textsc{Acknowledgments.} The authors are funded by Project
MTM2005-08760-C02-01 (Spain). M. Bonforte and J. L. Vázquez are
partially supported by the European Science Foundation Programme
"Global and Geometric Aspects of Nonlinear Partial Differential
Equations".

\bibliographystyle{plain}

\end{document}